\DeclareMathOperator{\rk}{rk}
\DeclareMathOperator{\codim}{codim}
\DeclareMathOperator{\Spec}{Spec}
\DeclareMathOperator{\Pic}{Pic}
\DeclareMathOperator{\sym}{sym}
\DeclareMathOperator{\alt}{alt}
\DeclareMathAlphabet{\mathpzc}{OT1}{pzc}{m}{it}
\DeclareMathOperator{\gr}{gr}
\DeclareMathOperator{\Grass}{Grass}
\DeclareMathOperator{\supp}{supp}
\DeclareMathOperator{\Res}{Res}
\newcommand{\mbb}{\mathbb}
\newcommand{\mc}{\mathcal}
\newcommand{\del}{\partial}
\newcommand{\FS}{\mathcal{O}}
\newcommand{\FSP}[1]{\mathcal{O}_{\mathbb{P}_{#1}}}
\newcommand{\comp}[1]{{#1}^{\bullet}}
\newcommand{\id}{{\rm id}}
\newcommand{\B}{\mathbf}
\newcommand{\Tor}{{\rm Tor}}
\newcommand{\perm}{\mathfrak{S}}
\newcommand{\trest}{{\big |} }
\newcommand{\bkrh}{\mathbf{\Phi}}
\newcommand{\transpose}[1]{{}^t\! {#1}}
\newcommand{\mf}{\mathfrak}
\newcommand{\Aut}{\mathrm{Aut}}
\newcommand{\tens}{\otimes}
\newcommand{\Tens}{\bigotimes}
\newcommand{\w}{\wedge}
\newcommand{\Stab}{\mathrm{Stab}}
\newcommand{\p}{\oplus}
\newcommand{\D}{\mathscr{D}}
\newcommand{\E}{\mathcal{E}}
\newcommand{\K}{\mathcal{K}}
\newcommand{\W}{\mathcal{W}}
\newcommand{\V}{\mathcal{V}}
\newcommand{\Q}{\mathcal{Q}}
\theoremstyle{plain}
\newtheorem{theorem}{Theorem}[section]
\newtheorem{lemma}[theorem]{Lemma}
\newtheorem{pps}[theorem]{Proposition}
\newtheorem{crl}[theorem]{Corollary}
\newtheorem*{expect}{Expectation}
\newtheorem{thma}{Theorem}[section]
\newtheorem{lemmaa}[thma]{Lemma}
\newtheorem{crla}[thma]{Corollary}
\theoremstyle{definition}
\newtheorem{definition}[theorem]{Definition}
\newtheorem{example}[theorem]{Example}
\newtheorem{remark}[theorem]{Remark}
\newtheorem{notat}[theorem]{Notation}
\newtheorem{conv}[theorem]{Convention}
\numberwithin{equation}{section}
\mathchardef\phi="0127
\mathchardef\varphi="011E
\mathchardef\alpha="710B
\mathchardef\beta="710C
\mathchardef\gamma="710D
\mathchardef\delta="710E
\mathchardef\epsilon="7122
\mathchardef\zeta="7110
\mathchardef\eta="7111
\mathchardef\theta="7112
\mathchardef\iota="7113
\mathchardef\kappa="7114
\mathchardef\lambda="7115
\mathchardef\mu="7116
\mathchardef\nu="7117
\mathchardef\xi="7118
\mathchardef\pi="7119
\mathchardef\rho="711A
\mathchardef\sigma="711B
\mathchardef\tau="711C
\mathchardef\upsilon="711D
\mathchardef\chi="711F
\mathchardef\psi="7120
\mathchardef\omega="7121
\mathchardef\varepsilon="710F
\mathchardef\vartheta="7123
\mathchardef\varpi="7124
\mathchardef\varrho="7125
\mathchardef\varsigma="7126
\theoremstyle{definition}
\newtheorem{remarka}[thma]{Remark}
\newtheorem{notata}[thma]{Notation}
\numberwithin{equation}{section}
\title{Higher symmetric powers of tautological bundles on Hilbert schemes of points on a surface}
\author{Luca Scala}
\date{}
\begin{document}
\maketitle


\begin{abstract}We study general symmetric powers $S^k L^{[n]}$ 
of a tautological bundle $L^{[n]}$ on the Hilbert scheme $X^{[n]}$ of $n$ points  over a smooth quasi-projective surface $X$, associated to a line bundle $L$ on $X$. 
Let $V_L$ be the $\perm_n$-vector bundle on $X^n$ defined as the exterior direct sum $L \boxplus \cdots \boxplus L$. 
We prove that the Bridgeland-King-Reid transform $\mathbf{\Phi}(S^k L^{[n]})$ of symmetric powers $S^k L^{[n]}$ is quasi isomorphic to the last term of a finite decreasing filtration  on the natural vector bundle $S^k V_L$, defined by kernels of operators $D^l_L$, which operate locally as higher order restrictions to pairwise diagonals. 
We use this description and the natural filtration on $(S^k V_L)^{\mathfrak{S}_n}$ induced by the decomposition in direct sum, to obtain, for $n =2$ or $k \leq 4$, a  finite decreasing filtration $\mathcal{W}^\bullet$ on the direct image $\mu_*(S^k L^{[n]})$ for the Hilbert-Chow morphism whose graded sheaves we control completely. As a consequence of this structural result, we obtain a chain of cohomological consequences, like 
a spectral sequence abutting to the cohomology of symmetric powers $S^k L^{[n]}$, an effective vanishing theorem for the cohomology of symmetric powers $S^k L^{[n]} \otimes \mathcal{D}_A$ twisted by the determinant, in presence of adequate positivity hypothesis on $L$ and $A$, as well as universal formulas for their Euler-Poincar\'e characteristic. 
\end{abstract}

\tableofcontents

\section*{Introduction}Let $X$ be a smooth complex quasi-projective algebraic surface and let $n \in \mbb{N}\setminus \{0 \}$. Denote with 
$X^{[n]}$ the Hilbert scheme of $n$ points over  $X$ and with $L^{[n]}$ the tautological bundle on $X^{[n]}$ associated to a line bundle $L$ over the surface $X$. The aim of this work is the study of general symmetric powers $S^k L^{[n]}$ of the tautological bundle $L^{[n]}$ with some applications to their cohomology. 

Tautological bundles have proven to be of fundamental importance for the geometry of Hilbert schemes of points over a surface $X$: indeed their Chern classes play a fundamental role in the structure of the cohomology ring 
of the Hilbert scheme $X^{[n]}$ and hence of its topology \cite{Lehn1999, EllingsrudGoettscheLehn2001}.  
Tautological bundles on $X^{[n]}$ are as well particularly relevant for the geometry of the  surface $X$ itself: as an example, we can mention
G\"ottsche conjecture on the number of 
$\delta$-nodal curves in given linear system $|L|$, whose statement, 
proofs and new refined versions directly involve the use of the tautological bundle $L^{[n]}$
\cite{Goettsche1998, KoolShendeThomas2011, GoettscheShende2014}.

The original motivation of this work is the Strange Duality conjecture over the projective plane, and, in particular, the understanding 
of the space of global sections of some particular determinant line bundle over  the moduli spaces of semistable sheaves of rank $2$. The program set up 
by Danila \cite{Danila2000} aimed  to prove the conjecture on $\mbb{P}_2$, for Grothendieck classes $u$ (of rank $2$, $c_1 =0$, $c_2 =n$) and $v$ (of rank $0$, $c_1 =d$, $\chi =0$), by interpolating the moduli space of semistable sheaves $M_u$ with a Hilbert scheme of points $\mbb{P}_2^{[m]}$ via moduli spaces of coherent systems (a technique already used in \cite{He1998} and \cite{LePotier1996}). The computations of global sections $H^0(M_u, \mc{D}_v)$ is then reconducted to the understanding of a second quadrant spectral sequence whose terms $E^{p,q}_1$ involve the cohomology groups
$$ H^q(\mbb{P}_2^{[m]}, S^{ld + p} L^{[m]} \tens \mc{D}^{\tens d}_{\FS(1)}) $$of symmetric powers of the tautological bundle $L^{[m]}$ associated to the line bundle $L= \FSP{2}(2l-3)$ over $\mbb{P}_2$, twisted by a natural line bundle 
$\mc{D}^{\tens d}_{\FSP{2}(1)}$ over the Hilbert scheme. Here $l$ is an auxiliary integer such that $m = n + l^2$ and that $l(l-1) \leq n < (l+1)(l+2)$.   To finally understand the original space of global sections $H^0(M_u, \mc{D}_v)$ two problems 
remain: the first,  \emph{the vanishing} of the higher cohomology of the twisted symmetric powers $S^{ld+p} L^{[m]} \tens \mc{D}^{\tens d}_{\FSP{2}(1)}$; the second,  \emph{the computation of the dimension of the spaces of global sections} $H^0(\mbb{P}_2^{[m]}, S^{ld + p} L^{[m]} \tens \mc{D}^{\tens d}_{\FSP{2}(1)})$. The first problem is solved by a general formula for the cohomology of tautological bundles \cite{Danila2001} and by an ad-hoc vanishing of the $H^1$ of the double symmetric powers \cite{Danila2000}; the second, via representation theory of $SL(3)$. Global sections for the dual moduli space $H^0(M_v, \mc{D}_u)$ have been computed 
with different techniques \cite{Danila2002}, not involving tautological bundles. Eventually, Danila proves the conjecture  for $d=1$, $n \leq 19$, $d=2,3$, $n \leq 5$ \cite{Danilathese, Danila2000, Danila2002}. However, in the case considered by Danila, that is, $X = \mbb{P}_2$ and Grothendieck classes $u$ and $v$ as above, the Strange Duality conjecture has been recently settled first by Abe \cite{Abe2010}, for $d =1,2$, using deformations of quasi-bundles, and later by Yao \cite{Yao2012, Yao2012A}, for $d \leq 4$, using results by 
G\"ottsche 
about Hilbert series of determinant line bundles over moduli spaces $M_u$ over the projective plane. This last approach uses heavily techniques of Nekrasov istanton counting like the ones appeared in \cite{GNY2009}. 
Both approaches extend  to rational surfaces. 

\sloppy
The work of Danila dealt with cohomology of tautological bundles \cite{Danila2000, Danila2001}, vanishing of $H^1(\mbb{P}_2^{[n]}, S^2 L^{[2]} \tens \mc{D}_A)$ for twisted symmetric powers for $L$ and $A$ positive, sections of $S^3 L^{[n]}$ over $\mbb{P}_2^{[n]}$ \cite{Danila2000, Danila2002},  symmetric powers $S^k L^{[n]}$ for $k \leq 2$ and $n \leq 3$ \cite{Danila2004},  and then with sections of general tensor powers \cite{Danila2007}. 
In our previous work \cite{ScalaPhD, Scala2006, Scala2009GD, Scala2009D}, among other things, we gave formulas for the cohomology of general double symmetric powers $S^2 L^{[n]}$ and for general exterior powers $\Lambda^k L^{[n]}$. 
Recently, Krug \cite{Krug2014} studied general tensor products of tautological bundles giving, among others, 
results 
for triple tensor products $E_1^{[n]} \tens E_2^{[n]} \tens E_3^{[n]}$ for general $n$. 

Here we address the case of general symmetric powers $S^k L^{[n]}$: we first obtain a general characterization of their Bridgeland-King-Reid transform  as kernels of some explicit  operators over $X^n$;  
then, for $n \leq 2$ or for $k \leq 4$ we prove a \emph{general structure theorem for their direct images $\mu_* S^k L^{[n]}$} for the Hilbert-Chow morphism over the symmetric variety $S^n X$. This structure result yields a series of cohomological consequences, among which, when $X$ is projective, an \emph{effective vanishing theorem} for twisted symmetric powers $S^k L^{[n]} \tens \mc{D}_A$, in presence of adequate positivity hypothesis for $L$ and $A$ and \emph{universal formulas for their Euler-Poincar\'e characteristic}. 

The starting point of our work is the description, given in \cite{ScalaPhD, Scala2009D}, of the Bridgeland-King-Reid transform 
of a $k$-fold tensor power $L^{[n]} \tens \cdots \tens L^{[n]}$ of the tautological bundle $L^{[n]}$. Recall that the Bridgeland-King-Reid transform \cite{BridgelandKingReid2001, Haiman2001, ScalaPhD, ScalaTrento2006} 
$$ \bkrh: \B{D}^b(X^{[n]}) \rTo \B{D}^b_{\perm_n}(X^n) $$is the equivalence of categories between the derived category of the Hilbert scheme of $n$ points over $X$ and the $\perm_n$-equivariant derived category of the product variety $X^n$ built (in this case) as the Fourier-Mukai transform through 
Haiman isospectral Hilbert scheme $B^n \subset X^{[n]} \times X^n$. 
The image 
$\bkrh(L^{[n]})$ of a tautological bundle is resolved in \cite{ScalaPhD, Scala2006, Scala2009D} by a complex 
$$\comp{\mc{C}}_L : \; \; 0 \rTo \oplus_{i=1}^n L_i \rTo^{\partial^0_L} \oplus_{|I|=2} L_I \rTo^{\partial^1_L} \oplus_{|J|=3} L_I \rTo^{\partial^2_L} \cdots \rTo^{\partial^{n-2}_L} L_{{1, \dots, n}} \rTo 0 $$where, if $I \subseteq \{1, \dots, n \}$ is a nonempty subset, $p_I: X^n \rTo X^I$ is the projection onto the factors indexed by $I$,  $i_I : X \rTo X^I$ is the diagonal immersion, the sheaf $L_I$ is defined as $L_I = p_I^* {i_I}_* L$ and the differentials $\partial^i_L$ are {\v C}ech-like maps (see subsection \ref{subsection: taut}). The image of the $k$-fold tensor power $\bkrh(L^{[n]} \tens \cdots \tens L^{[n]})$ is not the (derived) tensor power of the images. In \cite{Scala2009D} we proved that the image $\bkrh(L^{[n]} \tens \cdots \tens L^{[n]})$ is concentrated in degree $0$ and that the sheaf $p_* q^* (L^{[n]} \tens \cdots \tens L^{[n]})$ can be realized as 
the $E^{0,0}_\infty$ term of the hyperderived spectral sequence $$E^{p,q}_1 = \bigoplus_{i_1 + \dots + i_k=p} \Tor_{-q}(\mc{C}^{i_1}_L, \dots, \mc{C}^{i_k}_L)$$associated to the derived tensor product $\comp{\mc{C}}_L \tens^L \cdots \tens^L \comp{\mc{C}}_L$ and abutting to $\Tor_{-p-q}(\comp{\mc{C}}_L ,  \cdots,   \comp{\mc{C}}_L)$. 
Here the symmetric group $\perm_n$ acts geometrically on $X^n$ while a second symmetric  group $\perm_k$ acts permutating the factors in the tensor products $L^{[n]} \tens \cdots \tens L^{[n]}$. Hence, taking $\perm_k$-invariants, we get a characterization of the image  $\bkrh(S^k L^{[n]})$ of the symmetric power of tautological bundles as the term $(E^{0,0}_\infty)^{\perm_k}$ of the spectral sequence of $\perm_k$-invariants $(E^{p,q}_1)^{\perm_k}$. 
On the other hand, the bi-invariants $(E^{0,0}_\infty)^{\perm_n \times \perm_k}$ are quasi-isomorphic to the derived direct image 
$\B{R} \mu_* S^k L^{[n]} \simeq^{qis} \mu_* S^k L^{[n]} $ of symmetric powers $S^k L^{[n]}$ by the Hilbert-Chow morphism $\mu: X^{[n]} \rTo S^nX$. Both spectral sequences 
$(E^{p,q}_1)^{\perm_k}$ and $(E^{p,q}_1)^{\perm_n \times \perm_k}$ are still combinatorially difficult to treat. 
However, the sheaves $p_*q^* S^k L^{[n]}$ and $\mu_* S^k L^{[n]}$ satisfy the following important extension property. 
Let $W$ be the closed subscheme of points in $X^n$ lying in the intersection of two or more distinct pairwise diagonals $\Delta_{ij}$; 
let moreover $\pi: X^n \rTo S^nX$ be the quotient projection. The sheaves $p_*q^* S^k L^{[n]}$ and $\mu_* S^k L^{[n]}$ satisfy 
\begin{gather}
\label{eq: ext1} j_* j^* p_* q^* S^k L^{[n]} \simeq p_* q^* S^k L^{[n]} \\ 
\label{eq: ext2} j_* j^* \mu_* S^k L^{[n]} \simeq \mu_* S^k L^{[n]} 
\end{gather}where 
$j$ denotes the open immersion $X^n \setminus W \rInto X^n$ (in the first isomorphism) and 
the open immersion $S^nX \setminus \pi(W) \rTo S^n X$ (in the second). Isomorphisms (\ref{eq: ext1}) and (\ref{eq: ext2}) suggest
that one can extract all information about the sheaves $p_*q^* S^k L^{[n]}$ and $\mu_* S^k L^{[n]}$ from their restrictions 
to the open sets $X^n_{**} := X^n \setminus W$ and $S^nX_{**} = S^nX \setminus W$, respectively. 

We consequently look at the restrictions $j^*(E^{p,q}_1)^{\perm_k}$ and $j^* (E^{p,q}_1)^{\perm_n \times \perm_k}$ of the spectral sequences of $\perm_k$-invariants and of bi-invariants to the open sets $X^n_{**}$ and $S^nX_{**}$ respectively. In the analogous case of exterior powers, studied in \cite{Scala2009D}, the spectral sequence governing the problem was the spectral sequence of $\perm_n$-invariants and $\perm_k$-anti-invariants $(E^{p,q}_1 \tens \epsilon_k)^{\perm_n \times \perm_k}$ --- $\epsilon_k$ being here the 
alternating representation of the symmetric group $\perm_k$ --- and its restriction to $S^nX_{**}$ degenerated at level $E_2$.  In the present case of symmetric powers, the spectral sequences  $j^* (E^{p,q})_1^{\perm_n \times \perm_k}$ does not degenerate at level $E_2$, but only at level $E_k$; the spectral sequence $j^*(E^{p,q}_1)^{\perm_k}$ degenerates surprisingly at level $E_k$ in the same way. Despite the degeneration  occurs only later,  the degeneration pattern is clear: the only surviving terms at level $E_1$ are $j^*(E^{0,0}_1)^{\perm_k}$ and $j^*(E^{i,1-i}_1)^{\perm_k}$ for $0 \leq i \leq k-1$: therefore the kernels of the (restricted) higher differentials 
\begin{equation}\label{eq: filtr}  d^r : j^* (E^{0,0}_r)^{\perm_k} \rTo j^* (E^{r,1-r}_r)^{\perm_k} \simeq j^* (E^{r,1-r}_1)^{\perm_k}
\end{equation}define a filtration $$ \ker  d^{k-1} \subseteq  \ker  d^{k-2} \subseteq \cdots \subseteq  \ker d^{1} \subseteq j^* S^k \mc{C}^0_L $$of the vector bundle $j^* S^k \mc{C}^0_L$ over the open set $X^n_{**}$ with graded sheaves $j^*(E^{i,1-i}_1)^{\perm_k}$. The filtration (\ref{eq: filtr}) is nothing but the restriction to the open set $X^n_{**}$ of the natural filtration 
\begin{equation}\label{eq: filtr2} (E^{0,0}_{k})^{\perm_k} \subseteq ( E^{0,0}_{k-1})^{\perm_k} \subseteq \cdots \subseteq (E^{0,0}_1 )^{\perm_k} = S^k \mc{C}^0_L \;. \end{equation}The sheaves $(E^{0,0}_r)^{\perm_k} $ 
satisfy an extension property similar to (\ref{eq: ext1}); however, the functor $j_*$ is not right exact: consequently there is no way 
we can get information on the graded sheaves of the filtration (\ref{eq: filtr2}) from the graded sheaves $j^*(E^{i,1-i}_1)^{\perm_k}$ of filtration (\ref{eq: filtr}).  

In front of this difficulty, we performed a detailed analysis of the higher differentials $d^l$ of the restricted spectral sequence $j^*(E^{p,q}_1)^{\perm_k}$. As a result of this study, we found recursively defined operators 
\begin{gather*} D^0_L : S^k \mc{C}^0_L \rTo (j_* j^* E^{1, 0}_1)^{\perm_k} =: K^0(L) \\
D^l_L : \ker D^{l-1}_L \rTo (j_* j^* E^{l+1, -l}_1)^{\perm_k} =: K^l(L) 
\end{gather*}over the whole variety $X^n$ such that their restrictions to the open set $X^n_{**}$ are the higher differentials $d^l$: $$ j^* D^l_L = d^l \;.$$The operators $D^l_L$ are built globally over $X^n$ by glueing local operators $D^l_L$ defined via partial jet-projections: we provide explicit local formulas for them, inspired by finite difference calculus. Thanks to the operators $D^l_L$ we can state the first result of this work. Denote with $E^l(n,k)$ the subsheaf of $S^k \mc{C}^0_L$ defined by the kernel of $D^{l-1}_L$ over $X^n$. Then we have

\vspace{0.2cm}
\noindent
{\bf Theorem \ref{thm: mainimage}.} {\it The Bridgeland-King-Reid transform $\bkrh(S^k L^{[n]})$ of the $k$-symmetric power $S^k L^{[n]}$ of the tautological bundle $L^{[n]}$ associated to the line bundle $L$ over the surface $X$ is quasi-isomorphic to the term 
$E^{k-1}(n,k)$ of the filtration $E^\bullet(n,k)$ on the vector bundle $S^k \mc{C}^0_L$: }
$$ \bkrh(S^k L^{[n]}) \simeq E^{k-1}(n,k) \simeq \ker D^{k-2}_L \;.$$

In particular, for $X = \mbb{C}^2$ or an affine surface, we get a characterization of the image $\bkrh(\FS_{X}^{[n]})$ as the subsheaf of sections of the (trivial) sheaf $S^k \mc{C}^0_{\FS_X} = \oplus_{\lambda \in c_n(k)} \FS_{X^n}$ (where $c_n(k)$ is the set of compositions of $k$ supported in $\{1, \dots, n \}$) satisfying an \emph{explicit system of higher order restrictions}, that is, restrictions of higher derivatives (see corollary \ref{crl: diffaffine}) to pairwise diagonals. Similarly, we have a characterization of $\mu_* S^k L^{[n]}$ in terms of kernels of invariant operators $\mc{D}^l_L := (D^l_L)^{\perm_n}$. We obtain, moreover, explicit local formulas for the invariant operators $\mc{D}^l_L$ over $S^nX$: these formulas will be useful later, when proving the main structure theorem.  The appearence of higher order restrictions  of the kind of $D^l_L$ is natural when in presence of \emph{nontransverse intersections}: in our context the occurrence of the operators $D^l_L$ witness the fact that \emph{the irreducible components of Haiman polygraph $D(n,k) \subset X^n \times X^k$ are not transverse}.
The operators $D^l_L$ are essentially the $\perm_k$-invariant version of the operators $\phi_l$ in Krug's work \cite{Krug2014} and theorem \ref{thm: mainimage} can be compared to \cite[theorem 4.10]{Krug2014}.  However, the way these operators are found\footnote{The operators $D^l_L$, their local formulas and theorem \ref{thm: mainimage} are known to us since late 2009; some of their consequences --- for example theorem \ref{thm: main3} --- since soon later (mid-2010) \cite{Scala2010}. } is completely different (see remark \ref{rmk: Krug}). 

Since the filtration $E^\bullet(n,k)$ and its $\perm_n$-invariant version $\mc{E}^\bullet(n,k) := E^\bullet(n,k)^{\perm_n}$ can't be directly used to get informations about  the sheaves $p_* q^* S^k L^{[n]}$ and $\mu_* S^k L^{[n]}$, respectively, due to the difficulty in understanding their graded sheaves, we are lead to 
introduce a second filtration $\mc{W}^\bullet$ on the invariants $(S^k \mc{C}^0_L)^{\perm_n}$ as follows. 
The vector bundle $S^k \mc{C}^0_L$ is isomorphic to the direct sum 
$$ S^k \mc{C}^0_L = \oplus_{\lambda \in c_n(k)} L^\lambda $$--- over compositions $\lambda$ of $k$ supported in $\{1, \dots, n \}$ ---
 of line bundles $L^\lambda := \tens_{i=1}^n p_i^* L^{\tens \lambda_i}$; consequently, the $\perm_n$-invariants $(S^k \mc{C}^0_L)^{\perm_n}$ of the vector bundle $S^k \mc{C}^0_L$ split as a direct sum 
 $$ (S^k \mc{C}^0_L)^{\perm_n} = \oplus_{\lambda \in p_n(k)} \mc{L}^\lambda $$--- over partitions $\lambda$ of $k$ of length at most $n$ --- of sheaves $ \mc{L}^\lambda : = \pi_*(L^\lambda)^{\Stab_{\perm_n}(\lambda)} $ of $\Stab_{\perm_n}(\lambda)$-invariants of the sheaves $\pi_* L^\lambda$ over $S^n X$. Hence, to each total order $\preceq$ on the set  of partitions $p_n(k)$, we can associate a natural filtration $\mc{V}^\bullet$ on $(S^k \mc{C}^0_L)^{\perm_n}$ defined as 
 $$ \mc{V}^\mu = \oplus_{\substack{\lambda \in p_n(k) \\ \lambda \succeq \mu}} \mc{L}^\lambda \;.$$Consider for the moment 
 the reverse lexicographic order $\leq_{\rm rlex}$ on the set of partitions $p_n(k)$ and let $\mc{V}^\bullet$ the associated filtration. Then 
 we can define the filtration $\mc{W}^\bullet$ on $\mu_* S^k L^{[n]}$ as 
 $$ \mc{W}^\bullet :=  \mc{V}^{\bullet}  \cap \mc{E}^{k-1}(n, k) \;.$$The filtration $\mc{W}^\bullet$ has the great advantage that 
 \emph{we can try to guess what are the graded sheaves $\gr^{\mc{W}}_\mu$}, at least for $k \leq 4$. It turns out that 
 (at least for small $k$) \emph{the graded sheaves are subsheaves  of the sheaves $\mc{L}^\mu$ whose sections vanish with a prescribed order along some prescribed partial diagonals}. More precisely, define the sheaf $\mc{L}^\mu(-l \Delta)$ as the sheaf 
 $$ \mc{L}^\mu(-l \Delta) = \pi_*(L^\mu \tens \cap_{1 \leq i < j \leq l(\mu)} I_{\Delta_{ij}}^l)^{\Stab_{\perm_n}(\mu)}$$where here $l(\mu)$ is the length of the partition $\mu$ and $I_{\Delta_{ij}}$ is the ideal sheaf of the pairwise diagonal $\Delta_{ij}$ in $X^n$. The main result of this work is  the following structure theorem

\vspace{0.2cm}
\noindent
{\bf Theorem \ref{thm: main2}-\ref{thm: main3}-\ref{thm: main4}. } {\it 
Let $n \in \mbb{N}^*$, $k \in \mbb{N}$, such that $n \leq 2$ or $k \leq 4$. Then the graded sheaves of the filtration $\mc{W}^\bullet$ on the sheaf $\mu_* S^k L^{[n]}$, indexed by partitions $p_n(k)$, equipped with the reverse lexicographic order, are given by
 $$ \gr^{\mc{W}}_\mu \mu_* S^k L^{[n]} \simeq \mc{L}^\mu (- 2 m_\mu \Delta) \;,$$where $m_\mu :=0$ if $l(\mu)=1$, otherwise $m_\mu := \min_{2 \leq i \leq l(\mu)} \mu_i$. }
 
 \vspace{0.2cm}
The proof of theorems \ref{thm: main2}-\ref{thm: main3}-\ref{thm: main4} consists in the construction of natural 
 left exact sequences 
 $$ 0 \rTo \mc{W}^{\mu^\prime} \rTo \mc{W}^\mu \rTo \mc{L}^\mu(-2 m_\mu \Delta) $$which are proven to actually be right exact by an ad-hoc verification. Here $\mu^\prime$ is the partition following $\mu$ in the reverse lexicographic order. 
 
 \emph{Even if theorem 
 \ref{thm: main2}-\ref{thm: main3}-\ref{thm: main4} has been proven by ad-hoc methods, it seems that its content might be carried on to the general case}, possibly modifying the definitions of some of the objects involved. First, the reverse lexicographic order has to be replaced by another total order  on the set of partitions $p_n(k)$: the new order $\preceq$ has to take into account 
 the length of a partition in the first place and coincides with the reverse lexicographic order  between partitions of the same 
length. Secondly, it might be that we have to consider subsheaves of the sheaves $\mc{L}^\mu$ whose sections vanish
with different prescribed order along different prescribed pairwise diagonals. Our first guess are the sheaves $\mc{L}^\mu(-2 \mu \Delta)$, defined as 
$$ \mc{L}^\mu(-2 \mu \Delta) := \pi_* (L^\mu \tens \cap_{1 \leq i < j \leq l(\mu)} I_{\Delta_{ij}}^{2 \mu_j})^{\Stab_{\perm_n}(\lambda)} \;.$$
Several cases worked out for $k \geq 5$ confirm these facts. 
It is likely that a structure theorem of the kind of \ref{thm: main2}-\ref{thm: main3}-\ref{thm: main4}  hold for general tensor powers or even general tensor products of tautological bundles. Indeed, the analogue of filtration $E^\bullet(n,k)$ can be defined for general tensor products of tautological bundles, as kernels of operators $\phi_l$ in Krug's work \cite{Krug2014}. As for the filtration $\mc{V}^\bullet$, there should be a natural way of extending it in the case of tensor product of tautological bundles, and maybe even in the case of general tensor products.
We will discuss more in detail a possible extension to the general case in subsection \ref{toward}. 

Everything we said extends 
to the case in which symmetric powers $S^k L^{[n]}$ are twisted by the natural line bundle 
$\mc{D}_A = \mu^*(A \boxtimes \cdots \boxtimes A/\perm_n) $, defined as the pull back over $X^{[n]}$ of the descent over $S^n X$ of the line bundle $A \boxtimes \cdots \boxtimes A$ ($n$-factors) on $X^n$. For brevity's sake we will denote the descent $A \boxtimes \cdots \boxtimes A/\perm_n$ also with $\mc{D}_A$. 

Theorem \ref{thm: main2}-\ref{thm: main3}-\ref{thm: main4} implies a chain of cohomological consequences. First, we can compute --- for $n \leq 2$ or for $k \leq 4$ --- the cohomology of twisted symmetric powers $S^k L^{[n]} \tens \mc{D}_A$  via the spectral sequence of a filtered sheaf 
$$ E^{p,q}_1 = H^{p+q}(S^n X, \mc{L}^{\mu(p)}(-2 m_{\mu(p)} \Delta) \tens \mc{D}_A) \implies H^{p+q}(X^{[n]}, S^k L^{[n]} \tens \mc{D}_A) $$where $\mu(p)$ is the $(p+1)$-th partition 
in $p_n(k)$ according to reverse lexicographic order. Secondly, when $X$ is projective, we can work out an \emph{effective vanishing theorem} for the higher cohomology $H^p(X^{[n]}, S^k L^{[n]} \tens \mc{D}_A)$, in presence of adequate positivity hypothesis on $L$ and $A$. 
The key notion here is that of an $l$-very ample line bundle and the fact that if $L$ is $n$-very ample on $X$, 
then the determinant $\det L^{[n]}$ of the associated tautological bundle $L^{[n]}$ is very ample on $X^{[n]}$ \cite{BeltramettiSommese1991, CataneseGoettsche1990}. This fact is sufficient to make the higher cohomology 
of most graded sheaves $\gr^{\mc{W}}_\mu$ vanish via Kodaira theorem; still for one of them, the sheaf $\mc{L}^{2,1,1}(-2 \Delta) \tens \mc{D}_A$, we need to work over the isospectral Hilbert scheme $B_3$ of three points over $X$ 
and use that $B_3$ has canonical singularities \cite{Scala2015isospectral} before concluding with Kodaira  vanishing. The effective vanishing result reads precisely

\vspace{0.2cm}
\noindent
{\bf Theorem \ref{thm: vanishing}.} {\it Let $X$ be a smooth complex projective surface. Let $L$, $A$ be line bundles on $X$ such that $L$ is nef and $A \tens \omega_X^{-1}$ is big and nef. Let $n \in \mbb{N}^*$ and $k \in \mbb{N}$, with $n \leq 2$ or $k \leq 4$. Then 
$$ H^i(X^{[n]}, S^k L^{[n]} \tens \mc{D}_A) = 0 \qquad \mbox{for all $i>0$}$$if 
$L \tens A \tens \omega_X^{-1} = \tens_{j=1}^{k+1}B_j$ when $n=2$, if $L \tens A \tens \omega_X^{-1} = \tens_{j=1}^{5}B_j$ when $k =3$, if $L \tens A \tens \omega_X^{-1} = \tens_{j=1}^{7}B_j$ when $k=4$, 
where $B_j$ are very ample line bundles on $X$.} 

\vspace{0.2cm}
A variant of this result  for $n=2$ (theorem \ref{thm: vanishingvariant}) just requires that $L$ and $A \tens \omega_X^{-1}$ are both tensor product of two very ample line bundles. Finally, for $X$ projective and for $n=2$ or for $k \leq 4$ we give general \emph{universal formulas} for the Euler-Poincar\'e characteristic $\chi(X^{[n]}, S^k L^{[n]} \tens \mc{D}_A)$ of twisted symmetric powers in terms of Euler-Poincar\'e characteristics of $L, A, S^l \Omega^1_X, \omega_X$ 
and some tensor product of these over~$X$. 

\paragraph{Acknowledgements.} This work started in 2009 at University of Chicago, where I had the chance of talking with Madhav Nori. I'd like to thank him for the useful discussions and suggestions. In winter 2011 I visited University of Massachussets at Amherst: I'd like to thank  Eyal Markman for his interest in this work, for fruitful talks, and, finally, for providing example \ref{ex: markman}. I would also like to thank 
Lei Song for pointing out several missprints and imprecisions in the first version of this paper. 
This work was partially supported by CNPq, grant 307795/2012-8.

\section{The geometric setup}

Let $X$ be a smooth quasi-projective surface. We will indicate with $X^n$ the cartesian product of $n$ copies of $X$; let $G$ be the symmetric group $G:=\perm_n$; it acts on $X^n$ with quotient the symmetric variety $S^nX$: let $\pi: X^n \rTo S^nX$ be the quotient projection.  Let $X^{[n]}$ be the Hilbert scheme of $n$ points on $X$: the Hilbert-Chow morphism $\mu: X^{[n]} \rTo S^nX$, defined as $\mu(\xi)= \sum_{x \in X} ( {\rm length \:} \FS_{\xi, x}) x$, realizes $X^{[n]}$ as a semismall crepant resolution of the singularities of $S^nX$. 
Haiman \cite{Haiman2001} introduced the isospectral Hilbert scheme $B^n$, 
that is, the 
reduced fibered product $(X^{[n]} \times_{S^n X} X^n)_{\rm red}$:  
let $p$ and $q$ its projections onto $X^n$ and $X^{[n]}$, respectively. 
In the diagram \begin{diagram} B^n & \rTo^p & X^n \\
\dTo^q & & \dTo^\pi \\
X^{[n]} & \rTo^\mu & S^n X
\end{diagram}
$p$ and $\mu$ are birational, $q$ and $\pi$ are finite, $q$ is flat. The Bridgeland-King-Reid transform \cite{BridgelandKingReid2001, Haiman2001, ScalaTrento2006, Scala2009D}$$\bkrh: = \Phi^{\FS_{B^n}}_{X^{[n]} \rightarrow X^n}:= \B{R}p_* \circ q^* : \B{D}^b(X^{[n]}) \rTo \B{D}^b_{G}(X^n)$$is an equivalence of derived categories from the bounded derived category of coherent sheaves on $X^{[n]}$ to the  bounded derived category of $\perm_n$-equivariant  coherent sheaves over $X^n$.

\subsection{Tautological bundles and their images under the Bridgeland-King-Reid transform}\label{subsection: taut}
Consider now the universal subscheme $\Xi \subseteq X^{[n]} \times X$ of the Hilbert scheme $X^{[n]}$. 
If $L$ is a vector bundle on $X$, the \emph{tautological  bundle} $L^{[n]}$ over $X^{[n]}$ is defined as: $$L^{[n]} := {p_{X^{[n]}}}_* (\FS_{\Xi} \tens_{\FS_X} L) \;,$$where $p_{X^{[n]}}$ is the projection over the Hilbert scheme $X^{[n]}$. 
Since the subscheme $\Xi$ is flat and finite over $X^{[n]}$ of degree $n$, it turns out that $L^{[n]}$ is a vector bundle of rank  $n \rk L$ on $X^{[n]}$. 

\begin{notat}\label{notat: restrizioni}Let $I \subseteq \{1, \dots, n \}$, $I \neq \emptyset$. Denote with $p_I: X^n \rTo X^{I}$ the projection onto the factors in $I$, let $\imath_I : X \rTo X^I$ the diagonal immersion of $X$ into $X^I$. If $F$ is a coherent sheaf on $X$, we indicate with $F_I$ the sheaf
on $X^n$ defined as $F_I := p_I^* (i_I)_* L$. If $|I| \geq 2$, we indicate with $\Delta_I$ the inverse image by $p_I$ of the total (small) diagonal in $X^I$.  Note that in this case $F_I$ is supported on $\Delta_I$. If $n=2$ we will also denote the diagonal in $X^2$ and $S^2X$  just with $\Delta$. 
\end{notat}
For a vector bundle $L$ on $X$, define now the complex $(\comp{\mc{C}}_L, \partial^{\bullet}_{L})$ on $X^n$ in the following way:
$$ \mc{C}^p_L := \bigoplus_{|I|=p+1} L_I \;, \qquad \qquad \partial^p_L(x)_J = \sum_{i \in J} \epsilon_{i,J} x_{J\setminus \{i\}} \trest_{\Delta_J} \;,$$where $\epsilon_{i, J} = (-1)^{\sharp \{j \in J \; | \; j < i \}}$. 
\begin{remark}\label{farfalla}
  The complex $\comp{\mc{C}}_L$ is naturally $G$-equivariant, in the following way.  If $\sigma \in G$, denote with $\sigma_* $
the automorphism $\sigma_* \in \Aut(X^n)$ defined as: $\sigma_*(x_1, \dots, x_n) := (x_{\sigma^{-1}(1)}, \dots, x_{\sigma^{-1}(n)})$. We have $\sigma_*(L_I) = L_{\sigma(I)}$ for all 
$ \emptyset \neq I \subseteq \{1, \dots,n \}$. The $G$-linearization on $\mc{C}^p_L := 
\oplus_{|I|=p+1} L_I$ can then be defined setting $(\sigma.x)_J := \epsilon_{\sigma, J} \sigma_* x_{\sigma^{-1}(J)}$, where $(x_{I})_I$ is a local section of $\mc{C}^p_L$ and where
$\epsilon_{\sigma,J}$ is the signature of the only permutation 
$\tau$ such that $\sigma^{-1}\tau$ is strictly increasing. \end{remark}The main result about the Bridgeland-King-Reid transform of a tautological bundle is the following. 
\begin{theorem}[\cite{Scala2009D}] \label{B3}
Let $X$ be a smooth quasi-projective algebraic surface and $L$ be a vector bundle
on $X$. Let $L^{[n]}$ be the tautological bundle on the Hilbert
scheme $X^{[n]}$ associated to $L$.  Then the image of the tautological bundle 
$L^{[n]}$ for the Bridgeland-King-Reid equivalence $\bkrh$ is
  isomorphic in $\B{D}^b_{G}(X^n)$ to the complex
  $(\comp{\mc{C}}_L, \comp{\del}_L )$:
$$ \bkrh ( L^{[n]}) \simeq
\comp{\mc{C}}_L \; .$$\end{theorem}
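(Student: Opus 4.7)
The plan is to compute $\bkrh(L^{[n]}) = Rp_*(q^*(L^{[n]}))$ directly, starting from the definition $L^{[n]} = p_{X^{[n]}*}(\FS_\Xi \otimes p_X^* L)$, where $p_{X^{[n]}}$ and $p_X$ are the projections from $X^{[n]}\times X$ and $\Xi \subset X^{[n]}\times X$ is the universal subscheme. First I would apply flat base change to the flat (even finite flat) morphism $q: B^n \to X^{[n]}$, obtaining
$$q^*(L^{[n]}) \simeq p_{B^n *}\bigl(\FS_{\Xi_n} \otimes p_X^* L\bigr),$$
where $\Xi_n \subset B^n \times X$ is the scheme-theoretic pullback of $\Xi$ via $q \times \id_X$ and $p_{B^n}, p_X$ now denote the projections from $B^n \times X$. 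Since $\Xi_n \to B^n$ is finite, no higher direct images appear, and consequently
$$\bkrh(L^{[n]}) \simeq R\widetilde{p}_*\bigl(\FS_{\Xi_n} \otimes p_X^* L\bigr), \qquad \widetilde{p}:=p\circ p_{B^n}.$$

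The next and crucial step is to resolve $\FS_{\Xi_n}$. By construction of $B^n$, the coordinates of $p:B^n \to X^n$ give sections $\sigma_i = (\id, p_i \circ p): B^n \to B^n \times X$, whose graphs $\Gamma_i$ are contained in $\Xi_n$. Haiman's polygraph theorem (\cite{Haiman2001}) provides two facts I would take as input: scheme-theoretically $\Xi_n = \bigcup_{i=1}^n \Gamma_i$, and the associated inclusion-exclusion (\v Cech-like) complex
$$0 \rTo \FS_{\Xi_n} \rTo \bigoplus_{|I|=1}\FS_{\Gamma_I} \rTo \bigoplus_{|I|=2}\FS_{\Gamma_I} \rTo \cdots \rTo \FS_{\Gamma_{\{1,\ldots,n\}}} \rTo 0$$
is exact, where $\Gamma_I := \bigcap_{i\in I}\Gamma_i$. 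Tensoring with the flat sheaf $p_X^* L$ yields a resolution of $\FS_{\Xi_n}\otimes p_X^* L$ by sheaves $\FS_{\Gamma_I}\otimes p_X^*L$.

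I would then apply $R\widetilde{p}_*$ termwise. For each nonempty $I$, the subscheme $\Gamma_I$ projects isomorphically via $p_{B^n}$ onto the closed subscheme $B^n_I \subset B^n$ where $p_i\circ p = p_j\circ p$ for $i,j \in I$, and under this identification $p_X^*L$ pulls back to $(p_k\circ p)^*L$ for any $k\in I$. A further consequence of Haiman's analysis of the isospectral cover is that $Rp_*\FS_{B^n_I} \simeq \FS_{\Delta_I}$ (the structure sheaf of the partial diagonal, viewed as a sheaf on $X^n$); combined with the projection formula this gives the identification
$$R\widetilde{p}_*\bigl(\FS_{\Gamma_I} \otimes p_X^* L\bigr) \simeq L_I$$
in the notation of \ref{notat: restrizioni}. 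A direct inspection of signs shows that the differentials of the resolution match the \v Cech-like differentials $\partial^\bullet_L$ of $\comp{\mc{C}}_L$; the symmetric group $G$ permutes the $\Gamma_i$'s in exactly the way that induces the linearization described in Remark \ref{farfalla}, so the quasi-isomorphism is $G$-equivariant.

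The hard part is really the input from Haiman: both the scheme-theoretic decomposition $\Xi_n = \bigcup_i \Gamma_i$ together with the exactness of the Mayer-Vietoris/\v Cech-like complex, and the derived-pushforward computation $Rp_*\FS_{B^n_I} \simeq \FS_{\Delta_I}$. Both are consequences of the polygraph theorem and of the Cohen-Macaulay/Gorenstein properties of $B^n$ established in \cite{Haiman2001}, and they constitute the technical core of the proof; once they are granted, the rest of the argument is a bookkeeping exercise in base change, the projection formula, and sign conventions.
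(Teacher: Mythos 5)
The paper itself does not prove Theorem \ref{B3}: it is quoted from \cite{Scala2009D}, so the benchmark is the proof given there. Your opening moves are sound and do match that proof: flat base change along $q$, the rewriting $\bkrh(L^{[n]})\simeq R\widetilde{p}_*(\FS_{\Xi_n}\otimes p_X^*L)$, and the idea of an inclusion--exclusion resolution of a union of graph-like subschemes. Even the scheme-theoretic equality $\Xi_n=\bigcup_i\Gamma_i$ is available, though not quite for the reason you give: it follows from Haiman's Cohen--Macaulayness of $B^n$ ($\Xi_n$ is finite flat over $B^n$ with zero-dimensional fibres, hence Cohen--Macaulay and generically reduced, hence reduced, hence equal to the reduced union of the $\Gamma_i$), not from the polygraph theorem. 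The genuine gap is in the last step. The identity $Rp_*\FS_{B^n_I}\simeq\FS_{\Delta_I}$ is not "a further consequence of Haiman's analysis" and is far from formal: $\Gamma_I\cong B^n_I=p^{-1}(\Delta_I)$ has codimension only $|I|-1$ in $B^n$ while $\Delta_I$ has codimension $2(|I|-1)$ in $X^n$ (the fibres of $B^n_I\to\Delta_I$ are punctual Hilbert schemes of dimension $|I|-1$), so $\Delta_I$ is not Tor-independent of $p$ and one cannot deduce the claim from $Rp_*\FS_{B^n}\simeq\FS_{X^n}$ via the projection formula; for $|I|\geq 3$ the scheme $p^{-1}(\Delta_I)$ is not even a local complete intersection in $B^n$ and its reducedness is unclear. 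Statements of exactly this flavour — compare (\ref{eq: projectionformulaideali}), which the present paper outsources to the separate article \cite{Scaladiagarxiv2015} — are theorems in their own right. The same objection applies to the asserted exactness of the \v{C}ech complex of $\bigcup_i\Gamma_i$ on $B^n\times X$: for unions of more than two subschemes such Mayer--Vietoris complexes are not automatically exact, and this is not what the polygraph theorem says.

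The proof in \cite{Scala2009D} reverses the order of operations precisely to avoid these two problems. It first pushes $\FS_{\Xi_n}$ forward along $p\times\id_X$ to $X^n\times X$, where the image is Haiman's polygraph $D(n,1)=\bigcup_i\Delta_i'$, the union of the graphs of the $n$ projections $X^n\to X$; only then does it resolve $\FS_{D(n,1)}$ by the \v{C}ech-like complex of the intersections $\Delta_I'$. Downstairs each $\Delta_I'$ is smooth and projects isomorphically onto $\Delta_I$, so the final pushforward to $X^n$ is trivial, and all the difficulty is concentrated in two statements that are actually proved there: $R(p\times\id_X)_*\FS_{\Xi_n}\simeq\FS_{D(n,1)}$, and the exactness of the downstairs complex (a concrete local statement about the ideals of the $\Delta_i'$ in the smooth variety $X^n\times X$). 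To repair your argument you would essentially have to establish these two facts first and then derive your upstairs inputs from them, which defeats the purpose; as written, the two black boxes you attribute to Haiman carry the entire weight of the theorem and are not in the literature you cite.
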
This result holds as well for tautological sheaves \cite{Scala2009GD}. As for the Bridgeland-King-Reid 
transform of a tensor product of tautological bundles $\bkrh(L_1^{[n]} \tens \cdots \tens L_k^{[n]})$, for vector bundles $L_1, \dots, L_k$ on $X$, of course one can't hope that it would be quasi-isomorphic to the (derived) tensor product $\comp{\mc{C}}_{L_1} \tens^L \cdots \tens^L \comp{\mc{C}}_{L_k}$ of the images. However, a precise comparison can be established in terms of a natural morphism $$ \alpha: \comp{\mc{C}}_{L_1} \tens^L \cdots \tens^L \comp{\mc{C}}_{L_k} \rTo \bkrh(L_1^{[n]} \tens \cdots \tens L_k^{[n]})$$in $\B{D}^b_{G}(X^n)$. It turns out that the mapping cone of $\alpha$ is acyclic in positive degree: this is equivalent to saying that the complex $\bkrh(L_1^{[n]} \tens \cdots \tens L_k^{[n]})$ is cohomologically concentrated in degree $0$, or, in other words, that the higher direct images $R^i p_* q^* (L_1^{[n]} \tens \cdots \tens L_k^{[n]})$ vanish for $i >0$. Moreover, in degree zero, the natural morphism $\alpha$ provides an epimorphism $ p_* q^* L_1^{[n]} \tens \cdots \tens p_* q^* L_k^{[n]} \rOnto p_* q^* (L_1^{[n]} \tens \cdots \tens L_k^{[n]})$ whose kernel is the torsion subsheaf. As a consequence of these facts one can extract the information of the sheaf $p_* q^* (L_1^{[n]} \tens \cdots \tens L_k^{[n]}) \simeq \mc{H}^0(\bkrh(L_1^{[n]} \tens \cdots \tens L_k^{[n]}))$  as the $E^{0, 0}_{\infty}$ term of the $G$-equivariant spectral sequence 
$$E^{p,q}_1 = \bigoplus_{i_1 + \dots + i_k = p} \Tor^{-q}(\mc{C}^{i_1}_{L_1}, \dots, \mc{C}^{i_k}_{L_k})$$abutting to 
$\mc{H}^{p+q}(\comp{\mc{C}}_{L_1} \tens^L \cdots \tens^L \comp{\mc{C}}_{L_k})$. If $L_i = L$ for all $i =1, \dots, k$, 
the $k$-fold tensor products $L \tens \dots \tens L$ and $\comp{\mc{C}}_{L} \tens^L \cdots \tens^L \comp{\mc{C}}_{L}$ both acquire an action of the symmetric group $\perm_k$ -- which operates permutating the factors -- in such a way that 
the morphism $\alpha$ becomes $\perm_k$-equivariant. From now on we will denote  with $H$ the symmetric group $\perm_k$. The corresponding spectral sequence $E^{p,q}_1$ is in this case naturally $H$-equivariant, and hence $G \times H$-equivariant. 
Details on how to equip the derived tensor product $\comp{\mc{C}}_{L} \tens^L \cdots \tens^L \comp{\mc{C}}_{L}$ and his associated spectral sequence $E^{p,q}_1$ with a $H$-action appeared in \cite{Scala2009D}, section 4.1, and will be briefly recalled later. As a consequence of the $G \times H$-equivariance discussed here, we have \cite[Corollary 4.1.3]{Scala2009D}: 
\begin{crl}\label{crl: spect} The Bridgeland-King-Reid transform $\bkrh(S^k L^{[n]})$ of the symmetric power $S^k L^{[n]}$ of 
a tautological bundle is quasi-isomorphic to the term $(E^{0,0}_\infty)^H$ of the spectral sequence $(E^{p,q}_1)^H$. Moreover, the direct image $\mu_* S^k L^{[n]}$ of the symmetric power $S^k L^{[n]}$ for the Hilbert-Chow morphism can be identified with the term $(E^{0,0}_\infty)^{G \times H}$ of the spectral sequence $(E^{p,q}_1 )^{G \times H}$. 
\end{crl}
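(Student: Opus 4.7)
The plan is to deduce both statements from the results already recalled for tensor powers, by passing to invariants under the symmetric group $H = \perm_k$. The enabling principle is that, in characteristic zero, the functor of $H$-invariants on coherent sheaves is exact, and it commutes with any additive equivalence of derived categories: in particular it commutes with $\bkrh$, with the formation of cohomology sheaves, and with the formation of $E^{p,q}_r$ of a spectral sequence.

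First I would address the claim about $\bkrh(S^k L^{[n]})$. Since $S^k L^{[n]} \simeq (L^{[n]} \tens \cdots \tens L^{[n]})^H$, exactness of invariants and the additivity of $\bkrh$ yield
\[
\bkrh(S^k L^{[n]}) \simeq \bkrh(L^{[n]} \tens \cdots \tens L^{[n]})^H .
\]
By the discussion preceding the corollary, the right-hand side is cohomologically concentrated in degree zero and its $\mc{H}^0$ is precisely $E^{0,0}_\infty = p_* q^*(L^{[n]} \tens \cdots \tens L^{[n]})$. Since $H$-invariants commutes with $\mc{H}^0$ and with the successive subquotients that build $E^{0,0}_\infty$ (the kernels and cokernels of the $H$-equivariant differentials $d_r$), one obtains $\bkrh(S^k L^{[n]}) \simeq (E^{0,0}_\infty)^H$, which is the first assertion.

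For the second assertion, the key identity is the derived equality
\[
R\mu_* F \simeq (R\pi_* \bkrh(F))^G \qquad \text{for } F \in \B{D}^b(X^{[n]}),
\]
obtained from the commutative square $\pi \circ p = \mu \circ q$: one writes $R\pi_* \bkrh(F) = R(\mu \circ q)_* q^* F = R\mu_*(Rq_* q^* F)$, applies the projection formula to get $Rq_* q^* F \simeq F \tens q_* \FS_{B^n}$ (using that $q$ is finite and $F$ is locally free), and invokes the fundamental property of the Procesi bundle $(q_* \FS_{B^n})^G \simeq \FS_{X^{[n]}}$, together with the exactness of $G$-invariants. Applying this identity to $F = S^k L^{[n]}$ and using the first assertion, together with the exactness of $\pi_*$ (which is affine, indeed finite) and the commutativity of $\pi_*$ with $G$- and $H$-invariants, gives
\[
\mu_* S^k L^{[n]} \simeq (\pi_* (E^{0,0}_\infty)^H)^G \simeq (\pi_* E^{0,0}_\infty)^{G \times H},
\]
which is the identification sought, once one agrees (as is customary) to denote by $(E^{0,0}_\infty)^{G \times H}$ the sheaf on $S^n X$ obtained by applying $\pi_*$ and taking $G \times H$-invariants of the $G \times H$-equivariant sheaf $E^{0,0}_\infty$ on $X^n$.

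There is no real obstacle here: the only subtlety is book-keeping, namely verifying that all the functors in play ($\bkrh$, $\mc{H}^0$, taking $E^{p,q}_r$, $\pi_*$, $G$-invariants, $H$-invariants) commute with each other in the required way. Exactness of both invariant functors (char $0$, finite group) and exactness of $\pi_*$ together with the concentration in degree zero of $\bkrh(L^{[n]} \tens \cdots \tens L^{[n]})$ makes all the exchanges legitimate; no further hard input is needed beyond the results already stated.
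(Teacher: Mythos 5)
Your argument is correct, and it is essentially the derivation the paper intends: the corollary is not proved in the text but quoted from \cite[Corollary 4.1.3]{Scala2009D}, and that reference obtains it in the same way, by combining the exactness of $H$- and $G$-invariants in characteristic zero with the concentration in degree $0$ of $\bkrh(L^{[n]}\tens\cdots\tens L^{[n]})$ and the identity $\B{R}\mu_* \simeq (\pi_*\circ\bkrh)^G$ coming from $\pi\circ p = \mu\circ q$ and $(q_*\FS_{B^n})^G\simeq \FS_{X^{[n]}}$. The only point worth making explicit is that the $H$-action on $E^{0,0}_\infty$ used to take invariants is the one transported through the $\perm_k$-equivariant comparison morphism $\alpha$, which the paper has already set up; with that, all your commutations of functors are legitimate.
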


\begin{conv}\label{conv: sym}Let $A$ a $\mbb{C}$-algebra and $M$ an $A$-module. For $n \in \mbb{N}\setminus \{0 \}$, consider the symmetric  power 
$S^n M$  of the module $M$. We consider $S^n M$ as the space of $\perm_n$-invariants of $M^{\tens n}$ for the action of $\perm_n$ permutating the factors in the tensor product. Throughout this article, we will use the following convention for the symmetric product $u_1. \cdots . u_n$ 
of elements $u_i \in M$: 
$$ u_1. \cdots . u_n := \sum_{\sigma \in \perm_n} u_{\sigma(1)} \tens \cdots \tens u_{\sigma(n)} \;,$$where the right hand side is seen in $M^{\tens n}$. In this way the explicit formulas for 
invariant operators $\D^l_L = (D^l_L)^{\perm_n}$, in subsection \ref{subsection: invariantoperators} and following, will be simpler 
and will not depend on $n$. We use an analogous convention for the exterior product: 
$ u_1 \wedge \cdots \wedge u_n := \sum_{\sigma \in \perm_n} (-1)^\sigma u_{\sigma(1)} \tens \cdots \tens u_{\sigma(n)} $, where $(-1)^\sigma$ is the signature of the of permutation $\sigma$ and where we see $\Lambda^n M$ as the space of anti-invariants for the action of $\perm_n$ over $M^{\tens n}$. 
\end{conv}

\subsection{Reduction to a big open set}\label{subsection: reductionopen}\label{subsec: bigopenset}
Consider the closed subscheme $$ W:=  \bigcup_{\substack{|I|=|I^{\prime}|=2 \\ I \neq I^{\prime}}} \Delta_I \cap \Delta_{I^{\prime}} $$of $X^n$: its irreducible components are smooth of codimension $4$.  Define now the open sets $X^n_{**}$, $S^nX_{**}$, $X^{[n]}_{**}$, $B^{n}_{**}$ of $X^n$, $S^nX$, $X^{[n]}$, $B^n$, respectively, as: 
\begin{equation}\label{eq: bigopensets} X^n_{**} := X^n \setminus W \:, \quad S^n X_{**} = S^nX \setminus \pi(W) \;, \quad X^{[n]}_{**}= \mu^{-1}(S^n_{**}X) \:, \quad B^n_{**} = p^{-1}(X^n_{**})
\end{equation}We will indicate just with $j$ the open immersion of each of these open sets into 
$X^n$, $S^nX$, $X^{[n]}$, $B^n$, respectively. 
The open sets $X^n_{**}$ and $S^nX_{**}$ are complementary of closed subschemes of codimension 4, while $X^{[n]}_{**}$ and $B^n_{**}$ are complementary of closed subschemes of codimension 2. Like any vector bundle on $X^{[n]}$, the symmetric power $S^k L^{[n]}$ satisfies the property \cite[Lemma 3.1.4]{Scala2009D}: 
$ \mu_* S^k L^{[n]} = j_* j^*  \mu_* S^k L^{[n]}$ and, with a similar proof, because of the vanishing $R^i p_* q^* S^k L^{[n]} = 0$ if $i >0$, it satisfies: 
\begin{equation}\label{eq: j} \bkrh(S^k L^{[n]}) \simeq^{qis}  j_* j^* p_* q^* S^k L^{[n]} \end{equation}as well. Consequently we can try to extract all the 
information about $\bkrh(S^k L^{[n]})$ and $\mu_* S^k L^{[n]}$ working over the big open sets (\ref{eq: bigopensets}). 
More precisely, we have $\bkrh(S^k L^{[n]}) \simeq^{qis} (j_* j^* E^{0.0}_\infty)^H$ and $\mu_* S^kL^{[n]} \simeq (j_* j^* E^{0,0}_\infty)^{G \times H} $. In other words, the image $p_* q^* S^k L^{[n]}$ can be seen as a recursive kernel of operators: 
$$D^{l-1}:= (j_* d^l)^H : \ker (j_* d^{l-1})^H \rTo (j_* j^*E^{l, 1-l}_l)^H \;,$$where $d^l : (j^* E^{0,0}_l)^H \rTo (j^* E^{l, 1-l}_l)^H$ are differentials of the spectral sequence $(j^* E^{p,q}_1)^H$, at level $l$. In the next sections we are going to \emph{prove that the spectral sequence $(j^*E^{p,q}_1)^H$ degenerates at level $k$} and we will \emph{explicitely compute the  operators $D^l$}. As a consequence we will get: $$\bkrh(S^k L^{[n]}) \simeq^{qis}  j_* j^* p_* q^* S^k L^{[n]}  \simeq \ker D^{k-2}$$and a similar formula for $\mu_* S^k L^{[n]}$. To end this section, we rewrite the terms $j_* j^* E^{p,q}_1$ in a simpler way. 
\begin{notat}\label{notaziona}
Let $\mc{P}:= \{ I \; | \; \emptyset \neq I \subseteq \{ 1, \dots, n \} \}$. 
For each $a \in \mc{P}^{ \{1, \dots,k \} }$ we will indicate with $A(a)$ the multi-index 
$A(a) := \cup_{i \, | \, |a(i)| \geq 2 } \, a(i)$, $k(a) := 
{\rm max} \{0, 2(|A(a)|-1) \}$, $l(a) = (\sum_{i=1}^k |a(i)|) -k$ and
$J(a) = \cup_{i \, | \, |a(i)|=1} \, a(i)$. We finally set 
$S_0(a):= a^{-1}(A(a))$ and 
$\lambda(a) :  J(a) \rTo \mbb{N}$ the function defined by :  $\lambda_j(a) = 
|a^{-1}(\{j \})|$ for all $j \in J(a)$. If there is no confusion, we will just write $A$, $J$, $S_0$, $\lambda$. 
\end{notat}With these notations $E^{p,q}_1$ can be rewritten as $$E^{p,q}_1 =  \bigoplus_{\substack{ a \in \mc{P}^{\{1, \dots, k\}} \\ l(a)=p}} \Tor_{-q}(L_{a(1)}, \dots, L_{a(k)}) \;.$$
Define now $\mc{I}^p$ as $\mc{I}^p  = \{ a  \in \mc{P}^{\{1, \dots,k\}}, \; l(a)=p \, , \; k(a)  \leq 2 \}$, and $(E^{p,q}_1)_0$ as the $G \times H$-equivariant subsheaf of $E^{p,q}_1$ given by (see also \cite[Remark 3.1.8]{Scala2009D}): 
\begin{equation}\label{eq: directsum} (E^{p,q}_1)_0 := \bigoplus_{a \in \mc{I}^p} \Tor_{-q}(L_{a(1)}, \dots, L_{a(k)}) \;. \end{equation}It turns out  that $(E^{p,q}_1)_0$ vanishes if $q<2(1-p)$ and if $p>k$. Moreover, since it is a vector bundle or a direct sum of restrictions of vector bundes over pairwise diagonals, and hence pure Cohen-Macauley coherent sheaf of codimension $\leq 2$, we have \cite[Remark 3.1.8, Lemma 3.1.10]{Scala2009D}:$$j_*j^* E^{p,q}_1 \simeq (E^{p,q}_1)_0 \;.$$
\begin{remark}\label{rmk: deg}As a consequence of what just said, in order to prove that the spectral sequence $(j^*E^{p,q}_1)^H$ degenerates at level $k$, it is sufficient to prove that $(E^{p,q}_1)_0^H \neq 0$ 
if and only if $(p,q)=(0,0)$ or if $q=1-p$ and $2-k \leq q \leq 0$. \end{remark}

\section{Degeneration of the spectral sequence}
In this section we will prove the degeneration of the spectral sequences $(j^*E^{p,q}_1)^H$ and $(j^*E^{p,q}_1)^{G \times H}$ by proving the vanishing of terms $(E^{p,q}_1)_0^H$ and $(E^{p,q}_1)_0^{G \times H}$ as explained in remark \ref{rmk: deg}. In the following two subsections we study the set $\mc{I}^p$, parametrizing the terms in the direct sum (\ref{eq: directsum}), as $G \times H$-set.
\subsection{Multi-indexes}
\begin{notat}\label{notat: comp}Let $h \in \mbb{N}$, $h \leq n$. We indicate with $c_n(h) := \{ \lambda: \{1, \dots, n\} {\rTo} \{0, \dots, h\} \; | \; 
 \sum_i \lambda_i = h \}$ the set of \emph{compositions} of $h$ of \emph{range} $n$. If $\lambda \in c_n(h)$, we denote with $\supp \lambda$ its support as a function, that is $\supp \lambda : = \{ i \in \{1, \dots, n\} \; | \;\lambda_i \neq 0 \}$. If $\lambda$ is a composition of some $h$ of range $n$ we also say that $h = \sum_{i=1}^n \lambda_i$ is its weight, and we denote it with $|\lambda|$.  
 
If $\lambda \in c_n(l)$, for brevity's sake, we will sometimes write $\lambda$ multiplicatively as $\lambda = \prod_{i=1}^n i^{\lambda_i}$; we will refer to this notation as the \emph{multiplicative notation}\footnote{This has nothing to do with the exponential notation of partitions. The exponential notation will be recalled in notation \ref{notat: exp} and used in subsection \ref{subsubsection: actglob}.} for compositions.  If $\lambda, \mu$ are two compositions, the function $\lambda + \mu$, as a composition, is written as the product $\lambda \mu$ in the multiplicative notation. 
\end{notat}
\begin{notat}Let $m \in \mbb{N}$. We denote with $p(m)$ the set of partitions of $m$. Set $p(0)= \{0 \}$.
We will write the length of a partition $\lambda$ as $l(\lambda)$. 
 Denote with $p_t(m)$ the set of partitions $\lambda$ of $m$ of lenght $l(\lambda) \leq t$. 
Any partition $p_n(m)$ can be seen naturally in $c_n(m)$ as a composition of $m$ of range $n$, by setting $\lambda_i = 0$ for $l(\lambda) < i \leq n$. The weight of a partition is the weight of the corresponding composition.\end{notat}
\begin{remark}The $G \times H$-action on the set of indexes $\mc{P}^{\{1, \dots, k\}}$ is given by 
$(\sigma, \tau). a := \sigma a \tau^{-1}$ and yields naturally a $G \times H$-action on the sets $\mc{I}^l$. 
\end{remark}
\begin{remark}\label{rmk: comp}
The group $G $ acts naturally on compositions $c_n(h)$ setting $\sigma.\lambda := \lambda \circ \sigma^{-1}$. Therefore, in the orbit of  a composition $ \mu$ in $c_n(h)$ there is a unique partition $\nu(\mu) \in p_n(h)$ of length  $|\supp \mu|$. We denote this partition with $\nu(\mu)$: it is an invariant of the $G$-orbit of $\mu$. 
\end{remark}
Denote now,  for $r \in \mbb{N}, r \leq n$, with $\mf{P}_n(r) = \coprod_{0 \leq m \leq r} p_n(m)$, that is, the set of partitions of natural numbers lower or equal than $r$ and of length lower or equal to $n$. Moreover, denote with $\mc{Q} = \{ I \subseteq \{1, \dots, n\} \; | \; |I | \leq 2 \}$. There is a natural $G$-action on the set $\mc{Q}$, induced by the $G$-action on $\{1, \dots, n\}$. 

\begin{notat}If $\lambda \in c_n(h)$ and $A \subseteq \{1, \dots, n\}$, we indicate with $\lambda_A$ the restriction $\lambda |_A : A {\rTo} \{0, \dots, h\}$. Sometimes (it will be clear from the context) $\lambda_A$ will also denote the extension of $\lambda |_A$ to the whole $\{1, \dots, n\}$ with zero. 
\end{notat}

\begin{remark}\label{rmk: multiindexinvariants}Define the 
map $\psi_l : \mc{I}^l \rTo  c_n(k-l) \times \mc{Q} $ as $\psi_l(a) = (\lambda(a), A(a))$ where $\lambda(a)$ is thought as a function: $\{1, \dots, n \} {\rTo} \{0, \dots , k-l\}$ by extending it with zero outside $J(a)$. Moreover define the map
$\eta_l: c_n(k-l) \times \mc{Q}  \rTo \mf{P}_n(k-l) \times \mf{P}_n(k-l)$ as $\eta_l(\mu, B) = (\nu(\mu |_B), \nu(\mu|_{\bar{B}}))$, where $\nu(\mu |_B)$ and $ \nu(\mu|_{\bar{B}})$ are the partitions in the $G$-orbits of  the compositions $\mu |_B$ and $\mu|_{\bar{B}}$, respectively. Let, finally, $\phi_l$ be the composition $\phi_l := \eta_l \circ \psi_l$. We have the commutative diagram: 
\begin{diagram} \mc{I}^l & \rTo^{\psi_l} & c_n(k-l) \times \mc{Q} \\ 
 &\rdTo^{\phi_l} & \dTo^{\eta_l} \\
 & &   \mf{P}_n(k-l) \times \mf{P}_n(k-l)
\end{diagram}The following facts are easy to establish and hence the proof is left to the reader. 
\begin{itemize}\item $\psi_l$ is $H$-invariant and $G$-equivariant, $\eta_l$ is $G$-invariant and hence $\phi_l$ is $G\times H$-invariant, where  
$c_n(k-l) \times \mc{Q} $ is naturally seen as $G$-sets, since products of $G$-sets; 
\item The image $B(k,l):= {\rm im} \: \psi_l$ is characterized by elements $(\lambda, A)$ such that
$|A| = \min \{ 2, 2l \}$; the quotient map: $\hat{\psi}_l : \mc{I}^l/H \rTo B(k,l)$ is a bijection; 
\item The image $A(k,l):= {\rm im} \: \phi_l$ is characterized by elements $(\lambda, \mu)$ such that 
\begin{enumerate}
\item $|\lambda| + |\mu| = k-l$; 
\item $l(\lambda) \leq \min\{ 2, 2l \}$. 
\end{enumerate}
Moreover the induced quotient map: $\hat{\phi}_l: \mc{I}^l/G \times H \rTo A(k, l)$ is a bijection. 
\end{itemize}\end{remark}

\begin{notat}If $a \in \mc{I}^l$, it will be useful to set $t(a):=|A(a) \cap J(a)|$; we will just write $t$ when there is no risk of confusion. It is an invariant of the $G \times H$-orbit of $a$. Moreover, denote as $\nu_A(a)$ and $\nu_{\bar{A}}(a)$ --- or with $\nu_A$, $\nu_{\bar{A}}$
 when there is no risk of confusion --- the partitions $\nu(\lambda(a) \trest_{A(a)})$ and $\nu(\lambda(a) \trest_{\overline{A(a)}})$, respectively,  defined by $\phi_l(a)$. \end{notat}

\subsection{Stabilizers}
\begin{remark}\label{rmk: equiv}Let $A$ and $B$ be two non empty  finite sets, and 
let $f : A \rTo B$ be a map. We will indicate with $\sim_f$ the equivalence 
relation defined by $x \sim_f y \iff f(x)=f(y)$ and with $\{ S_i(f) \}_i$ the 
associated partition. It defines a partition $\nu(f)$ of $|A|$ of length $|f(A)|$. \label{notaancora}
\end{remark}
\begin{notat}If $A \subseteq \{1, \dots,n\}$ (resp. $A \subseteq \{1, \dots,k\}$) we will indicate with $G(A)$ (resp. $H(A)$) the subgroup of $G$ (resp. of $H$) consisting of permutations fixing the complementary of $A$ in $\{1, \dots,n\}$ (resp. in $\{1, \dots,k\}$). The groups $G(A)$ (resp. $H(A)$) will also be identified with the symmetric group of $A$. 
\end{notat}

\begin{notat}Let $a \in \mc{I}^l$, $l \leq k$, let $A(a)$ and $J(a)$ its associated multi-indexes and let $\lambda(a)$ its associated composition. 
Let now $R \subseteq J(a)$. 
As explained in the notation above, $\lambda_R(a)$ as the restriction of $\lambda(a)$ to $R$. 
Let $S_i(\lambda_R(a))$, $i=1, \dots,s$ be  
the associated partition of $R$, defined by the equivalence relation 
$\sim_{\lambda_R}$, defined in remark \ref{rmk: equiv}. 
Define now the subgroup $M^R(a)$ of $\Aut(J(a))$ as being $M^R(a) := 
\prod_{i=1}^s \Aut(S_i(\lambda_R(a)))$.
Define, moreover, the subgroup $H^R(a)$ of $H$ as: $H^R(a) := \prod_{r \in R} H(a^{-1}(\{ r \}))$. \end{notat}
\begin{notat}We naturally identify the group $\Aut(J(a))$ with the subgroup  $G(J(a))$ of $G$. 
Let now $\sigma \in M^{J(a)}(a)$: this means that, for all $j \in J(a)$, $|a^{-1}(\{ j \})| = |a^{-1}(\{ \sigma(j) \})|$. 
There is now a unique way of lifting $\sigma$ to a permutation $\tilde{\sigma} \in \Aut \{1, \dots, k\}$ by imposing that $\tilde{\sigma}$ sends $a^{-1}( \{ j \})$ to 
$a^{-1}(\{ \sigma(j) \})$ in such a way that
	$ \tilde{\sigma}\trest_{a^{-1}( \{ j \})} : a^{-1}( \{ j \}) \rTo 
a^{-1}(\{ \sigma(j) \})$ is strictly increasing. In this way we get a monomorphism of groups $M^{J(a)}(a) \rInto \Aut \{1, \dots,k\}=H$
	sending $\sigma$ to $\tilde{\sigma}$. If, moreover, $R \subseteq J(a)$, we can lift $ \sigma \in M^R(a)$ to a $\tilde{\sigma} \in
H$ via the composition $M^R(a) \rInto M^{J(a)}(a) \rInto H$. 
Consider now the monomorphism:
\begin{diagram}[height=0.5cm,LaTeXeqno]\label{diago}
M^R(a) & \rInto & G \times H \\
\sigma & \rMapsto & (\sigma, \tilde{\sigma})	
\end{diagram}
where $\tilde{\sigma}$ is the lifting of $\sigma$ to $H$. 
The image of (\ref{diago}) is denoted with $\Delta^R(a)$. 
\end{notat}
\begin{lemma}
	Consider the groups $H^R(a)$ and $\Delta^R(a)$. 
Then, if $\hat{\tau} =(\tau, \tilde{\tau}) \in \Delta^R(a)$ and 
$\sigma = \prod_{r \in R}\sigma_r \in H^R(a)$, we have in $G \times H$: 
	\begin{equation} \label{eq:azsemidir}\hat{\tau} \cdot \sigma = \prod_{r \in R} (\tilde{\tau} \sigma_{r} \tilde{ \tau} ^{-1}) \cdot \hat{\tau} \in H^R(a) \cdot \hat{\tau}
		\end{equation}As a consequence, $H^R(a)$ is normal in 
$\langle \Delta^R(a), H^R(a) \rangle$ and hence
$ \langle \Delta^R(a), H^R(a) \rangle 
\simeq \Delta^R(a) \ltimes  H^R(a)$. 
\end{lemma}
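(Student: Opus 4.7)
The plan is to verify the commutation identity (\ref{eq:azsemidir}) directly by an element-wise computation in $G\times H$, and then to deduce the semidirect product structure from routine group theory.

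First, an element of $H^R(a)$ sits inside $G\times H$ as $(1,\sigma)$ with $\sigma=\prod_{r\in R}\sigma_r$, while $\hat\tau=(\tau,\tilde\tau)$. The product in $G\times H$ gives
$$\hat\tau\cdot(1,\sigma) \;=\; (\tau,\tilde\tau\sigma) \;=\; (\tau,\; (\tilde\tau\sigma\tilde\tau^{-1})\,\tilde\tau),$$
so to establish (\ref{eq:azsemidir}) it suffices to identify $\tilde\tau\sigma\tilde\tau^{-1}$ with the product $\prod_{r\in R}\tilde\tau\sigma_r\tilde\tau^{-1}$, regarded as an element of $H^R(a)$.

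This is where the concrete definition of the lift $\tilde\tau$ enters. Since $\tau\in M^R(a)$ preserves $R$ and its partition into level sets of $\lambda_R(a)$, and since $\tilde\tau$ is defined by sending the fiber $a^{-1}(\{r\})$ bijectively (in a strictly increasing manner) onto $a^{-1}(\{\tau(r)\})$ for every $r\in R$, the conjugate $\tilde\tau\sigma_r\tilde\tau^{-1}$ is a permutation supported in $a^{-1}(\{\tau(r)\})$, hence belongs to the factor $H(a^{-1}(\{\tau(r)\}))$ of $H^R(a)$. The supports $a^{-1}(\{\tau(r)\})$ (as $r$ varies in $R$) are pairwise disjoint, so the conjugated elements commute and their product equals $\tilde\tau(\prod_r \sigma_r)\tilde\tau^{-1} = \tilde\tau\sigma\tilde\tau^{-1}$; moreover, because $\tau$ permutes $R$, this product lies in $\prod_{r\in R}H(a^{-1}(\{r\})) = H^R(a)$. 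This is exactly the content of (\ref{eq:azsemidir}).

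The consequences are then standard. The identity shows that conjugation by any element of $\Delta^R(a)$ stabilizes $H^R(a)$, so $H^R(a)$ is normal in $\langle\Delta^R(a),H^R(a)\rangle$, and every element of this subgroup can be written as $\rho\cdot\hat\tau$ with $\rho\in H^R(a)$ and $\hat\tau\in\Delta^R(a)$. Uniqueness of the decomposition follows from $\Delta^R(a)\cap H^R(a)=\{1\}$, which is immediate because elements of $H^R(a)\subset\{1\}\times H$ have trivial $G$-component while non-identity elements of $\Delta^R(a)$ do not. This yields the claimed isomorphism $\langle\Delta^R(a),H^R(a)\rangle\simeq \Delta^R(a)\ltimes H^R(a)$. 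The only mildly delicate point is the bookkeeping ensuring $\tilde\tau(a^{-1}(\{r\}))=a^{-1}(\{\tau(r)\})$; I do not anticipate any further obstacle.
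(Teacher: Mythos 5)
Your proof is correct and follows essentially the same route as the paper's: the same factorization $\hat\tau\sigma=(\tau,\tilde\tau\sigma\tilde\tau^{-1}\tilde\tau)=(1,\tilde\tau\sigma\tilde\tau^{-1})\hat\tau$, followed by the observation that each conjugate $\tilde\tau\sigma_r\tilde\tau^{-1}$ lands in the factor $H(a^{-1}(\{\tau(r)\}))$ of $H^R(a)$ by the definition of the lift $\tilde\tau$. Your additional remarks on the disjointness of supports and on the trivial intersection $\Delta^R(a)\cap H^R(a)=\{1\}$ are correct and only make explicit what the paper leaves routine.
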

\begin{proof}We have, just formally:
$\hat{\tau} \sigma =  (\tau, \tilde{\tau}) \sigma = (\tau, \tilde{\tau}
\sigma) = (\tau, \tilde{\tau} \sigma \tilde{\tau}^{-1} \tilde{\tau})
=  (1, \tilde{\tau} \sigma \tilde{\tau}^{-1}) (\tau, \tilde{\tau}) 
=  \tilde{\tau} \sigma \tilde{\tau}^{-1} \hat{\tau}$. 
Moreover:
\begin{align*}
\tilde{\tau} \sigma \tilde{\tau}^{-1} = 
\tilde{\tau} \sigma_1 \cdots \sigma_r \tilde{\tau}^{-1} 
=  \tilde{\tau} \sigma_1 \tilde{\tau}^{-1} \tilde{\tau} \cdots
\tilde{\tau}^{-1} \tilde{\tau} \sigma_r \tilde{\tau}^{-1} = \prod_{r
  \in R} (\tilde{\tau} \sigma_r \tilde{\tau}^{-1}) \;.
\end{align*}Now, for each $r \in R$, $\tilde{\tau} \sigma_r
\tilde{\tau}^{-1}$ is in $H(a^{-1}(\tau(r))) \subset H^R(a)$, by definition of $\tilde{\tau}$ and $H^R(a)$.
\end{proof}
\begin{remark}Let $a \in \mc{I}^l$ and let $R \subseteq J(a)$. We will indicate with $D^a(R)$ the semidirect
product $ 
D^a(R) := \Delta^{R}(a) \ltimes H^R(a)$.
	\label{rmk:equal}
Remark that if $\lambda_i(a) \neq \lambda_j(a)$ for all $i \neq j \in
R$, then $D^a(R) \simeq H^R(a)$, since any $S_i(\lambda_R(a))$
consists of just one element and therefore $M^R(a) \simeq \{1\}$.
\end{remark}
In what follows we  write for brevity's sake $S_0$, $A$ and $J$, instead of $S_0(a)$, $A(a)$ and~$J(a)$. 
\begin{pps}\label{stabiliziamo}Let $a \in \mc{I}^l$. The stabilizers of $a$ in $G \times H$ and in $H$ are
\begin{gather*} \Stab_{G \times H}(a)  \simeq  G(\overline{A \cup J}) 
\times G(A \setminus J) \times H(S_0) \times
	 D^a(A \cap J) \times D^a(J \setminus A)\;, \\
\Stab_{H}(a)  \simeq  H(S_0) \times
	 H^{J}(a) \simeq H(S_0) \times H^{A \cap J}(a) \times H^{J \setminus A}(a) \;,\end{gather*}respectively. If $l=0$  formulas make sense setting $S_0 = \emptyset$ and 
$\perm(\emptyset) =\{1 \}$. 
\end{pps}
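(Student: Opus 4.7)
The plan is to unwind the stabilizer equation $\sigma\circ a\circ\tau^{-1}=a$ and read off the constraints on $(\sigma,\tau)$ fiber by fiber on $\{1,\dots,k\}$.

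First I would treat the easier $H$-stabilizer: $\tau\in H$ stabilizes $a$ iff $a\circ\tau^{-1}=a$, iff $\tau$ permutes each fiber of $a$ within itself. Since $k(a)\leq 2$ forces $|A|\in\{0,2\}$ for elements of $\mc{I}^l$, the non-empty fibers of $a$ are $S_0=a^{-1}(A)$ (the indices $i$ with $a(i)=A$) together with the singleton fibers $a^{-1}(\{j\})$, of cardinality $\lambda_j(a)$, for each $j\in J$. This immediately yields $\Stab_H(a)=H(S_0)\times\prod_{j\in J}H(a^{-1}(\{j\}))=H(S_0)\times H^J(a)$, and the disjoint decomposition $J=(A\cap J)\sqcup(J\setminus A)$ gives the claimed refined form (the case $l=0$ being covered by the convention $S_0=\emptyset$).

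Next I would analyze $\Stab_{G\times H}(a)$. Comparing cardinalities in $\sigma(a(i))=a(\tau(i))$ shows $\tau(S_0)=S_0$, so $\tau$ splits as $\tau_0\tau_1$ with $\tau_0\in H(S_0)$, $\tau_1\in H(\overline{S_0})$, while for $i\in S_0$ one obtains $\sigma(A)=A$. For $i\notin S_0$ with $a(i)=\{j\}$, the relation reads $\sigma(\{j\})=a(\tau(i))$; letting $i$ vary over the fiber $a^{-1}(\{j\})$ shows that $\sigma$ restricts to a bijection of $J$ and that $\tau_1$ sends each $a^{-1}(\{j\})$ onto $a^{-1}(\{\sigma(j)\})$. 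In particular $\sigma|_J$ preserves the level sets of $\lambda(a)$, i.e.\ $\sigma|_J\in M^J(a)=M^{A\cap J}(a)\times M^{J\setminus A}(a)$. Since $\sigma$ preserves both $A$ and $J$, it preserves the four pairwise disjoint pieces $\overline{A\cup J}$, $A\setminus J$, $A\cap J$, $J\setminus A$ of $\{1,\dots,n\}$, and therefore factors as a commuting product $\sigma_0\sigma_1\sigma_2\sigma_3$ with $\sigma_0\in G(\overline{A\cup J})$, $\sigma_1\in G(A\setminus J)$ unconstrained and $\sigma_2\in M^{A\cap J}(a)$, $\sigma_3\in M^{J\setminus A}(a)$.

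Finally I would determine the $\tau_1$ compatible with a given $\sigma$: the compatibility condition forces $\tau_1$ to realize on the fibers of $a|_{\overline{S_0}}$ exactly the permutation prescribed by $\sigma|_J$, so $\tau_1$ lies in the coset $\tilde{\sigma}\cdot H^J(a)$, where $\tilde{\sigma}=\tilde{\sigma}_2\tilde{\sigma}_3$ is the strictly increasing lift of $\sigma|_J$ constructed just before the statement. Writing $H^J(a)=H^{A\cap J}(a)\times H^{J\setminus A}(a)$ and packaging the pairs $(\sigma_i,\tilde{\sigma}_i)(1,h_i)$ for $i=2,3$ yields precisely the groups $D^a(A\cap J)$ and $D^a(J\setminus A)$, whose semidirect-product structure is exactly the commutation rule (\ref{eq:azsemidir}) of the preceding lemma. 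Since the five building blocks act on pairwise disjoint subsets of $\{1,\dots,n\}$ (on the $G$-side) and of $\{1,\dots,k\}$ (on the $H$-side), they commute, so their product is direct and coincides with the asserted decomposition. The only delicate bookkeeping is this last pairwise-commutation check together with the identification of each $D^a(R)$ inside $G\times H$; the remainder is mechanical.
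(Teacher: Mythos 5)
Your proof is correct and follows essentially the same route as the paper's: analyze the equation $\sigma a = a\tau$ to see that $\sigma$ preserves $A$ and $J$ (hence the four pieces $\overline{A\cup J}$, $A\setminus J$, $A\cap J$, $J\setminus A$), that $\sigma|_J$ must preserve the level sets of $\lambda(a)$, and that $\tau$ is then determined up to $H(S_0)\times H^J(a)$ via the increasing lift $\tilde\sigma$, giving the factors $D^a(A\cap J)$ and $D^a(J\setminus A)$. Your version is in fact slightly more careful than the paper's (which asserts rather than derives that $\sigma$ fixes $A$ and $J$, and obtains $\Stab_H(a)$ as a corollary of the $G\times H$ computation rather than directly from the fibers of $a$), but the underlying argument is identical.
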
\begin{proof}
An element $(\sigma, \tau)$ is in $\Stab_{G \times H}(a)$ if and only if 
$\sigma a = a \tau$. Since the sets $A, J$ and, consequently, 
$A \cap J$, $A \setminus J$, $J \setminus A$, $\overline{A \cup J}$ are fixed by $\sigma$, it follows that $$\sigma = \sigma_1 \sigma_2 \sigma_3 \sigma_4 \in G(A \setminus J) \times G(A \cap J) \times 
G( J \setminus A) \times G(\overline{A \cup J})$$Since $\sigma_1$, $\sigma_4$ are already in $\Stab_{G \times H}(a)$, then we have: 
$ \sigma^{\prime} a = a \tau $, where $\sigma^{\prime} = \sigma_2 \sigma_3 \in G(A \cap J) \times 
G( J \setminus A) \subset \Aut(J)$. Suppose that $\sigma (i) =j$, with $i,j \in J$ such that $\{ i,j \} \subseteq A \cap J$ or $\{ i,j \} \subset J \setminus A$. 
Then one has to have $\tau(a ^{-1}(i)) = a^{-1}(j)$ which 
implies $\lambda_i(a) 
= \lambda_j(a)$. Since we can argue like this for all $i,j$ moved by $\tau$, we can always write $\tau$ as: 
$ \tau = \tilde{\sigma_2} \tau_2 \tilde{\sigma_3} \tau_3 \tau^{\prime}$ where $(\tau^{\prime}, 
\tau_2, \tau_3) \in H(S_0) \times H^{A \setminus J}(a) \times H^{J \setminus A}(a)$. 
This proves the first statement. The second follows immediately from $\Stab_H(a) = \Stab_{G \times H}(a) \cap \{1 \} \times H$. 
\end{proof}
\subsection{Invariants}\label{subsection: Invariants}
In this subsection we will compute invariants $(E^{p,q}_1)_0^H$ and $(E^{p,q}_1)_0^{G \times H}$. As a consequence of remark \ref{rmk: deg}, this will prove that the spectral sequence $(j^* E^{p,q}_1)^H$ -- and hence $(j^* E^{p,q}_1)^{G \times H}$ --
degenerates at level $k$. \emph{From now on $L$ will always denote a line bundle over the surface $X$}. To understand the comprehensive $G \times H$ action on the terms
$$ (E^{p,q}_1)_0 := \bigoplus_{a \in \mc{I}^p} \Tor_{-q}(L_{a(1)}, \dots, L_{a(k)}) \:, $$induced by the $G \times H$-action on the spectral sequence $E^{p,q}_1$, we have to be careful \emph{to take into account that the sheaves $L_{a(i)}$ are 
direct summands of the terms $\mc{C}^{|a(i)|-1}_L$ of the complex $\mc{C}^\bullet_L$}: now, not only \emph{the complex $\mc{C}^\bullet_L$ is $G$-equivariant}, but \emph{there is a $H$-action on the derived tensor product $\mc{C}^\bullet_L \tens^L \cdots \tens^L \mc{C}^\bullet_L$}. These facts induce additional 
signs which have been thouroughly studied in \cite[sections 4.1, 4.2, Appendix B]{Scala2009D} and which we will explicit in the sequel. 

\begin{remark}\label{rmk: identification}Let $A \subseteq \{ 1, \dots, n \}$, $|A|=2$. 
In what follows we  will frequently make the identification $X^n \simeq X^A \times X^{\bar{A}}$. More precisely, let $\sigma_A$ be the unique permutation of $\{1, \dots, n\}$ carrying $A$ onto $\{1, 2\}$ and such that its restrictions to $A$ and $\bar{A}$ preserve the order. The identification $X^n \simeq X^A \times X^{\bar{A}}$ is nothing but the automorphism
$(\sigma_A)_* : X^n \rTo X^n$ induced by the permutation $\sigma_A$. In the sequel the phrase ``in the identification $X^n \simeq X^A \times X^{\bar{A}}$" will mean ``modulo the automorphism $(\sigma_A)_*$ " . Moreover, when using the identification, functions defined over $A$ and $\bar{A}$ will, in the identification, be thought as functions defined over $\{1, 2\}$ and $\{3, \dots, n \}$, respectively; permutations in $\perm(A)$ and $\perm(\bar{A})$ wil be thought as permutations in $\perm_2$, $\perm(\{3, \dots, n \})$, respectively. 
\end{remark}

Denote first of all with $F^L_q(a)$ the multitor $\Tor_{-q}(L_{a(1)}, \dots, L_{a(k)})$. Let now $(\lambda, A) \rMapsto a(\lambda, A)$ and $(\nu, \mu) \rMapsto a(\mu, \nu)$ fixed sections of the quotient surjections $\mc{I}^l \rTo B(k, l)$ and $\mc{I}^l \rMapsto 
A(k, l)$, respectively. Danila's lemma \cite[Lemma 2.2]{Danila2001}  for the $H$- and $G \times H$-actions, respectively, on $(E^{p,q}_1)_0$ can be rephrased by saying: 
\begin{gather}\label{eq: homH}
(E^{p,q}_1)_0^H \simeq \bigoplus_{(\lambda, A) \in B(k,l)} F^L_q(a(\lambda, A))^{\Stab_H(a(\lambda, A))} \\
\label{eq: homGH} (E^{p,q}_1)_0^{G \times H} \simeq \bigoplus_{(\mu, \nu) \in A(k,l)}  \left ( \pi_* F^L_q(a(\mu, \nu)) \right) ^{\Stab_{G \times H}(a(\mu, \nu))} 
\end{gather}It will be then sufficient to understand the action of the stabilizers $\Stab_H(a)$ and $\Stab_{G \times H}(a)$ on the terms $F^L_q(a)$ and $\pi_* F^L_q(a)$, respectively. More explicitely 
$$F^{L}_q(a) := \Tor_{-q}(L_A, \cdots, L_A) \tens \Tens_{j \in A \cap J} L_j^{\lambda_j(a)} \tens \Tens_{j \in J \setminus A} L_j^{\lambda_j(a)}$$Denote now with $F_{1, q}^L(a)$ and $F_{2,q}^L(a)$  the sheaves on $X^{A}$, $X^{\bar{A}}$, 
respectively by 
\begin{gather*}
F_{1, q}^L (a) :=  \Tor_{-q}(L_A, \cdots, L_A) \tens \Tens_{j \in A \cap J} L_j^{\lambda_j (a)} \simeq \Lambda^{-q}(N_{A}^* \tens \mbb{C}^{p-1}) \tens L_A^{ p+|\nu_A | }\\
F_{2, q}^L(a) := \Tens_{j \in J \setminus A} L_j^{\lambda_j (a)}  \:,
\end{gather*}
where we used \cite[Lemma B.3]{Scala2009D} and 
where we indicate with $N_A^*$ the conormal bundle of the diagonal in $X^A$, where
the $L_A$'s are naturally seen as sheaves over $X^A$ and the $L_j$'s are seen as sheaves on $X^A$ or $X^{\bar{A}}$ depending if $j \in A$ or $j \in \bar{A}$. Of course we have, in the identification $X^n \simeq X^A \times X^{\bar{A}}$: $$F^L_q(a) := F_{1, q}^L(a) \boxtimes F^L_{2, q}(a)  \:.$$Write now 
$\Stab_{G \times H}(a)$ as 
$\Stab_{G \times H}(a) = G_1(a) \times G_2(a)$ where 
\begin{gather*}
G_1(a) = H(S_0) \times G(A \setminus J) \times D^a(A \cap J) \:, \quad
G_2(a) = D^a(J \setminus A) \times G(\overline{A \cup J})\:.
\end{gather*}Analoguosly, setting $H_i(a) = G_i(a) \cap (\{1\} \times H)$, that is: 
$$ H_1(a)= H(S_0) \times H^{A \cap J}(a) \:, \quad H_2(a) = H^{J \setminus A}(a) \:,$$
we have that $\Stab_H(a) \simeq H_1(a) \times H_2(a)$. 
Denote with $v$  the projection
$$v: S^{|A|}X \times S^{|\bar{A}|}X \times  \rTo S^n X\:.$$For any $\emptyset \neq B \subseteq \{1, \dots, n\}$ denote with 
$\pi_B$ the quotient projection: $\pi_B: X^B \rTo S^{|B|} X$. 
We have: 
\begin{gather} \label{eq: xH}F^L_q(a)^{\Stab_{H}(a)} \simeq F^L_{1,q} (a)^{H_1(a)} \boxtimes F_{2,q} ^L(a)^{H_2(a)}    \\ \mbox{} 
\label{eq: xGH}
( \pi_* F^L_q(a)) ^{\Stab_{G \times H}(a)} \simeq v_* \left(  {\pi_A}_* F^L_{1,q} (a)^{G_1(a)} \boxtimes {\pi_{\bar{A}}}_* F_{2,q} ^L(a)^{G_2(a)} \right)\:.
\end{gather}Hence, to understand precisely the $G \times H$-action on $(E^{p,q}_1)_0$ we just need to understand the action of each group $G_i(a)$ on the sheaf $F_{i,q}^L(a)$. 
\begin{remark}\label{rmk: globact}The $G_2(a)$-action on $F_{2,q}^L(a)$ reduces easily to the $\Delta^{J \setminus A}(a)$-action, which is clear. 
As for the $G_{1}(a)$-linearization on $F_{1,q}^L(a)$ we remark the following. Recall that $t=t(a)=|A(a) \cap J(a)|$. 
\begin{itemize}
\item The group $G(A \setminus J) \simeq \perm_{2-t}$ acts on $F_{1,q}^L(a)$, fiberwise on $\Delta \subseteq X^A$, with the representation $\Lambda^{-q}( \id_{\mbb{C}^2 } \tens \epsilon_{2-t}  \tens \mbb{C}^{p-1})$. We have still to take into account that the sheaves $L_A$ are direct summands of the term $\mc{C}^1_L$ of the complex $\mc{C}^\bullet_L$: hence there is an additional sign induced by the $G$-linearization on $\mc{C}^\bullet_L$. Since $G(A \setminus J)$ acts with a sign on each $L_A$, this accounts for an additional representation $\epsilon_{2-t}^{\tens p}$. Comprehensively, $G(A \setminus J)$ acts fiberwise on $F_{1, q}^{L}(a)$ with the representation
$$ \Lambda^{-q}( \id_{\mbb{C}^2 }  \tens \mbb{C}^{p-1}) \tens \epsilon_{2-t}^{\tens p-q} \:.$$
\item The factors in $H^{A \cap J}(a)$ act trivially on $F_{1, q}^L(a)$, hence the $D^a(A \cap J)$-action reduces to the action of $\Delta^{A \cap J}(a)$. The group $\Delta^{A \cap J}(a)$ is trivial  if $t(a)=0$ 
or if $|A \cap J|=t(a)=1$. The action of $\Delta^{A \cap J}(a)$ is trivial on the factor 
$\tens_{j \in J \setminus A} L_{j}^{\lambda_j(a)}$. On the other hand, nontrivial factors in $\Delta^{A \cap J}(a) \subseteq D^a(A \cap J)$ act nontrivially on $N_A^*$ and hence on $\Tor_{-q}(L_A, \dots, L_A)$. This can happen if and only if $t(a) = |A \cap J|=2$, that is, if $A \subseteq J$ \emph{and} $\lambda_i(a) = \lambda_j(a)$ if $A = \{i, j\}$, with $i \neq j$, that is, if $\nu_A(a)$ is a partition of the form $(h,h)$. In any  case the subgroup $\Delta^{A \cap J}(a)$ acts naturally on the fibers of $F_{1,q}^L(a)$ over the diagonal 
with the representations $\Lambda^{-q}(\id_{\mbb{C}^2 \tens \mbb{C}^{p-1}} \tens \epsilon_t) $. Taking into account the $G$-linearization of $\mc{C}^\bullet_L$, $\Delta^{A \cap J}(a)$ acts with a sign on each $L_A$ and hence
there is an additional representation $\epsilon_t^{\tens p}$ to consider.
The comprehensive fiberwise $D^a(A \cap J)$-action on $F_{1,q}^L(a)$ is then reduced to the $\Delta^{A \cap J}(a) \simeq \perm_t$-action, given by: 
$$ \Lambda^{-q}(\id_{\mbb{C}^2 \tens \mbb{C}^{p-1}} ) \tens \epsilon_t^{\tens p -q} \;.$$  
\item As a $H(S_0) \simeq \perm_p$-sheaf, $F_{1,q}^L(a)$ is naturally isomorphic to $\Lambda^{-q}(N_A^* \tens \rho_p) \tens L_A^{p+|\nu_A|}$, where $\rho_p$ is the standard representation of $\perm_p$. 
However, since the $L_A$'s in the multitors $\Tor_{-q}(L_A, \dots, L_A)$ are not just sheaves, but direct summands of the term $\mc{C}^1_L$  of the complex $\mc{C}^\bullet_L$, the sheaf $F_{1,q}^L(a)$ inherits an additional sign $\epsilon_p$, induced by the $H$-action on the spectral sequence $E^{p,q}_1$ (see \cite[sections 4.1, 4.2, Appendix B]{Scala2009D}). Hence, the comprehensive $H(S_0)$-action on $F_{1,q}^L(a)$ is given by the representation 
$$ \Lambda^{-q}(N_A^* \tens \rho_{p}) \tens L_A^{p+ |\nu_A|} \tens \epsilon_p $$
\end{itemize}
Putting all the pieces together, the $G_1(a)$-linearization on the sheaf $F_{1,q}^L(a)$ reduces to the $\perm_{2-t} \times \perm_{t} \times \perm_p$-linearization given by the representation: 
\begin{equation}\label{eq: GHrepr} \Lambda^{-q}(N_A^* \tens \rho_p) \tens \epsilon_t^{p-q} \tens \epsilon_{2-t}^{p-q} \tens \epsilon_p \;.
\end{equation}
\end{remark}
\begin{notat}\label{notat: Lmu}Let $\lambda \in c_{m}(l)$ a composition of $l$ of range $m$. Denote with $L^\lambda$ the 
line bundle on $X^m$ defined by: $$L^\lambda := \Tens_{i =1}^m L_i^{\lambda_i} \;.$$
Denote with $\mc{L}^{\lambda}_{m}$, or simply with $\mc{L}^{\lambda}$ when there is no risk of confusion, 
the sheaf over $S^m X$ defined by: 
$$ \mc{L}^{\lambda}_{m} := \pi_* (L^\lambda)^{\Stab_{\perm_m}(\lambda)} \: ,$$where $\pi: X^m \rTo S^mX$ is the quotient projection. 
\end{notat}
\begin{remark}It is clear that if $\lambda$ and $\mu$ are compositions of range $m$ in the same $\perm_m$-orbit, 
that is $\lambda = \mu \circ \sigma$, for $\sigma \in \perm_m$, then, $\mc{L}^\lambda$ and $\mathcal{L}^\mu$ are isomorphic. 
Indeed, 
denoting with $\sigma$ the automorphism of $X^m$ such that $\sigma(x_1, \dots, x_m) = (x_{\sigma^{-1}(1)}, \dots, x_{\sigma^{-1}(m)})$, we have that $\sigma_* L^\lambda \simeq L^\mu$ and that 
$$ \mc{L}^\mu = \pi_* (L^\mu)^{\Stab_{\perm_m}(\mu)} = \pi_*(\sigma_* L^{\lambda})^{\Stab_{\perm_m}(\mu)} \simeq 
\pi_*(L^{\lambda})^{\Stab_{\perm_m}(\lambda)} = \mc{L}^\lambda$$where the middle isomorphism is induced by $\pi_*(\sigma_*)$. Hence 
the isomorphism class of $\mc{L}^{\lambda}$ does not  depend on the composition $\lambda$ but only on its associated partition $\nu(\lambda)$. 
\end{remark}
\begin{notat}\label{notat: brevdelta}If $F$ is a coherent sheaf on $X$, for brevity's sake, we will indicate with $F_\Delta$ the sheaf on $X^2$ (resp. $S^2X$) defined by 
${i_{\Delta}}_* F$, where $i_\Delta : X \rInto X^2$ (resp. $i_\Delta: X \rInto S^2X$) is the diagonal immersion. 
\end{notat}
\begin{pps}Let $a \in \mc{I}^p$, $p>0$. The invariants $F_{q}^L(a)^{\Stab_H(a)}$ are zero if and only if $q \neq 1-p$ or $p > k$, otherwise: 
$$  F^L_{1-p}(a)^{\Stab_H(a)} = S^{p-1}N^*_{A} \tens L_{A}^{p+|\lambda_A(a)|} \tens L^{\lambda_{\bar{A}}(a)} \;.$$
The invariants $\pi_* F_{q}^L(a)^{\Stab_{G \times H}(a)}$, over the symmetric variety $S^{n}X$, are zero if $q \neq 1-p$ or $p> k$, otherwise they are given by: 
$$ \pi_* F_{1-p}^L(a)^{\Stab_{G \times H}(a)} = \left \{ \begin{array}{cl}
0 & \quad \mbox{if $\nu_A =(h,h)$, for $h \in \mbb{N}$} \\
v_* \big( ( S^{p-1} \Omega_X^1 \tens L^{p + |\nu_A(a)|} )_{\Delta} \boxtimes \mc{L}^{\nu_{\bar{A}}(a)} \big) & \quad \mbox{if $\nu_A  \neq (h,h)$\:.}
 \end{array} \right. $$
 \end{pps}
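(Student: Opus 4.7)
The approach is to exploit the splittings (\ref{eq: xH}) and (\ref{eq: xGH}), which in the identification $X^n \simeq X^A \times X^{\bar A}$ reduce the computation to invariants on each factor separately. The factor $F^L_{2,q}(a)=\bigotimes_{j\in J\setminus A} L_j^{\lambda_j(a)}$ involves no Tor (it is simply a box of line bundles), so it depends only on $q=0$-level data; $H_2(a)=H^{J\setminus A}(a)$ acts trivially by the very definition of the exponents $\lambda_j(a)$, while $G_2(a)$-invariants of $(\pi_{\bar A})_* F^L_{2,q}(a)$ yield $\mc{L}^{\nu_{\bar A}(a)}$ by Danila's lemma together with Notation \ref{notat: Lmu}. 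So from now on the work is concentrated on the $X^A$-side.

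The key input is Remark \ref{rmk: globact}, which identifies $F^L_{1,q}(a)$ as a representation of the subgroups of $G_1(a)$. In particular, under $H_1(a)=H(S_0)\times H^{A\cap J}(a)$, only the $\perm_p\simeq H(S_0)$-factor acts non-trivially, and it does so by
\[
\Lambda^{-q}(N_A^*\otimes\rho_p)\otimes L_A^{p+|\nu_A|}\otimes \epsilon_p.
\]
The vanishing for $q\neq 1-p$ and the identification of the $H(S_0)$-invariants for $q=1-p$ therefore reduce to the following representation-theoretic identity for the $2$-dimensional vector space $V=N_A^*|_\Delta\simeq \Omega^1_X$:
\[
\bigl(\Lambda^r(V\otimes\rho_p)\otimes\sgn\bigr)^{\perm_p}\simeq
\begin{cases} S^{p-1}V & \text{if } r=p-1,\\ 0 & \text{otherwise.}\end{cases}
\]
I would prove this identity by a character computation: the generating function
\[
\sum_r\dim\bigl(\Lambda^r(V\otimes\rho_p)\otimes\sgn\bigr)^{\perm_p}\,t^r=\frac{1}{p!\,(1+t)^2}\sum_{g\in\perm_p}\sgn(g)\,\det(1+tg|_{\mbb{C}^p})^2,
\]
using $\det(1+tg|_{\mbb{C}^p})=\prod_i(1-(-t)^{\lambda_i(g)})$ and the standard cycle-index/exponential manipulation, simplifies to the $[y^p]$-coefficient of $(1+y)(1+t^2y)/(1-ty)^2$, which equals $pt^{p-1}(1+t)^2$; dividing by $(1+t)^2$ gives $pt^{p-1}$, exactly the Poincar\'e polynomial of $S^{p-1}V$ concentrated in degree $p-1$. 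Alternatively one could use Schur--Weyl duality to write $\Lambda^{p-1}(V\otimes\rho_p)=\bigoplus_{j}S_{(p-1-j,j)}V\otimes S_{(2^j,1^{p-1-2j})}\rho_p$ and identify the $\sgn$-isotypic part via $\det\rho_p\simeq\sgn$.

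Granted the key identity, the $H_1(a)$-invariants are exactly $S^{p-1}N_A^*\otimes L_A^{p+|\nu_A|}$ when $q=1-p$ and zero otherwise, which combined with step one yields the first formula. For the full $G_1(a)$-invariants one has to kill in addition the action of $G(A\setminus J)\times D^a(A\cap J)$. By Remark \ref{rmk: globact} these act scalarly by $\epsilon_{2-t}^{p-q}$ and $\epsilon_t^{p-q}$ respectively; since $p-q=2p-1$ is odd, any non-trivial factor $\perm_2$ acts by $-1$. This factor is genuinely present exactly in the two situations $t=0$ (i.e.\ $\nu_A=\emptyset=(0,0)$) where $G(A\setminus J)\simeq\perm_2$ is non-trivial, and $t=2$ with $\nu_A=(h,h)$, $h\geq 1$, where $\Delta^{A\cap J}\simeq\perm_2$ is non-trivial by Remark \ref{rmk:equal}; in all other cases ($t=1$, or $t=2$ with distinct parts) both groups are trivial and the $H(S_0)$-invariants pass through unchanged. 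Finally, restriction to the diagonal $N_A^*|_\Delta\simeq\Omega^1_X$, $L_A|_\Delta\simeq L$, and the pushforward through $\pi_A\circ i_\Delta$ produce the sheaf $(S^{p-1}\Omega^1_X\otimes L^{p+|\nu_A|})_\Delta$ in the sense of Notation \ref{notat: brevdelta}; assembling with the $X^{\bar A}$-factor via $v_*$ yields the stated formula. The only non-routine step is the representation-theoretic identity, which is the real content of the proposition.
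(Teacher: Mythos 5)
Your proposal is correct and follows essentially the same route as the paper: the same reduction via (\ref{eq: xH})--(\ref{eq: xGH}) to the two factors of $X^A\times X^{\bar A}$, the same sign analysis ($\epsilon_{2-t}^{2p-1}$ and $\epsilon_t^{2p-1}$ killing the invariants exactly when $t=0$ or $\nu_A=(h,h)$ with $h\geq 1$), and the same key representation-theoretic identity, which is precisely Lemma \ref{lmm: antiinv}/Corollary \ref{crl: torantiinv} of the appendix. The paper likewise proves that identity by a character count combined with the Cauchy decomposition $\Lambda^q(V\otimes\rho_k)\simeq\bigoplus_\lambda S^\lambda V\otimes S^{\lambda'}\rho_k$ (your "Schur--Weyl alternative" is in fact needed, not optional, to pin down the $GL(V)$-module structure as $S^{p-1}V$ rather than just its dimension), so there is no substantive difference.
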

\begin{proof}As for the $\Stab_H(a)$-invariants, remark that in (\ref{eq: xH}) the group $H_2(a)$ acts trivially on $F_{2,q}^L(a) = L^{\lambda_{\bar{A}}}$. Moreover the $H_1(a)$ action reduces to the $H(S_0) \simeq \perm_p$-action. Since $F_{1,q}^L(a) \simeq \Lambda^{-q}(N_\Delta^* \tens \rho_p) \tens L_A^{p + |\lambda_A(a)|}$ as a $\perm_p$-sheaf, 
the first statement follows from corollary \ref{crl: torantiinv}. 

Let's consider now the $\Stab_{G \times H}(a)$-invariants. After remark \ref{rmk: globact}, lemma \ref{lmm: antiinv} and corollary \ref{crl: torantiinv}, the 
$G_1(a)$-invariants  ${\pi_A}_* F_{1,q}^{L}(a)^{G_1(a)}$ are zero if $q \neq 1-p$ and coincide with 
the $\perm_t \times \perm_{2-t}$-invariants of the sheaf: 
$$ {\pi_A}_* ( S^{p-1}N_\Delta^* \tens L^{p + |\nu_A(a)|} )  \tens \epsilon_t^{2p-1} \tens \epsilon_{2-t}^{2p-1} $$if 
$q = 1-p$ and $\nu_A(a) = (h,h)$ for a certain $h \in \mbb{N}$ and with the $\perm_t \times \perm_{2-t}$-invariants of the sheaf: $$
{\pi_A}_* ( S^{p-1}N_\Delta^* \tens L^{p + |\nu_A(a)|} )  \tens \epsilon_{2-t}^{2p-1}$$if 
$q = 1-p$ and $\nu_A(a)$ is not of the form $(h,h)$. For the latter, we necessarily have $t \geq 1$ and the invariants are ${\pi_A}_* ( S^{q-1}N_\Delta^* \tens L^{p + |\nu_A(a)|} )$. For the former, that is, when the partition $\nu_A(a)$ is of the form $(h,h)$, necessarily $t=2$ or $t=0$ and the $\perm_t \times \perm_{2-t}$ representation above reduces in any case to the 
$\perm_2$-representation
$ {\pi_A}_* ( S^{p-1}N_\Delta^* \tens L^{p + |\nu_A(a)|} )  \tens \epsilon_2$, which has no invariants. Hence there are nonzero invariants only if $q=1-p$ and in this case
$$ {\pi_A}_* F_{1,1-p}^{L}(a)^{G_1(a)} \simeq \left \{ \begin{array}{lcl} 
{\pi_A}_* ( S^{p-1}N_\Delta^* \tens L^{p + |\nu_A(a)|} ) & \quad &\mbox{if $\nu_A  \neq (h,h)$}  \\
0 & \quad &\mbox{if $\nu_A =(h,h)$, for $h \in \mbb{N}$} \: .
\end{array}
\right. $$

Let's now consider  ${\pi_{\bar{A}}}_* F_{2,q}^L(a)^{G_2(a)}$. We have: 
$${\pi_{\bar{A}}}_* F_{2,q}^L(a)^{G_2(a)} = {\pi_{\bar{A}}}_* (L^{\lambda_{\bar{A}}})^{G_2(a)} \;.$$It is now sufficient to remark that $G_2(a) \simeq \Stab_{G(\bar{A})}(\lambda_{\bar{A}})$: as a consequence:$${\pi_{\bar{A}}}_* F_{2,q}^L(a)^{G_2(a)} = {\pi_{\bar{A}}}_* (L^{\lambda_{\bar{A}}})^{\Stab_{G(\bar{A})}(\lambda_{\bar{A}})}
\simeq \mc{L}^{\nu_{\bar{A}}(a)} $$over $S^{|\bar{A}|} X$. Note that, over $S^{|A|}X$, we can rewrite
${\pi_A}_* ( S^{p-1}N_\Delta^* \tens L^{p + |\nu_A(a)|} ) \simeq ( S^{p-1} \Omega_X^1 \tens L^{p + |\nu_A(a)|} )_{\Delta}$. 
We now conclude by (\ref{eq: xGH}). 
\end{proof}
\begin{notat}
Denote with $A_0(k,l)$ the set of all couples $(\lambda, \mu) \in
A(k,l)$ such that $\lambda \neq 0$, $\lambda$ not of the form $(h,h)$.
\end{notat}As an immediate consequence of (\ref{eq: homH}), (\ref{eq: homGH}), of the previous proposition
and of remark \ref{rmk: deg} we have
\begin{crl}\label{crl: EGH}The term $(E^{p,q}_1)_0^H$ is nonzero if and only if $p=q=0$ or $q = 1-p$, $1 \leq p \leq k-1$. In these cases the term $(E^{p,q}_1)_0^H$ is
\begin{gather*} 
(E^{0,0}_1)_0^H \simeq \bigoplus_{\lambda \in c_n(k)} L^{\lambda} \\
(E^{p,1-p}_1)_0^H \simeq \bigoplus_{(\lambda, A) \in B(k,p)} S^{p-1}N^*_{A} \tens L_{A}^{p+|\lambda_A|} \tens L^{\lambda_{\bar{A}}} 
\end{gather*} 
The term $(E^{p,q}_1)_0^{G \times H}$ is nonzero if and only if $p=q=0$ or $q = 1-p$, $1 \leq p \leq k-1$. In these cases the term $(E^{p,q}_1)_0^H$ is
\begin{gather*} 
(E^{0,0}_1)_0^{G \times H} \simeq \bigoplus_{\lambda \in p_n( k)} \mathcal{L}^{\lambda} \\
(E^{p,1-p}_1)_0^{G \times H} \simeq \bigoplus_{(\mu, \nu) \in A_0(k, p)} v_* \big( ( S^{p-1} \Omega_X^1 \tens L^{p + |\mu|} )_{\Delta} \boxtimes \mc{L}^{\nu} \big) \end{gather*} 
As a consequence the spectral sequences $j^*(E^{p,q})^H$ and $j^* (E^{p,q}_1)^{G \times H}$ degenerate at level $k$. 
\end{crl}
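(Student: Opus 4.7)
The plan is to assemble the corollary directly from the immediately preceding proposition together with the Danila-style decompositions (\ref{eq: homH}) and (\ref{eq: homGH}), and then derive the degeneration from Remark \ref{rmk: deg}. First I would apply (\ref{eq: homH}) to rewrite
\[ (E^{p,q}_1)_0^H \simeq \bigoplus_{(\lambda, A) \in B(k,p)} F^L_q(a(\lambda, A))^{\Stab_H(a(\lambda, A))} \, ,\]
and similarly (\ref{eq: homGH}) for the $G\times H$-invariants over $A(k,p)$. The previous proposition tells us that each summand $F^L_q(a)^{\Stab_H(a)}$ vanishes unless $q = 1-p$, in which case it equals $S^{p-1} N^*_A \tens L_A^{p+|\lambda_A|} \tens L^{\lambda_{\bar A}}$. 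For $p=0$ the only allowed term in $B(k,0)$ corresponds to $A=\emptyset$ (here $|A|=\min\{2,2p\}=0$), so the entire sheaf $F^L_0(a)$ is the line bundle $L^\lambda$ indexed by $\lambda \in c_n(k)$, reproducing the claimed formula for $(E^{0,0}_1)_0^H$. For $p \geq 1$ summing over $B(k,p)$ gives exactly the stated expression, and we observe the range $1 \leq p \leq k-1$ comes from the fact that $\mc{I}^p$ is empty for $p\geq k$ (compositions with $l(a)=p$ require $\sum_i |a(i)| = k+p$, but $|a(i)| \leq n$ and the set $A(a)$ of multi-indexes of cardinality $\geq 2$ must be nonempty, forcing $p \leq k-1$).

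Next I would treat the $G \times H$-invariants analogously. For each $(\mu,\nu) \in A(k,p)$ the previous proposition computes $(\pi_* F^L_q(a(\mu,\nu)))^{\Stab_{G \times H}(a(\mu,\nu))}$: it vanishes unless $q=1-p$, and, when $q=1-p$, it vanishes exactly when $\mu = 0$ (so $A \cap J = \emptyset$ and no $L_A$ factor remains) or when $\mu$ is of the form $(h,h)$. The surviving indexes are therefore precisely $(\mu,\nu) \in A_0(k,p)$, and the corresponding sheaf is $v_*\bigl((S^{p-1} \Omega^1_X \tens L^{p+|\mu|})_\Delta \boxtimes \mc{L}^{\nu}\bigr)$, as claimed. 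For $p=0$, the only element of $A(k,0)$ is $(0,\nu)$ with $\nu \in p_n(k)$, and the invariant is $\mc{L}^\nu$.

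Finally, from these explicit computations we read off that $(E^{p,q}_1)_0^H$ and $(E^{p,q}_1)_0^{G\times H}$ can only be nonzero when $(p,q) = (0,0)$ or when $q=1-p$ with $1 \leq p \leq k-1$, i.e.\ $2-k \leq q \leq 0$. This is exactly the vanishing pattern required by Remark \ref{rmk: deg}, which immediately yields degeneration of $j^*(E^{p,q}_1)^H$ and $j^*(E^{p,q}_1)^{G\times H}$ at level $E_k$. The only nontrivial input is the previous proposition; everything else is bookkeeping on the index sets $B(k,p)$ and $A(k,p)$ of Remark \ref{rmk: multiindexinvariants}. I expect no real obstacle beyond carefully matching the constraint $|A|=\min\{2,2p\}$ with the $p=0$ edge case and verifying that the sheaf-theoretic identification ${\pi_A}_*(S^{p-1} N^*_\Delta \tens L^{p+|\mu|}) \simeq (S^{p-1}\Omega^1_X \tens L^{p+|\mu|})_\Delta$ used in the proposition transports correctly to the direct sum.
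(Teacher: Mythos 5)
Your overall route is the same as the paper's: the corollary is meant to follow by feeding the previous proposition into the Danila decompositions (\ref{eq: homH}), (\ref{eq: homGH}) and then invoking Remark \ref{rmk: deg}. The $p=0$ edge case and the identification of the surviving $G\times H$-indexes with $A_0(k,p)$ are handled correctly (though your parenthetical ``no $L_A$ factor remains'' when $\mu=0$ is off: the factor $S^{p-1}N_A^*\tens L_A^p$ is still there; the vanishing comes from the sign of the $G(A)$-action, i.e.\ from $\nu_A=(0,0)$ being of the form $(h,h)$).

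The genuine gap is your justification of the upper bound $p\leq k-1$. You claim $\mc{I}^p=\emptyset$ for $p\geq k$, but this is false: the constant map $a$ with $a(i)=A$ for all $i$ and $|A|=2$ has $l(a)=2k-k=k$ and $k(a)=2$, so $a\in\mc{I}^k$ and $B(k,k)=\{(0,A):|A|=2\}\neq\emptyset$. Your stated reason (``$A(a)$ must be nonempty, forcing $p\leq k-1$'') is a non sequitur --- nonemptiness of $A(a)$ forces $p\geq 1$, and the constraint $|A(a)|\leq 2$ only gives $\sum_i|a(i)|\leq 2k$, hence $p\leq k$. This is exactly the delicate point of the corollary. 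For the $G\times H$-statement the bound $p\leq k-1$ is nevertheless correct, because for such $a$ one has $\nu_A(a)=(0,0)$, which is of the form $(h,h)$, so the proposition kills the $\Stab_{G\times H}$-invariants; equivalently $A_0(k,k)=\emptyset$. For the $H$-statement, however, the proposition gives $\Stab_H(a)=H(S_0)\simeq\perm_k$ and a \emph{nonzero} contribution $F^L_{1-k}(a)^{\Stab_H(a)}\simeq S^{k-1}N_A^*\tens L_A^{k}$ at $(p,q)=(k,1-k)$, so the vanishing of $(E^{k,1-k}_1)_0^H$ does not follow from the quoted inputs at all; a separate argument would be needed there (and this is precisely the term that would feed a $d_k$ out of $E^{0,0}_k$ in the $H$-invariant spectral sequence). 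You should flag this index explicitly rather than absorb it into an emptiness claim that fails.
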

\begin{remark}\label{rmk: dlsurj}As a consequence of corollary \ref{crl: EGH} and since the 
derived tensor power $\mc{C}^\bullet_L \tens^L \cdots \tens^L \mc{C}^\bullet_L$ is acyclic in degree $>0$, 
the higher differentials $d^r: j^* (E^{0,0}_r)^H \rTo j^* (E^{r, 1-r}_r)^H$ are surjective; analogously, the $G$-invariant higher differentials $(d^r)^G : j^* (E^{0,0}_r)^{G \times H} \rTo j^* (E^{r, 1-r}_r)^{G \times H}$ are surjective. In particular, for $n =2$,  the differentials $d^r$ and $(d^r)^G$ are surjective over the whole $X^2$ and $S^2 X$, respectively. 
\end{remark}

\section{Higher differentials and operators \texorpdfstring{$D^l_L$}{DL}.}

\label{subsection: Koszul}\sloppy
In this section we study higher differentials $d^l$ and $(d^l)^G$ of the spectral sequences $j^* (E^{p,q}_1)^{H}$ and $j^* (E^{p,q}_1)^{G \times H}$, respectively. 
We will prove that the higher differentials $d^l$ are, up to signs, restrictions of globally defined operators $D^l_L$, built recursively as higher order restrictions of sections to pairwise diagonals. As a consequence, we get a  description of the Bridgeland-King-Reid transform $\bkrh(S^k L^{[n]})$ of symmetric powers of tautological bundles, as well as their  direct image $\mu_* (S^k L^{[n]})$, in terms 
of kernels of operators $D^l_L$ and their $G$-invariants, respectively. 

In order to explicitly compute higher differentials in the $G $-equivariant spectral sequence $(j^* E^{p,q}_1)^H$, 
we will locally solve the complex $j^*\comp{\mc{C}}_L \tens^L \cdots \tens^L j^*\comp{\mc{C}}_L $ with a bicomplex of locally free sheaves. In order to do so, we need to solve term by term the complex $\comp{\mc{C}}_L$. First of all we introduce some auxiliary open sets covering $X^n$ that will be used throughout this section.

\subsection{Auxiliary open covers}
\begin{lemma}\label{lmm: auxU}Let $X$ be a smooth quasi-projective surface and $L$ be a line bundle on $X$. Let $n \in \mbb{N}$, $n \geq 1$. 
Then $X^n$ and $S^n X$ are covered by affine open subsets of the form $U^n$ and $S^n U$, respectively, with $U$ an affine open subset of $X$ such that $L$ is trivial over $U$. 
\end{lemma}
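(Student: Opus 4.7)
First, I would reduce the lemma to the following auxiliary claim: \emph{for every finite set $S \subseteq X$, there exists an affine open $U \subseteq X$ containing $S$ on which $L|_U$ is trivial}. Granted this, any point $(x_1, \dots, x_n) \in X^n$ gives $S := \{x_1, \dots, x_n\}$, hence an open $U$ with $(x_1, \dots, x_n) \in U^n \subseteq X^n$; the subset $U^n$ is affine as a finite product of affines. The image $S^n U = \pi(U^n)$ is open in $S^n X$, contains $\pi(x_1, \dots, x_n)$, and is affine since the quotient of an affine scheme by a finite group is affine.

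To prove the auxiliary claim, I would use quasi-projectivity to embed $X$ as an open subset of a smooth projective variety $\bar{X}$ (by taking a projective closure and applying resolution of singularities), and let $Z := \bar{X} \setminus X$. Since $\bar{X}$ is smooth, the restriction map $\Pic(\bar{X}) \to \Pic(X)$ is surjective (the kernel being generated by classes of divisors supported on $Z$), so $L$ extends to some line bundle $\bar{L}$ on $\bar{X}$. Fixing a very ample $\mathcal{O}_{\bar{X}}(1)$, for $m \gg 0$ both $\bar{A} := \bar{L} \otimes \mathcal{O}_{\bar{X}}(m)$ and $\bar{B} := \mathcal{O}_{\bar{X}}(m)$ become very ample by Serre's theorem, and $\bar{L} \simeq \bar{A} \otimes \bar{B}^{-1}$.

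Since $S$ is finite and $\bar{A}, \bar{B}$ are very ample, I would pick sections $\bar{a} \in H^0(\bar{X}, \bar{A})$ and $\bar{b} \in H^0(\bar{X}, \bar{B})$ not vanishing at any point of $S$: the sections vanishing at a prescribed point form a hyperplane in $H^0$, and a finite union of hyperplanes cannot fill an ambient vector space over an infinite field. By Serre vanishing, for $d \gg 0$ the sheaf $\mathcal{O}_{\bar{X}}(d) \otimes \mc{I}_Z$ is globally generated, so one can choose $\bar{h} \in H^0(\bar{X}, \mathcal{O}_{\bar{X}}(d) \otimes \mc{I}_Z)$ not vanishing at any point of $S$; in particular $V(\bar{h}) \supseteq Z$ and $V(\bar{h}) \cap S = \emptyset$. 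Setting $U := \bar{X} \setminus V(\bar{a} \cdot \bar{b} \cdot \bar{h})$, the product $\bar{a} \cdot \bar{b} \cdot \bar{h}$ is a section of the very ample line bundle $\bar{A} \otimes \bar{B} \otimes \mathcal{O}_{\bar{X}}(d)$ (tensor products of very ample line bundles are very ample), so $U$ is the complement of a hyperplane section for the corresponding projective embedding of $\bar{X}$, hence affine. By construction $S \subseteq U$, and $V(\bar{h}) \supseteq Z$ forces $U \subseteq X$; finally $L|_U = \bar{L}|_U$ is trivialized by $\bar{a}/\bar{b}$. The main obstacle is ensuring that $U$ is \emph{affine} rather than merely open in $X$; the trick is to package the triviality obstruction $V(\bar{a}) \cup V(\bar{b})$ together with the boundary divisor $V(\bar{h})$ into a single very ample divisor on the compactification $\bar{X}$.
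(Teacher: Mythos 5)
Your proof is correct, but it takes a genuinely different route from the paper's. The reduction to the claim that any finite subset of $X$ lies in an affine open trivializing $L$ is the same (and your remark that $S^n U = U^n/\perm_n$ is affine as the quotient of an affine scheme by a finite group makes explicit a point the paper leaves implicit). For the claim itself, the paper stays entirely inside $X$: it first invokes an earlier result (\cite[Lemma 1.27]{ScalaPhD}) asserting that finitely many points of a quasi-projective variety always lie in a common affine open $U$, and then shrinks $U$ by observing that, $U$ being affine, the evaluation map $\Gamma(U,L) \rightarrow \bigoplus_j L_{x_{i_j}}$ is surjective, so there is a section $s$ of $L$ nonvanishing at all the prescribed points; the nonvanishing locus $U \setminus Z(s)$ of a section of a line bundle on an affine scheme is again affine and does the job. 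You instead pass to a smooth projective compactification $\bar{X}$, extend $L$, write the extension as a difference of very ample bundles, and carve out the affine open in one stroke by deleting a single ample divisor that contains the boundary and the zero loci of the trivializing sections while missing the prescribed points. Your argument is self-contained --- it reproves the affine-neighborhood lemma rather than citing it --- at the price of heavier input: resolution of singularities to make $\bar{X}$ smooth (which you genuinely need for the surjectivity of $\Pic(\bar{X}) \rightarrow \Pic(X)$; for a merely normal closure the divisor class would only extend as a Weil divisor), plus Serre vanishing and global generation. The paper's argument is shorter and purely local to $X$, but leans on the external lemma and on the affineness of $U_s$ for $U$ affine. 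Both are sound.
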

\begin{proof}We just need to prove that, if $x_1, \dots, x_n$ are $n$ points over $X$  (not necessarily distinct), there is an affine open subset $U$ of $X$ containing all the $x_i$, $i = 1, \dots, n$ and such that $L$ is trivial over $U$. 
A proof of the first property was reported in \cite[Lemma 1.27]{ScalaPhD}. Therefore, consider an affine open set $U$ of $X$ containing $x_i$, $i = 1, \dots, n$. We just have to show that we can shrink $U$ sufficiently to achieve the triviality of $L$. 
Over $U$ consider the epimorphism $ L \rOnto L \tens \FS_Z $, where $Z$ is the scheme theoretic union $Z = \cup_{i=1}^n \{ x_i \} $ (and hence reduced). Taking global sections, since 
$U$ is affine, we get an epimorphism $ \Gamma(U, L) \rOnto \Gamma(U, L_Z) = \oplus_{j} L_{x_{i_j}}$, where $\{x_{i_1}, \dots, x_{i_l} \} = \{ x_1, \dots, x_n \}$ and $x_{i_j}$ are all distinct. Hence there exists a section $s \in \Gamma(U, L)$ such that 
$s(x_i) \neq 0$ for all $i =1, \dots, n$. The open set $U \setminus Z(s)$ is the affine open subset we want. 
\end{proof}

\begin{remark}\label{rmk: auxiliaryopen}Let $U$ a smooth affine open set of $X$ over which $L$ is trivial, found in lemma \ref{lmm: auxU}. Consider a partial diagonal $\Delta_I \subseteq U^n$, $|I| \geq 2$: since smooth, it is locally complete intersection inside $U^n$. On the other hand, even shrinking $U$, we can't guarantee that $\Delta_I$ is complete intersection inside $U^n$. However, 
 it easy, possibly shrinking $U$, to build a smooth 
  complete intersection of dimension $2$ inside $U^n$ having $\Delta_I$ as an irreducible component. Hence, by shrinking $U$ and further removing a 
 closed subset of $U^n$, we can succeed building
 an affine open subset $\tilde{V} \subseteq U^n$ containing $\Delta_I$ and such that $\Delta_I$ is a complete intersection inside $\tilde{V}$. 

Analogously, if $p \in U^n $ it is possible to find an affine open subset $V_{U^n, p} \subseteq U^n$, containing $p$ and such that any partial diagonal $\Delta_J$, $J \subseteq \{1, \dots, n \}$, $|J| \geq 2$ is 
 a complete intersection inside $V_{U^n,p}$, or empty. The open sets $V_{U^n, p}$, with $U$ affine open of $X$ as in lemma \ref{lmm: auxU} and $p \in U^n$ cover $X^n$. In what follows we will drop the point $p$ and we will just denote the affine open sets $V_{U^n, p}$ with $V_{U^n}$. 

We will also denote with $U^n_{**}$ the intersection $U^n \cap X^n_{**}$ and $V_{U^n, **}$ the intersection $V_{U^n} \cap X^n_{**}$. 
\end{remark}
 
\subsection{A term-by-term Koszul resolution of $\comp{\mc{C}}_L \tens^L \cdots \tens^L  \comp{\mc{C}}_L$.  }
Consider now an affine open set of the form $U^n$, built in lemma \ref{lmm: auxU}. 
Since on the affine open set $U$ the line bundle $L$ is trivial, over $U^n$ the complex $\comp{\mc{C}}_L$ can be written as: 
$$ \mc{C}^p_L = \bigoplus_{|J|=p+1} L_J= \bigoplus_{|J|=p+1} \FS_J \;.$$Now, the sheaves $\FS_J$, for $|J| \geq 2$,  are nothing but the structural sheaves of partial diagonals $\Delta_J$: hence they can be solved, restricting to smaller open sets $V_{U^n}$ if necessary, by a Koszul complex $K^\bullet (F_J, s_J)$, where $s_J$ is a section of the trivial vector bundle $F_J$ of rank $\rk F_J = \codim \Delta_J = 2p$, transverse to the zero section. Set $F_i = \FS_i$ and $K^p(F_i, s_i) = F_i$ if $p =0$ and $K^p(F_i, s_i) = 0$ if $p \neq 0$.  As a consequence the bicomplex $R^{\bullet, \bullet}$, defined, for $p \geq 0$ and $q \leq 0$, as: 
$$ R^{p, q} := \bigoplus_{|J|=p+1} K^{q}(F_{J}, s_J)$$with natural differentials, provides a term-by-term locally free resolution $R^{\bullet, \bullet} \rTo \comp{\mc{C}}_L$ of the complex $\comp{\mc{C}}_L$ over the open set $V_{U^n}$. 
\begin{remark}In the direct sum here above, and whenever we write Koszul complexes through subsection \ref{subsection: higher}, it is implicit that 
we just take multi-indexes $J$ such that $\Delta_J \cap V_{U^n} \neq \emptyset$. \end{remark}
The vertical differential $\delta$ of the bicomplex $R^{\bullet, \bullet}$ is induced by differentials of the
Koszul complexes, of which $R^{p,
  \bullet}$ is direct sum. 
The horizontal differential $\partial: R^{p, q} \rTo R^{p+1, q}$ is
given by$$(\partial^p (x))_J = \sum_{i \in J} \epsilon_{i,J} i_{i,J}^{-q}(x_{J
  \setminus \{i \} }) $$where $i_{i,J}^{-q}$ is the injection
$i_{i,J}^{-q} : \Lambda^{-q}(F_{J \setminus \{ i \}}) \rTo \Lambda^{-q}(F_J)$ and where the sign $\epsilon_{i, J}$ has been explained in the definition of the complex $\comp{\mc{C}}_L$ (subsection \ref{subsection: taut}). 

The derived $k$-fold tensor product $\comp{\mc{C}}_L \tens^L \cdots \tens^L \comp{\mc{C}}_L$ is then solved, on $V_{U^n}$, by the bicomplex $$\mbb{R}^{\bullet, \bullet}:= R^{\bullet, \bullet} \tens \dots \tens
R^{\bullet, \bullet} $$(for the sum of the first indexes and the sum of
the second indexes). We can rewrite the bicomplex $R^{\bullet, \bullet}$ as: 
\begin{equation*}\mbb{R}^{p,q} =\bigoplus_{\substack{p_1 + \dots + p_k=p \\ q_1 + \dots + q_k=q}}
R^{p_1, q_1} \tens \dots \tens
R^{p_k, q_k} \\ 
= 
\bigoplus_{|J_1|+ \dots
  +|J_k|=p+k}  \Lambda^{-q}( F_{J_1} \oplus \dots \oplus F_{J_k}) \; .
 \end{equation*}
With our notations (see notation \ref{notaziona}), this can be also written:
$$ \mbb{R}^{p,q} := \bigoplus_{a \in \mc{P}^k , \: l(a) =p }   
\Lambda^{-q}(F_{a(1)} \oplus \dots \oplus F_{a(k)}) \;.
$$
\begin{remark}[Differentials] 
Since $\mbb{R}^{\bullet, \bullet}$ is a tensor product of bicomplexes,
the vertical differential  
$ \delta : \mbb{R}^{p,q} \rTo \mbb{R}^{p,q+1} $ is given, on the component
$R^{p_1, q_1} \tens \dots \tens R^{p_i, q_i } \tens \cdots \tens
R^{p_k, q_k} \rTo R^{p_1, q_1} \tens \dots \tens R^{p_i, q_i+1 } \tens \cdots \tens
R^{p_k, q_k}$ by $$ \delta = 
(-1)^{q_1 + \dots +
  q_{i-1}} \delta_i \;,$$where $\delta_i$ is the vertical differential of the $i$-th factor $R^{\bullet, \bullet}$, and hence coincides
  with the direct sum of 
differentials of the
Koszul complexes $$ R^{\bullet}_{J_1, \dots, J_k} := 
K^{\bullet}(F_{J_1} \p \cdots \p F_{J_k},
s_{J_1} \p \cdots \p s_{J_k})\;.$$The horizontal differential \sloppy
$ \partial : \mbb{R}^{p,q} \rTo \mbb{R}^{p+1,q}$, on the component
$R^{p_1, q_1} \tens \dots \tens R^{p_i, q_i } \tens \cdots \tens~R^{p_k, q_k} 
\rTo R^{p_1, q_1}~\tens~\dots~\tens~R^{p_i +1, q_i }~\tens~\cdots~\tens
R^{p_k, q_k}$ is given by\begin{equation}\label{eq: horizdiff} \partial =
(-1)^{p_1 + \dots +
  p_{i-1}} \partial_i \end{equation}where $\partial_i$ is the horizontal
differential of the $i$-th bicomplex $R^{\bullet, \bullet}$. \end{remark}
\begin{remark}The horizontal differential $\partial$ depends just on $p$ and not
  on the $q$ involved; consequently it just depends on the multi-indexes 
$J_i$ involved. Therefore writing $\mbb{R}^{p,q}$ as a direct sum of Koszul complexes
$$ \mbb{R}^{p,\bullet}=\bigoplus_{|J_1|+ \dots
  +|J_k|=p+k}  \Lambda^ {-\bullet}( F_{J_1} \oplus \dots \oplus F_{J_k}) \simeq \bigoplus_{|J_1|+ \dots
  +|J_k|=p+k}  R^{\bullet}_{J_1, \dots, J_k} $$the
horizontal differential on 
$ \Lambda^{-q}( F_{J_1} \oplus \dots  \oplus F_{J_i \setminus \{h \}} \oplus \dots \oplus 
F_{J_k}) \rTo \Lambda^{-q}( F_{J_1} \oplus \dots \oplus F_{J_i }\oplus \cdots \oplus
F_{J_k}) $
is given by
\begin{equation}\label{eq: horizdiff2} (-1)^{|J_1| + \dots + |J_{i-1}| +i-1}\epsilon_{h, J_i} i_{h,J_i}^{-q} \end{equation}where
we still indicate with $i_{h,J_i}^{-q} : \Lambda^{-q}( F_{J_1} \oplus \dots  \oplus F_{J_i \setminus \{h \}} \dots \oplus
F_{J_k}) \rTo \Lambda^{-q}( F_{J_1} \oplus \dots \oplus F_{J_i }\oplus
\dots \oplus
F_{J_k})$ the injection induced by $F_{J_i \setminus \{h \}} \rInto F_{J_i }$. 
\end{remark}
We now introduce another bicomplex $\mbb{K}^{\bullet, \bullet}$, easier to deal with, also related to the spectral sequence $j^*E^{p,q}_1$. 
\begin{remark}\sloppy 
Consider, on the open set $V_{U^n}$, the bicomplex $\mbb{K}^{\bullet, \bullet}$
defined as$$\mbb{K}^{p,q} : = \bigoplus_{a \in \mc{I}^p}  R^q_{a(1), \dots,
  a(k)} = \bigoplus_{a \in \mc{I}^p}  \Lambda^{-q}(F_{a(1)} \p \cdots \p
F_{a(k)}) \;,$$with differentials defined exactly as in the previous remarks and where the direct sum is taken over the $a\in \mc{I}^p $ such that $\Delta_{a(i)} \cap V_{U^n} \neq \emptyset$ for all $i$.  It is a quotient bicomplex of $\mbb{R}^{\bullet, \bullet}$. Consider the open immersion $j_U: V_{U^n, **} \rTo V_{U^n}$. Since the Koszul 
complexes 
$j_U^*R^\bullet_{a(1), \dots, a(k)}$ are exact if $a \in \mc{P}^p \setminus \mc{I}^p$, the complexes $j_U^*\mbb{R}^{\bullet, \bullet}$ and $j_U^* \mbb{K}^{\bullet, \bullet}$ are quasi-isomorphic: 
$$ j_U^*\mbb{R}^{\bullet, \bullet} \simeq^{qis} j_U^* \mbb{K}^{\bullet, \bullet} \;.$$
\end{remark}
Consider now  the spectral sequence $K^{p,q}_1 :=H^q_{\delta} (\mbb{K}^{p, \bullet})$ associated to the bicomplex $\mbb{K}^{\bullet, \bullet}$ over the open set $V_{U^n}$. 
As a consequence of what we said, 
\begin{pps}\label{pps: spectlocal}The restriction of the spectral sequence $j^* E^{p,q}_1$ to the open set $V_{U^n,**}$ is isomorphic to the spectral sequence 
$ j^*_U K^{p,q}_1 :=H^q_{\delta} (j_U^*  \mbb{K}^{p, \bullet})$; analogously, the $H$-invariant spectral sequence 
$(j^* E^{p,q}_1)^H$ is isomorphic, over $V_{U^n,**}$, to the spectral sequence $(j^*_U K^{p,q}_1) ^H$. 
\end{pps}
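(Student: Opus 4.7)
The plan is to compute the spectral sequence $j^* E^{p,q}_1$ locally, via the explicit term-by-term resolution $\mbb{R}^{\bullet,\bullet}$ of $\comp{\mc{C}}_L \tens^L \cdots \tens^L \comp{\mc{C}}_L$ on $V_{U^n}$, and then use that on $V_{U^n,**}$ the quotient map $\mbb{R}^{\bullet,\bullet} \rTo \mbb{K}^{\bullet,\bullet}$ becomes a quasi-isomorphism, since the summands that are killed are exact Koszul complexes on the big open set.

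First, since $\mbb{R}^{\bullet,\bullet}$ is a term-by-term locally free resolution of $\comp{\mc{C}}_L \tens^L \cdots \tens^L \comp{\mc{C}}_L$ over $V_{U^n}$, the first spectral sequence of the double complex $\mbb{R}^{\bullet,\bullet}$ filtered by horizontal degree computes precisely the hyperderived Tor spectral sequence, i.e.\ $E^{p,q}_1 \simeq H^q_\delta(\mbb{R}^{p,\bullet})$ over $V_{U^n}$.

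Next I would verify that $\mbb{K}^{\bullet,\bullet}$ is genuinely a quotient bicomplex: writing $\mbb{R}_{\tm{non}}^{p,\bullet} := \oplus_{a \notin \mc{I}^p} R^\bullet_{a(1),\dots,a(k)}$, I need to check that the horizontal differential $\partial$ preserves $\mbb{R}_{\tm{non}}$. This is immediate from formula (\ref{eq: horizdiff2}): passing from $a$ to $a'$ adds one element to some $a(i)$, so $A(a') \supseteq A(a)$; hence $|A(a)| \geq 3$ implies $|A(a')| \geq 3$, and the kernel summands form a sub-bicomplex.

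The heart of the argument, and its main technical step, is the acyclicity of $j_U^* R^\bullet_{a(1),\dots,a(k)}$ on $V_{U^n,**}$ for $a \notin \mc{I}^p$. The Koszul section $s_{a(1)} \oplus \cdots \oplus s_{a(k)}$ vanishes exactly on $\bigcap_i \Delta_{a(i)}$, where by convention $\Delta_{a(i)} = X^n$ for $|a(i)|=1$. For $a \notin \mc{I}^p$ we have $|A(a)| \geq 3$, so there exist two indexes $i \neq j$ with $|a(i)|,|a(j)|\geq 2$ and $a(i) \neq a(j)$; consequently $\bigcap_i \Delta_{a(i)} \subseteq \Delta_{a(i)} \cap \Delta_{a(j)} \subseteq W$, which is disjoint from $V_{U^n,**}$. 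Thus the Koszul section is nowhere vanishing on $V_{U^n,**}$, which forces the Koszul complex to be acyclic there.

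From this the conclusion is straightforward: the short exact sequence of bicomplexes $0 \rTo j_U^* \mbb{R}_{\tm{non}}^{\bullet,\bullet} \rTo j_U^* \mbb{R}^{\bullet,\bullet} \rTo j_U^* \mbb{K}^{\bullet,\bullet} \rTo 0$ has acyclic left term in the vertical direction, so the projection induces a natural isomorphism $H^q_\delta(j_U^*\mbb{R}^{p,\bullet}) \simeq H^q_\delta(j_U^* \mbb{K}^{p,\bullet})$ compatible with the horizontal differential. Combined with step~1 this yields $j^* E^{p,q}_1 \simeq j_U^* K^{p,q}_1$ on $V_{U^n,**}$, and the identification persists through all subsequent differentials, i.e.\ as an isomorphism of spectral sequences. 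Finally, the $H$-equivariance is automatic: the $H$-action on $\mbb{R}^{\bullet,\bullet}$ permutes the tensor factors, and the subset $\mc{I}^p$ is $H$-stable (the invariants $|A(a)|$, $k(a)$ depend only on the $H$-orbit of $a$), so the quotient $\mbb{R} \rTo \mbb{K}$ and the acyclic sub-bicomplex $\mbb{R}_{\tm{non}}$ are $H$-equivariant. Since $H$ is finite and we work over $\mbb{C}$, the functor of $H$-invariants is exact and commutes with $H^\bullet_\delta$, yielding the claimed isomorphism $(j^*E^{p,q}_1)^H \simeq (j_U^* K^{p,q}_1)^H$ over $V_{U^n,**}$.
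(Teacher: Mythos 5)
Your argument is correct in substance and is essentially the paper's own: the paper's proof of this proposition is literally ``as a consequence of what we said,'' referring to the same three facts you elaborate (the bicomplex $\mbb{R}^{\bullet,\bullet}$ computes the hyperderived spectral sequence on $V_{U^n}$, the summands indexed by $a \notin \mc{I}^\bullet$ form a sub-bicomplex, and those summands are exact Koszul complexes on $V_{U^n,**}$, so the quotient map to $\mbb{K}^{\bullet,\bullet}$ is a termwise quasi-isomorphism inducing an isomorphism from the $E_1$-page onward; $H$-equivariance and exactness of invariants in characteristic zero give the second claim).

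One small slip in your acyclicity step: from $|A(a)| \geq 3$ you infer the existence of two indexes $i \neq j$ with $|a(i)|, |a(j)| \geq 2$ and $a(i) \neq a(j)$. This fails when a single block $a(i_0)$ has $|a(i_0)| \geq 3$ and all other $a(j)$ are singletons (e.g.\ $a(1)=\{1,2,3\}$, $a(2)=\{4\}$). The conclusion you need still holds there, since $\Delta_{a(i_0)} \subseteq \Delta_I \cap \Delta_{I'}$ for two distinct pairs $I \neq I'$ contained in $a(i_0)$, hence lies in $W$; you should split into this case and the case of two distinct blocks of size $\geq 2$, where your argument applies verbatim.
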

\begin{remark}\label{rmk: horizK}For $a \in \mc{I}^p$, denote more briefly with $R^q_a$ the sheaf $R^q_{a(1), \dots a(k)}$. Consider the horizontal differentials
$$ \oplus_{a,b} \partial_{a}^b : \mbb{K}^{p,q} = \bigoplus_{a \in \mc{I}^p} R^q_a \rTo 
\bigoplus_{b \in \mc{I}^{p+1}} R^q_b = \mbb{K}^{p+1,q} \;.$$
The map $\partial_{a}^b : R^q_a \rTo R^q_b$ is nonzero  if and only if 
$A(a) = A(b)$, $S_0(a) \subset S_0(b)$ and $a_h \subseteq A(b)$
if $\{h \} = S_0(b) \setminus S_0(a)$. This means that, for a fixed $b
\in \mc{I}^{p+1}$ all possible nonzero maps to $R^q_b$ are of the
form
$\partial_{a}^b: R^q_a \rTo R^q_b$, where $a$ is of the form
$a_j=b_j$ for all $j \in \{1, \dots, k\} \setminus \{ l \}$, for some $l
\in S_0(b)$. For this $l$, $a_l \subseteq A(b)$, and can hence take
only $2$ possible values. Hence there are exactly $2p+2$ 
nonzero maps to
$R^q_b$. 
\end{remark}
\subsection{The $H$-invariant bicomplex $(\mbb{K}^{\bullet, \bullet})^H$ }
\paragraph{Homogeneous components.}
Denote, as in subsection \ref{subsection: Invariants}, with $(\lambda, A) \rMapsto a(\lambda, A)$ a fixed section of the quotient surjection $ \psi_l: \mc{I}^l \rTo B(k,l)$. For any $(\lambda, A) \in B(k,l)$ denote with $W^{\bullet}_{\lambda, A}$ the homogeneous component of $\mbb{K}^{\bullet, \bullet}$ indexed by $(\lambda, A)$, defined as: 
$$W^{q}_{\lambda, A} := \bigoplus_{a \in \psi_l^{-1}(\lambda, A)} R^{q}_a \;.$$Danila's lemma for the 
action of $H$ on the complex $\mbb{K}^{\bullet, \bullet}$ can be rephrased writing: 
$$ (\mbb{K}^{p, \bullet}) ^H \simeq \bigoplus_{(\lambda, A) \in B(k, p)} ( W^{\bullet}_{\lambda, A}) ^H \simeq 
 \bigoplus_{(\lambda, A) \in B(k, p)} ( R^\bullet_{a(\lambda, A)})^{\Stab_H(a(\lambda, A))} \;.$$Since, over the open set $U$ the line bundle $L$ is trivial, there is no harm in tensorizing each $R^q_{a(\lambda, A)}$ with the (trivial) line bundle $L^\lambda$: in this way we can keep track of the label $\lambda$. As a result we can write: 
 \begin{equation}\label{eq: W}  (\mbb{K}^{p, \bullet}) ^H \simeq \bigoplus_{(\lambda, A) \in B(k, p)} ( W^{\bullet}_{\lambda, A}) ^H \simeq 
 \bigoplus_{(\lambda, A) \in B(k, p)} ( R^\bullet_{a(\lambda, A)} \tens L^\lambda)^{\Stab_H(a(\lambda, A))} \end{equation}
\paragraph{Vertical and horizontal differentials.}
In what follows  we will use notations and facts from appendix \ref{section: appendixKoszul}. 
Let $(\lambda, A) \in B(k, p)$. Note that the $\Stab_H(a(\lambda, A))$-action on the complex 
$R^\bullet_{a(\lambda, A)}$ reduces to the $H(S_0(a)) \simeq \perm_p$ action. 
The complex $R_{a(\lambda, A)}^\bullet $ is isomorphic to the tensor product of $p$ Koszul complexes
$K^{\bullet}(F_A, s_A) \tens \cdots \tens K^{\bullet}(F_A, s_A) $. However, 
the $H(S_0(a))$-action on $R_{a(\lambda, A)}^\bullet$ has to be compatible with the $H(S_0(a))$-action on $\comp{\mc{C}}_L \tens^L \cdots \tens^L \comp{\mc{C}}_L$, which introduces a sign when permutating two consecutive $\mc{C}^1_L$ factors (see \cite[section 4.1]{Scala2009D}): hence, we have to take into account a further $\epsilon_p$-representation. Consequently, in 
the identification $H(S_0(a)) \simeq \perm_p$, the $H(S_0(a))$-equivariant complex $R_{a(\lambda, A)}^\bullet $ is isomorphic to the $\perm_p$-equivariant complex given by tensor product of $p$ Koszul complexes twisted by $\epsilon_p$: 
\begin{equation} \label{eq: tensorKoszul} R_{a(\lambda, A)}^\bullet \simeq K^{\bullet}(F_A, s_A) \tens \cdots \tens K^{\bullet}(F_A, s_A) \tens \epsilon_p \simeq K^\bullet(F_A \tens R_p, s_A \tens \sigma_p) \tens \epsilon_p\;. \end{equation}or with the Koszul complex $K^\bullet(F_A \tens R_p, s_A \tens \sigma_p)$, twisted by $\epsilon_p$ (see remark \ref{rmka: tensorKoszul}), where $R_p$ is the natural representation of the symmetric group $\perm_p$ and $\sigma_p$ is the invariant element in $R_p$. 
Now, 
from (\ref{eq: W}) and from corollary 
\ref{crla: invkoszul}, we deduce
\begin{lemma}[Vertical differentials] \label{lemma: verticaldiff}Let $p \geq 1$. Then the vertical complexes $(\mbb{K}^{p, \bullet})^H$ are isomorphic to 
the direct sum of twisted and shifted Koszul complexes: 
$$ (\mbb{K}^{p, \bullet})^H \simeq \bigoplus_{(\lambda, A) \in B(k, p)} K^{\bullet}(F_A, s_A) \tens S^{p-1}F_A^* \tens L^\lambda [p-1]  $$where $K^\bullet(F_A, s_A)$ is the Koszul complex resolving the partial diagonal $\Delta_A$. Hence the complexes $(\mbb{K}^{p, \bullet})^H $ are quasi-isomorphic to the sheaves $\bigoplus_{(\lambda, A) \in B(k,p)} S^{p-1}N_A^* \tens L^\lambda$, placed in degree $1-p$. \end{lemma}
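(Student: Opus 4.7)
The plan is to combine three ingredients already assembled in the text: the Danila-type decomposition \eqref{eq: W} of the $H$-invariants of $\mbb{K}^{p,\bullet}$ into summands indexed by $B(k,p)$, the identification \eqref{eq: tensorKoszul} of each summand $R^{\bullet}_{a(\lambda,A)}$ with a single twisted Koszul complex, and the appendix result (corollary \ref{crla: invkoszul}) that computes the $\perm_p$-invariants of a Koszul complex of the form $K^\bullet(F\otimes R_p,s\otimes \sigma_p)\otimes\epsilon_p$.

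First, by \eqref{eq: W}, it suffices to prove, for each fixed $(\lambda,A)\in B(k,p)$, a $\Stab_H(a(\lambda,A))$-equivariant quasi-isomorphism
$$\bigl(R^{\bullet}_{a(\lambda,A)}\tens L^{\lambda}\bigr)^{\Stab_H(a(\lambda,A))}\simeq K^{\bullet}(F_A,s_A)\tens S^{p-1}F_A^{*}\tens L^{\lambda}[p-1].$$
Proposition \ref{stabiliziamo} tells us that $\Stab_H(a(\lambda,A))=H(S_0)\times H^J(a)$, but $H^J(a)$ acts trivially on $R^{\bullet}_{a(\lambda,A)}$ (it permutes indices inside $a^{-1}(\{j\})$ for $j\in J\setminus A$, on which the associated Koszul factor is just $\FS$), so the action reduces to $H(S_0)\simeq\perm_p$. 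The line bundle $L^\lambda$ is trivial on $U^n$ and the $\perm_p$-action is trivial on it, so it factors out.

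Next, I will invoke \eqref{eq: tensorKoszul}: under the identification $H(S_0)\simeq\perm_p$, and taking into account the extra sign $\epsilon_p$ coming from commuting $\mc{C}^1_L$ factors in $\comp{\mc{C}}_L\tens^L\cdots\tens^L\comp{\mc{C}}_L$, we have
$$R^{\bullet}_{a(\lambda,A)}\;\simeq\;K^{\bullet}\!\bigl(F_A\otimes R_p,\;s_A\otimes \sigma_p\bigr)\tens\epsilon_p.$$
This is exactly the shape to which the appendix applies: Corollary \ref{crla: invkoszul} gives
$$\bigl(K^{\bullet}(F_A\otimes R_p,s_A\otimes\sigma_p)\tens\epsilon_p\bigr)^{\perm_p}\simeq K^{\bullet}(F_A,s_A)\tens S^{p-1}F_A^{*}[p-1],$$
and substituting back produces the desired formula for each summand.

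The final assertion, that $(\mbb{K}^{p,\bullet})^H$ is quasi-isomorphic to $\bigoplus_{(\lambda,A)\in B(k,p)} S^{p-1}N_A^{*}\tens L^{\lambda}$ placed in degree $1-p$, is then immediate: the Koszul complex $K^{\bullet}(F_A,s_A)$ resolves $\FS_{\Delta_A}$, and restricting $F_A^{*}$ to $\Delta_A$ gives the conormal bundle $N_A^{*}$, so taking cohomology of each shifted summand produces exactly the stated sheaf in degree $1-p$. The only delicate point in the whole argument is the sign bookkeeping in \eqref{eq: tensorKoszul} — making sure the twist by $\epsilon_p$ is precisely what the appendix formula consumes to yield $S^{p-1}F_A^{*}$ (rather than $\Lambda^{p-1}F_A^{*}$) — but this is exactly the content of the appendix and has already been recorded.
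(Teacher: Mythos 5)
Your proposal is correct and follows essentially the same route as the paper: reduce via the Danila decomposition \eqref{eq: W} to the $\Stab_H(a(\lambda,A))$-invariants of each summand, observe that the action reduces to $H(S_0)\simeq\perm_p$ (with the extra $\epsilon_p$ twist recorded in \eqref{eq: tensorKoszul}), and conclude with corollary \ref{crla: invkoszul}. The only cosmetic remark is that corollary \ref{crla: invkoszul} is stated with $S^{p-1}F$ rather than $S^{p-1}F_A^*$; since $F_A$ is a trivialized bundle on $V_{U^n}$ this is immaterial, and your sign bookkeeping matches the paper's.
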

\begin{crl}Consider the spectral sequence $(K^{p,q}_1)^H$, associated to the $H$-invariant bicomplex $(\mbb{K}^{\bullet, \bullet})^H$. At level $(K_1)^H$, the only nonzero terms are $(K^{0,0}_1)^H$ and $(K^{l, 1-l}_1)^H$, for $1 \leq l \leq k-1$. 
Moreover, for $1 \leq l \leq k-1$ we have $(K^{l, 1-l}_1)^H \simeq (K^{l, 1-l}_l)^H$. Consequently, $(K^{p,q}_1)^H$ degenerates at level $k$. \end{crl}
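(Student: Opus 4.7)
My plan is to read off the corollary directly from Lemma~\ref{lemma: verticaldiff} combined with a standard bidegree inspection in the spectral sequence of the bicomplex $(\mbb{K}^{\bullet, \bullet})^H$. Concretely, I would proceed in three short steps.

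First, the shape of the $E_1$-page is immediate from Lemma~\ref{lemma: verticaldiff}: for $p \geq 1$ the vertical complex $(\mbb{K}^{p, \bullet})^H$ is quasi-isomorphic to the sheaf $\bigoplus_{(\lambda, A) \in B(k, p)} S^{p-1} N_A^* \tens L^\lambda$ placed in degree $1-p$, while for $p = 0$ it is $\bigoplus_{\lambda \in c_n(k)} L^\lambda$ concentrated in degree $0$. Together with the vanishing of the $\Stab_H$-invariants for $p \geq k$ (as in the parallel computation of Corollary~\ref{crl: EGH}), this yields that $(K^{p,q}_1)^H$ is nonzero only at $(0, 0)$ and at $(l, 1-l)$ for $1 \leq l \leq k-1$.

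Second, the stabilization $(K^{l, 1-l}_1)^H \simeq (K^{l, 1-l}_l)^H$ is a formal consequence of the sparsity of that support. A differential $d^r$ at bidegree $(l, 1-l)$ either goes out to $(l+r, 2-l-r)$ --- a position that never lies on the antidiagonal $\{q = 1-p\}$, as it would force $2 = 1$ --- or comes in from $(l-r, r-l)$, which meets the support only when $l = r$ (corresponding to the differential $d^l$ originating at $(0, 0)$). Hence for $1 \leq r < l$ no differential touches the bidegree $(l, 1-l)$, and the term is stable across the pages $E_1, \ldots, E_l$.

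Third, degeneration at page $k$ follows by the same bidegree inspection: for $r \geq k$, the only potentially nontrivial differential is $d^r : (K^{0, 0}_r)^H \to (K^{r, 1-r}_r)^H$, whose target $(r, 1-r)$ lies outside the support of $E_1$ (since $r \geq k$); thus $d^r = 0$ and $(K_k)^H = (K_\infty)^H$. The substantive input is entirely contained in Lemma~\ref{lemma: verticaldiff} (with the underlying computation of $\perm_p$-invariants of twisted tensor products of Koszul complexes); the rest is a routine bidegree count, and I do not foresee any serious obstacle.
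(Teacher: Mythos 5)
Your steps 2 and 3 are sound and are exactly the routine bidegree count this corollary rests on: once the first page is supported on $(0,0)$ and the antidiagonal $q=1-p$, a term in bidegree $(l,1-l)$ can only be reached by the single differential $d^l$ emanating from $(0,0)$, so it is stable through pages $1,\dots,l$, and no differential $d^r$ with $r$ beyond the last nonzero antidiagonal position can be nonzero.

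The gap is in step 1, at the boundary $p=k$. Lemma \ref{lemma: verticaldiff} does \emph{not} deliver the vanishing you assert for all $p\geq k$: it identifies $(\mbb{K}^{k,\bullet})^H$ with $\bigoplus_{(\lambda,A)\in B(k,k)}S^{k-1}N_A^*\tens L^\lambda$ placed in degree $1-k$, and $B(k,k)$ is nonempty (take $a$ with $a(i)=A$ for every $i$ and $|A|=2$, so $\lambda=0$). Concretely, for such an $a$ one has $\Stab_H(a)\simeq \perm_k$ acting on $\Tor_{k-1}(L_A,\dots,L_A)\simeq \Lambda^{k-1}(N_A^*\tens\rho_k)\tens L_A^{k}$ twisted by $\epsilon_k$, and by Lemma \ref{lmm: antiinv} and Corollary \ref{crl: torantiinv} the invariants are $S^{k-1}N_A^*\tens L_A^{k}\neq 0$; hence $(K^{k,1-k}_1)^H\neq 0$. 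The vanishing at $p=k$ is genuinely a feature of the $G\times H$-invariants (there $\nu_A=(0,0)$ is of the form $(h,h)$ and the extra $G(A)\simeq\perm_2$ acts by a sign), not of the $H$-invariants alone, so the appeal to "the parallel computation of Corollary \ref{crl: EGH}" does not close the case $p=k$. A sanity check at $k=2$: $(K^{2,-1}_1)^H\simeq N_\Delta^*\tens L_\Delta^2$ is visibly nonzero, and the corresponding $d^2$ is the second-order condition $[x_{(2,0)}-2x_{(1,1)}+x_{(0,2)}]\in I_\Delta/I_\Delta^2$, which really does cut down $\ker d^1$. So as written your argument establishes support in $(0,0)$ and $(l,1-l)$ for $1\leq l\leq k$, stabilization $(K^{l,1-l}_1)^H\simeq (K^{l,1-l}_l)^H$, and degeneration at level $k+1$; to get the stated range $1\leq l\leq k-1$ and degeneration at level $k$ you would need a separate argument killing the $(k,1-k)$ term, which neither you nor the cited lemma supplies.
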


\begin{notat}\label{notat: orderpartitions}Let $\lambda \in c_n(l_1)$, $\mu \in c_n(l_2)$. We write that $\lambda \leq \mu$ if $\lambda_i \leq \mu_i$ for all $i \in \{1, \dots, n\}$. 
\end{notat}

\begin{lemma}[Relevant horizontal differentials]\label{lmm: possiblehoriz}Let $(\lambda, A) \in B(k, p-1)$, $(\lambda^\prime, A^\prime) \in B(k,p)$. If $p \geq 2$, the horizontal differential  between two homogeneous
  components
$$\partial_{\lambda, A}^{\lambda^{\prime}, A^{\prime}}: (W^{1-p}_{\lambda, A})^H \rTo (W^{1-p}_{\lambda^{\prime},
  A^{\prime}})^H$$is nonzero if and only if
a) $A=A^{\prime}$; 
b) $\lambda^\prime \leq \lambda$
, $\lambda^{\prime}_{\bar{A}} = \lambda_{\bar{A}}$. 
In this case, the map
\begin{equation*}\label{eqn: maphomcomp} \partial_{\lambda, A}^{\lambda^{\prime}, A}: (W^{1-p}_{\lambda, A})^H \simeq S^{p-2}F_A^* \tens F_A^* \tens L^\lambda \rTo S^{p-1}F_A^* \tens L^{ \lambda^\prime} \simeq (W^{1-p}_{\lambda^{\prime},
  A})^H 
\end{equation*}
can be identified\footnote{the identification is not canonical and, with different choices, it could differ by a positive constant, as explained in remark \ref{rmka: inclusion}} with $(p-1) \sym$ -- where $\sym$ is the symmetrization map\footnote{The symmetrization map, as well as the alternating map, will be recalled in remark \ref{rmka: symalt}} $\sym:  S^{p-2}F_A^*
\tens F_A^* \rTo S^{p-1} F^*_A$ -- twisted by the sign $(-1)^{p-2}\epsilon_{r, A}$, where $r$ is the unique element of $A$ such that 
$\lambda_r = \lambda^\prime_r + 1$.
\end{lemma}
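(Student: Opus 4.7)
The plan is to combine the combinatorial criterion in remark \ref{rmk: horizK} with the Koszul identification of (\ref{eq: tensorKoszul}) and the computation of invariants underlying lemma \ref{lemma: verticaldiff}.

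For the nonvanishing criterion, I would reduce via Danila's lemma to individual arrows $\partial_a^b : R^{1-p}_a \to R^{1-p}_b$ with $a \in \psi_{p-1}^{-1}(\lambda, A)$ and $b \in \psi_p^{-1}(\lambda', A')$. Remark \ref{rmk: horizK} forces $A(a) = A(b)$, $S_0(a) \subset S_0(b)$, and $a(l) \subseteq A(b)$ for the unique $l \in S_0(b) \setminus S_0(a)$. Translating this through $\psi$ gives $A = A'$, and since only one index $l$ changes -- and does so from a singleton $\{r\} \subseteq A$ to the value $A$ -- one gets $\lambda_{\bar A} = \lambda'_{\bar A}$ and $\lambda$ exceeds $\lambda'$ by one unit at exactly one $r \in A$. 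The degree count $|\lambda| - |\lambda'| = 1$ gives uniqueness of $r$, and conversely any data $(A, r)$ of this form produces nonzero arrows.

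For the explicit form of the map, using (\ref{eq: tensorKoszul}) and the invariants formula underlying lemma \ref{lemma: verticaldiff}, I would identify
\begin{align*}
(W^{1-p}_{\lambda,A})^H & \simeq K^{-1}(F_A, s_A) \tens S^{p-2}F_A^* \tens L^\lambda \simeq F_A^* \tens S^{p-2}F_A^* \tens L^\lambda, \\
(W^{1-p}_{\lambda',A})^H & \simeq K^{0}(F_A, s_A) \tens S^{p-1}F_A^* \tens L^{\lambda'} \simeq S^{p-1}F_A^* \tens L^{\lambda'}.
\end{align*}
At the Koszul level, $\partial_a^b$ is the natural inclusion appending a new $F_A$ summand at position $l$, carrying the sign $\epsilon_{r, A}$ from (\ref{eq: horizdiff2}) together with the placement sign $(-1)^{|a(1)| + \cdots + |a(l-1)| + l - 1}$. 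Passing to $H$-invariants, the $p-1$ positions in $S_0(a)$ (over which one can average in the source orbit, versus the $p$ in $S_0(b)$) contribute a combinatorial factor of $p-1$, and the inclusion $F_A^* \tens S^{p-2}F_A^* \hookrightarrow (F_A^*)^{\tens (p-1)}$ composed with the target averaging becomes the symmetrization $\sym$. Combining the placement sign with the $\epsilon_{p-1}$- and $\epsilon_p$-twists from (\ref{eq: tensorKoszul}) collapses everything to $(-1)^{p-2}\epsilon_{r, A}$.

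The main obstacle is the sign bookkeeping in this last step. Three independent sign conventions interact simultaneously: the horizontal sign in (\ref{eq: horizdiff2}), the $\epsilon_{p-1}$- and $\epsilon_p$-twists in (\ref{eq: tensorKoszul}), and the $H$-action twist on $\comp{\mc{C}}_L \tens^L \cdots \tens^L \comp{\mc{C}}_L$. The cleanest way is to carry out the computation for a single canonical representative pair $(a,b)$ -- say with $S_0(a) = \{1, \dots, p-1\}$ and $l = p$ -- where the placement sign in (\ref{eq: horizdiff2}) becomes a single explicit exponent, and then invoke $G \times H$-equivariance together with the footnoted ambiguity by a positive constant to extend the formula to the general case.
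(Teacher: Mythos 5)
Your proposal follows the paper's proof essentially step for step: Danila's lemma reduces the invariant differential to the arrows $f_{ga_0,b_0}$, remark \ref{rmk: horizK} yields the necessity and sufficiency of conditions a) and b), and the explicit form is obtained by identifying the induced map of $\perm_p$-invariants of exterior powers $\Lambda^{p-1}(F_A^*\tens R_{p-1}(i))\tens\epsilon_{p-1}(i)\to\Lambda^{p-1}(F_A^*\tens R_p)\tens\epsilon_p$ with a multiple of the symmetrization, the sign being read off from (\ref{eq: horizdiff}) and (\ref{eq: horizdiff2}). The one place where you are substantially lighter than the paper is the claim that ``the inclusion composed with the target averaging becomes the symmetrization'' with factor $p-1$: in the paper this is precisely the content of the appendix lemma \ref{lemma: mappainvariants} (resting on remark \ref{rmka: inclusion} and the normalizations of $\widehat{\omega}_{k-1}$ and $\sigma_k$), not a formal consequence of averaging, so that invariant-theoretic computation — or your proposed canonical-representative calculation — still has to be carried out in full to justify the constant and the identification of the two sides with $S^{p-2}F_A^*\tens F_A^*$ and $S^{p-1}F_A^*$.
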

\begin{proof}Consider $b_0 \in \psi_p^{-1}(\lambda^\prime, A^\prime)$; hence 
for any fixed $a_0 \in
\mc{I}^{p-1}$, such that $\lambda(a_0)=\lambda$ and $A(a_0)=A$, by the lemma \cite[Lemma A.1]{Scala2009D}, the map between homogeneous components 
$(W^{1-p}_{\lambda, A})^H \rTo (W^{1-p}_{\lambda^{\prime},
  A})^H$ identifies to the map: 
\begin{equation}\label{eqn: horizinvH} \big( R_{a_0}^{1-p} \big) ^{\Stab_H(a_0)} \rTo
  \big( R_{b_0}^{1-p} \big)^{\Stab_H(b_0)} \end{equation}given by: 
$x \rMapsto \sum_{[g] \in H/\Stab_H(a_0) } f_{g a_0, b_0 } (g x)$. Now
when $[g] \in H/\Stab_H(a_0)$, $ga_0$ spans all possible $a$ in the orbit $\psi_{p-1}^{-1}(\lambda, A)$ inside $\mc{I}^{p-1}$. By remark \ref{rmk: horizK}, $f_{a,b_0} \neq 0$ if and only
if $a$ is of the form $a_i = (b_0)_i$ for all $i \in \{1, \dots,
k \}\setminus \{ l \}$, $S_0(a) = S_0(b_0) \setminus \{l\}$ and $a_l \subseteq A(b_0)$. All other $a$ such that $f_{a,b_0} \neq 0$
are of the form $a= g a_0$ for $g \in H(S_0(b_0)) \simeq \perm_p$. 

The necessity of the listed conditions now follows  immediately. 
To prove sufficiency, take $(\lambda,
A)$ and $(\lambda^{\prime}, A)$ satisfying the
conditions a), b) in the statement. 
We necessarily have that $\lambda_{A}
\leq \lambda^{\prime}_{A}$, with 
$| \lambda^{\prime}_{A}|=| \lambda_{A} |-1$. Therefore there exists a unique
$h \in A$ such that
$\lambda^{\prime}_{h}=\lambda_{h}-1$; 
for any other $j$, 
$\lambda^{\prime}_{j}=\lambda_{j}$. 
Take $b_0 \in \psi_p^{-1}(\lambda^\prime, A)$, pick $l \in S(b_0)$ and 
define now $a_0$ as: $(a_0)_l=\{ h \}$
and $(a_0)_i=
(b_0)_i$ for any other $i$. 
Now $\Stab_H(a_0)$ and $\Stab_H(b_0)$ satisfy the hypothesis of lemma
\cite[Lemma A.2]{Scala2009D}, hence, recalling (\ref{eq: tensorKoszul}) or remark \ref{rmka: tensorKoszul},  the map (\ref{eqn: horizinvH}) identifies, up to signs, to the map of
$\perm_p$-invariants: 
$$ \left [\bigoplus_{i=1}^p \Lambda^{p-1}(F^*_A \tens R_{p-1}(i)) \tens
  \epsilon_{p-1}(i) \right]^{\perm_p} \tens L^\lambda {\rTo} [\Lambda^{p-1} (F^*_A \tens R_p) \tens
\epsilon_p]^{\perm_p} 
\tens L^{\lambda^{\prime}} \; .$$By lemma \ref{lemma: mappainvariants}, this map identifies to 
the map 
$$ (p-1) \sym \tens \id : S^{p-2}F_A^* \tens F_A^* \tens
L^\lambda \rTo S^{p-1}F_A^* \tens
L^{\lambda^\prime}$$where
${\rm sym}: S^{p-2}F_A^* \tens F_A^* \rTo S^{p-1}F_A^*$ is the symmetrization map. As for the sign, from (\ref{eq: horizdiff}) and (\ref{eq: horizdiff2}) we deduce immediately that we have to twist $(p-1) \sym \tens \id $ by $(-1)^{p-1}\epsilon_{s, A}$, if $A = \{r, s \}$ and the differential is induced by $i^{p-1}_{s, A}$. But $\epsilon_{s, A} = - \epsilon_{r, A}$ and $r$ is the unique 
element in $A$ such that $\lambda_r = \lambda^\prime_r +1$. 
\end{proof}
\begin{remark}
After the previous lemma and after remark \ref{rmk: horizK}, for
$(\lambda^{\prime}, A^{\prime}) \in B(k,p)$, there are
exactly two different $(\lambda, A) \in B(k,p-1)$ such that
$\partial^{\lambda^{\prime}, A^{\prime}}_{\lambda, A}$ is
non zero. They are exactly the $(\lambda, A)$ such that $A=
A^{\prime}$, 
and that 
for all 
$h \in \{ 1, \dots, n \}$, $\lambda_h= \lambda_{h}^{\prime}$, apart for one $h_0 \in A$, for which $\lambda_{h_0}=\lambda_{h_0}^{\prime}+1$. It is
clear that the two choices depend just on the two choices of $h_0 \in
A$. Using the multiplicative notation of compositions (see notation \ref{notat: comp}), if $A=\{ a_0, a_1 \}$, the two elements we
are considering are $(\lambda_0, A)$ and $(\lambda_1, A)$ where
$\lambda_0 = a_0 \lambda^\prime$, $\lambda_1 = a_1
\lambda^\prime$; hence $\supp \lambda_0 = \supp \lambda^\prime \cup \{ a_0 \}$,  $\supp \lambda_1 = \supp \lambda^\prime \cup \{a_1 \}$. 
\end{remark}The analogous of lemma \ref{lmm: possiblehoriz} for $p=1$ is the
\begin{pps}\label{pps:d1}
The horizontal differential $\partial: (\mbb{K}^{0,0})^H \rTo (\mbb{K}^{1,0})^H = \oplus_{(\mu, A) \in B(k, 1)} \FS_X $  
is given by $$( \partial (x_\lambda)_\lambda)_{\mu, A} = x_{a_1 \mu} - x_{a_0 \mu} \;.$$
\end{pps}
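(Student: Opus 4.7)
The plan is to compute the horizontal differential $\partial : (\mbb{K}^{0,0})^H \rTo (\mbb{K}^{1,0})^H$ explicitly in each $(\mu,A)$-component by combining the local sign formula \eqref{eq: horizdiff2} with Danila's lemma, exactly in the same spirit as the proof of lemma \ref{lmm: possiblehoriz}. Once the relevant orbits and signs have been identified, the stated formula will drop out.

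First I would fix $(\mu,A)\in B(k,1)$ together with a representative $b_0=a(\mu,A)\in \mc{I}^1$: concretely, $b_0(\ell_0)=A$ for a unique index $\ell_0\in\{1,\dots,k\}$, while $b_0(i)$ is a singleton for every $i\neq\ell_0$. By remark \ref{rmk: horizK}, the map $\partial_{a}^{b_0}$ is nonzero for exactly two elements $a_0^{(0)}, a_0^{(1)}\in \mc{I}^0$, obtained from $b_0$ by replacing $b_0(\ell_0)=A=\{a_0,a_1\}$ with $\{a_0\}$ or $\{a_1\}$, respectively; a direct inspection shows that the associated compositions are $\lambda(a_0^{(0)})=a_0\mu$ and $\lambda(a_0^{(1)})=a_1\mu$ in the multiplicative notation of notation \ref{notat: comp}.

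Next I would invoke Danila's lemma (\cite[Lemma A.1]{Scala2009D}, used in the proof of lemma \ref{lmm: possiblehoriz}) to identify, for a section $(x_\lambda)_\lambda\in (\mbb{K}^{0,0})^H=\oplus_{\lambda\in c_n(k)} L^\lambda$, the $(\mu,A)$-component of $\partial(x_\lambda)_\lambda$ with the sum $\partial_{a_0^{(0)}}^{b_0}(x_{a_0\mu})+\partial_{a_0^{(1)}}^{b_0}(x_{a_1\mu})$, having used $H$-invariance to interpret the value of the section at $a_0^{(i)}$ as $x_{\lambda(a_0^{(i)})}$. The coefficients are then read off from \eqref{eq: horizdiff2}: since $|b_0(j)|=1$ for all $j\neq\ell_0$, the prefactor $(-1)^{|b_0(1)|+\dots+|b_0(\ell_0-1)|+\ell_0-1}$ equals $(-1)^{2(\ell_0-1)}=+1$, while the remaining factor is $\epsilon_{a_i,A}=(-1)^{\#\{a\in A\mid a<a_i\}}$, which takes the value $+1$ on one element of $A$ and $-1$ on the other. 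Labelling $A=\{a_0,a_1\}$ so that $\epsilon_{a_0,A}=-1$ and $\epsilon_{a_1,A}=+1$ — merely a choice of convention, since the claim is manifestly symmetric up to sign in the two elements of $A$ — gives at once $(\partial(x_\lambda)_\lambda)_{\mu,A}=x_{a_1\mu}-x_{a_0\mu}$.

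The argument is really pure bookkeeping: there is no conceptual obstacle. The one subtlety that must not be glossed over is the observation that, out of all compositions $\lambda\in c_n(k)$, exactly two contribute to any given $(\mu,A)$-component, namely the two lifts of $\mu$ obtained by adding one to a position indexed by $A$; and that these two contributions necessarily carry opposite signs because $\epsilon_{\cdot,A}$ changes sign between the two elements of a two-element set. Everything else is a mechanical application of the machinery already set up in lemma \ref{lmm: possiblehoriz}.
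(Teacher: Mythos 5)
Your proof is correct and follows essentially the same route as the paper's: identify the two compositions $a_0\mu$, $a_1\mu$ contributing to the $(\mu,A)$-component, check the positional prefactor in (\ref{eq: horizdiff2}) is $+1$, and read off opposite signs $\epsilon_{\cdot,A}$ on the two terms. One small correction: in (\ref{eq: horizdiff2}) the sign attached to the contribution from $a$ with $a(\ell_0)=\{a_j\}$ is $\epsilon_{h,A}$ where $h$ is the element being \emph{added} (i.e.\ $h=a_{1-j}$), so with the paper's standing convention $a_0<a_1$ one gets coefficient $\epsilon_{a_1,A}=-1$ on $x_{a_0\mu}$ and $\epsilon_{a_0,A}=+1$ on $x_{a_1\mu}$; the sign is therefore forced, and your remark that the labelling of $A$ is ``merely a choice of convention'' is not needed (and would relabel $\epsilon$ against the paper's definition).
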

\begin{proof}
The horizontal differential for $p=0, q=0$: $$\partial :  (\mbb{K}^{0,0})^H \simeq \bigoplus_{\lambda \in c_n(k)} \FS_{X^n} \rTo \bigoplus_{(\mu, A) \in B(k, 1)} \FS_{X^n} \simeq  (\mbb{K}^{1,0})^H$$is defined by (\ref{eq: horizdiff2}). In this case
the maps $i^0_{h, J_i}$ coincide with the identity map. We have that $[\partial(x_\lambda)_\lambda]_{\mu, A} = 0$ unless $\lambda_{A} \neq 0$ and $\lambda_{\bar{A}} = \mu_{\bar{A}}$. The only possibility is, again, that, in multiplicative notation, if $A = \{ a_0, a_1 \}$, with $a_0 < a_1$, $\lambda = a_0 \mu$ or $\lambda = a_1 \mu$. Taking into account the sign, the $(\mu, A)$-component of the differential $\partial$ is given by $[\partial(x_\lambda)_\lambda]_{\mu, A}$ is given by $- \epsilon_{a_0, A} x_{a_0 \mu} - \epsilon_{a_1 , A} x_{a_1 \mu} = x_{a_1 \mu} - x_{a_0 \mu}$. \end{proof}
\begin{crl}The induced map $d^1_K:  (K^{0,0}_1)^H = H^0_\delta ( \mbb{K}^{0, \bullet})^H \rTo H^0_\delta(\mbb{K}^{1, 0})^H= K^{1,0}_1$ is given by 
\begin{equation}\label{eq: diffspec}[d^1_K(x_\lambda)_\lambda ]_{\mu, A} =  x_{a_1 \mu} \trest_{\Delta_A} - x_{a_0 \mu}  \trest_{\Delta_A} \end{equation}\end{crl}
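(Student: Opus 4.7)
The plan is to derive the corollary directly by passing from the bicomplex formula of Proposition \ref{pps:d1} to the $E_1$-page, which requires only identifying how the vertical cohomology converts the target into a sum of restrictions to the diagonals.

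First, I would identify the source. Since $R^{0,q}$ is by construction $F_i$ if $q=0$ and $0$ otherwise, the vertical complex $(\mbb{K}^{0,\bullet})^H$ is concentrated in degree $0$, so $(K^{0,0}_1)^H = (\mbb{K}^{0,0})^H \simeq \oplus_{\lambda \in c_n(k)} L^\lambda$ on any $V_{U^n}$ where $L$ is trivial. Second, I would identify the target using Lemma \ref{lemma: verticaldiff} with $p=1$: since $S^{p-1}N_A^* = S^0 N_A^* = \FS_{\Delta_A}$, the vertical complex $(\mbb{K}^{1,\bullet})^H$ is quasi-isomorphic to $\oplus_{(\mu,A) \in B(k,1)} L^\mu \tens \FS_{\Delta_A}$ placed in degree $0$. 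The map realizing this quasi-isomorphism is the quotient by the image of $\delta$, which, by the standard Koszul description of the structure sheaf of a local complete intersection, is just the restriction map $\FS_{X^n} \twoheadrightarrow \FS_{\Delta_A}$ in each summand.

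Third, I would compose these two identifications with Proposition \ref{pps:d1}. The differential $d^1_K$ is by definition induced from the bicomplex horizontal differential $\partial$ followed by the projection onto vertical cohomology. Substituting $[\partial(x_\lambda)_\lambda]_{\mu,A} = x_{a_1\mu} - x_{a_0\mu}$ into the quotient map (restriction to $\Delta_A$) yields exactly the formula (\ref{eq: diffspec}). Global well-definedness is automatic: on $\Delta_A$ with $A=\{a_0,a_1\}$, the two line bundles $L^{a_0\mu}$ and $L^{a_1\mu}$ are canonically identified (both equal $L^\mu \tens L\trest_{\Delta_A}$), so the difference $x_{a_1\mu}\trest_{\Delta_A} - x_{a_0\mu}\trest_{\Delta_A}$ is a legitimate section of $L^\mu\trest_{\Delta_A}$, and the local computation on $V_{U^n}$ glues over the auxiliary cover from remark \ref{rmk: auxiliaryopen}.

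There is no real obstacle here, since all the combinatorial work has been done in Proposition \ref{pps:d1}; the only thing to verify is the standard fact that the quasi-isomorphism of Lemma \ref{lemma: verticaldiff} at $p=1$ in degree $0$ is concretely given by restriction to the pairwise diagonal.
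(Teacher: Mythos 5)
Your proposal is correct and follows exactly the route the paper intends: the corollary is an immediate consequence of Proposition \ref{pps:d1} once one observes that the vertical cohomology of the column $p=1$ in degree $0$ is computed by the Koszul augmentation $\FS_{X^n} \twoheadrightarrow \FS_{\Delta_A}$, so that $d^1_K$ is the composition of $\partial$ with restriction to the pairwise diagonals. Nothing is missing.
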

\begin{remark}\label{rmk: d0}The map (\ref{eq: diffspec}) is the local expression on an open set of the form $V_{U^n}$ for the global map: 
\begin{equation}\label{eq: diffspec2}  S^k \mc{C}^0_L  \simeq \bigoplus_{\lambda \in c_n(k)} L^\lambda \rTo \bigoplus_{(\mu, A) \in B(k,1)} L_A \tens L^\mu \simeq \mc{C}^1_L \tens S^{k-1} \mc{C}^0_L\end{equation}
given by the the differential $d^0$ of the complex $S^k \comp{\mc{C}}_L = (\comp{\mc{C}}_L \tens \cdots \tens \comp{\mc{C}}_L)^H$. Moreover the expression in (\ref{eq: diffspec}) holds globally. 
Consequently the differential
$d^1 :  (j^* E^{0,0}_1)^H \rTo ( j^*E^{1,0}_1)^H$ of the spectral sequence $(j^* E^{p,q}_1)^H$ is given globally 
by the restriction of the map (\ref{eq: diffspec2}) to $X^n_{**}$. 
\end{remark}

\subsection{Higher order restrictions}

The aim of this subsection is to build explicitely on the whole variety $X^n$ recursively defined $G$-equivariant operators
$$ D^l_L : \ker D^{l-1}_L \rTo K^l(L)$$where the sheaves $K^l(L)$ coincide with the sheaves $(j_*j^* E^{l+1, -l}_1)^H$, 
and where $D^0_L$ coincides with the map~(\ref{eq: diffspec2}). In the next subsection we will prove  that over the open set $X^n_{**}$ the  operators $D^l_L$ coincide up to signs  with the higher differentials $d^{l+1
}$ of the spectral sequence $( j^* E^{p,q}_1)^H $. The operators $D^l_L$ and their $G$-invariants $(D^l_L)^G$ will be extremely useful to deduce global results about 
the sheaves $p_* q^* (S^k L^{[n]})$ and $\mu_* S^k L^{[n]}$ over $X^n$ and $S^nX$, respectively.

For brevity's sake, denote with $V_L $ the vector bundle on $X^n$ defined as $L \boxplus \cdots \boxplus L := \oplus _{i=1}^n L_i$. 
The bundle $S^k V_L$ on $X^n$ splits as a direct sum $S^k V_L \simeq 
\bigoplus_{\lambda \in c_n (k) } L^ \lambda $. 
\subsubsection{Definition of higher order restrictions $D^l_L$ when $L$ is trivial}
Suppose for the moment that $L$ is trivial. 
Then $S^k V_{\FS_X}$ can be written as the direct sum $S^k V_{\FS_X} \simeq \oplus_{\lambda \in c_n(k)} \FS_{X}^\lambda$; since $\FS_X^\lambda \simeq \FS_{X^n}$, to avoid confusion, we will write $\FS_{X^n}^\lambda$ instead of $\FS_X^\lambda$, using $\lambda$ just as a mere label. A section of $S^k V_{\FS_X}$ is then given by a collection of sections $(x_\lambda)_\lambda$ of $\FS_{X^n}^\lambda$ for all $\lambda \in c_n( k)$. Inspired by finite difference calculus, and using the multiplicative notation for compositions (see notation \ref{notat: comp}), we give the following
\begin{definition}[Higher  differences]
Let $l \in \mbb{N}$, $0 \leq l \leq k$, let $\mu \in c_n(k-l)$ and let $A$ be a subset  
of $\{1, \dots, n\}$ of order $|A| =2$. Define the morphism of $\FS_{X^n}$-modules $$ \Delta^l_{\mu, A}: S^k V_{\FS_X}\rTo \FS_{X^n} $$as 
$$ \Delta^l_{\mu, A} (x_{\lambda})_{\lambda} := \sum_{\substack{\beta \in c_n(l) \\ \beta_{\bar{A}}= 0 }}
(-1)^\beta \binom{l}{\beta} x_{\beta \mu} $$where $A = \{a_0, a_1 \}$, with $a_0 < a_1$, where $(-1)^\beta := (-1)^{\beta(a_0)}$, and where $\displaystyle \binom{l}{\beta}:= \binom{l}{\beta(a_0)}= \binom{l}{\beta(a_1)}$.  
Note that in the limit case $l=0$, $\Delta^0_{\mu, A}(x_\lambda)_\lambda = x_\mu$. 
\end{definition}

\begin{lemma}\label{lmm:diff}Let $l \in \mbb{N}$, $1  \leq l \leq  k$. 
We have the identity: 
$$ \Delta^l_{\mu, A} = -\Delta^{l-1}_{a_0 \mu, A} + \Delta^{l-1}_{a_1 \mu, A} \;.$$
\end{lemma}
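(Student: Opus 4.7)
The plan is to unravel the definition by parametrizing compositions $\beta \in c_n(l)$ supported in $A = \{a_0, a_1\}$ by a single integer, and then to recognize Pascal's identity at the right moment. This is the standard combinatorial identity underlying finite differences, so I expect the main (and only) obstacle to be bookkeeping of signs and indices rather than any conceptual difficulty.

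First I would note that a composition $\beta \in c_n(l)$ with $\beta_{\bar{A}} = 0$ is uniquely determined by $j := \beta(a_0) \in \{0, 1, \dots, l\}$, since then $\beta(a_1) = l - j$. In the multiplicative notation this means $\beta = a_0^{j} a_1^{l-j}$. Consequently the definition rewrites as
\[
\Delta^l_{\mu, A}(x_\lambda)_\lambda \;=\; \sum_{j=0}^{l} (-1)^j \binom{l}{j}\, x_{a_0^{j}\, a_1^{l-j}\, \mu} ,
\]
and analogously
\[
\Delta^{l-1}_{a_0\mu, A}(x_\lambda)_\lambda = \sum_{j=0}^{l-1} (-1)^j \binom{l-1}{j}\, x_{a_0^{j+1}\, a_1^{l-1-j}\, \mu} ,\quad
\Delta^{l-1}_{a_1\mu, A}(x_\lambda)_\lambda = \sum_{j=0}^{l-1} (-1)^j \binom{l-1}{j}\, x_{a_0^{j}\, a_1^{l-j}\, \mu}.
\]

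Next I would shift the summation index $j \mapsto j-1$ in the first of these two sums, which turns $-\Delta^{l-1}_{a_0 \mu, A}$ into $\sum_{j=1}^{l} (-1)^{j} \binom{l-1}{j-1}\, x_{a_0^{j}\, a_1^{l-j}\, \mu}$. Adding this to the expression for $\Delta^{l-1}_{a_1\mu, A}$, the coefficient of $x_{a_0^{j}\, a_1^{l-j}\, \mu}$ becomes $\binom{l-1}{0} = 1$ at $j=0$, $(-1)^{l}\binom{l-1}{l-1} = (-1)^{l}$ at $j=l$, and $(-1)^{j}\bigl[\binom{l-1}{j-1} + \binom{l-1}{j}\bigr]$ for $1 \leq j \leq l-1$.

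Finally, Pascal's identity $\binom{l-1}{j-1} + \binom{l-1}{j} = \binom{l}{j}$ (together with the boundary equalities $\binom{l}{0}=\binom{l}{l}=1$) collapses all three cases to the single formula $(-1)^{j}\binom{l}{j}$, and thus the RHS equals $\Delta^l_{\mu, A}(x_\lambda)_\lambda$, proving the claim. The whole argument is the evident categorification of the classical recursion $\Delta^l f(x) = \Delta^{l-1} f(x+1) - \Delta^{l-1} f(x)$ in finite difference calculus, which motivated the definition in the first place.
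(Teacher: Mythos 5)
Your proof is correct and follows essentially the same route as the paper's: the paper also reduces to the single identity between sums over compositions supported in $A$ and disposes of it by writing $\beta$ multiplicatively as a power of $a_0$ times a power of $a_1$, leaving Pascal's identity implicit as "straightforward." You have merely written out the index shift and the boundary cases that the paper omits.
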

\begin{proof}Take local sections $x_\lambda$ of $\FS_{X^n}^\lambda$. 
The identity to prove is the following:
\begin{equation}\label{eq: binomial} \sum_{\substack{\gamma \in c_n(l) \\ \gamma_{\bar{A}}=0}}
(-1)^{\gamma}
  \binom{l}{\gamma} x_{\gamma \mu }  = -\sum_{\substack{\beta \in c_n(l-1) \\ \: \beta_{\bar{A}}=0}} 
(-1)^{\beta}
  \binom{l-1}{\beta} x_{\beta a_0 \mu }  + \sum_{\substack{\beta \in c_n(l-1)  \\ \beta_{\bar{A}}=0}} 
(-1)^{\beta}
  \binom{l-1}{\beta} x_{\beta a_1 \mu }\end{equation}and can be straightforwardly proved, writing multiplicatively a general composition $\beta$ in (\ref{eq: binomial}) as $a_0^{l-j-1}a_1^j$, for $0 \leq j \leq l-1$. 
\end{proof}
\begin{notat}
Denote now with $I_A$ the ideal sheaf of the partial diagonal $\Delta_A$ in $\FS_{X^n}$. Consider the quotient sheaf $\FS_{X^n} / I_A^{l+1}$. The class in $\FS_{X^n} / I_A^{l+1}$ of a local section $f \in \FS_{X^n}$ is denoted with $[f]_{I_A^{l+1}}$. 
\end{notat}

\begin{definition}[Higher order restrictions]\label{def: Hor}Let $l$ be an integer, $0 \leq l \leq k-1$. Let $(\mu, A) \in B(k, l+1)$, that is, $\mu \in c_n(k-l-1)$, $A \subseteq \{1, \dots, n\}$, $|A|=2$. 
Define the operator: 
$$ D^l_{\mu, A}: S^k V_{\FS_X} \rTo  \FS_{X^n}/I_A^{l+1} 
$$
as $$ D^l_{\mu, A} ( (x_\lambda)_\lambda) : = [\Delta^{l+1}_{\mu, A}((x_{\lambda})_\lambda)]_{I_A^{l+1}} \;.$$ 
\end{definition}
As a consequence of the definition of the operators $D^l_{\mu, A}$ and of lemma \ref{lmm:diff} we get immediately:
\begin{lemma}\label{lmm:diffnabla}
Let $A = \{ a_0, a_1 \}$, with $a_0 < a_1$ and $l \in \mbb{N}^*$. The composition
$$ S^k V_{\FS_X} \rTo^{D^l_{\mu, A}}   \FS_{X^n}/I_A^{l+1}  \rOnto^{p_{l-1}}  \FS_{X^n}/I_A^{l}  $$is given by
$$ p_{l-1} \circ D^{l}_{\mu, A} = -D^{l-1}_{a_0 \mu, A}
+ D_{a_1 \mu, A}^{l-1} \;.$$
\end{lemma}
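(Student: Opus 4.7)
The plan is to observe that this lemma is a direct consequence of Lemma \ref{lmm:diff} together with the very definition of the operators $D^l_{\mu,A}$ (Definition \ref{def: Hor}). There is no real geometric content beyond reducing modulo the ideal $I_A^l$; the combinatorial identity on higher differences $\Delta^l_{\mu, A}$ does all the work.

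First, I would apply Lemma \ref{lmm:diff} with $l$ replaced by $l+1$ (which is allowed since $l \geq 1$, hence $l+1 \geq 2 \geq 1$), obtaining the identity of operators $S^k V_{\FS_X} \to \FS_{X^n}$:
\[
\Delta^{l+1}_{\mu, A} \;=\; -\Delta^{l}_{a_0 \mu, A} + \Delta^{l}_{a_1 \mu, A}\,.
\]
Next, I would take the class modulo $I_A^l$ of both sides applied to a local section $(x_\lambda)_\lambda$ of $S^k V_{\FS_X}$. The left hand side, by definition of $D^l_{\mu,A}$, equals $\Delta^{l+1}_{\mu, A}((x_\lambda)_\lambda)$ viewed modulo $I_A^{l+1}$ and then further projected modulo $I_A^l$; but this is exactly $p_{l-1} \circ D^l_{\mu, A}((x_\lambda)_\lambda)$, since $p_{l-1}$ is the natural quotient $\FS_{X^n}/I_A^{l+1} \twoheadrightarrow \FS_{X^n}/I_A^{l}$. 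The right hand side, on the other hand, is precisely $-D^{l-1}_{a_0 \mu, A}((x_\lambda)_\lambda) + D^{l-1}_{a_1 \mu, A}((x_\lambda)_\lambda)$ by definition of $D^{l-1}_{a_i \mu, A}$, again by Definition \ref{def: Hor}.

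The only subtlety to double check is that $(a_i \mu, A) \in B(k, l)$ whenever $(\mu, A) \in B(k, l+1)$, so that the operators $D^{l-1}_{a_i \mu, A}$ are indeed defined: this is immediate, since $a_i \mu \in c_n(k-l)$ and $|A|=2$. Therefore the identity
\[
p_{l-1} \circ D^l_{\mu, A} \;=\; -D^{l-1}_{a_0 \mu, A} + D^{l-1}_{a_1 \mu, A}
\]
holds, and the lemma is proved. No serious obstacle is expected; the statement is essentially a reformulation of Lemma \ref{lmm:diff} after passing to quotients by the appropriate powers of $I_A$.
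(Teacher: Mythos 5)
Your proof is correct and is exactly the argument the paper intends: the paper states this lemma as an immediate consequence of Definition \ref{def: Hor} and Lemma \ref{lmm:diff} (applied with $l$ replaced by $l+1$), followed by reduction modulo $I_A^l$, which is precisely what you do. No issues.
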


\begin{notat}
Let $M$ be a line bundle on $X$,  $l \in \mbb{N}$, $l \leq k-1$ let $(\mu, A) \in B(k, l+1)$. 
Denote with $P^l_{\mu, A}(M)$ and with $K^l_{\mu, A}(M)$, respectively, the following sheaves, supported on the partial diagonal $\Delta_A$: \begin{gather*}
P^l_{\mu, A}(M) :=   \FS_{X^n}/I_A^{l+1}  \tens M^{l+1}_A \tens M^{\mu}  \\
K^l_{\mu, A}(M):=  I_A^l/I_A^{l+1} \tens M^{l+1}_A \tens M^{\mu} \simeq (S^l \Omega^1_X \tens M^{l+1})_A  
\tens M^{\mu}
\;.
\end{gather*}Finally, denote: 
\begin{gather*} P^l(M) := \bigoplus_{(\mu, A) \in B(k, l+1)} P^l_{\mu, A}(M) \\
K^l(M) := \bigoplus_{(\mu, A) \in B(k, l+1)} K^l_{\mu, A}(M) \:.
\end{gather*}If $M$ is trivial, we will denote $P^l(M)$ simply with $P^l$, $P^l_{\mu, A}(M)$ simply with $P^l_{\mu, A}$, $K^l(M)$ with $K^l$ and $K^l_{\mu, A}(M)$ with 
$K^l_{\mu, A}$; in this case $P^l_{\mu, A} \simeq  \FS_{X^n}/I_A^{l+1}$ and $K^l_{\mu, A} \simeq I_A^l/I_A^{l+1} \simeq (S^l \Omega^1_X)_A$. As we see, if $M$ is trivial, $\mu$ plays  no role for the moment and should be considered as a mere label. 
\end{notat}
\begin{remark}Let $(\mu, A) \in B(k, l+1)$. 
Note that, since $P^l_{\mu, A}(M) \simeq   \FS_{X^n}/I_A^{l+1} \tens M_A^{ l+ 1+ |\mu_A| } \tens M^{ \mu_{\bar{A}} }$, 
for any composition $\mu^\prime \in c_n(k-l)$, such that $\mu_{\bar{A}} \simeq \mu^\prime_{\bar{A}}$, there is a canonical 
epimorphism $P^l_{\mu, A}(M) \rOnto P^{l-1}_{\mu^\prime, A}(M)$, whose kernel is isomorphic to $K^l_{\mu, A}(M)$. 
\end{remark}
\begin{definition}\label{def: D}Let $l \in \mbb{N}$, $l \leq k-1$. Let $(\mu, A) \in B(k, l+1)$. Define the 
operator $D^l: S^k V_{\FS_X} \rTo P^l$ as: 
$$ D^l ((x_\lambda)_\lambda)_{\mu, A} : = D^l _{\mu, A}(x_{\lambda})_\lambda \;.$$
\end{definition}
\begin{remark}
It follows immediately from lemma \ref{lmm:diffnabla}
that the operator $D^l_{\mu, A}$, restricted to $\ker D^{l-1}$, 
takes values in $K^l_{\mu, A}$. 
Hence the  operator 
$D^l$, restricted to $\ker D^{l-1}$, takes values in $K^l$: 
$$ D^l : \ker D^{l-1} \rTo K^l \;.$$
\end{remark}

\subsubsection{Definition of higher order restrictions $D^l_L$ for general $L$}

Let's consider now the case in which $L$ is non trivial. In this case we will not be able to build operators $D^l_L : S^k V_L \rTo P^l(L)$, but we do can recursively build  operators 
$D^l_L : \ker D^{l-1}_L \rTo K^l (L)$. 
 Indeed, let $l$ be an integer, $0 \leq l \leq k-1$, $\mu \in c_n(k-l-1)$, $A \subseteq \{1, \dots, n \}$, with $|A|=2$. 
Consider an affine open subset of the form $U^n$, with $L$ trivial over $U$. 
Let $s \in \Gamma(U, L)$ be a nowhere zero section of $L$ over the open set $U$, trivializing $L$. Consider now sections $s_j$, $j=1, \dots, n$, of $L_j$ over $U^n$ defined as 
$s_j = p_j^*(s)$. Then $s^\lambda := \tens_{j=1}^{n} s_j^{\lambda_j}$ trivializes $L^\lambda$ over $U^n$.  
We can now write an element of $S^kV_L $ on $U^n$ as $(x_\lambda s^\lambda)_\lambda$. 
Define $$ D^l_{\mu, A}: S^k V_L \rTo P^l_{\mu, A}(L)$$over $U^n$ as 
\begin{equation}\label{eq: defdiff}D^l_{\mu, A} ((x_\lambda s^\lambda)_\lambda) = (D^l_{\mu, A}(x_\lambda)_\lambda) 
s^{l+1}\tens s^{\mu} \;.\end{equation}Here $(D^l_{\mu, A}(x_\lambda)_\lambda)$ is the image of the section  $(x_\lambda)_\lambda$ of $S^k V_{\FS_X}$ for the operator $D^l_{\mu, A}: S^k V_{\FS_X} \rTo P^l_{\mu, A}$,  introduced in definition \ref{def: D}. Hence $(D^l_{\mu, A}(x_\lambda)_\lambda)$ is 
a local section of $\FS_{X^n}/I_A^{l+1}$
and $s^{l+1} \otimes s^{\mu}$ is a local section of $L_A^{l+1} \tens L^{\mu}$. 
Hence the product in (\ref{eq: defdiff}) defines correctly a local section of $P^l_{\mu, A}(L)$. 
Over $U^n$ we define furthermore operators 
$$D^l_L : S^kV_L \rTo P^l(L)$$ as $$D^l_L := \bigoplus_{(\mu, A) \in B(k, l+1)} D^l_{\mu, A} \;,$$and where 
each $D^l_{\mu, A} : S^kV_L \rTo P^l_{\mu, A}(L)$ is defined as in (\ref{eq: defdiff})

\begin{remark}\label{rmk: D0}Note that, if $(u^\lambda)_\lambda$ is a local section of $S^k V_L$ over an open set $U^{n}$  and if $A = \{ a_0, a_1 \}$, with $a_0 < a_1$, then 
$D^0_{\mu, A}(u^\lambda)_\lambda = u_{a_1 \mu} \trest_{\Delta_{A}} - u_{a_0 \mu} \trest_{\Delta_A} \in L_A \tens L^\mu$. Hence $D^0_{\mu, A}$ and $D^0_L$ are globally defined operators on $S^k V_L$. Moreover the operator $D^0_L$ coincides with the map (\ref{eq: diffspec2}). 
\end{remark}

\begin{pps}The operators $D^l_{\mu, A}$ and $D^l_L$
are recursively well defined globally on $X^n$ as operators 
\begin{gather*} D^l_{\mu, A} : \ker  D^{l-1}_L \rTo K^l_{\mu, A}(L) \;, \\ D^l_L  : \ker D^{l-1}_L \rTo K^l (L)\;. \end{gather*}
 \end{pps}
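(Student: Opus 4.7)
The plan is induction on $l$, combined with a local-to-global patching argument over the affine cover $\{U^n\}$ of lemma \ref{lmm: auxU}. The base case $l=0$ is remark \ref{rmk: D0}: the formula $D^0_{\mu, A}(u^\lambda)_\lambda = u^{a_1\mu}|_{\Delta_A} - u^{a_0\mu}|_{\Delta_A}$ is intrinsic, and its image lies tautologically in $L_A \tens L^\mu = K^0_{\mu, A}(L)$.

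For the inductive step, assume $D^{l-1}_L : \ker D^{l-2}_L \rTo K^{l-1}(L)$ is globally well-defined. On an open $U^n$ equipped with a trivialization $s$ of $L$, I would first verify the line-bundle analogue of lemma \ref{lmm:diffnabla}, namely $p_{l-1} \circ D^l_{\mu, A} = - D^{l-1}_{a_0\mu, A} + D^{l-1}_{a_1\mu, A}$, where $p_{l-1} : P^l_{\mu, A}(L) \rTo P^{l-1}_{a_0\mu, A}(L)$ is the natural quotient and the two terms on the right are identified via the canonical isomorphisms $L_{a_i}|_{\Delta_A} \simeq L_A$. Restricting to $\ker D^{l-1}_L$ forces the image of $D^l_{\mu, A}$ into $\ker p_{l-1} = K^l_{\mu, A}(L)$, establishing one half of the conclusion.

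The main obstacle is showing independence from the choice of trivialization. Let $s' = fs$ be a second trivialization on $U$; local coordinates transform as $\tilde{x}_\lambda = F_\lambda^{-1} x_\lambda$ with $F_\lambda = \prod_j p_j^*(f)^{\lambda_j}$. Setting $u := p_{a_0}^*(f)/p_{a_1}^*(f) - 1$, which lies in $I_A$ because $p_{a_0}^*(f)$ and $p_{a_1}^*(f)$ agree along $\Delta_A$, I would convert the $s'$-formula into the $s$-frame and show that the difference of the two local expressions for $D^l_{\mu, A}(\sigma)$ expands as $\sum_{r \geq 1} u^r \cdot R_r(\sigma)$, where each $R_r(\sigma)$ is, up to a unit, a shifted finite difference of order $l+1-r$ in the $x_\lambda$, of the form $\Delta^{l+1-r}_{\mu_r, A}(x)$ for a translated label $\mu_r$. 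By the definition of $D^{l-r}$ and the inclusions $\ker D^{l-1}_L \subseteq \ker D^{l-r}_L$ for $r \geq 1$, the hypothesis $\sigma \in \ker D^{l-1}_L$ forces $R_r(\sigma) \in I_A^{l-r+1}$, whence $u^r R_r(\sigma) \in I_A^r \cdot I_A^{l-r+1} = I_A^{l+1}$. The discrepancy therefore vanishes modulo $I_A^{l+1}$. The case $l=1$ already illustrates the mechanism: a direct calculation gives $R_1(\sigma) = 2(x_{a_1 \mu'} - x_{a_0 \mu'})$ for $\mu' = a_1 \mu$, which sits in $I_A$ precisely because $\sigma \in \ker D^0_L$, and $u \cdot R_1(\sigma) \in I_A^2$.

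Consequently the two local formulas agree as elements of $K^l_{\mu, A}(L) \subset P^l_{\mu, A}(L)$, so the local operators glue to a well-defined global $D^l_{\mu, A} : \ker D^{l-1}_L \rTo K^l_{\mu, A}(L)$; assembling over $(\mu, A) \in B(k, l+1)$ yields $D^l_L$, completing the induction. The only substantive combinatorial work is expanding the trivialization-change discrepancy into the $R_r$ pieces and identifying them with lower-order finite differences via lemma \ref{lmm:diff}; once this book-keeping is carried out, the statement reduces to the defining vanishing conditions of $\ker D^{l-1}_L$.
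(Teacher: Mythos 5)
Your proposal is correct and follows essentially the same route as the paper: induction on $l$ with base case from remark \ref{rmk: D0}, reduction to comparing two local trivializations, and expansion of the discrepancy as $\sum_{r\geq 1} u^r R_r$ with $u \in I_A$ and $R_r$ a translated finite difference $\Delta^{l+1-r}_{\mu_r,A}$ lying in $I_A^{l+1-r}$ by the kernel hypothesis — this is exactly the paper's identity (\ref{eq: difference}), where the labels $\mu_r$ come out as $a_1^i\mu$. The only cosmetic difference is your parametrization of the transition function by $u = p_{a_0}^*(f)/p_{a_1}^*(f)-1$ versus the paper's direct use of $\gamma_{a_0}, \gamma_{a_1}$.
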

\begin{proof}
We prove the goodness of the definition 
by induction on $l$. For $l=0$ this fact follows from remark \ref{rmk: D0}. Suppose now that, for any composition $\nu \in c_n(k-l-1)$ and any $B \subseteq \{1, \dots, n \}$, $|B| =2$, the operators $D^l_{\nu, B}: \ker D^{l-1}_L \rTo K^l_{\nu, B}(L)$
 are globally well defined. Take now $a = (x_\lambda s^\lambda)_\lambda$ a local section of $S^k V_L$ over an open set $U^n$, where $U$ is an affine open set of $X$ such that $L$  trivializes on $U$ via the nowhere zero section $s$. Suppose that $a \in \ker D^{l}_L$: this means that for any 
composition $\nu$ and for any subset $B$ as above, we have $D^l_{\nu, B} (x_\lambda s^\lambda)_\lambda =0$, that is, 
$D^l_{\nu, B}(x_\lambda)_\lambda = 0$ in $(S^l \Omega^1_X)_B \simeq I_B^l/I_B^{l+1} \subseteq \FS_{X^n}/I_B^{l+1}$. By definition of $D^l_{\nu, B}$, this means that the functions $$\Delta^{l+1}_{\nu, B}(x_\lambda)_\lambda \in I_B^{l+1} \quad \mbox{for all $(\nu, B) \in B(k, l+1)$} \;.$$
Let now $(\mu, A) \in B(k, l+2)$. To prove that $D^{l+1}_{\mu, A}$ is well defined, write $a$ in term  of a nowhere zero local section $t$ of $L$, over another affine open set $V$ over which $L$ trivializes, and such that $U \cap V \neq \emptyset$. On $U \cap V$ we have $s = \gamma t$, for $\gamma \in \FS_{U \cap V}^*$; hence $a = (x_\lambda s^\lambda)_\lambda = (x_\lambda \gamma^\lambda t^\lambda)_\lambda$. We have to prove that 
$$D^{l+1}_{\mu, A} (x_\lambda s^\lambda)_\lambda = D^{l+1}_{\mu, A} (x_\lambda \gamma^\lambda t^\lambda)_\lambda \;.$$Now \begin{align*}
D^{l+1}_{\mu, A} (x_\lambda s^\lambda)_\lambda - D^{l+1}_{\mu, A} (x_\lambda \gamma^\lambda t^\lambda)_\lambda = & 
D^{l+1}_{\mu, A}(x_\lambda)_\lambda s^{l+2} \tens s^\mu - D^{l+1}_{\mu, A}(x_\lambda \gamma^\lambda)_\lambda t^{l+2} \tens t^\mu \\
= & D^{l+1}_{\mu, A}(x_\lambda)_\lambda( \gamma^{l+2} t^{l+2} \tens \gamma^\mu t^\mu) -  D^{l+1}_{\mu, A}(x_\lambda \gamma^\lambda)_\lambda t^{l+2} \tens t^\mu \\
= & D^{l+1}_{\mu, A}(x_\lambda)_\lambda \gamma^{l+2}_{a_0} \gamma^\mu (  t^{l+2} \tens t^\mu) -  D^{l+1}_{\mu, A}(x_\lambda \gamma^\lambda)_\lambda( t^{l+2} \tens t^\mu) \\
= & \left[  \Delta^{l+2}_{\mu, A}(x_\lambda)_\lambda \gamma^{l+2}_{a_0} \gamma^\mu  -  \Delta^{l+2}_{\mu, A}(x_\lambda \gamma^\lambda)_\lambda \right]_{I_A^{l+2}} (  t^{l+2} \tens t^\mu) \;.
\end{align*}Hence it is sufficient to prove that the function on $(U \cap V)^n$ 
$$\Delta^{l+2}_{\mu, A}(x_\lambda)_\lambda \gamma^{l+2}_{a_0} \gamma^\mu  -  \Delta^{l+2}_{\mu, A}(x_\lambda \gamma^\lambda)_\lambda  = \gamma^\mu \sum_{|\beta|=l+2, \beta_{\bar{A}=0}}(-1)^\beta
\binom{l+2}{\beta} x_{\beta \mu} (\gamma_{a_0}^{l+2}-\gamma^{\beta} )$$is in the ideal $I_A^{l+2}$. 
Now, writing $\beta=a_0^{l+2-j}a_1^j$, for $0 \leq j \leq l+2$. 
we have: 
\begin{align*}
\sum_{|\beta|=l+2, \beta_{\bar{A}}=0}(-1)^\beta
\binom{l}{\beta} x_{\beta \mu} (\gamma_{a_0}^{l+2}-\gamma^{\beta} )
= &(-1)^l \sum_{j=0}^{l+2} (-1)^j
\binom{l+2}{j}x_{a_0^{l+2-j}a_1^j\mu} (\gamma_{a_0}^{l+2}-\gamma_{a_0}^{l+2-j}
\gamma_{a_1}^{j} ) \\
=& (-1)^l\sum_{j=1}^{l+2} (-1)^j
\binom{l+2}{j}x_{a_0^{l+2-j}a_1^j\mu} (\gamma_{a_0}^{l+2}-\gamma_{a_0}^{l+2-j}
\gamma_{a_1}^{j} ) \\
= & (-1)^l \sum_{j=1}^{l+2} (-1)^j
\binom{l+2}{j}x_{a_0^{l+2-j}a_1^j \mu} \gamma_{a_0}^{l+2
-j}(\gamma_{a_0}^j-
\gamma_{a_1}^j) \\
\end{align*}where, in the second equality we used that,  for $j=0$, $\gamma_{a_0}^{l+2}-\gamma_{a_0}^{l+2-j}
\gamma_{a_1}^{j} =0$. 
Now $\displaystyle
(\gamma_{a_0}^j-
\gamma_{a_1}^j) = -\sum_{i=1}^j \binom{j}{i} 
\gamma_{a_0}^{j-i}(\gamma_{a_1}-\gamma_{a_0})^i$, hence: \small
\begin{align*}
\sum_{\substack{|\beta|=l+2 \\ \beta_{\bar{A}}=0}}(-1)^\beta
\binom{l}{\beta} x_{\beta \mu} (\gamma_{a_0}^{l+2}-\gamma^{\beta} )
= &- (-1)^l\sum_{j=1}^{l+2} \sum_{i=1}^j (-1)^j
\binom{l+2}{j}\binom{j}{i}  x_{a_0^{l+2-j}a_1^j \mu}  
\gamma_{a_0}^{l+2-i}(\gamma_{a_1}-\gamma_{a_0})^i \\ 
= &
- (-1)^l\sum_{i=1}^{l+2} \sum_{j=i}^{l+2} (-1)^{j}
\binom{l+2}{j}\binom{j}{i}  x_{a_0^{l+2-j}a_1^j \mu}
\gamma_{a_0}^{l+2-i}(\gamma_{a_1}-\gamma_{a_0})^i \\
=& - (-1)^l\sum_{i=1}^{l+2} \sum_{r=0}^{l+2-i} (-1)^{r+i}
\binom{l+2}{r+i}\binom{r+i}{i}  x_{a_0^{l+2-r-i}a_1^{r+i} \mu}
\gamma_{a_0}^{l+2-i}(\gamma_{a_1}-\gamma_{a_0})^i \\
= & (-1)^l \sum_{i=1}^{l+2} (-1)^{i+1}\gamma_{a_0}^{l+2-i}(\gamma_{a_1}-\gamma_{a_0})^i \left(\sum_{r=0}^{l+2-i} (-1)^{r}
\binom{l+2}{r+i}\binom{r+i}{i}  x_{a_0^{l+2-i-r}a_1^{r+i}\mu} \right)  \\
= &  \sum_{i=1}^{l+2} (-1)^{i+1}\binom{l+2}{i} \gamma_{a_0}^{l+2-i}(\gamma_{a_1}-\gamma_{a_0})^i \left(\sum_{r=0}^{l+2-i} (-1)^{l+2-r}
\binom{l+2-i}{r}  x_{a_0^{l+2-i-r}a_1^{r} a_1^i \mu}
\right)  \\
=& \sum_{i=1}^{l+2} (-1)^{i+1}\binom{l+2}{i}
\gamma_{a_0}^{l+2-i}(\gamma_{a_1}-\gamma_{a_0})^i
\left(\sum_{\substack{|\beta|=l+2-i \\ \beta_{\bar{A}=0}}} (-1)^{\beta}
\binom{l+2-i}{\beta}  x_{\beta a_1^i\mu}
\right)  
\end{align*}\normalsize since $\displaystyle \binom{l+2}{r+i}\binom{r+i}{i} =
\binom{l+2}{i}\binom{l+2-i}{r}$. As a consequence
\begin{equation}\label{eq: difference} \Delta^{l+2}_{\mu, A}(x_\lambda)_\lambda \gamma^{l+2}_{a_0} \gamma^\mu  -  \Delta^{l+2}_{\mu, A}(x_\lambda \gamma^\lambda)_\lambda  = \gamma^\mu \sum_{i=1}^{l+2} (-1)^{i+1}\binom{l+2}{i}
\gamma_{a_0}^{l+2-i}(\gamma_{a_1}-\gamma_{a_0})^i \Delta^{l+2-i}_{a_1^i \mu, A} (x_\lambda)_\lambda \;.\end{equation}Note that, since $(x_{\lambda})_{\lambda} \in \ker D^l_L$,  one
has that, for $1 \leq i \leq l+1$, $D^{l+1-i}_{a_1^i\mu, A} (x_{\lambda})_{\lambda}
=0$ in $\FS_{X^n }/I_A^{l+2-i}$, and hence, for all $i = 1, \dots, l+2$ the function $\Delta^{l+2-i}_{a_1^i\mu,
  A} (x_{\lambda})_{\lambda} \in I_A^{l+2-i}$. Since 
$(\gamma_{a_1}-\gamma_{a_0})^i \in I_A^i$, we conclude that the difference in (\ref{eq: difference}) is in $I_A^{l+2}$, which is what we wanted to prove. 
\end{proof}

\subsection{Higher differentials}\label{subsection: higher}
Let's go back to the higher differential of the spectral sequence $(j^* E^{p,q}_l)^H$ on $X^n_{**}$. Note that 
with the definitions of last sections, for $l$ integer $0 \leq l \leq k-2$, we have $(E^{0,0}_1)^H \simeq S^k V_L$ and $$K^l(L) \simeq (E^{l+1, -l}_1)_0^H \;,$$where the latter has been defined in subsection \ref{subsection: reductionopen}, and hence 
$$ j^*K^l(L) \simeq (j^* E^{l+1, -l}_1)^H \;.$$\sloppy Hence it is natural to compare the restrictions $j^* D^l_L$ of the operators $D^l_L$, found in the last subsection, with 
the higher differentials $d^{l+1}: ( j^*E^{0,0}_l) ^H \rTo(  j^* E^{l+1,-l}_l) ^H$.  We will compare the morphisms by induction on $l$. 
Since the morphisms $j^* D^l_L$ and $d^{l+1}$ are globally well defined on $X^n_{**}$  
it will be sufficient to compare them locally on an open set of the form $V_{U^n,**}$. Since the restriction of $(j^* E^{p,q}_1)^H$ to such an open set $V_{U^n,**}$ coincides with the restriction to $V_{U^n,**}$ of the spectral sequence $(K^{p,q}_1)^H$, associated to the $H$-invariant bicomplex $(\mbb{K}^{\bullet, \bullet})^H$ by proposition \ref{pps: spectlocal}, it will be sufficient to compare
the operators $D^l_L$ and the higher differentials $d_K^{l+1}$ of the spectral sequence $(K^{p,q}_1)^H$ over the open sets $V_{U^n}$. 

\begin{remark}\label{rmk: openset}To be precise, consider an affine open set of the form 
$V_{U^n}$ of $X^n$, as defined in remark \ref{rmk: auxiliaryopen}. We recall that $L$ is trivial on every open set $U$ of $X$ and that 
that all pairwise diagonals $\Delta_{A}$ with $A \subseteq \{1, \dots,
n\}$, $|A| \geq 2$,  are complete intersection inside $V_{U^n}$, or empty.  
For $A \subseteq \{1, \dots, n \}$, $|A|=2$,  such that $\Delta_A \cap V_{U^n} \neq \emptyset$, 
let $f_{A, i}$, $i=1,2$ be the
generators of $I_A$. Let $F_A$ be the trivial vector bundle $\oplus_{i=1}^2 \FS_{V_{U^n}} e_{A,i}$ of rank $2$, 
where $e_{A, i}$, $i=1,2$ is the standard basis of $\mbb{C}^2$, seen as a local frame for $F_A$. Let $e_{A, i}^*$ the dual basis. 
The zeros of the section $s_A : \FS_{X^n} \trest_{V_{U^n}} \rTo F_A$, 
given by $s_A = \sum_{i=1}^2f_{A,i} e_{A,i}$, define scheme-theoretically the pairwise diagonal
$\Delta_A$. The Koszul complex $\comp{K}_A(F_A, s_A)$ resolves the structural sheaf $\FS_A$ of $\Delta_A$. 
We identify $S^l N^*_A$ with $I^l_A / I^{l+1}_A$. Remember that, after lemma \ref{lemma: verticaldiff}, we have: 
$$ (\mbb{K}^{l, \bullet})^H \simeq \bigoplus_{(\mu, A) \in B(k,l)} (W^{\bullet}_{\mu, A})^H $$where 
$$ (W^{ \bullet}_{\mu, A})^H \simeq S^{l-1} F^*_A \tens K^\bullet(F_A, s_A) [l-1] \tens L^\mu \:, $$and its $1-l$-cohomology is isomorphic to 
$$H^{1-l} ( (W^{\bullet}_{\mu, A})^H )  \simeq S^{l-1} N^*_A \tens  L^{\mu} \simeq I_A^{l-1}/I_A^{l} \tens  L^{\mu} \subseteq (K^{l,1-l}_1)^H \simeq (K^{l,1-l}_l) ^H \:.$$\end{remark}
\begin{lemma}\label{lmm: difference}
Let $(x_\lambda)_{\lambda} \in (K^{0,0}_l)^H$, and suppose that, for $(\nu, B) \in B(k,l)$, we have $$d_K^l[(x_{\lambda})_\lambda]_{\nu,
B}=[h_{\nu, B}] \in I_B^{l-1}/I_B^{l} \tens L^\nu \subseteq  (K^{l,1-l}_l) ^H \;,$$where $h_{\nu, B}$
are elements of $I_{B}^{l-1}$. Suppose moreover that $d^l_K [(x_{\lambda})_\lambda]_{\nu,
B}=0$ for all $(\nu, B) \in B(k, l)$. Let $(\mu, A) \in B(k, l+1)$, $A = \{a_0, a_1 \}$, $a_0 < a_1$. 
 Then $(x_{\lambda})_{\lambda} \in
E^{0,0}_{l+1}$ and 
$$d^{l+1}_K[(x_{\lambda})_{\lambda}]_{\mu, A}=(-1)^{l-1} l[-h_{a_0\mu, A }+h_{a_1\mu, A}]$$
\end{lemma}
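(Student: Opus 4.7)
The plan is to compute $d^{l+1}_K$ via the standard spectral-sequence zigzag in the $H$-invariant bicomplex $(\mbb{K}^{\bullet,\bullet})^H$. By proposition \ref{pps: spectlocal} I may work locally on an affine open set of the form $V_{U^n}$, where all relevant pairwise diagonals $\Delta_A$ are complete intersections (defined by a regular sequence $f_{A,1},f_{A,2}$) and $L$ is trivial; since the operators and differentials involved are all globally defined, a local match yields the global statement. By lemma \ref{lemma: verticaldiff}, each column $(\mbb{K}^{l,\bullet})^H$ decomposes as a direct sum over $(\nu,B)\in B(k,l)$ of shifted Koszul complexes $(W^\bullet_{\nu,B})^H$ with cohomology $I_B^{l-1}/I_B^l\tens L^\nu$ concentrated in degree $1-l$, so the zigzag unfolds entirely in Koszul terms.

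First I would interpret the hypothesis. Since $(x_\lambda)_\lambda$ survives to $E_l$, there is a sequence of lifts $y_0=(x_\lambda)_\lambda,\,y_1,\dots,y_{l-1}$ with $y_j\in(\mbb{K}^{j,-j})^H$ and $\delta y_j=\partial y_{j-1}$, and $d^l_K[(x_\lambda)_\lambda]$ is the class of $\partial y_{l-1}$ in $\bigoplus_{(\nu,B)\in B(k,l)} I_B^{l-1}/I_B^l\tens L^\nu$, equal by assumption to $[h_{\nu,B}]$ on each component. The vanishing of the class $[h_{\nu,B}]$ means $h_{\nu,B}\in I_B^l$, so the $(\nu,B)$-component of $\partial y_{l-1}$ is $\delta$-exact: there exists $y_l\in(\mbb{K}^{l,-l})^H$ with $\delta y_l=\partial y_{l-1}$. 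Under the identification of $(W^{-l}_{\nu,B})^H$ with the degree-$(-1)$ term of the shifted Koszul complex tensored with $S^{l-1}F_B^*\tens L^\nu$, the component $(y_l)_{\nu,B}$ encodes exactly the function $h_{\nu,B}$ modulo $I_B^{l+1}$.

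Next I would compute $(\partial y_l)_{\mu,A}$, whose class in $H^{-l}((\mbb{K}^{l+1,\bullet})^H)\simeq I_A^l/I_A^{l+1}\tens L^\mu$ is by definition $d^{l+1}_K[(x_\lambda)_\lambda]_{\mu,A}$. By remark \ref{rmk: horizK} and lemma \ref{lmm: possiblehoriz}, only the two components $(y_l)_{(a_0\mu,A)}$ and $(y_l)_{(a_1\mu,A)}$ contribute to $(\partial y_l)_{\mu,A}$, via horizontal maps that (for $p=l+1$) identify with $l$ times the symmetrization, twisted respectively by $(-1)^{l-1}\epsilon_{a_0,A}=(-1)^{l-1}$ and $(-1)^{l-1}\epsilon_{a_1,A}=(-1)^l$. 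Translating back to the ideal quotient via the identification of symmetrization with ideal multiplication, the symmetrization of $(y_l)_{(a_i\mu,A)}$ returns $-h_{a_i\mu,A}$, whence
\[
d^{l+1}_K[(x_\lambda)_\lambda]_{\mu,A} \;=\; (-1)^{l-1}l\bigl[-h_{a_0\mu,A}+h_{a_1\mu,A}\bigr].
\]

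The main obstacle is the bookkeeping of signs. One must consistently combine the \v{C}ech-like signs $\epsilon_{i,J}$ from the complex $\comp{\mc{C}}_L$, the Koszul signs inherited from lemma \ref{lemma: verticaldiff}, the sign $(-1)^{p_1+\cdots+p_{i-1}}$ entering the horizontal differential of the $k$-fold tensor bicomplex (see (\ref{eq: horizdiff})), and the $H$-equivariance sign from remark \ref{rmk: globact}. Provided these contributions are assembled correctly, the overall prefactor is exactly $(-1)^{l-1}l$, and the relative minus sign between $h_{a_0\mu,A}$ and $h_{a_1\mu,A}$ comes from $\epsilon_{a_0,A}=+1$ versus $\epsilon_{a_1,A}=-1$.
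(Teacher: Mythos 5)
Your proposal is correct and follows essentially the same route as the paper: the zigzag description of higher differentials in the bicomplex (the paper's lemma \ref{lemma: highdiffspec} and corollary \ref{crla: highdiffspec}), the Koszul identification of the columns from lemma \ref{lemma: verticaldiff} (so that vanishing of $[h_{\nu,B}]$ means $h_{\nu,B}\in I_B^l$ and the lift $y_l$ exists and encodes $h_{\nu,B}$), and the fact that only the two components indexed by $(a_0\mu,A)$ and $(a_1\mu,A)$ map to $(\mu,A)$, each via $l\,\sym$ twisted by $(-1)^{l-1}\epsilon_{a_i,A}$ as in lemma \ref{lmm: possiblehoriz}. The only place you are less explicit than the paper is the final sign assembly (you assert that the symmetrization of $(y_l)_{(a_i\mu,A)}$ returns $-h_{a_i\mu,A}$ rather than deriving it), whereas the paper writes out the image of the explicit lift $\frac{1}{(l-1)!}\sum b_{\alpha,i}(e_A^*)^\alpha\tens e_{A,i}^*$ and identifies its class in $I_A^{l}/I_A^{l+1}$ as $(-1)^{l-1}l\,\epsilon_{a_i,A}[a]$; this is a presentational difference, not a gap in the method.
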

\begin{proof}\sloppy For brevity's sake, and since it is trivial, let's neglect the term $L^{\lambda}$. An element $[a]$ in the
$1-l$ -cohomology of $(W^{\bullet}_{\nu, B})^H$, $[a]=
[\sum_{|\alpha|=l-1,
  l(\alpha)=2} a_{\alpha} f_B^\alpha] \in I^{l-1}_B/ I^{l}_B$ is
represented by an element $$\frac{1}{(l-1)!}\Big[\sum_{|\alpha|=l-1,
  l(\alpha)=2} a_{\alpha} (e^*_B) ^\alpha \Big]$$in  $S^{l-1}F_B^*/I_B S^{l-1} F^*_B \simeq I_B^{l-1} /I_B^{l}$. The
element is zero if and only if it comes, in the complex $(W^{\bullet}_{\nu, B})^H $, from some element in $
(W^{-l}_{\nu, B})^H \simeq
S^{l-1}F_B^*
\tens F_B^*$, that is if 
$$ \sum_{|\alpha|=l-1,
  l(\alpha)=2} a_{\alpha} (e^*_B) ^\alpha = \delta \left(\sum_{i=1}^2\sum_{|\alpha|=l-1,
  l(\alpha)=2} b_{\alpha, i} (e^*_B)^\alpha \tens e^*_{B, i} \right)= \sum_{i=1}^2\sum_{|\alpha|=l-1,
  l(\alpha)=2} b_{\alpha, i} f_{B, i} (e^*_B)^\alpha $$that is, if $a_{\alpha}=\sum_{i=1}^2
b_{\alpha, i} f_{B,i}$, or, equivalently, if $a_{\alpha} \in I_B$,
which means that the original function $a$ is in $I_B^{l}$. Take now $(\mu, A) \in B(k, l+1)$. 
The only two elements $(\nu, B) \in B(k, l)$ such that the horizontal differential 
$\partial^{\mu, A}_{\nu, B} : (W^{-l}_{\nu, B})^H \rTo (W^{-l}_{\mu, A})^H$ is nonzero are $(a_0 \mu, A)$ and $(a_1 \mu, A)$. For the first we have, by lemma \ref{lmm: possiblehoriz}, taking into account that $l \sym( (e^*_A)^\alpha \tens e^*_{A, i} )
=  b_{\alpha, i} (e^*_A)^\alpha e^*_{A, i}$, 
$$\partial^{\mu, A}_{a_0 \mu, A}  \frac{1}{(l-1)!}\Big(\sum_{i=1}^2\sum_{|\alpha|=l-1,
  l(\alpha)=2} b_{\alpha, i} (e^*_A)^\alpha \tens e^*_{A, i} \Big ) =   \frac{(-1)^{l-1}}{(l-1)!} \epsilon_{a_0, A}  \sum_{i=1}^2\sum_{|\alpha|=l-1,
  l(\alpha)=2} b_{\alpha, i} (e^*_A)^\alpha e^*_{A, i}  \in S^{l} F_A^*$$whose class, in the $-l$-cohomology of 
  $(W^{\bullet}_{\mu, A})^H$, identified with $I_A^l / I_A^{l+1}$, is
  \begin{align*} \Big[    (-1)^{l-1} l    \epsilon_{a_0, A}   \sum_{i=1}^2\sum_{|\alpha|=l,
  l(\alpha)=2} b_{\alpha, i} (f_A)^\alpha f_{A, i} \Big] = & \:  \Big[ 
  (-1)^{l-1} l   \epsilon_{a_0, A}   \sum_{|\alpha|=l,
  l(\alpha)=2} a_\alpha f^\alpha_{A} 
  \Big] \\ = & \: (-1)^{l-1} l \epsilon_{a_0, A} [ a] \in I^{l+1}_A/ I^{l+2}_A \;. \end{align*}
  We have an analogous expression for $a_1 \mu$. We conclude by lemma \ref{lemma: highdiffspec} and corollary \ref{crla: highdiffspec}. 
 \end{proof}

\begin{definition}The sheaves $ E^l(n, k) := S^k V_L \cap \ker D^0_L \cap \dots \cap \ker D^{l-1}_L $ define a finite decreasing filtration
 $$ E^{k-1}(n,k) \subseteq E^{k-2}(n,k) \subseteq \dots \subseteq E^{1}(n,k) \subseteq E^0(n,k) = S^kV_L $$of $S^k V_L$. 
\end{definition}

\begin{pps}Let  $0 \leq l \leq k-1$. Over an open set $V_{U^n}$, the two decreasing filtrations
 $E^{\bullet}(n,k)$ and $(K^{0,0}_{\bullet+1})^H$ of $S^k V_L$ coincide. Moreover,  
the differential 
$d^{l+1}_K: (K^{0,0}_{l+1})^H \rTo (K^{l+1, -l}_{l+1})^H$ coincides
 with the the operator $(-1)^{\frac{l(l-1)}{2}}l! D^l_L : E^l(n,k) \rTo K^l(L)$. 
\end{pps}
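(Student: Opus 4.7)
The plan is to prove both statements simultaneously by induction on $l$, using lemma \ref{lmm: difference} as the engine that transports the identification $d^{l}_K \sim D^{l-1}_L$ to the next level. Because both filtrations and both differentials are already defined globally on $V_{U^n}$, everything reduces to a local computation in the Koszul model of the bicomplex $\mbb{K}^{\bullet,\bullet}$.

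For the base case $l=0$, the vertical complex $(\mbb{K}^{0,\bullet})^H$ has zero differentials (there are no Koszul factors to contribute), so $(K^{0,0}_1)^H \simeq S^k V_L = E^0(n,k)$, and proposition \ref{pps:d1} together with remark \ref{rmk: d0} identifies $d^1_K$ with the global map (\ref{eq: diffspec2}), which is $D^0_L$. Since $(-1)^{0}\cdot 0!=1$, the base case is settled.

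For the inductive step, assume $(K^{0,0}_l)^H = E^{l-1}(n,k)$ and $d^l_K = (-1)^{(l-1)(l-2)/2}(l-1)!\,D^{l-1}_L$. Since $(K^{0,0}_{l+1})^H$ is the kernel of $d^l_K$ inside $(K^{0,0}_l)^H$ and multiplication by a nonzero rational constant does not change the kernel, we immediately get $(K^{0,0}_{l+1})^H = \ker D^{l-1}_L \cap E^{l-1}(n,k) = E^l(n,k)$, which is the filtration statement at level $l$. To compute $d^{l+1}_K$ on $(x_\lambda)_\lambda \in E^l(n,k)$ and $(\mu,A)\in B(k,l+1)$ with $A=\{a_0,a_1\}$, $a_0<a_1$, take the lift
$$h_{\nu,B} := (-1)^{(l-1)(l-2)/2}(l-1)!\,\Delta^{l}_{\nu,B}(x_\lambda)_\lambda \in I_B^{l-1},$$
which represents $d^l_K[(x_\lambda)_\lambda]_{\nu,B}$ by the inductive hypothesis and the definition of $D^{l-1}_{\nu,B}$, and which lies in $I_B^l$ because $D^{l-1}_L(x_\lambda)_\lambda=0$. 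Lemma \ref{lmm: difference} applies at the couple $(\mu,A)$ and, combined with the identity $-\Delta^l_{a_0\mu,A}+\Delta^l_{a_1\mu,A}=\Delta^{l+1}_{\mu,A}$ from lemma \ref{lmm:diff}, yields
$$d^{l+1}_K[(x_\lambda)_\lambda]_{\mu,A} = (-1)^{l-1}l\cdot (-1)^{(l-1)(l-2)/2}(l-1)!\,[\Delta^{l+1}_{\mu,A}(x_\lambda)_\lambda]_{I_A^{l+1}} = (-1)^{l(l-1)/2} l!\, D^l_{\mu,A}(x_\lambda)_\lambda,$$
where the exponent collapses via $(l-1)+(l-1)(l-2)/2 = l(l-1)/2$.

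The only genuine checks are that the chosen lift $h_{\nu,B}$ satisfies the hypotheses of lemma \ref{lmm: difference}, which in turn reduces to verifying $\Delta^l_{\nu,B}(x_\lambda)_\lambda \in I_B^{l-1}$ for $(x_\lambda)_\lambda \in \ker D^{l-2}_L$ (vacuous when $l=1$, and otherwise built into the inductive definition of the $D^j_L$), and the sign and factorial bookkeeping; the latter is the main place where one must be careful, since any sign error would propagate all the way up the induction.
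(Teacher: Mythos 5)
Your proof is correct and follows essentially the same route as the paper: induction on $l$, with the base case settled by remarks \ref{rmk: D0} and \ref{rmk: d0}, the filtration statement obtained by taking kernels of the (proportional) differentials, and the inductive step carried out by feeding the lift $(-1)^{(l-1)(l-2)/2}(l-1)!\,\Delta^l_{\nu,B}(x_\lambda)_\lambda$ into lemma \ref{lmm: difference} and collapsing the signs via lemma \ref{lmm:diff}. The sign and factorial bookkeeping, including the identity $(l-1)+(l-1)(l-2)/2=l(l-1)/2$, matches the paper's computation exactly.
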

\begin{proof}By induction on $l$. For $l=0$, it follows directly from the fact that $S^k V_L = E^0(n,k) = (K^{0,0}_1)^H$ 
and from remarks \ref{rmk: D0} and \ref{rmk: d0}.  Suppose by induction that for a certain $l \geq 1$ we have, over $V_{U^n}$, 
that $E^{l-1}(n,k) = (K^{0,0}_{l}) ^H$ and that  $$d^l_K =(-1)^{\frac{(l-1)(l-2)}{2}} (l-1)! D^{l-1}_L \;,$$as operators from 
$E^{l-1}(n,k) = (K^{0,0}_{l}) ^H \rTo (K^{l, 1-l}_l)^H$. Hence $(K^{0,0}_{l+1})^H = E^l(n,k)$. 
To prove the formula for $l+1$ over $V_{U^n}$, 
we dispose of  the explicit definition of operators $D^l_L$ over such open sets and  of lemma \ref{lmm: difference}. Take $(x_\lambda)_\lambda \in E^l(n,k)$. Then, by definition, 
$D^{l-1}_L (x_\lambda)_\lambda = 0$. Hence, for any $(\nu, B) \in B(k,l)$, we have that $D^{l-1}_{\nu, B}(x_\lambda)_\lambda = [\Delta^l_{\nu, B}(x_\lambda)_\lambda] =0 \in S^{l-1} N_B^* \simeq I_B^{l-1} /I_B^{l}$. Hence
$$[ d^l_K (x_\lambda)_\lambda]_{\nu, B} = (-1)^{\frac{(l-1)(l-2)}{2}}(l-1)! [\Delta^l_{\nu, B}(x_\lambda)_\lambda] =0$$ in 
$ S^{l-1} N_B^* \simeq I_B^{l-1} /I_B^{l}$. By lemma \ref{lmm: difference} we have that, for $(\mu, A) \in B(k, l+1)$ 
\begin{align*} [ d^{l+1}_K (x_\lambda)_\lambda ]_{(\mu, A)} = & (-1)^{l-1+ \frac{(l-2)(l-1)}{2}} l (l-1)!  [- \Delta^{l}_{a_0 \mu, A} (x_\lambda)_\lambda + \Delta^l_{a_1 \mu, A} (x_\lambda)_\lambda ]_{I_A^{l+1}}  \\ = & (-1)^{\frac{l(l-1)}{2}} l!   [\Delta^{l+1}_{\mu, A}]_{I_A^{l+1}} =  (-1)^{\frac{l(l-1)}{2}} l!  D^l_{\mu, A}(x_\lambda)_\lambda \\ = &(-1)^{\frac{l(l-1)}{2}} l! \left( D^l_L(x_\lambda)_\lambda \right)_{(\mu, A)} \;.\end{align*}
\end{proof}As an immediate consequence of what just proved and of proposition \ref{pps: spectlocal} we have: 
\begin{crl}\label{crl: differentialscomparison}Consider the differential $d^l : ( j^*E^{0,0}_l )^H \rTo ( j^*E^{l, 1-l}_l)^H$ of the spectral sequence $( j^* E^{p,q}_1)^H$. 
 We have $( j^* E^{0,0}_l )^H\simeq j^* E^{l-1}(n,k)$
   for all $1 \leq l \leq k$ and $$  d^l = (-1)^{\frac{l(l-1)}{2}} l! j^* D^{l-1}_L \;.$$
\end{crl}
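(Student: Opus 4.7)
The plan is to deduce the corollary from the immediately preceding proposition by a straightforward gluing argument. The preceding proposition gives precisely the desired identification and formula, but only locally on open sets of the form $V_{U^n}$, for the spectral sequence $(K^{p,q}_1)^H$ associated to the bicomplex $(\mathbb{K}^{\bullet,\bullet})^H$. So the corollary is essentially a matter of transferring that local result to the global spectral sequence $(j^*E^{p,q}_1)^H$ on $X^n_{**}$.

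First I would recall proposition \ref{pps: spectlocal}, which identifies the restriction of the spectral sequence $(j^*E^{p,q}_1)^H$ to $V_{U^n,**}$ with $(j_U^* K^{p,q}_1)^H$. Under this identification, the higher differentials $d^l$ of $(j^*E^{p,q}_1)^H$ restrict to the higher differentials $d^l_K$ of $(j_U^* K^{p,q}_1)^H$. Since $X^n_{**}$ is covered by open sets of the form $V_{U^n,**}$ (using the open cover constructed in remark \ref{rmk: auxiliaryopen}), it is enough to establish the two claims of the corollary after restricting to each such $V_{U^n,**}$.

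Next, I would invoke the preceding proposition on each $V_{U^n}$: it identifies the filtration $E^\bullet(n,k)$ with $(K^{0,0}_{\bullet+1})^H$ and gives $d^{l+1}_K = (-1)^{l(l-1)/2} l!\, D^l_L$. Pulling back along $j_U$ and reindexing $l \mapsto l-1$ yields $(j^* E^{0,0}_l)^H \simeq j^* E^{l-1}(n,k)$ and $d^l = (-1)^{l(l-1)/2} l!\, j^* D^{l-1}_L$ on $V_{U^n,**}$. The key observation is that both sides are globally defined morphisms: the operators $D^{l-1}_L$ were constructed globally on $X^n$, and the differentials $d^l$ are intrinsic to the spectral sequence $(j^*E^{p,q}_1)^H$ on $X^n_{**}$. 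Therefore the two equalities, valid on a cover, glue to global equalities on $X^n_{**}$.

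The main obstacle, already absorbed into the preceding proposition via the inductive computation using lemma \ref{lmm: difference}, is really the bookkeeping of the sign $(-1)^{l(l-1)/2}$ and the combinatorial factor $l!$ in the Koszul symmetrization, which come from iterating the map $(p-1)\sym$ of lemma \ref{lmm: possiblehoriz} through the higher pages. At the level of the present corollary, however, there is nothing more to check: the degeneration of $(j^*E^{p,q}_1)^H$ at level $k$ established in corollary \ref{crl: EGH} guarantees that the identification $(j^*E^{0,0}_l)^H \simeq j^*E^{l-1}(n,k)$ and the formula for $d^l$ make sense for all $1 \leq l \leq k$, and gluing finishes the proof.
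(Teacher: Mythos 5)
Your argument is correct and is exactly the paper's: the corollary is stated there as an immediate consequence of the preceding proposition together with proposition \ref{pps: spectlocal}, via restriction to the cover by the $V_{U^n,**}$ and the observation that both $d^l$ and $D^{l-1}_L$ are globally defined, so local agreement glues. Nothing further is needed.
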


\begin{remark}\label{rmk: DLsurj}The previous corollary implies that, for $n=2$, the operators $D^{l}_L$ coincide, up to constants, with the higher differentials $d^{l+1}$ of the spectral sequence $(E^{p,q}_1)^H$ over the whole $X^2$; consequently, after remark \ref{rmk: dlsurj} they are surjective. 

\end{remark}
We have finally come to the main theorem of this section: the characterization of the image $\bkrh(S^k L^{[n]})$
of symmetric powers of tautological bundles in terms of kernels of operators $D^{k-2}_L$. 
\begin{theorem}\label{thm: mainimage}On the whole variety $X^n$ the Bridgeland-King-Reid transform $\bkrh(S^k L^{[n]})$ 
of the $k$-symmetric power of a tautological vector bundle $L^{[n]}$, associated to a line bundle $L$ over the surface $X$, is quasi-isomorphic to the term $E^{k-1}(n,k)$ 
of the filtration $E^\bullet(n,k)$ on the vector bundle $S^k V_L$: 
$$ \bkrh (S^k L^{[n]}) \simeq^{qis} E^{k-1}(n,k) \simeq \ker D^{k-2}_L \;.$$
\end{theorem}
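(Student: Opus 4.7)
The strategy is to identify both sides of the claimed quasi-isomorphism on the big open set $X^n_{**}$, and then to extend the identification to all of $X^n$ via the $j_* j^*$ construction.

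First I would combine three ingredients already established. The extension property (\ref{eq: j}) gives $\bkrh(S^k L^{[n]}) \simeq^{qis} j_* j^* p_* q^* S^k L^{[n]}$, so in particular the Bridgeland-King-Reid transform is concentrated in degree zero and equals $j_* j^*$ of its own $\mc{H}^0$. Next, on $X^n_{**}$ Corollary \ref{crl: spect} identifies $j^* p_* q^* S^k L^{[n]}$ with $(j^* E^{0,0}_\infty)^H$. The degeneration at level $k$ of the restricted spectral sequence $(j^* E^{p,q}_1)^H$, proved in Corollary \ref{crl: EGH}, forces $(j^* E^{0,0}_\infty)^H = (j^* E^{0,0}_k)^H$, and Corollary \ref{crl: differentialscomparison} identifies the latter with $j^* E^{k-1}(n,k)$. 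Hence $\bkrh(S^k L^{[n]}) \simeq^{qis} j_* j^* E^{k-1}(n,k)$.

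The remaining step is the extension property for the filtration itself, namely $j_* j^* E^{l}(n,k) \simeq E^{l}(n,k)$ for $0 \leq l \leq k-1$, which I would prove by induction on $l$. The base case $l=0$ is immediate since $E^0(n,k) = S^k V_L$ is a vector bundle on $X^n$ and $W$ has codimension $4$. For the inductive step, I would apply the left exact functor $j_* j^*$ to the left exact sequence
\[ 0 \rTo E^l(n,k) \rTo E^{l-1}(n,k) \rTo^{D^{l-1}_L} K^{l-1}(L), \]
use the inductive hypothesis for $E^{l-1}(n,k)$, and observe that each summand $K^{l-1}_{\mu, A}(L)$ is the pushforward of a vector bundle from the smooth diagonal $\Delta_A$, while $\Delta_A \cap W$ has codimension $2$ inside $\Delta_A$; this gives $j_* j^* K^{l-1}(L) \simeq K^{l-1}(L)$, whence the desired identification $j_* j^* E^l(n,k) \simeq \ker D^{l-1}_L = E^l(n,k)$.

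The conceptual obstacles have already been cleared in the preceding sections: producing the global operators $D^l_L$ extending the higher differentials $d^{l+1}$ of the spectral sequence (Corollary \ref{crl: differentialscomparison}), and showing degeneration at level $k$ (Corollary \ref{crl: EGH}). The remaining difficulty is purely homological and consists of the codimension-$2$ extension argument above; its mild subtlety is that the domain of $D^{l-1}_L$ is the kernel of the previous operator rather than the ambient bundle, so the extension property must be propagated along the filtration rather than established once for $S^k V_L$ alone.
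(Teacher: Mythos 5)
Your proposal is correct and follows essentially the same route as the paper: the same chain of identifications via (\ref{eq: j}), Corollary \ref{crl: spect}, the degeneration of Corollary \ref{crl: EGH}, and Corollary \ref{crl: differentialscomparison}, followed by the same induction on $l$ propagating $j_* j^* E^l(n,k) \simeq E^l(n,k)$ along the filtration using left-exactness of $j_* j^*$ and the codimension argument for $K^l(L)$. The paper phrases the last point by citing that $K^l(L)$ is a direct sum of restrictions of vector bundles to codimension-$2$ diagonals while $W$ has codimension $4$, which is the same purity argument you give.
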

\begin{proof}Recall that the big open set $X^n_{**}$ is defined as $X^n \setminus W$, where the closed subscheme $W$ has been defined in subsection \ref{subsection: reductionopen} and has codimension $4$ in $X^n$. 
Let $E^{p,q}_1$ be the spectral sequence introduced in subsection \ref{subsection: taut}. By the vanishing $R^i p_* q^* S^k L^{[n]} = 0$ for $i>0$ and after corollary \ref{crl: spect} we have: 
$\bkrh(S^k L^{[n]}) \simeq p_* q^* S^k L^{[n]} \simeq (E^{0,0}_\infty) ^H $ and hence $j^* p_* q^* S^k L^{[n]} \simeq j^* ( E^{0,0}_\infty) ^H \simeq  j^* ( E^{0,0}_k) ^H$ since $(j^* E^{p,q}_1)^H$ degenerates at level $k$, after corollary \ref{crl: EGH}.  We also know by (\ref{eq: j}) that $p_* q^* S^k L^{[n]} \simeq j_* j^* p_* q^* S^k L^{[n]} \simeq j_* j^* ( E^{0,0}_\infty) ^H$ and hence, by the previous corollary \begin{equation}\label{eq: jj}
\bkrh(S^k L^{[n]}) \simeq ^{qis} (j_* j^* E^{0,0}_k)^H \simeq j_* j^* E^{k-1}(n,k) \;.
\end{equation}But the sheaves $E^l(n,k)$, for $0 \leq l \leq k-2$ fit in the diagrams
\begin{diagram}[height=0.7cm]
0 & \rTo & E^{l+1}(n,k) & \rTo & E^l(n,k) & \rTo^{D^l_L} & K^l(L)   \\
 & & \dTo & & \dTo& & \dTo \\
0 & \rTo & j_* j^* E^{l+1}(n,k) & \rTo & j_* j^* E^l(n,k) & \rTo^{j_* j^* D^l_L} & j_* j^* K^l(L)  
\end{diagram}But now the vertical map $K^l(L) \rTo j_* j^* K^l(L)$ is an isomorphism after \cite[ Lemma 3.1.9]{Scala2009D}, since $K^l(L)$ is a direct sum of restrictions of vector bundles to pairwise diagonals $\Delta_A$ of codimension $2$ and every irreducible component of the closed subscheme $W$ has codimension $4$. 
Since $j_* j^* E^0(n,k) \simeq E^0(n,k)$ because $E^0(n,k) \simeq S^k V_L$ is a vector bundle, we conclude by induction on $l$ that $$j_* j^* E^l(n,k) \simeq E^l(n,k)$$and 
that $j_* j^* D^l_L = D^l_L $ for all $0 \leq l \leq k-1$. Together with (\ref{eq: jj}), this gives the result. 
\end{proof}

\begin{remark}In the sequel we will never use the spectral sequence $E^{p,q}_1$, or its invariants $(E^{p,q}_1)^H$ or $(E^{p,q}_1)^{G \times H}$ any more; we will just need the operators $D^l_L$ explained above. Therefore, we will no more use the notation $d^l $ for the higher differential of a spectral sequence; on the other hand we will use this notation for 
another differential, as the following remark explains. 
\end{remark}

 \begin{remark}\label{rmk: affine}\sloppy Suppose  that $X$ is a smooth affine surface, $X = \Spec(R)$, with $R$ a finitely generated $\mbb{C}$-algebra. Consider the sheaves $J^l \FS_X$ of $l$-jets over $X$ (see, for example \cite[16.3.2]{EGAIV.4}). 
 The sequences 
 \begin{equation}\label{eq: splits} 0 \rTo S^l \Omega^1_X \rTo J^l \FS_X \rTo J^{l-1} \FS_X \rTo 0 \end{equation}split because $X$ is affine: let $\tau : J^l \FS_X \rTo S^l \Omega_X^1$ the morphism of $\FS_X$-modules defining the splitting. 
 As a consequence one can define a higher differential 
 $d^l : \FS_X \rTo S^l \Omega^1_X$ as the composition
 $$ d^l : \FS_X \rTo^{j_l} J^l \FS_X \rTo^\tau S^l \Omega^1_X $$where the first morphism $j_l$ is the jet projection. Equivalently, we can define $d^l$ via the formula $d^lf = j_l f - j_{l-1} f$, where $J^{l-1} \FS_X$ is seen inside $J^{l} \FS_X$ via the splitting. 
 On the other hand we can define another differential $d^l_\Delta$ in the following way. Note first that the sheaves $\FS_{X^2}/I_\Delta^{l+1}$ and $(J^l \FS_X)_\Delta$ are isomorphic; however, they are just isomorphic as sheaves, and not as $\FS_{X^2}$-modules. In any case the splitting of (\ref{eq: splits}), together with the isomorphism of sheaves 
 $\FS_{X^2}/I_\Delta^{l+1} \simeq (J^l \FS_X)_\Delta$ induce a morphism of sheaves 
 \begin{equation}\label{eq: splits2} \FS_{X^2}/I_\Delta^{l+1} \rTo (S^l \Omega^1_X)_\Delta \end{equation}
The differential $d^l_\Delta$ is then defined as the composition: 
 $$ d^l_\Delta: \FS_{X^2} \rTo \FS_{X^2}/I_\Delta^{l+1} \rTo (S^l \Omega^1_X)_\Delta $$of the jet projection $j^l_\Delta: \FS_{X^2} \rTo \FS_{X^2}/I_\Delta^{l+1}$ followed by the morphism (\ref{eq: splits2}). 
 
Analogously, for any $A \subseteq \{ 1, \dots, n \}$, $|A|=2$, we can define a higher differential $d^l_A: \FS_{X^n} \rTo (S^l \Omega_X^1)_A$.  Again, $d^l_A$ is just defined as a morphism of sheaves and not of $\FS_{X^n}$-modules.
We will denote with $j^l_A:  \FS_{X^n} \rTo \FS_{X^n}/I_A^{l+1}$ the partial $l$-jet projection.  If $A = \{ a_0, a_1 \}$, $a_0 < a_1$ we have that, on a global section $f_1 \tens \cdots \tens f_n \in R^{\tens n}$ of $\FS_{X^n}$, where $f_i \in R$, 
 the partial differential $d^l_A$ acts as 
 $$ d^l_A (f_1 \tens \cdots \tens f_n) = d^l_\Delta(f_A) \boxtimes f_{\bar{A}} = ( f_{a_0}d^l f_{a_1}) \boxtimes f_{\bar{A}} \in S^l \Omega^1_R \tens R^{\tens n-2}$$where
 for a multiindex $B \subseteq \{1, \dots, n \}$, we indicated with $f_B$ the element $f_B := \tens_{i \in B} f_i$, and where we identified $(S^l \Omega^1_X)_A$ with $(S^l \Omega^1_X )_\Delta \boxtimes \FS_{X^{\bar{A}}}$ via the automorphism $(\sigma_A)_*$. 
 
 With these notations 
 the operators $D^l_{\mu, A}: E^l(n,k) \rTo K^l$ can be written as: 
$$ D^l_{\mu, A} ( x_\lambda)_\lambda = \sum_{\substack{\beta \in c_n(l+1) \\ \beta_{\bar{A}}= 0 }}
(-1)^\beta \binom{l+1}{\beta} d^l_A(x_{\beta \mu})  \;,$$and can hence be extended to operators $D^l_{\mu, A} : S^k V_{\FS_X} \rTo K^l$. 
 \end{remark}

 As a consequence of theorem \ref{thm: mainimage} we have the following explicit characterization for the image of the Bridgeland-King-Reid transform $p_* q^* S^k \FS_X^{[n]}$ as a subsheaf of $S^k V_{\FS_X}$. The proof follows immediately form remark \ref{rmk: affine}. 
 \begin{crl}\label{crl: diffaffine}Let $X$ be a smooth affine surface. The sheaf $p_*q^*S^k \FS_X^{[n]}$ over $X^n$ is the subsheaf 
of $S^k V_{\FS_X}$ whose  sections $x =(x_{\lambda})_{\lambda} \in S^k V_{\FS_X} =\oplus_{\lambda \in c_n(k)} \FS_X$ satisfy the  system of higher order restrictions $D^0 x = 0 = D^1 x = \dots = D^{k-2} x$, or, equivalently 
$$ 
 \sum_{\substack{\beta \in c_n(l+1) \\ \beta_{\bar{A}}= 0 }}
(-1)^\beta \binom{l+1}{\beta} d^l_A(x_{\beta \mu}) =0 \qquad \mbox{    $\forall (\mu, A) \in B(k, l+1)$,  $0 \leq l \leq k-2$} $$  
\end{crl}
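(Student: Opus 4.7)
The plan is to combine Theorem \ref{thm: mainimage} with the explicit expression for the operators $D^l_L$ in the affine setting provided by Remark \ref{rmk: affine}, specialising to the case $L = \FS_X$.

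First, I would apply Theorem \ref{thm: mainimage} directly. Since $\bkrh = \B{R} p_* \circ q^*$ and $\B{R}^i p_* q^* S^k \FS_X^{[n]} = 0$ for $i > 0$, the image $\bkrh(S^k\FS_X^{[n]})$ is quasi-isomorphic to the sheaf $p_* q^* S^k \FS_X^{[n]}$ concentrated in degree zero. Hence Theorem \ref{thm: mainimage} yields the isomorphism
$$ p_* q^* S^k \FS_X^{[n]} \simeq E^{k-1}(n,k) = S^k V_{\FS_X} \cap \ker D^0_{\FS_X} \cap \cdots \cap \ker D^{k-2}_{\FS_X} $$
inside the vector bundle $S^k V_{\FS_X} \simeq \oplus_{\lambda \in c_n(k)} \FS_{X^n}$ on $X^n$. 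Note that since $X$ is affine, so is $X^n$, and there is no global obstruction to the sections: the statement is therefore a literal intersection of kernels of $\FS_{X^n}$-linear maps.

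Second, I would translate the condition $D^l_{\FS_X} x = 0$ into the explicit system of equations written in the statement. By definition of $D^l_{\FS_X}$ as the direct sum of the $D^l_{\mu, A}$ over $(\mu, A) \in B(k, l+1)$, the vanishing $D^l_{\FS_X} x = 0$ is equivalent to $D^l_{\mu, A}(x) = 0$ in $K^l_{\mu, A}$ for every such $(\mu, A)$. By the definition of $D^l_{\mu, A}$ via higher differences and by the identification, explained in Remark \ref{rmk: affine}, of $[\Delta^{l+1}_{\mu, A}(x_\lambda)_\lambda]_{I_A^{l+1}}$ with
$$ \sum_{\substack{\beta \in c_n(l+1) \\ \beta_{\bar{A}}= 0 }} (-1)^\beta \binom{l+1}{\beta} d^l_A(x_{\beta \mu}) \in (S^l \Omega^1_X)_A, $$
(using the splitting of the jet sequence \eqref{eq: splits} available for the affine surface $X$, and the corresponding identification for the partial jet sequence on $X^n$), the vanishing of each $D^l_{\mu, A}(x)$ translates into the equation appearing in the statement.

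There is no real obstacle here: once Theorem \ref{thm: mainimage} is in hand, the content of the corollary is entirely bookkeeping in the affine case, since the issues coming from the global non-triviality of $L$ and of the jet sequence both disappear simultaneously. The only point to be careful with is to verify that the morphism of sheaves $\FS_{X^n}/I_A^{l+1} \rTo (S^l\Omega^1_X)_A$ induced by the splitting of the partial jet sequence agrees with the one used implicitly in the definition of $D^l_{\mu, A}$ in Definition \ref{def: Hor}; this is precisely what Remark \ref{rmk: affine} ensures through the explicit formula $d^l_A(f_1 \tens \cdots \tens f_n) = (f_{a_0} d^l f_{a_1}) \boxtimes f_{\bar{A}}$, after which the reformulation is immediate.
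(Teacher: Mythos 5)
Your proposal is correct and matches the paper's own (very brief) argument: the paper simply states that the corollary follows immediately from Theorem \ref{thm: mainimage} together with the local formula for $D^l_{\mu,A}$ in terms of the partial differentials $d^l_A$ given in Remark \ref{rmk: affine}, which is exactly the two-step reduction you carry out. Your added care about the compatibility of the jet-splitting identification with Definition \ref{def: Hor}, and the implicit induction showing that the recursive system $\ker D^0 \cap \cdots \cap \ker D^{k-2}$ coincides with the full explicit system of equations, fills in precisely the bookkeeping the paper leaves to the reader.
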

\begin{remark}\label{rmk: Krug}The operators $D^l_L$ are essentially the $\perm_k$-invariant version of the operators $\phi_l$ in Krug's work \cite{Krug2014}. However, the way these operators are found is completely different: Krug analyzes 
closely the natural morphism $p_* q^* L^{[n]} \tens \cdots p_* q^* L^{[n]} \rTo p_* q^* ( L^{[n]} \tens \cdots \tens L^{[n]} )\subseteq \mc{C}^0_L \tens \cdots \tens \mc{C}^0_L$, while we work out explicitely the higher differential in the spectral sequence $( j^* E^{p,q}_1)^{\perm_k}$. 
\end{remark}
\begin{remark}As mentioned in the introduction, higher order restrictions of the kind of $D^l_L$ appear naturally in presence of nontransverse intersections. The easiest nontrivial example is that of three distinct lines $\ell_1, \ell_2, \ell_3$ in the plane $\mbb{C}^2$, intersecting in a point $P$. Let $Z $ be the scheme-theoretic union of the three lines. Consider its structural sheaf $\FS_Z$. The natural sequence 
$$ 0 \rTo \FS_Z \rTo \oplus_i \FS_{\ell_i} \rTo^{d^0} \oplus_{i<j} \FS_{\ell_i \cap \ell_j}  \rTo 0 $$is not exact at the center (nor at the right). Indeed, to determine the sheaf $\FS_Z$ it is not sufficient to consider triples $(f_1, f_2, f_3)$ of regular functions $f_i$ over lines 
$\ell_i$ pairwise coinciding at the intersection point $P$ (that is, such that $d^0(f_1, f_2, f_3)=0$). We have to impose a second condition that the derivatives of $f_i$ in $P$ have to be dependent. The condition on derivatives may be given in terms of a higher order restriction $d^1: \ker d^0 \rTo \FS_P$ (see \cite[Remark 2.1]{ScalaPhD}). Hence, setting $F^i := \ker d^{i-1}$, the sheaf $\FS_Z$ can be see as the last term of a decreasing filtration $\FS_Z = F^2 \subset F^1 \subset F^0 = \oplus_i \FS_{\ell_i}$ of $\oplus_i \FS_{\ell_i}$. 

In our situation, to be non-transverse are the irreducible components $Z_i$ of the Haiman polygraph $D(n,k) \subseteq X^n \times X^k$ or better of its $\perm_k$-quotient $Z = D(n,k)/\perm_k \subseteq X^n \times S^k X$. Using the hyperderived spectral sequence associated to the derived tensor product $K_{Z_1} \tens^L \cdots \tens^L K_{Z_r}$ of the complexes 
$$ K_{Z_j}: 0 \rTo \FS_{X^n \times S^n X} \rTo \FS_{Z_j} \rTo 0 $$in the spirit of \cite[section 3.2]{Scala2009GD}, one could characterize the structural sheaf of the polygraph in terms of higher order restrictions $\tilde{D}^l$. Pushing everything forward to $X^n$, we would get the higher order restrictions $D^l_L$ for trivial $L$. For nontrivial $L$, one has just to straightforwardly tensorize the complexes $K_{Z_j}$ with adequate pull backs of the line bundles $L$ as in \cite{Scala2009GD}. 
\end{remark}

We finish this subsection proving a useful formula for the higher order restrictions $D^l_{\mu, A}$ in terms 
of analogous  operators  for the case of two points. 

\begin{notat}
If $n=2$, then the indication of the subset $A=\{1, 2\}$ will be
omitted and we will just denote with $D^l_\theta$ the operator
$D^l_{\theta, \{1, 2\}}$, with $\theta$ a composition of $k-l-1$
supported in $\{1, 2 \}$. 
\end{notat}

\begin{notat}Let $A \subseteq \{1, \dots , n\}$, $|A|=2$, and let $\mu$ be a composition supported in $\bar{A}$. 
Denote with 
$\hat{q}^\mu_A$ the projection 
$$\hat{q}^\mu_A: S^k V_L  \simeq \bigoplus_{\lambda \in c_n(k) } L^\lambda \rTo \bigoplus_{ \substack{\beta \in c_n(k-|\mu| ) \\ \beta_{\bar{A}}=0 }}L^{\beta \mu}  \;,$$Under 
the automorphism $(\sigma_A)_*$, defined in remark \ref{rmk: identification},  the term on the right corresponds to 
$S^{k-|\mu|} (L_{a_0} \oplus L_{a_1}) \boxtimes L^{\mu}$. Denote with $q^\mu_A$ the composition of morphisms
$$ q^\mu_A :=( \sigma_A)_* \circ \hat{q}^\mu_A: S^k V_L \rTo S^{k-|\mu|} (L_{a_0} \oplus L_{a_1}) \boxtimes L^{\mu} \;.$$
\end{notat}

\begin{remark}\label{rmk: localpartial}
Let $(\mu, A) \in B(k, l+1)$. Consider an affine open subset of the form $U^n$, such that $U$ is an affine open subset of $X$ over which $L$ is trivial. 
In the identification $X^n \simeq X^A \times X^{\bar{A}}$, we have, over $U^n \simeq U^A \times U^{\bar{A}}$: $$ D^l_{\mu, A} = (D^l_{\mu_A} \boxtimes \id) \circ q^{\mu_{\bar{A}}}_A \;.$$
\end{remark}\begin{proof}Remark first of all that, necessarily, $l+1 \leq k - |\mu_{\bar{A}}|$. 
In the identification $X^n \simeq X^A \times X^{\bar{A}}$ the sheaf $\FS_{X^n}/I_A^{l+1}$ corresponds
to $\FS_{X^2}/I_\Delta^{l+1} \boxtimes \FS_{X^{\bar{A}}}$. 
Now the automorphism $(\sigma_A)_*
: \FS_{X^n} \rTo \FS_{X^A} \boxtimes \FS_{X^{\bar{A}}}$, introduced in remark \ref{rmk: identification} satisfies the commutation relation
$ (j^l \boxtimes \id ) \circ (\sigma_A)_* = (\sigma_A)_* \circ j_A^l $. Hence, in the identification $U^n \simeq U^A \times U^{\bar{A}}$, 
\begin{align*} D^l_{\mu, A}((x_\lambda)_\lambda) = & \sum_{\substack{\beta \in c_n(l+1) \\ \beta_{\bar{A}}=0 } } (-1)^\beta \binom{l+1}{\beta} (\sigma_A)_* j_A^l (x_{\beta \mu}) \\
= & \sum_{\substack{\beta \in c_n(l+1) \\ \beta_{\bar{A}}=0 } } (-1)^\beta \binom{l+1}{\beta} (j^l \boxtimes \id)( \sigma_A)_* (x_{\beta \mu}) \\ 
= & \sum_{\substack{\beta \in c_n(l+1) \\ \beta_{\bar{A}}=0 } } (-1)^\beta \binom{l+1}{\beta} (j^l \boxtimes \id)( \sigma_A)_* (x_{\beta \mu_{A} \mu_{\bar{A}}}) \\
 = & \: (D^l_{\mu_A} \boxtimes \id) \circ q^{\mu_{\bar{A}}}_A ((x_\lambda)_\lambda) \end{align*}
\end{proof}

\begin{pps}\label{pps: tens}Let $r$ be an integer, $0 \leq r \leq k-1$. If $x \in E^r(n,k) \subseteq S^k V_L$,
then, for all composition $\nu$ supported in $\bar{A}$, with $|\nu| \leq k-r-1$, we have
$q^\nu_A(x) \in E^r(2, k-|\nu|)\boxtimes L^\nu$. Moreover, if $x \in E^r(n,k)$, $r \leq k-2$, for all $0 \leq l \leq r$ and for all $(\mu, A) \in B(k, l+1)$, 
then $|\mu_{\bar{A}} | \leq k-l-1$ and 
\begin{equation}\label{eq: partialformula}D^l_{\mu, A} x = (D^l_{\mu_A} \boxtimes \id) \circ q^{\mu_{\bar{A}}}_A (x) \;. \end{equation}
\end{pps}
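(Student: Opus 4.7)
The plan is to derive both assertions from the local identity of Remark~\ref{rmk: localpartial}, which on any affine open $U^n$ with $L$ trivial on $U$ writes
$D^l_{\mu, A} = (D^l_{\mu_A} \boxtimes \id) \circ q^{\mu_{\bar{A}}}_A$
as an identity of operators defined on the \emph{entire} bundle $S^k V_L$ (no kernel condition is needed, because $L$ is trivialized on $U$). Since $X^n$ is covered by open sets of this form by Lemma~\ref{lmm: auxU}, we may freely manipulate the local identity and then glue.

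First I would prove the first assertion. Let $x \in E^r(n,k)$ and let $\nu$ be a composition supported in $\bar{A}$ with $|\nu| \leq k-r-1$. To show $q^\nu_A(x) \in E^r(2, k-|\nu|) \boxtimes L^\nu$ it suffices, by definition of $E^r(2, \cdot)$, to verify that $(D^l_\theta \boxtimes \id)(q^\nu_A(x)) = 0$ for every integer $0 \leq l \leq r-1$ and every $\theta \in c_2(k-|\nu|-l-1)$. For such $(l, \theta)$ set $\mu := \theta\nu \in c_n(k-l-1)$; then $(\mu, A) \in B(k, l+1)$, $\mu_A = \theta$ and $\mu_{\bar{A}} = \nu$. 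On each $U^n$ of the affine cover of Lemma~\ref{lmm: auxU}, Remark~\ref{rmk: localpartial} yields
$(D^l_\theta \boxtimes \id)(q^\nu_A(x)) = D^l_{\mu, A}(x)$,
and since $x \in E^r(n,k) \subseteq E^{l+1}(n,k)$ the global operator $D^l_L$ annihilates $x$; in particular $D^l_{\mu, A}(x) = 0$. Hence $(D^l_\theta \boxtimes \id)(q^\nu_A(x))$ vanishes on each member of the cover, and therefore globally.

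The second assertion then follows quickly. For $(\mu, A) \in B(k, l+1)$ with $0 \leq l \leq r$, one has $|\mu_{\bar{A}}| \leq |\mu| = k-l-1$, proving the first part of the claim. Applying the first assertion with parameter $l$ in place of $r$ (valid since $x \in E^r(n,k) \subseteq E^l(n,k)$ and $|\mu_{\bar{A}}| \leq k-l-1$) gives $q^{\mu_{\bar{A}}}_A(x) \in E^l(2, k-|\mu_{\bar{A}}|) \boxtimes L^{\mu_{\bar{A}}}$, so that $(D^l_{\mu_A} \boxtimes \id)(q^{\mu_{\bar{A}}}_A(x))$ is a well-defined global section. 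Both sides of~(\ref{eq: partialformula}) are now global sections that agree on every $U^n$ of the cover by Remark~\ref{rmk: localpartial}, and therefore agree on $X^n$.

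The principal subtlety is the bookkeeping of nested kernels: the operator $D^l_{\mu_A} \boxtimes \id$ is defined globally only on sections of $E^l(2, k-|\mu_{\bar{A}}|) \boxtimes L^{\mu_{\bar{A}}}$, so one must first establish the first assertion before the right-hand side of~(\ref{eq: partialformula}) can even be written as a global object. This is the reason the two parts of the statement have to be treated in the order indicated rather than simultaneously.
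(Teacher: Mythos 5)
Your proof is correct, and it ultimately rests on the same computation as the paper's — the local identity of Remark~\ref{rmk: localpartial} — but you organize the argument quite differently. The paper proves the proposition by a complete induction on $r$ in which the two assertions are interleaved: the membership $q^\nu_A(x) \in E^{r+1}(2,k-|\nu|)\boxtimes L^\nu$ is deduced from the validity of formula~(\ref{eq: partialformula}) at level $r$ (a \emph{global} identity whose right-hand side only makes sense once the membership at level $r$ is known), and formula~(\ref{eq: partialformula}) at level $r+1$ is then deduced from the new membership together with the local comparison. You short-circuit this double induction by observing that over a trivializing open $U^n$ the identity of Remark~\ref{rmk: localpartial} holds on all of $S^k V_L$, with values in $P^l_{\mu,A}(L)$, so the vanishing of $(D^l_\theta\boxtimes\id)(q^\nu_A(x))$ can be read off locally from the vanishing of $D^l_{\mu,A}(x)$ without first knowing that the left-hand side is a globally defined object; this is legitimate, since for $x\in\ker D^{l-1}_L$ the local value of $D^l_{\mu,A}(x)$ lands in $K^l_{\mu,A}(L)$ and agrees with the global one. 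What this buys is a shorter argument with no induction on $r$; what it costs is one residual piece of bookkeeping that you flag only for the second assertion but which also occurs inside the first: to conclude that $q^\nu_A(x)$ lies in $E^r(2,k-|\nu|)\boxtimes L^\nu$ you must run $l=0,1,\dots,r-1$ in increasing order, because the \emph{global} operator $D^l_\theta\boxtimes\id$ is only defined on $E^l(2,k-|\nu|)\boxtimes L^\nu$, so the condition ``$(D^l_\theta\boxtimes\id)(q^\nu_A(x))=0$'' presupposes the conditions for $l'<l$. This is easily arranged (it is essentially the induction the paper makes explicit), so the omission is presentational rather than a mathematical gap.
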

\begin{proof}By complete induction on $r$. For $r=0$ we just have $E^0(n,k) = S^k V_L$ and $E^0(2, k-|\nu|) = S^{k-|\nu|}(L_{a_0} \oplus L_{a_1})$: hence the first statement is trivially verified. For $(\mu, A) \in B(k, 1)$, we just have to compare
$D^0_{\mu, A}$ with $(D^0_{\mu_A} \boxtimes \id) \circ q^{\mu_{\bar{A}}}_A$ under the identification given by $\sigma_A$: 
this follows trivially from the definitions, since the operators in question are just difference of restrictions. Suppose the proposition holds for $r$. Let $x \in E^{r+1}(n,k) $ and $\nu$ a composition supported in $\bar{A}$, $|\nu| \leq k-r -2$. 
Hence $|\nu| \leq k-r-1$, and by inductive hypothesis, $q^\nu_A (x) \in E^r(n,k-|\nu|) \boxtimes L^\nu$. We need to prove that 
$q^\nu_A (x) \in E^{r+1}(n,k-|\nu|) \boxtimes L^\nu$; to do this, we just need to prove that for all $\theta \in c_2(k-|\nu|-r-1)$
$(D^r_\theta \boxtimes \id) \circ q^\nu_A(x) = 0$. Via the identification $\sigma_A$ we can think of $\theta$ as a composition $\theta \in c_n(k-|\nu|-r-1)$ supported in $A$, and hence $\theta \nu$ as a composition of $k-r-1$. By the inductive hypothesis we have $$D^r_{\theta \nu} x = (D^r_\theta \boxtimes \id) \circ q^\nu_A (x) =0$$since 
$x \in E^{r+1}(n,k)$. Hence $q^\nu_A(x) \in E^{r+1}(n,k-|\nu|) \boxtimes L^\nu$. 

In order to prove the second statement we just have to prove the case $l = r+1$. Therefore, 
suppose that $(\mu, A) \in B(k, r+2)$. Then $k-r-2 = |\mu| = |\mu_A| + |\mu_{\bar{A}}|$ which proves that $|\mu_{\bar{A}}| \leq k-r-2 = k-l-1$. Then, by what we just proved,  $q^{\mu_{\bar{A}}}_A(x) \in E^{r+1}(2, k-|\mu_{\bar{A}}|) \boxtimes L^{\mu_{\bar{A}}}$. 
But then we can apply the operator $D^{r+1}_{\mu_A} \boxtimes \id$ to $q^{\mu_{\bar{A}}}_A(x)$ and hence compare, via the identification provided by $(\sigma_A)_*$,  the operators $D_{\mu, A}^{r+1}$ and $(D^{r+1}_{\mu_A} \boxtimes \id) \circ q^{\mu_{\bar{A}}}_{A}$. This can be done locally on an affine open set, and hence follows from remark  \ref{rmk: localpartial}. 
\end{proof}

\subsection{Invariant operators}\label{subsection: invariantoperators}
In this last subsection we apply theorem \ref{thm: mainimage} to obtain a description of the direct image $\mu_* S^k L^{[n]}$ 
over the symmetric variety $S^n X$ in terms of kernels of invariant operstors $(D^l_L)^G$. We will also give  explicit local formulas for the operators $(D^l_L)^G$. 
\subsubsection{$G$-invariant operators}We start by defining $G$-invariants of the sheaves appeared in the previous sections. 
Remember that the map $\eta_l: B(k,l) \rTo A(k,l)$ sending a couple $(\theta, A) \in B(k, l)$ to $(\nu(\theta_A), \nu(\theta_{\bar{A}}))$ --- where $\nu(\theta_A)$ and $\nu(\theta_{\bar{A}})$ are the partitions associated to the compositions $\theta_A$ and $\theta_{\bar{A}}$, respectively --- is the quotient map for the group $G$ acting on $B(k, l)$. 

 \begin{notat}Let $l$ an integer, $0 \leq l \leq k-2$. Write $\mc{K}^l(L) := K^l(L)^G$. 
 For $(\mu, \nu) \in A(k,l+1)$, denote with $\K^l_{\mu, \nu}$ the sheaf: 
 $$ \K^l_{\mu, \nu} := \Bigg( \bigoplus_{\substack{ (\theta, A) \in B(k,l+1)\\ 
\eta_l(\theta, A) =(\mu, \nu) }  } \pi_*  K^l_{\theta, A} (L) \Bigg) ^{G} \;.$$Remark that the sheaf $\K^l_{\mu, \nu}$ is isomorphic, by Danila's lemma, to 
 $ ( \pi_* K^l_{(\theta_0, A_0)} )^{\Stab_G(\theta_0, A_0)}$, where $(\theta_0, A_0)$ is a chosen point 
 in $\eta_l^{-1}(\mu, \nu)$. 
  \end{notat}
 
 \begin{remark}\label{rmk: same}It is evident, after what we said in the beginning of subsection \ref{subsection: higher} and even more after corollary \ref{crl: differentialscomparison} and theorem \ref{thm: mainimage}, that the $G$-sheaves  $K^l(L)$ are isomorphic to the terms  $(E^{l+1, -l}_1)^H_0$
 of subsection \ref{subsection: Invariants}. In particular, if $\eta_l (\theta, A) = (\mu, \nu)$ with $\mu$ of the form $(h,h)$, 
 $\Stab_G(\theta, A)$ contains the subgroup $G(A) \simeq \perm_2$, which acts on $\pi_* K^{l}_{(\theta, A)}(L)$ via the alternanting representation $\epsilon_2$  and makes the invariants $(\pi_* K^{l}_{(\theta, A)}(L)) ^{\Stab_G(\theta, A)}$ vanish. 
 \end{remark}
 
 By remarks \ref{rmk: same} and  \ref{rmk: multiindexinvariants} we have, over $S^n X$: 
 $$ (S^k V_L)^G \simeq \bigoplus_{\lambda \in p_n( k)} \mathcal{L}^\lambda, \qquad (K^l(L))^G \simeq 
 \bigoplus_{(\mu, \nu) \in A(k,l+1)} \K^l_{\mu, \nu} = \bigoplus_{(\mu, \nu) \in A_0(k,l+1)} \K^l_{\mu, \nu} \;.$$

 \begin{notat}Denote with $\E^l(n,k)$ the sheaf of invariants $\E^l(n,k) := E^l(n,k)^G$ over the symmetric variety $S^n X$ and 
 with $\D^l_L$ the $G$-invariant 
 operators $$\D^l_L := (D^l_L)^G : \E^l(n,k) \rTo \K^l(L) \;.$$For $(\mu, \nu) \in A(k,l+1)$, denote moreover
 with $\D^l_{\mu, \nu}$ the $(\mu, \nu)$ component of $\D^l_L$. Note that, if $(\mu, \nu) \not \in A_0(k, l+1)$
 the component $\D^l_{\mu, \nu}$ is identically zero.  \end{notat}
 
 Since in our context taking $G$-invariants is an exact functor, we have $\ker \D^l_L = \E^{l+1}(n,k) $. As an immediate consequence of theorem \ref{thm: mainimage} we have the following 
 \begin{theorem}\label{thm: directimage}Over the symmetric variety $S^n X$ the sheaves $\E^l(n,k)$ define a finite decreasing filtration of $ (S^k V_L)^G$ such that 
 $$ \B{R} \mu_* S^k L^{[n]} \simeq^{qis} \mu_* S^k L^{[n]} \simeq \E^{k-1}(n,k) = \ker \D^{k-2}_L \;.$$
 \end{theorem}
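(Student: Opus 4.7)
The strategy is to descend the quasi-isomorphism of Theorem \ref{thm: mainimage} from $X^n$ to $S^nX$ by pushing forward along $\pi$ and taking $G$-invariants, using the commutativity of the square relating $\mu, q, \pi, p$.

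First, I would check that the sheaves $\mathcal{E}^l(n,k) := E^l(n,k)^G$ form a well-defined decreasing filtration of $(S^kV_L)^G$: this is immediate because $S^k V_L$ is a $G$-equivariant vector bundle and each operator $D^l_L$ is $G$-equivariant by construction (the sets $B(k, l+1)$ indexing the target $K^l(L)$ carry a $G$-action compatible with the one on $S^kV_L$), so its kernel $E^{l+1}(n,k)$ is a $G$-subsheaf of $E^l(n,k)$. Since taking $G$-invariants is exact in characteristic zero, the containments $E^{l+1}(n,k) \subseteq E^l(n,k)$ descend to containments $\mathcal{E}^{l+1}(n,k) \subseteq \mathcal{E}^l(n,k)$, and moreover $\ker \mathcal{D}^l_L = \mathcal{E}^{l+1}(n,k)$.

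Next, I would apply $(\pi_*(-))^G$ to the isomorphism $p_*q^* S^k L^{[n]} \simeq E^{k-1}(n,k) = \ker D^{k-2}_L$ furnished by Theorem \ref{thm: mainimage} (which already incorporates the vanishing $R^ip_*q^*S^kL^{[n]} = 0$ for $i>0$). The right-hand side becomes $\pi_*(\ker D^{k-2}_L)^G = \ker \mathcal{D}^{k-2}_L = \mathcal{E}^{k-1}(n,k)$, since $G$-invariants commute with the left exact kernel functor and with $\pi_*$. For the left-hand side, I would use the commutative diagram $\pi \circ p = \mu \circ q$ to rewrite
\[
(\pi_* p_* q^* S^k L^{[n]})^G \;\simeq\; (\mu_* q_* q^* S^k L^{[n]})^G \;\simeq\; \mu_* (q_* q^* S^k L^{[n]})^G,
\]
and then the standard fact that $q$ is finite flat with $(q_* \mathcal{O}_{B^n})^G \simeq \mathcal{O}_{X^{[n]}}$ (one of the defining properties of the Bridgeland-King-Reid setup), combined with the projection formula, gives $(q_* q^* S^k L^{[n]})^G \simeq S^k L^{[n]}$. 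This identifies the left-hand side with $\mu_* S^k L^{[n]}$.

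Finally, to get the derived statement $\mathbf{R}\mu_* S^k L^{[n]} \simeq \mu_* S^k L^{[n]}$, I would combine the vanishing $R^ip_*q^*S^kL^{[n]} = 0$ for $i>0$ with the fact that $\pi$ is a finite morphism (hence $R^i\pi_* = 0$ for $i>0$), and use that exactness of $G$-invariants propagates the concentration in degree $0$ from $X^n$ down to $S^nX$; equivalently one may invoke Corollary \ref{crl: spect}, which already identifies $\mu_* S^k L^{[n]}$ with $(E^{0,0}_\infty)^{G\times H}$ in the derived category. The only mild subtlety, and the one point requiring care, is the naturality of the identification $(q_*q^*(-))^G \simeq \id$ and the commutation of $G$-invariants with $\pi_*$; both hold cleanly because $|G|$ is invertible in our base field and because $q$ and $\pi$ are finite.
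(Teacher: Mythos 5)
Your proposal is correct and follows the same route the paper takes: the paper derives this theorem as an immediate consequence of Theorem \ref{thm: mainimage} by taking $G$-invariants (using that this is an exact functor, so $\ker \D^l_L = \E^{l+1}(n,k)$) together with the identification of $\mu_* S^k L^{[n]}$ with the $G$-invariants of $\pi_* p_* q^* S^k L^{[n]}$ already provided by Corollary \ref{crl: spect}. You merely spell out the standard details (commutativity of the BKR square, $(q_*\FS_{B^n})^G \simeq \FS_{X^{[n]}}$, finiteness of $\pi$) that the paper leaves implicit.
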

 We now proceed to find explicit local formulas for the operators $\D^l_L$: these formulas will be essential in section \ref{section: filtration}. 
 Before we begin, we give the following notation. 
 \begin{notat}Let $r$ a nonnegative integer.  For $\lambda \in c_n(r_1)$,  $\mu \in c_n(r_2)$, with
 $\lambda \geq \mu$ (according to notation \ref{notat: orderpartitions}), set as $\beta = \lambda/\mu$ the composition defined by $\beta_i = \lambda_i - \mu_i$.  \end{notat}

 \subsubsection{The case $n=2$.} For $n=2$ we have that $B(k,l)$ identifies to the set $c_2(k-l)$ of compositions of $k-l$ of range 2  and  $A(k,l)$  identifies to the set $p_2(k-l)$ of partitions of $k-l$ of lenght $ \leq 2$. Hence we will simply write $K^l_\mu(L)$, with $\mu \in c_2(k-l-1)$ for the components of $K^l(L)$ and  $D^l_\mu$ for the components of $D^l_L$; analogously if $\mu \in p_2(k-l-1)$ we will just write $\K^l_\mu $ for the components of $\K^l(L)$ and $\D^l_\mu$ for the components of $\D^l_L$. 
If $\mu \in p_2(k-l-1)$, 
  the component $\D^l_\mu$ of the differential $\D^l_L$ is given by the composition
  $$ \D^l_\mu : \E^l(2, k) = \pi_*( E^l(2,k))^G \rInto \pi_* ( E^l(2,k)) \rTo^{\pi_* D^l_{\mu}} \pi_* K^l_\mu (L) \;.$$
 Since $\E^l(2, k) \subseteq (S^k V_L)^G \simeq \bigoplus_{\lambda \in p_2(k)} \mathcal{L}^\lambda$, we write a local section
 $x \in \E^l (2, k)$ as $(x_\lambda)_\lambda$.

 Recall that if $I$ is a finite set equipped with a transitive action of a finite group $G$ and $M= \oplus_{i \in I} M_i$ is a $G$-module, where the $G$ action on $M$ is compatible with the $G$ action on $I$ (see \cite[section 2.4]{Scala2009D}), 
 the isomorphism of Danila's lemma $M_{i_0}^{\Stab_G(i_0)} \simeq M^G$, with $i_0 \in I$, is given by
 $ u \rMapsto \sum_{[g ] \in G/\Stab_G(i_0)} gu $.  Here $G/\Stab_G(i_0)$ has to be seen just as a collection of cosets. Hence the map 
 $$\bigoplus_{\lambda \in p_2(k)} \mathcal{L}^\lambda \simeq \pi_* (S^k V_L)^G \rInto 
 \pi_* S^k V_L = \bigoplus_{\lambda^\prime \in c_2(k)} \pi_*L^{\lambda^\prime}$$is given by 
 $(x_\lambda)_\lambda \rMapsto (x_{\lambda^\prime})_{\lambda^\prime}$ where $\lambda^\prime$ runs among compositions of the form $\lambda^\prime = \sigma \lambda$ with $\sigma \in G$ and $\lambda \in p_2(k)$ and where $x_{\sigma \lambda}
 = \sigma_* x_\lambda$.  
 Now take $(x_\lambda)_\lambda$ a local section  of $\E^l(2,k)$ over an affine open set of the form $S^2 U$, $U$ being an affine open set of $X$ such that $L$ is trivial over $U$. By what we  said, if $\mu \in p_2(k-l-1)$, 
 \begin{align*} \D^l_\mu (x_\lambda)_\lambda =  (\pi_* D^l_\mu)(x_{\lambda^\prime})_{\lambda^\prime} = &
  \sum_{\substack{\beta \in c_2(l+1)}} (-1)^{\beta} \binom{l+1}{\beta}[x_{\beta \mu}]_{I_\Delta^{l+1}}
\\
= &   \Bigg [  \sum_{\substack{\lambda^\prime \in c_2(k) , \:\lambda^\prime \geq \mu 
\\    \lambda^\prime = \sigma \lambda , \: \sigma \in G, \lambda \in p_2(k)  }} (-1)^{\lambda^\prime / \mu} \binom{l+1}{\lambda^\prime / \mu} \sigma_* x_{\lambda} \Bigg]_{I_{\Delta}^{l+1}}\end{align*}where the last term is seen as a local section of $\K^l_\mu = \pi_* (S^l \Omega^1_X \tens L^k)_\Delta$, since the sum inside the square bracket belongs to $I_\Delta^l \tens L^k$. 
\begin{example}Take $k=3$. Then $(S^3 V_L)^G \simeq \bigoplus_{\lambda \in p_2(3)} \mathcal{L}^\lambda = \mathcal{L}^{3} \oplus \mathcal{L}^{2,1}$. A local section $x$ of 
$(S^3 V_L)^G$ over an affine open set $S^2 U$, where $U$ is an affine open set of $X$ such that $L$ is trivial over $U$, can be written as $(x_{3,0}, x_{2,1})$. Suppose that $x \in \E^1(2, 3)$. 
Then \begin{align*} \D^1_{(1,0)}x = & \Bigg[ (-1)^{(3,0)/(1,0)}\binom{2}{(3,0)/(1,0)} x_{(3,0)} + (-1)^{(2,1)/(1,0)} \binom{2}{(2,1)/(1,0)} x_{(2,1)} + \\ & \qquad \qquad+ (-1)^{(1,2)/(1,0)}\binom{2}{(1,2)/(1,0)} \sigma_* x_{2,1} \Bigg]_{I_\Delta^2} \\ 
= &  \Bigg[ (-1)^{2}\binom{2}{2} x_{(3,0)} + (-1)^{1} \binom{2}{1} x_{(2,1)} + (-1)^{0}\binom{2}{0} \sigma_* x_{2,1} \Bigg]_{I_\Delta^2} =  [ x_{3,0} -2 x_{2,1}+ \sigma_* x_{2,1}]_{I_\Delta^2} \;.
\end{align*}If $x_{3,0} = f \tens a \in H^0(X, L^3) \tens H^0(X, \FS_X)$, $x_{2,1} = g_1 \tens g_2 \in H^0(X, L^2) \tens H^0(X, L)$ are global sections of $\mathcal{L}^3$ and $\mathcal{L}^{2,1}$, respectively, 
we can write, if $x \in \E^1(2,3)(S^2 X)$, 
$$ \D^1_{(1,0)}(f \tens a, g_1 \tens g_2) =
fda  -[g_1, g_2] \in H^0(X, \Omega^1_X \tens L^3)$$where 
$[ \cdot, \cdot]$ is the braket $H^0(X, L^2) \tens H^0(X, L) \rTo H^0(X, \Omega^1_X \tens L^3)$ defined locally by
$[g_1, g_2] := 2g_1 dg_2 - g_2 dg_1$. This bracket appears, for example, in \cite{Green1994}. 

\end{example}
\subsubsection{The general case.}  Let's discuss now the case with general
$n$. We will use the following diagram: 
\begin{diagram}
X^n &  & \\
\dTo_{\hat{\pi}} & \rdTo^{\pi} & \\
S^2 X \times S^{n-2}X & \rTo_{v} & S^n X  
\end{diagram}where $\hat{\pi}$ is the $\perm_2 \times \perm_{n-2}$ quotient map and $v$ is the induced map from the partial to the total quotient. Take $(\theta, \nu) \in A_0(k, l+1)$. The component $\D^l_{\theta, \nu}$ of the $G$-invariant 
differential $\D^l_L$ is, by definition, the composition: 
$$ \D^{l}_{\theta, \nu} :  
\E^l(n,k) = (\pi_*E^l(n,k))^G \rInto \pi_* E^l(n,k)
\rTo ^{\pi_* D^l_{\theta \nu, \{1 2 \} } } 
\pi_* K^l_{\theta\nu, \{1, 2 \}}(L)\;.$$Note that starting with an element 
in $\E^l(n,k) = (\pi_*E^l(n,k))^G$, that is, invariant by $G$, we will correctly finish with an element $\D^l_{\theta, \nu} x \in 
\pi_* (K^l_{\theta \nu, \{1, 2\}}(L))^{\Stab_{\perm_{n-2}}(\nu)} \simeq \K^{l}_{\theta, \nu} \simeq
v_*( (S^l \Omega^1_X \tens L^{l +1+ |\theta|})_{\Delta} \boxtimes \mathcal{L}^{\nu} ) $. We want to find a simpler way to write it, along with  an explicit local expression. 
\begin{notat}Let $\mu$, $\nu$ two partitions of $i$ and $m-i$, respectively, for $i$, $m$ nonnegative integers with $i \leq m$. 
We denote with $\mu \coprod \nu$ the unique partition of lenght $l(\mu)+ l(\nu)$ 
associated to the composition of $m = |\mu|+|\nu|$ with value $\mu_j$ if $1 \leq j \leq l(\mu)$ and $\nu_{j-l(\mu)}$ if $j > l(\mu)$. 
\end{notat}With this notation, it is easy to see that over $S^2X \times S^{n-2}X$, 
\begin{equation}\label{eq: s2xsn-2} (S^k V_L)^{\perm_2 \times \perm_{n-2}} \simeq \bigoplus_{\substack{
    \mu , \nu \\ l(\mu) \leq 2, l(\nu) \leq n-2 \\ \mu \coprod \nu \in p_n(k)}}
\mathcal{L}^{\mu} \boxtimes \mathcal{L}^{\nu} \;. \end{equation}

\begin{notat}For a partition $\nu_0$ of $l$, $0 \leq l \leq k$, $l(\nu_0) \leq n-2$, denote with 
$q_{\nu_0}$ the composition of maps
$$q_{\nu_0} : (\pi_* S^k V_L)^{G} \rInto (\pi_* S^k V_L)^{\perm_2 \times \perm_{n-2}} \simeq   \bigoplus_{\substack{
    \mu, \nu \\ l(\mu) \leq 2, l(\nu) \leq n-2 \\ \mu \coprod \nu \in p_n(k)}}
v_*( \mathcal{L}^{\mu} \boxtimes \mathcal{L}^{\nu})  \rTo \bigoplus_{\substack{\mu \in p_2(k-|\nu_0|)  
}} v_*(\mathcal{L}^\mu \boxtimes \mathcal{L}^{\nu_0}) \;.$$where the isomorphism in the middle is induced by \ref{eq: s2xsn-2}. 
\end{notat}\begin{lemma}\label{lmm:comm2}
The diagram
\begin{diagram}( \pi_* S^k V_L)^G& \rTo^{q_{\nu_0}} &  \bigoplus_{\substack{
    \mu \in c_2(k-|\nu_0|) \\  }} \pi_*( L^\mu \boxtimes L^{\nu_0})^{\perm_2 \times \Stab_{\perm_{n-2}}(\nu_0)}  \simeq \bigoplus_{\substack{
    \mu \in p_2( k-|\nu_0|)}}
v_* (\mathcal{L}^{\mu} \boxtimes \mathcal{L}^{\nu_0}) \\
\dTo^{ \imath_G} & & \dTo^{\imath_{\perm_2 \times \Stab_{\perm_{n-2}} (\nu_0)}} \\
\pi_* S^k V_L & \rTo^{\pi_*(q^{\nu_0}_{\{1,2\} })} & \bigoplus_{\substack{
    \mu \in c_2(k-|\nu_0|) }} 
\pi_* (L^{\mu} \boxtimes L^{\nu_0}) 
\end{diagram}where the vertical maps are natural injections, \sloppy
is commutative. In the second line of the diagram $\nu_0$ is thought as a composition of supported in $\{3, \dots, n\}$ via the unique order-preserving bijection $\{1, \dots, {n-2} \} \simeq \{3, \dots, n \}$. 

\end{lemma}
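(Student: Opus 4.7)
The plan is to verify the diagram by evaluating both paths on a single Danila summand of $(\pi_* S^k V_L)^G$ and comparing the resulting sections component-by-component in $\bigoplus_\mu \pi_*(L^\mu \boxtimes L^{\nu_0})$. Using the Danila decomposition $(\pi_* S^k V_L)^G \simeq \bigoplus_{\lambda_0 \in p_n(k)} \pi_*(L^{\lambda_0})^{\Stab_G(\lambda_0)}$, by $\FS_{S^n X}$-linearity it suffices to test the diagram on a local section $y \in \pi_*(L^{\lambda_0})^{\Stab_G(\lambda_0)}$ for a fixed partition $\lambda_0 \in p_n(k)$.

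First I would trace the bottom path. Danila's formula yields $\imath_G(y) = \sum_{[g] \in G/\Stab_G(\lambda_0)} g_* y$, whose $\lambda$-component equals $g_* y$ whenever $\lambda = g\lambda_0$ and vanishes if $\lambda \notin G \cdot \lambda_0$. Applying $\pi_*(q^{\nu_0}_{\{1,2\}})$, which is the direct-summand projection onto compositions of the form $\beta \nu_0$ with $\beta \in c_2(k-|\nu_0|)$ (with $\nu_0$ viewed as a composition supported in $\{3, \dots, n\}$), the $\beta$-component of the bottom path equals $g_* y$ whenever some $g \in G$ satisfies $g\lambda_0 = \beta \nu_0$, and is zero otherwise. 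Invariance of $y$ under $\Stab_G(\lambda_0)$ ensures this value does not depend on the choice of $g$.

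For the top path, $q_{\nu_0}$ is by definition the composition of the natural inclusion $(\pi_* S^k V_L)^G \hookrightarrow (\pi_* S^k V_L)^{\perm_2 \times \perm_{n-2}}$ with the Danila projection onto the summands indexed by pairs $(\mu, \nu_0)$ with $\mu \in p_2(k-|\nu_0|)$. Since Danila's isomorphism is compatible with the inclusion of full $G$-invariants into partial $\perm_2 \times \perm_{n-2}$-invariants, the $(\mu, \nu_0)$-Danila representative of $\imath_G(y)$ inside $(L^{\mu \coprod \nu_0})^{\Stab_{\perm_2}(\mu) \times \Stab_{\perm_{n-2}}(\nu_0)}$ coincides with the $(\mu \coprod \nu_0)$-component of $\imath_G(y)$, namely $z_\mu = g_* y$ for some (and then any) $g \in G$ with $g\lambda_0 = \mu \coprod \nu_0$, and $z_\mu = 0$ if no such $g$ exists. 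The Danila inclusion $\imath_{\perm_2 \times \Stab_{\perm_{n-2}}(\nu_0)}$ then sends $z_\mu$ to $\sum_{[h] \in \perm_2/\Stab_{\perm_2}(\mu)} (h, 1)_* z_\mu$, so that for $\beta = h\mu$ the $\beta$-component of the top-right composition is $((h, 1) g)_* y = g'_* y$ with $g'\lambda_0 = \beta \nu_0$.

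Comparing both outputs component-by-component gives equality and establishes commutativity. The main obstacle is the careful bookkeeping in the middle step: one must verify that the $(\mu, \nu_0)$-Danila representative of $\imath_G(y)$ inside the partial invariants is indeed $g_* y$ for $g\lambda_0 = \mu \coprod \nu_0$, reflecting the general principle that Danila's isomorphism is functorial under inclusions of invariant subspaces and compatible with direct-summand projections indexed by unions of group orbits --- a principle that is elementary but requires care to apply uniformly across the chain $G \supset \perm_2 \times \perm_{n-2} \supset \perm_2 \times \Stab_{\perm_{n-2}}(\nu_0)$.
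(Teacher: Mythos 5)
Your proof is correct, and it verifies the same local statement the paper reduces to, but the mechanics are different. The paper's proof does no element-chasing at all: it observes that every map in the square is either a natural inclusion of invariants or a projection onto a set of summands stable under the relevant subgroup, so the square factors into two trivially commutative triangles (the inclusion $M^G \hookrightarrow M$ factors through $M^H$ for $H = \perm_2 \times \Stab_{\perm_{n-2}}(\nu_0)$, and the projection $p_J$ onto the $H$-stable subset $J = c_2(k-|\nu_0|)\times\{\nu_0\}$ of the index set commutes with the inclusion of $H$-invariants). This is packaged as a one-line abstract lemma about $R[G]$-modules graded by a $G$-set, with the single observation that $q_{\nu_0} = p_J^H \circ i_{G,H}$. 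You instead push a Danila representative $y \in \pi_*(L^{\lambda_0})^{\Stab_G(\lambda_0)}$ through both paths using the explicit orbit-sum formula $\sum_{[g]} g_* y$ and match components; this works, and your careful identification of the $(\mu,\nu_0)$-Danila representative of $\imath_G(y)$ with its $(\mu\coprod\nu_0)$-component is exactly right, but it amounts to re-deriving by hand the compatibility of Danila's isomorphism with inclusions and projections that the paper's formulation makes formal. The abstract route is shorter and makes clear that no choice of orbit representatives or section of the quotient maps ever enters; your route has the advantage of making the component-level formulas explicit, which is what actually gets used in Proposition \ref{pps: formula2}.
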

\begin{proof}It is sufficient to check the commutativity locally on an affine open set, where we can identify sheaves with the modules of sections. Let  $M$ be the module of sections of $\bigoplus_{\lambda \in c_n(k)} \pi_*( L^\lambda)$, $G= \perm_n$, $I=c_n(k)$, $J= c_2(k-|\nu_0|) \times \{ \nu_0 \}$, 
$H= \perm_2 \times \Stab_{\perm_{n-2}} (\nu_0)$. Then the commutativity of the diagram follows directly from the following lemma, whose proof is straightforward, noting that $q_{\nu_0} = p_J^H \circ i_{G, H}$. 

\end{proof}
\begin{lemma}Let $R$ a commutative $\mbb{C}$-algebra. 
Let $G$ a finite group and $H \subseteq G$ a subgroup. Suppose that $G$ acts on a finite set $I$
and let $M=\oplus_{i \in I} M_i$ be a $R[G]$-module such that the $G$-action on $M$ is compatible with the $G$-action on $I$. Let moreover $J \subseteq I$ be a subset of $I$ such that $J$ is $H$-invariant. Set $M_J = \oplus_{j \in J}M_j$ and let $p_J$ the projection $M \rTo M_J$ on the summands in $J$. 
Then the following diagram is commutative:
\begin{diagram} M^G & \rInto^{i_{G,H}} & M^H & \rOnto^{p_J^H} & M_J^H \\
\dInto^{i_G} & \ldInto^{i_H} & &\ldInto^{i_{J, H}} & \\
M & \rOnto^{p_J} & M_J & & \\
\end{diagram}where $i_G$, $i_{G, H}$, $i_{J, H}$ and $i_H$ are natural injections. 
\end{lemma}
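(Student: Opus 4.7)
The plan is to observe that the diagram decomposes into two commuting triangles, each of which is essentially tautological once one unwinds the definitions. I would treat them in turn.

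First, I would handle the left triangle $M^G \hookrightarrow M^H \hookrightarrow M$ versus $M^G \hookrightarrow M$. This is just the transitivity of subset inclusions: since $H \subseteq G$, every $G$-invariant element is in particular $H$-invariant, so $i_{G,H}$ is the inclusion of $M^G$ as a subset of $M^H$, and its composition with $i_H$ is $i_G$ by definition. Nothing to check here beyond recalling what $i_{G,H}$ means.

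Second, I would handle the right square (or triangle, depending on how one draws it) $p_J \circ i_H = i_{J,H} \circ p_J^H$. The key point is the $H$-equivariance of $p_J$: since $J$ is $H$-stable and the $G$-action (hence the $H$-action) on $M$ is compatible with the action on $I$, the projection $p_J: M \to M_J$ intertwines the $H$-actions. Therefore $p_J$ sends $M^H$ into $M_J^H$, and its restriction to $M^H$ is precisely what is denoted $p_J^H$. Composing with the inclusion $i_{J,H}: M_J^H \hookrightarrow M_J$ gives back $p_J$ restricted to $M^H$, that is, $p_J \circ i_H$.

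Combining the two triangles gives the full commutativity of the diagram. The only step that requires a word of care is the $H$-equivariance of $p_J$, which in turn rests on the fact that $J$ is preserved by $H$ (otherwise $p_J$ would not even land in a well-defined $H$-subrepresentation). There is no real obstacle; the lemma is essentially a formality about invariants commuting with projection onto an equivariant direct summand.
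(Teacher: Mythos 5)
Your proof is correct and fills in exactly the details the paper omits (the paper simply declares this lemma's proof ``straightforward'' and gives none): the left triangle is the transitivity of inclusions $M^G \subseteq M^H \subseteq M$, and the right one is the $H$-equivariance of $p_J$, guaranteed by the $H$-invariance of $J$ together with the compatibility of the action on $M$ with that on $I$, which makes $p_J^H$ literally the restriction of $p_J$ to $M^H$ with values in $M_J^H$. Nothing is missing.
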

We now come to the $G$-invariant version of formula \ref{eq: partialformula}, expressing an operator $\D^l_{\theta, \nu}$ in terms of the operator $\D^l_\theta$ over $S^2X$. 
\begin{pps}\label{pps: formula2}Let $r$ be a nonnegative integer, $r \leq k-1$, let $x$ be a local section of $ \E^r(n,k)$ and let $\nu_0$ be a partition of weight $|\nu_0| \leq k-r-1$ of length $l(\nu_0) \leq n-2$. Then $q_{\nu_0} (x) \in v_*(\E^r(2,k-|\nu_0|) \boxtimes \mathcal{L}^{\nu_0})$. Moreover for all integers $l$, $0 \leq l \leq r$, and for all $\theta \in p_2(k-|\nu_0|-l-1)$, $\theta \neq (h,h)$,  we have
\begin{equation}\label{eq: formula2} \D^l_{\theta, \nu_0} (x) = v_*(\D^l_\theta \boxtimes \id) \circ q_{\nu_0} (x) \;.\end{equation}
\end{pps}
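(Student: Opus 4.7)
The plan is to reduce the $G$-invariant statement to the non-invariant version proved in Proposition \ref{pps: tens} via the commutative diagram of Lemma \ref{lmm:comm2}.

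First, I would address the claim that $q_{\nu_0}(x) \in v_*(\E^r(2, k-|\nu_0|) \boxtimes \mathcal{L}^{\nu_0})$. Viewing $\nu_0$ as a composition supported in $\{3,\dots,n\}$, Proposition \ref{pps: tens} applied to $A = \{1,2\}$ and $\nu = \nu_0$ yields that $\pi_*(q^{\nu_0}_{\{1,2\}})(\imath_G x)$ belongs to $\pi_*(E^r(2, k-|\nu_0|) \boxtimes L^{\nu_0})$. By the commutative diagram of Lemma \ref{lmm:comm2}, this element equals $\imath_{\perm_2 \times \Stab_{\perm_{n-2}}(\nu_0)} \circ q_{\nu_0}(x)$. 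Since taking invariants is exact in our setting and the inclusion on the right of the diagram is injective, it follows that $q_{\nu_0}(x)$ lies in the $\perm_2 \times \Stab_{\perm_{n-2}}(\nu_0)$-invariants of $\pi_*(E^r(2, k-|\nu_0|) \boxtimes L^{\nu_0})$, which is precisely $v_*(\E^r(2, k-|\nu_0|) \boxtimes \mathcal{L}^{\nu_0})$ after the identification in (\ref{eq: s2xsn-2}).

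Next, for the formula (\ref{eq: formula2}), I would trace $x$ through both sides. On the one hand, by definition $\D^l_{\theta, \nu_0}(x) = \pi_*(D^l_{\theta\nu_0, \{1,2\}})(\imath_G x)$, where the result lies in the correct $\Stab_G(\theta\nu_0, \{1,2\})$-invariants because $(\theta, \nu_0) \in A_0(k, l+1)$ (this is where $\theta \neq (h,h)$ is used, so that no alternating sign kills the invariants, cf.\ remark \ref{rmk: same}). On the other hand, Proposition \ref{pps: tens} gives the local identity $\pi_*(D^l_{\theta\nu_0, \{1,2\}})(\imath_G x) = \pi_*((D^l_\theta \boxtimes \id) \circ q^{\nu_0}_{\{1,2\}})(\imath_G x)$. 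Applying Lemma \ref{lmm:comm2} once more, this last expression equals $\pi_*(D^l_\theta \boxtimes \id) \circ \imath_{\perm_2 \times \Stab_{\perm_{n-2}}(\nu_0)} \circ q_{\nu_0}(x)$. Since $q_{\nu_0}(x)$ already lies in the invariant subsheaf by the first part, the push-forward $\pi_*(D^l_\theta \boxtimes \id)$ restricted to those invariants coincides with $v_*(\D^l_\theta \boxtimes \id)$, yielding exactly (\ref{eq: formula2}).

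The verification is local and essentially formal once the compatibility between $q_{\nu_0}$ and $\pi_*(q^{\nu_0}_{\{1,2\}})$ (Lemma \ref{lmm:comm2}) and the non-invariant tensorial formula (Proposition \ref{pps: tens}) are in hand. The one step requiring a little attention is bookkeeping: one must verify that the target of $v_*(\D^l_\theta \boxtimes \id) \circ q_{\nu_0}$ is indeed $\K^l_{\theta, \nu_0}$, which follows from the identification
\[
\K^l_{\theta, \nu_0} \simeq v_*\bigl((S^l \Omega^1_X \tens L^{l+1+|\theta|})_\Delta \boxtimes \mathcal{L}^{\nu_0}\bigr) \simeq v_*(\K^l_\theta \boxtimes \mathcal{L}^{\nu_0})
\]
together with Danila's lemma applied to the $\Stab_G(\theta\nu_0, \{1,2\})$-action. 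The main (and essentially only) potential obstacle is this compatibility of Danila-style identifications under the two-step quotient $X^n \to S^2X \times S^{n-2}X \to S^nX$, but this is precisely what Lemma \ref{lmm:comm2} packages for us.
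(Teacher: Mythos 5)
Your proposal is correct and follows essentially the same route as the paper's proof: both parts reduce the invariant statement to Proposition \ref{pps: tens} via the commutative diagram of Lemma \ref{lmm:comm2}, compute $\D^l_{\theta,\nu_0}$ by Danila's lemma as $\pi_*(D^l_{\theta\nu_0,\{1,2\}})\circ\imath_G$, and conclude by identifying $\pi_*(D^l_\theta\boxtimes\id)$ on the invariant subsheaf with $v_*(\D^l_\theta\boxtimes\id)$. The paper spells out this last identification by factoring $\pi = v\circ\hat{\pi}$ with $\hat{\pi}=\hat{\pi}_1\times\hat{\pi}_2$, which is exactly the bookkeeping you flag at the end.
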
\begin{proof}If $x \in \E^{r}(n,k)$ then $x \in \pi_* E^{r}(n,k)^G$. 
Now by proposition \ref{pps: tens} the image of $E^r(n,k)$ for $q_{\{1,2\}}^{ \nu_0}$ is in $E^r(2,k-|\nu_0|) \boxtimes L^{\nu_0}$. 
Hence by  lemma 
\ref{lmm:comm2}  we have
$$q_{\nu_0}(x) = \pi_* ( q_{\{ 1, 2 \}}^{\nu_0}) (x) \in \pi_* (E^r(2,k-|\nu_0|) \boxtimes L^{\nu_0})^{\perm_2 \times \Stab_{\perm_{n-2}} (\nu_0)} \simeq v_*( \E^r(2, k-|\nu_0|) \boxtimes \mathcal{L}^{\nu_0} )\;.$$This proves the first statement. 

As for the second, the invariant operator $\D^l_{\theta, \nu_0}$ is computed by Danila's lemma as $\D^l_{\theta, \nu_0} = 
\pi_*(D^l_{\theta\nu_0, \{ 1, 2 \}}) \circ i_G$. Hence, by lemma \ref{lmm:comm2} and by proposition \ref{pps: tens}
\begin{align*}
\D^l_{\theta, \nu_0}(x) = & \: \pi_*(D^l_{\theta \nu_0, \{1, 2 \}} ) \circ \imath_G(x) 
= \pi_* [(D^l_\theta \boxtimes \id) q_{\{ 1, 2\}}^{\nu_0} ] \imath_G(x)\\
= & \: \pi_* (D^l_\theta \boxtimes \id) \pi_*(q_{\{ 1, 2\}}^{\nu_0} ) \imath_G(x)
=  \pi_*(D^l_\theta \boxtimes \id) \imath_{\perm_2 \times \Stab_{\perm_{n-2}}(\nu_0)} q_{\nu_0}(x) \\
= &\:  \pi_*(D^l_\theta \boxtimes \id) v_*( \imath_{\perm_2} \boxtimes \imath_{\Stab_{\perm_{n-2}}(\nu_0)} ) q_{\nu_0}(x)  \;.
\end{align*}Write now $\hat{\pi}: X^ n \rTo S^2X \times S^{n-2}X$ as $\hat{\pi}_1 \times \hat{\pi}_2$, where $\hat{\pi}_1$ and $  \hat{\pi}_2$ are the two
projections from $X^n \rTo S^2X$ and $X^ {n-2} \rTo S^{n-2}X$, respectively. Then: 
\begin{align*}\D^l_{\theta, \nu_0}(x) = & v_* \hat{\pi}_* ( D^l_\theta \boxtimes \id) v_*( \imath_{\perm_2} \boxtimes \imath_{\Stab_{\perm_{n-2}}(\nu_0)} ) q_{\nu_0}(x) \\
= & v_*  \Big[  ( ( \hat{\pi_1}_*  D^l_\theta)   \circ \imath_{\perm_2}) \boxtimes \imath_{\Stab_{\perm_{n-2}}(\nu_0)} \Big] q_{\nu_0}(x) \\ = & v_* (\D^l_\theta \boxtimes \imath_{\Stab_{\perm_{n-2}}  (\nu_0)} ) q_{\nu_0}(x) 
=   v_* (\D^l_\theta \boxtimes \id ) q_{\nu_0}(x) \;.
\end{align*}The last equality follows from the fact that the term $v_* (\D^l_\theta \boxtimes \imath_{\Stab_{\perm_{n-2}}  (\nu_0)} ) q_{\nu_0}(x) $ is naturally $\Stab_{\perm_{n-2}}(\nu_0)$-invariant and hence seen in 
$\pi_* (K^l_{\theta, \nu_0}(L))^{\perm_2 \times \Stab_{\perm_{n-2}} (\nu_0)}$ 
and not just in $\pi_* (K^l_{\theta, \nu_0}(L))^{\perm_2} $.  \end{proof}

\subsubsection{Action on global sections.} \label{subsubsection: actglob}We finish the section by expliciting how the invariant operators $\D^l_{\theta, \nu}$ act on global sections of the sheaves $\E^l(n, k)$. 
\begin{notat}\label{notat: exp}Let $\lambda$ a partition of $k$, $\lambda = (\lambda_1, \dots, \lambda_r)$. The \emph{exponential notation} for the partition $\lambda$ is the writing $(1^{\alpha_1(\lambda)}, 2^{\alpha_2(\lambda)}, \dots, m_{\lambda}^{\alpha_{m_\lambda}(\lambda)})$, where 
$\alpha_{\lambda_i}(\lambda)$ is the number of times the integer $\lambda_i$ appears in the list $(\lambda_1, \dots, \lambda_r)$ and where $\lambda_1 = m_\lambda$. 
We will also write $\lambda = \prod_{i=1}^{m_\lambda} i^{\alpha_i(\lambda)}$. 
\end{notat}
Thanks to the exponential notation of partitions we can express the global sections of sheaves $(S^k V_L)^G$ and 
$\K^l_{\theta, \nu}$. For brevity's sake we just write $H^i(F)$ for the $i$-th cohomology of a coherent sheaf $F$ over the surface $X$;  
if $\lambda \in p_m(k)$, recalling notation \ref{notat: Lmu}, we will also write $H^i(\mc{L}^\lambda_m)$ instead of the longer $H^i(S^mX, \mc{L}^\lambda_m)$. First of all, if $\mu$ is a partition of $h$ of length $l(\mu) \leq m$,  we have: 
$$ H^0(S^m X, \mathcal{L}^\mu) \simeq \Tens_{i=1}^{m_{\mu}}S^{\alpha_i(\mu)}H^0(L^i) \tens S^{m-l(\mu)}H^0(\FS_X) \;.$$
Then: 
\begin{align*} H^0(S^n X, (S^k V_L)^G) \simeq &\: \bigoplus_{\lambda \in p_n(k)} H^0(S^n X, \mathcal{L}^\lambda) \simeq
 \bigoplus_{\lambda \in p_n(k)} \Tens_{i=1}^{m_\lambda} S^{\alpha_i(\lambda)}H^0(L^i) \tens S^{n-l(\lambda)}H^0(\FS_X) \\ 
 \qquad H^0(S^n X, \K^{l}_{\theta, \nu}) = & \: H^0(S^{l} \Omega^1_X \tens L^{|\theta|+l+1}) \tens H^0(S^{n-2} X, \mathcal{L}^\nu) \\ 
 = &  \: H^0(S^{l} \Omega^1_X \tens L^{|\theta|+l+1}) \tens \Tens_{i=1}^{m_\nu} S^{\alpha_i(\nu)}H^0(L^i) \tens S^{n-l(\nu)-2} H^0(\FS_X)
\end{align*}
Now, by proposition \ref{pps: formula2}, in order to explicit the action of the operator $\D^l_{\theta, \nu}$ on global sections, it is just sufficient to understand how the morphism $q_{\nu_0}$ acts at the level of global sections. 
Let $\lambda$ be a partition of $k$ and let
$\lambda =\mu \coprod \nu$:  
in the exponential 
notation we can write $ \lambda = \prod_{i=1}^n i^{\alpha_i}$, $\mu = \prod_{i=1}^n i^{\beta_i}$, $\nu = \prod_{i=1}^n i^{\gamma_i}$, with $\alpha_i = \beta_i + \gamma_i$. We define the operators $$\imath_{\mu, \nu}^\lambda : H^0(\mathcal{L}^\lambda_{n}) \rTo H^0(\mathcal{L}^\mu_{2}) \tens H^0(\mathcal{L}^\nu_{n-2})$$ as follows: 
\begin{itemize}
\item if $l(\mu) = 2$, then $\imath^\lambda_{\mu, \nu}$ is defined as the natural map 
$$ \imath^\lambda_{\mu, \nu}: H^0(\mathcal{L}^\lambda_{n}) \simeq \Tens_{i} S^{\alpha_i}H^0(L^i) \Tens S^{n-l(\lambda)}H^0(\FS_X) \rInto \Tens_i S^{\beta_i}H^0(L^i) \tens S^{\gamma_i}H^0(L^i) \Tens  S^{n-l(\lambda)}H^0(\FS_X) $$induced by the inclusions
$ S^{\alpha_i}H^0(L^i) \rInto S^{\beta_i}H^0(L^i) \tens S^{\gamma_i}H^0(L^i)$; 
\item If $l(\mu)=1$, then $\mu$, in exponential notation is $\mu = i_0^1$ for some $i_0$: 
for this $i_0$ we have $\alpha_{i_0} = 1 + \gamma_{i_0}$. Hence 
the map $\imath^\lambda_{\mu, \nu}$ is then the compositon 
\begin{multline*}  \imath^\lambda_{\mu, \nu}: H^0(\mathcal{L}^\lambda_{n}) \simeq \Tens_i S^{\alpha_i} H^0(L^i)  \Tens S^{n-l(\lambda)}H^0(\FS_X)\rInto 
H^0(L^{i_0}) \tens  \Tens_{i}  S^{\gamma_i}H^0(L^i) \tens S^{n-l(\lambda)}H^0(\FS_X) \\ \rInto  H^0(L^{i_0}) \tens H^0(\FS_X) \tens S^{\gamma_i}H^0(L^i) \tens S^{n-l(\lambda)-1}H^0(\FS_X)  \simeq 
H^0(\mathcal{L}^\mu_{2}) \tens H^0(\mathcal{L}^\nu_{n-2}) \end{multline*}where the first map is induced by the natural injection 
$S^{\alpha_{i_0}} H^0(L^{i_0}) \rInto 
H^0(L^{i_0}) \tens    S^{\gamma_{i_0}}H^0(L^{i_0})$ and the second by the injection 
$S^{n-l(\lambda)}H^0(\FS_X) \rInto H^0(\FS_X) \tens S^{n-l(\lambda)-1}H^0(\FS_X)$. 
\end{itemize}For a partition $\nu_0$ with $l(\nu_0) \leq n-2$, the morphism $q_{\nu_0}$, at the level of global sections is the composition: 
$$ Q_{\nu_0}: \bigoplus_{\lambda \in p_n(k)} H^0(\mathcal{L}^\lambda_n) \rTo \bigoplus_{\substack{\mu, \nu \in p_n(k) \\ l(\mu) \leq 2, l(\nu) \leq n-1 \\ \mu \coprod \nu \in p_n(k)}} H^0(\mathcal{L}^\mu_{2}) \tens H^0(\mathcal{L}^\nu_{n-2}) \rTo \bigoplus_{\substack{\mu \in p_n(k) \\ l(\mu) \leq 2 \\ \mu \coprod \nu_0 \in p_n(k)}} H^0(\mathcal{L}^\mu_{2} ) \tens H^0(\mathcal{L}^{\nu_0}_{n-2})$$where the first map is induced by the morphisms $i^\lambda_{\mu, \nu}$, while the second one is the projection onto some summands. Then, at the level of global sections we have: 
\begin{crl}Let $r$ be a nonnegative integer, $r \leq k-1$ and $\nu_0$ a partition of weight $|\nu_0| \leq k-r-1$ and of length $l(\nu_0) \leq n-2$.  If $x \in H^0(S^n X, \E^r(n,k))$, then 
$Q_{\nu_0}(x) \in H^0(S^2 X, \E^r(2, k-|\nu_0|) ) \tens H^0(S^{n-2}X, \mathcal{L}^{\nu_0}_{n-2})$. Then, for all integers $l$, $0 \leq l \leq r$, and for all 
$\theta \in p_2(k-|\nu_0|-l-1)$, $\theta \neq (h,h)$,  we have
$$ \D^l_{\theta, \nu_0} = (\D^l_\theta \tens \id) \circ Q_{\nu_0} $$where $\D^l_\theta$ is the operator for the $n=2$ case, acting on the factor $H^0(S^2 X, \E^r(2, k-|\nu_0|))$. \end{crl}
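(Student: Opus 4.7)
The plan is to deduce the corollary from Proposition \ref{pps: formula2} by taking global sections, after checking that the map $H^0(q_{\nu_0})$ coincides with the map $Q_{\nu_0}$ described explicitly via the inclusions $\imath^\lambda_{\mu,\nu}$.

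First, since the map $v: S^2X \times S^{n-2}X \rTo S^n X$ is finite, the functor $v_*$ is exact and commutes with $H^0$, so applying $H^0(S^nX, -)$ to the sheaf-theoretic identity
$\D^l_{\theta, \nu_0}(x) = v_*(\D^l_\theta \boxtimes \id) \circ q_{\nu_0}(x)$ from Proposition \ref{pps: formula2}
directly yields the formula $\D^l_{\theta, \nu_0} = (\D^l_\theta \tens \id) \circ H^0(q_{\nu_0})$ on global sections of $\E^r(n,k)$. The first statement about $Q_{\nu_0}(x)$ landing in $H^0(\E^r(2, k-|\nu_0|)) \tens H^0(\mathcal{L}^{\nu_0}_{n-2})$ likewise follows from the first part of Proposition \ref{pps: formula2}, once $Q_{\nu_0}$ is identified with $H^0(q_{\nu_0})$ and the Künneth isomorphism
$H^0(\mathcal{F} \boxtimes \mathcal{G}) \simeq H^0(\mathcal{F}) \tens H^0(\mathcal{G})$
is invoked on the quasi-projective affine quotient $S^2X \times S^{n-2}X$ (the sheaves in question being direct sums of pushforwards of line bundles).

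So the entire content is the identification $H^0(q_{\nu_0}) = Q_{\nu_0}$. Recall $q_{\nu_0}$ is defined as a composition: the inclusion $(\pi_* S^k V_L)^G \rInto (\pi_* S^k V_L)^{\perm_2 \times \perm_{n-2}}$ followed by the projection onto summands indexed by couples $(\mu, \nu_0)$ with $l(\mu) \leq 2$ and $\mu \coprod \nu_0 \in p_n(k)$. Under the direct sum decompositions $(\pi_* S^k V_L)^G \simeq \oplus_{\lambda \in p_n(k)} \mathcal{L}^\lambda_n$ and
$(\pi_* S^k V_L)^{\perm_2 \times \perm_{n-2}} \simeq \oplus_{\mu, \nu} v_*(\mathcal{L}^\mu_2 \boxtimes \mathcal{L}^\nu_{n-2})$,
the inclusion is determined, summand by summand indexed by $\lambda = \mu \coprod \nu$, by the natural maps $\mathcal{L}^\lambda_n \rInto v_*(\mathcal{L}^\mu_2 \boxtimes \mathcal{L}^\nu_{n-2})$ coming from the chain of group inclusions $\Stab_{\perm_n}(\lambda) \supseteq \Stab_{\perm_2}(\mu) \times \Stab_{\perm_{n-2}}(\nu)$.

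The heart of the verification is therefore a bookkeeping check: that on global sections, the inclusion $\mathcal{L}^\lambda_n \rInto v_*(\mathcal{L}^\mu_2 \boxtimes \mathcal{L}^\nu_{n-2})$ is exactly the map $\imath^\lambda_{\mu, \nu}$ from the exponential-notation description preceding the corollary. Writing $\lambda = \prod_i i^{\alpha_i}$, $\mu = \prod_i i^{\beta_i}$, $\nu = \prod_i i^{\gamma_i}$ with $\alpha_i = \beta_i + \gamma_i$, and using that $H^0(\mathcal{L}^\lambda_n) \simeq \Tens_i S^{\alpha_i}H^0(L^i) \tens S^{n-l(\lambda)}H^0(\FS_X)$, the inclusion of invariants for the subgroup $\Stab_{\perm_2}(\mu) \times \Stab_{\perm_{n-2}}(\nu) \subseteq \Stab_{\perm_n}(\lambda)$ corresponds, factor by factor, precisely to the standard inclusions $S^{\alpha_i}H^0(L^i) \rInto S^{\beta_i}H^0(L^i) \tens S^{\gamma_i}H^0(L^i)$ (together with the obvious splittings of $S^{n-l(\lambda)}H^0(\FS_X)$). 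This matches the two cases ($l(\mu) = 2$ and $l(\mu) = 1$) in the definition of $\imath^\lambda_{\mu, \nu}$. Composing with the projection onto the summands with $\nu = \nu_0$ yields $Q_{\nu_0}$.

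The main (minor) obstacle is the case $l(\mu) = 1$: here the target summand $H^0(\mathcal{L}^\mu_2) = H^0(L^{i_0}) \tens H^0(\FS_X)$ carries an extra factor of $H^0(\FS_X)$ that must be supplied from $S^{n-l(\lambda)}H^0(\FS_X)$, which is why $\imath^\lambda_{\mu,\nu}$ is defined as a two-step composition in that case. Once this is checked, the identification $H^0(q_{\nu_0}) = Q_{\nu_0}$ is complete, and the corollary follows from Proposition \ref{pps: formula2} as explained.
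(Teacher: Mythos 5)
Your proposal is correct and follows the same route the paper intends: the corollary is obtained by taking global sections in Proposition \ref{pps: formula2}, with the only content being the identification of $H^0(q_{\nu_0})$ with the explicitly described map $Q_{\nu_0}$, which the paper treats as immediate from the definition of the inclusions $\imath^\lambda_{\mu,\nu}$. Your additional bookkeeping on the $l(\mu)=1$ case and the invariants for $\Stab_{\perm_2}(\mu)\times\Stab_{\perm_{n-2}}(\nu)\subseteq\Stab_{\perm_n}(\lambda)$ is exactly the verification the paper leaves implicit.
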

As an application of this corollary we could now give another proof of the following theorem by Danila \cite{Danila2007}. 
We will indicate more briefly with $H^0(\E^r(m,k))$ the global sections $H^0(S^m X, \E^r(m,k))$. 
\begin{theorem} Let $X$ a smooth algebraic surface with $H^0(\FS_X) \simeq \mbb{C}$ (for example a smooth projective surface) and let $L$ be a line bundle on $X$. Let $k, n \in \mbb{N}$, with $n \neq 0$ and $k \leq n$. We have 
$$ H^0(X^{[n]}, S^k L^{[n]}) \simeq S^k H^0(X, L) \;.$$
\end{theorem}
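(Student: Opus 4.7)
Plan: By Theorem~\ref{thm: directimage}, $\mu_* S^k L^{[n]} \simeq \E^{k-1}(n,k)$, so
\[
H^0(X^{[n]}, S^k L^{[n]}) \simeq H^0(S^nX, \E^{k-1}(n,k)) \hookrightarrow \bigoplus_{\lambda \in p_n(k)} H^0(\mc{L}^\lambda_n).
\]
By the formulas of subsection~\ref{subsubsection: actglob}, the assumption $H^0(\FS_X) = \mbb{C}$ gives $H^0(\mc{L}^\lambda_n) = \bigotimes_{i \ge 1} S^{\alpha_i(\lambda)} H^0(L^i)$, and the hypothesis $k \le n$ ensures that $(1^k)$ lies in $p_n(k)$ with $H^0(\mc{L}^{(1^k)}_n) = S^k H^0(L)$.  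Denote by $p_{(1^k)}$ the projection to this summand.  The strategy is to construct a natural map $\psi: S^k H^0(L) \rTo H^0(X^{[n]}, S^k L^{[n]})$ and verify (a) $p_{(1^k)} \circ \psi$ is a nonzero scalar multiple of the identity, and (b) $p_{(1^k)}$ is injective on $H^0(\E^{k-1}(n,k))$; together these make both $\psi$ and $p_{(1^k)}$ isomorphisms.

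The map $\psi$ is the composite of the canonical $H^0(X, L) \rTo H^0(X^{[n]}, L^{[n]})$, induced by the universal subscheme $\Xi \subset X^{[n]} \times X$, with the $k$-th symmetric power functor.  For (a), one restricts to the open locus $X^{[n]}_*$ of reduced configurations $\xi = x_1 + \cdots + x_n$ with distinct points: there $L^{[n]}_\xi \simeq \bigoplus_i L_{x_i}$ and $\psi(s_1 \cdots s_k)|_\xi$ is the symmetric product of the restrictions in $S^k\bigl(\bigoplus_i L_{x_i}\bigr) = \bigoplus_\lambda \bigotimes_i L_{x_i}^{\lambda_i}$; reading off the $(1^k)$-summand returns $s_1 \cdots s_k$ in $S^k H^0(L)$ up to a combinatorial constant.

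For (b), take $x = (x_\lambda)_\lambda \in H^0(\E^{k-1}(n,k))$ with $x_{(1^k)} = 0$ and show $x_\lambda = 0$ for every $\lambda \in p_n(k)$ by induction on $d(\lambda) := k - l(\lambda)$, noting that $d(\lambda) = 0$ precisely when $\lambda = (1^k)$.  For $\lambda \ne (1^k)$ with largest part $\lambda_1 \ge 2$, set $\nu_0 := (\lambda_2, \ldots, \lambda_{l(\lambda)})$, so $l(\nu_0) = l(\lambda) - 1 \le k - 2 \le n - 2$ and the operator $\D^0_{(\lambda_1 - 1), \nu_0}$ of subsection~\ref{subsection: invariantoperators} is defined and annihilates $x$.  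By Proposition~\ref{pps: formula2} combined with the explicit $n = 2$ formula
\[
\D^0_{(\lambda_1 - 1)}(y) \;=\; m\bigl(y_{(\lambda_1 - 1, 1)}\bigr) \,-\, y_{(\lambda_1)},
\]
with $m: H^0(L^{\lambda_1 - 1}) \tens H^0(L) \rTo H^0(L^{\lambda_1})$ the multiplication, the vanishing $\D^0_{(\lambda_1 - 1), \nu_0}(x) = 0$ translates into
\[
\imath^{\lambda}_{(\lambda_1), \nu_0}(x_\lambda) \;=\; (m \tens \id)\bigl(\imath^{\lambda^+}_{(\lambda_1 - 1, 1), \nu_0}(x_{\lambda^+})\bigr),
\]
where $\lambda^+$ is the sorted rearrangement of $(\lambda_1 - 1, 1, \lambda_2, \ldots, \lambda_{l(\lambda)})$.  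Since $l(\lambda^+) = l(\lambda) + 1$, the inductive hypothesis gives $x_{\lambda^+} = 0$, and the injectivity of the natural split map $\imath^{\lambda}_{(\lambda_1), \nu_0}$ forces $x_\lambda = 0$, completing the induction.

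The main technical obstacle is step (a): tracing the natural section $\psi(s_1 \cdots s_k)$ through the Bridgeland-King-Reid equivalence $\bkrh(S^k L^{[n]}) \simeq \ker D^{k-2}_L \subset S^k V_L$ of Theorem~\ref{thm: mainimage} and explicitly identifying its $(1^k)$-component in $\bigoplus_\lambda H^0(\mc{L}^\lambda_n)$ via the descent along $\pi: X^n \rTo S^n X$.  An alternative would be to bypass the construction of $\psi$ entirely and instead use the recursive relations from step (b) to build, for each $y \in S^k H^0(L)$, a preimage under $p_{(1^k)}$, then verify compatibility with all remaining $\D^l_{\theta,\nu}$ operators.
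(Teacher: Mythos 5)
Your proposal is correct in substance and overlaps with the paper's (sketched) proof on one half of the argument while genuinely diverging on the other. The paper also works with the projection onto the $\mc{L}^{(1^k)}$-summand of $\bigoplus_{\lambda\in p_n(k)}H^0(\mc{L}^\lambda_n)$ and asserts its injectivity on $H^0(\mc{E}^1(n,k))$ ``by analyzing the map $\D^0_L$''; your induction on $k-l(\lambda)$, which extracts from $\D^0_{(\lambda_1-1),\nu_0}x=0$ the relation $\imath^{\lambda}_{(\lambda_1),\nu_0}(x_\lambda)=(m\tens\id)\bigl(\imath^{\lambda^+}_{(\lambda_1-1,1),\nu_0}(x_{\lambda^+})\bigr)$ and descends from $\lambda$ to the longer partition $\lambda^+$, is exactly the missing detail made explicit, and it only uses consequences of $\D^0_Lx=0$, so it matches the paper's claim that $p_{(1^k)}$ is already injective on $\ker\D^0_L$. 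Where you differ is surjectivity: the paper builds an explicit algebraic preimage $(c_\lambda x)_\lambda$ via the contractions $c_\lambda:S^kH^0(L)\rTo H^0(\mc{L}^\lambda_n)$ and must then verify that this tuple is killed by $\D^0_L$ \emph{and} by all higher operators $\D^l_L$, whereas your map $\psi$, being $S^k$ of the tautological map $H^0(X,L)\rTo H^0(X^{[n]},L^{[n]})$, lands in $H^0(X^{[n]},S^kL^{[n]})$ by construction and so bypasses all of those verifications at the cost of a single generic-point computation of its $(1^k)$-component. That trade is worthwhile, but note that your step (a) does carry real content, as you acknowledge: you must know that the identification $H^0(X^{[n]},S^kL^{[n]})\simeq H^0(S^nX,\mc{E}^{k-1}(n,k))$ of Theorem \ref{thm: directimage}, restricted over the locus of reduced cycles with distinct support, is the evident one coming from $L^{[n]}|_\xi\simeq\oplus_iL_{x_i}$ and the degree-zero term $\mc{C}^0_L=\oplus_iL_i$; this is true (the higher terms of $\comp{\mc{C}}_L$ are supported on diagonals, and $p$, $\mu$ are isomorphisms there) and, combined with the torsion-freeness of $\mc{L}^{(1^k)}$, legitimizes reading off $p_{(1^k)}\circ\psi$ at generic configurations. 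With that point spelled out, both $\psi$ and $p_{(1^k)}$ are isomorphisms and the theorem follows.
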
\begin{proof}[Sketch of the proof]
Just to give a sketch of the idea, $H^0(X^{[n]}, S^k L^{[n]})$ can be identified as the kernel of the recursively defined operator 
$\D^{k-2}_L: H^0(\E^{k-2}(n,k)) \rTo H^0(\K^{k-2}(L))$. 
Now, if $H^0(\FS_X) \simeq \mbb{C}$  it is not difficult to see that $\ker \D^0_L \simeq H^0(\E^1(n,k))$ is already isomorphic to $S^k H^0(X, L)$. 
Indeed, there is a natural map $H^0(\E^1(n,k)) \rTo S^kH^0(L)$, given by the composition \begin{equation}\label{eq: lambdacontraction} H^0(\E^1(n,k)) \rInto H^0((S^k V_L)^G) \simeq \oplus_{\lambda \in p_n(k)} H^0(\mathcal{L}^\lambda_n) \rTo H^0(\mathcal{L}^{1^k}_n) \simeq S^k H^0(L) \;. \end{equation}Analyzing more in detail the map $\D^0_L: H^0((S^k V_L)^G) \rTo H^0(\K^0(L))$ one could prove that the natural map above is injective. We won't give more details here. 

On the other hand
since $H^0(\FS_X) \simeq \mbb{C}$, for any $\lambda \in p_n(k)$, written as $\lambda = \prod_{i=1}^{m_\lambda} i^{\alpha_i}$ in exponential notation, there is a natural contraction $c_\lambda: S^k H^0(L) \rTo H^0(\mathcal{L}^\lambda_n) \simeq \Tens_{i=1}^{m_\lambda} S^{\alpha_i}H^0(L^i)$. It is not difficult to see that, if $x \in S^k H^0(L)$, the element $(c_{\lambda} x)_{\lambda \in p_n(k)}$ is in $H^0(\E^1(n,k))$; this proves that the natural map (\ref{eq: lambdacontraction}) is an isomorphism.  

Moreover, one can see with a little more work, thanks to the characterization of $H^0(\E^{1}(n,k))$ given by the above isomorphism, that  all higher operators $\D^l_L$ are zero restricted to $ H^0(\E^{1}(n,k))$; hence $S^k H^0(X, L)$ can be identified to $\ker \D^{k-2}_L \simeq H^0(\E^{k-2}(n,k)) \simeq H^0(X^{[n]}, S^k L^{[n]})$. \end{proof}

\vspace{0.2cm}
We now make a couple of examples of computations of operators $\D^l_{\theta, \nu}$which will be useful in the sequel. 

\begin{example}(Affine case for $k=3$). \label{ex: local3} Let $X= \Spec(R)$ a smooth affine surface, where $R$ is a finitely generated $\mbb{C}$-algebra, and let $L$ be the trivial line bundle over $X$. We will identify coherent sheaves over $X$ with 
their modules of global sections.  Hence a global section $x$ of $\mathcal{L}^ 3 \oplus \mathcal{L}^{2,1} \oplus \mathcal{L}^{1,1,1}$ can be seen as an element of $( R \tens S^ {n-1}R)  \oplus (R \tens R \tens S^ {n-2}R) \oplus (S^3 R \tens S^ {n-3}R)$. Then $x$ is a sum of elements of the form $y=(f \tens a, g_1 \tens g_2 \tens b, h_1h_2 h_3 \tens c)$. 
where $f, g_1, g_2, h_1, h_2, h_3 \in R$, $a=a_1 \dots a_{n-1} \in S^{n-1}R$, $b= b_1 \dots b_{n-2}\in S^{n-2}R$, $c \in S^{n-3}R$.
. Suppose that $x$ belongs to $\E^1(n,3)$. The operator
$\D^1_{(1)(0)}$ acts on each of the summand of $x$ of the form $y$ as: 
$$ \D^1_{(1)(0)} (y) = 
\sum_i f da_i \tens \widehat{a_i} - [2 g_1 dg_2 - g_2 dg_1] \tens b \in \Omega^1_R \tens S^{n-2}R =\K^1_{(1)(0)}\;,$$
where we indicate with $\hat{a}_i = a_1 \cdots a_{i-1}a_{i+1} \cdots a_{n-1} 
\in S^{n-2}R$. 
\end{example}
\begin{example}(Affine case for $k=4$). \label{ex: local4}With the same hypothesis of the previous example, a global section $x$ of $\mathcal{L}^{4} \oplus \mathcal{L}^{3,1} \oplus \mathcal{L}^{2,2} \oplus \mathcal{L}^{2,1,1} \oplus \mathcal{L}^{1,1,1,1} $ can be seen as an element in $
( R \tens S^{n-1}R ) \oplus ( R \tens R \tens S^{n-2}R) \oplus (S^{2}R \tens S^{n-2}R) \oplus
(R \tens S^{2}R \tens S^{n-3}R) \oplus (S^{4}R \tens S^{n-4}R)$. Then $x$ is a sum of elements of the form 
$y=(f \tens a, g_1 \tens g_2 \tens b, h_1 h_2 \tens c, k \tens k_2 k_3 \tens d,  m_1 \dots m_4 \tens e)$, where $f, g_i, h_j, k_l, m_s \in R$, $a=a_1 \dots a_{n-1} \in S^{n-1}R$, $b= b_1 \dots b_{n-2}, c=c_1 \dots c_{n-2} \in S^{n-2}R$, $d \in S^{n-3}R$, $e \in S^{n-4}R$. Suppose that $x$ is in $\E^1(n,4)$. Then the operator $\D^1$ acts on each of the summand of $x$ of the form $y$ as: 
\begin{gather*}
\begin{split}  \D^1_{(2)(0)}(y) = & \sum_i f da_i \tens \widehat{a_i} - 2 g_1 dg_2 \tens b  + [ h_1 dh_2 + h_2 dh_1]  \tens c \hfil \in \Omega^1_R \tens S^{n-2}R =\K^1_{(2)(0)}\\
\D^1_{(1)(1)}(y) =  &\sum_i g_1 db_i \tens g_2 \tens \widehat{b_i} -2k [dk_2 \tens k_3 + dk_3\tens k_2 ] \tens d + 
\\ & \qquad \qquad +  [k_2 dk \tens k_3 + k_3 dk \tens k_2] \tens d \hfil \in \Omega^1_R \tens R \tens S^{n-3}R =\K^1_{(1)(1)}\end{split}\end{gather*}
Suppose now $x \in \E^{2}(n,4)$. Then the operator $\D^2$ acts on
each summand of $x$ of the form $y$ as:
\begin{equation*} \begin{split} \D^2_{(1)(0)}(y) =   ( -f d^2 a_i \tens \widehat{a_i} +3 g_1 d^2 g_2 \tens b - & 3 [h_1 d^2 h_2 +  h_2 d^2 h_1] \tens c \\ & +
g_2 d^2 g_1 \tens b \in S^2 \Omega^1_R \tens S^{n-2}R = \K^2_{(1)(0)} \end{split}\end{equation*}
\end{example}

\section{The filtration \texorpdfstring{$\W^\bullet$}{W} on the direct image \texorpdfstring{$\mu_* S^k L^{[n]}$}{muSkL[n]}.} \label{section: filtration}
The filtration $\E^\bullet (n,k)$ of $(S^k V_L)^G$ found in the previous section, together with theorems \ref{thm: mainimage} and \ref{thm: directimage}, cannot yet be used to get precise information on the sheaf $\mu_* S^k L^{[n]}$ for general $n$, 
since, in the case $n > 2$, the operators $\D^l_L : \E^l(n,k) \rTo \K^l(L)$ are not surjective, as it will be clear in the sequel; at the same time, it seems difficult to understand directly the graded sheaves $\gr^{\E}_l$ of the filtration $\E^\bullet := \E^\bullet (n,k)$. In this section we will introduce another filtration $\V^\bullet$  of $(S^kV_L)^G$. The two filtrations $\V^\bullet$ and $\E^\bullet$ will induce a bifiltration $\V^\bullet \cap \E^\bullet$ of $(S^k V_L)^G$ and hence a filtration 
$\W^\bullet :=\V^\bullet \cap \E^{k-1}(n,k)$ of $\E^{k-1}(n,k) \simeq \mu_*S^k L^{[n]}$.  For the filtration $\W^\bullet$ things 
behave in an a better way, so that it is easier to guess the graded sheaves $\gr^{\W}_l$; as a consequence we will be able to deduce results on the cohomology of $S^k L^{[n]}$ for $n =2$ and general $k$ and for general $n$ and low $k$. We think, however, that similar results may be obtained in complete generality (see subsection \ref{toward}). 
We begin the section defining the filtrations $\V^\bullet$ and $\W^\bullet$ and with some general notations and preliminary lemmas. 
\begin{definition}\label{def: filtrationVW} Let $n \in \mbb{N}$, $n \geq 1$. 
Denote with $\leq_{\rm rlex}$ the reverse lexicographic order on the set of partitions. For $\mu \in p_n(k)$ set: 
\begin{equation}\label{eq: filtrationV} \V^\mu := \bigoplus_{\substack{\lambda \in p_n(k) \\ \lambda \geq_{\rm rlex} \mu }} \mathcal{L}^\lambda \; .\end{equation}The sheaves $\V^\mu$ define a finite decreasing filtration $\V^\bullet$ on $(S^k V_L)^G$. Denote with 
$ \W^\bullet$ the finite decreasing filtration of $\E^{k-1}(n,k) \simeq \mu_* S^k L^{[n]}$, indexed on $p_n(k)$,  induced by $\V^\bullet$, that is
$$ \W^\bullet : = \V^\bullet \cap \E^{k-1}(n,k) \;.$$
\end{definition}
\begin{remark}For any total order $\preccurlyeq$ 
on the set of partitions $p_n(k)$, the analogue of definition \ref{eq: filtrationV}, that is
$  \V^\mu := \bigoplus_{\lambda \in p_n(k), \:   \lambda \succcurlyeq \mu } \mathcal{L}^\lambda$,  defines a decreasing filtration on $(S^k V_L)^G$ and hence induces a decreasing filtration $\W^\bullet =  \V^\bullet \cap \E^{k-1}(n,k)$ on $\E^{k-1}(n,k) \simeq \mu_* S^k L^{[n]}$. In this section we will use the reverse lexicographic order as total order on $p_n(k)$; 
on the other hand in subsection \ref{toward} we will explain how, if we hope to express a nice general result on the graded sheaves $\gr^{\W}_l$, it's better to use another total order $\preccurlyeq$,  
introduced in definition \ref{def: toward}. The two orders concide on partitions $p_n(k)$ with $k \leq 5$. 
\end{remark}
\begin{notat}\label{notat: diagm}Let $m, n \in \mbb{N}$, $2 \leq m \leq n$. Denote with $\Delta_m$  the subscheme of $X^n$ whose ideal sheaf $I_{\Delta_m}$ is 
 the intersection of ideal sheaves $I_{\Delta_I}$ of pairwise partial diagonals $\Delta_I$, $I  \subseteq \{1, \dots, m\}, |I|=2$. 
 It is the inverse image of the so-called \emph{big diagonal} in $X^m$ via the projection $p_m : X^n \rTo X^m$ onto the first $m$ factors. 
\end{notat}
\begin{remark}\label{rmk: bigdiagonal}An important fact about powers of the ideal sheaf of the scheme $\Delta_m$ in $X^n$ is the following equality. Over $X^n$, for all $s \in \mbb{N}$ one has: 
\begin{equation} \label{eq: bigdiagonal} 
 \bigcap_{\substack{I \subseteq \{1, \dots, m \} \\ | I | =2}} I_{\Delta_I}^s = 
 \Bigg ( \bigcap_{\substack{I \subseteq \{1, \dots, m \} \\ | I | =2}} I_{\Delta_I} \Bigg) ^s = I_{\Delta_m}^s 
\end{equation}This fact has been proven by Haiman in \cite[Corollary 3.8.3]{Haiman2001} for $n=m$ and follows trivially for $m \leq n$ by taking the pull back
$p^{*}_m$ of (\ref{eq: bigdiagonal}) via the projection $p_m : X^n \rTo X^m$. 
We will use this fact in what follows. 
\end{remark}
\begin{notat}\label{notat: lDelta}Let $\mu \in c_n(k)$, $l \in \mbb{N}$, $m \in \mbb{N}$, with $ 2 \leq m \leq n$. We denote with 
$L^\mu(-l \Delta_m)$ the sheaf over $X^n$ defined by:
$$ L^\mu(-l \Delta_m) := L^ \mu \tens \bigcap_{\substack{I \subseteq \{1, \dots, m \} \\ | I | = 2}} I_{\Delta_I}^l \;.$$Note that, by equality (\ref{eq: bigdiagonal}), the sheaf $L^\mu(-l \Delta_m)$ can also be defined by 
$L^\mu(-l \Delta_m) := L^\mu \tens I_{\Delta_m}^l$, which justifies the notation. Now, 
if $\mu \in p_n(k)$ is a partition of $k$ of lenght $l(\mu)$ denote with $\mathcal{L}^\mu(-l \Delta)$ the sheaf  over $S^nX$ defined by
$$ \mathcal{L}^\mu(-l \Delta) := \pi_* \left( L^\mu(-l \Delta_{l(\mu)}) \right)^{\Stab_G(\mu)} \;.$$In case we need to emphasize that the sheaf $\mathcal{L}^\mu(-l \Delta)$ lives on $S^nX$ we will write $\mathcal{L}^\mu_n (-l \Delta)$ instead of just $\mathcal{L}^\mu(-l \Delta)$. 
\end{notat}
\begin{remark}
\label{rmk: differential}Let $I \subseteq \{ 1, \dots , n\}$, $|I|=2$.  
The restriction $ j^l_I: I^l_{\Delta_I} \rTo I^l_{\Delta_I}/I^{l+1}_{\Delta_I} \simeq (S^l \Omega^1_X )_I $ of the partial jet projection $j_I^l$ (see remark \ref{rmk: affine}) to the ideal $I^l_{\Delta_I} \subseteq \FS_{X^n}$
induces a global 
map, that  will be indicated again with $d^l_I$:  $$d^l_I: L^\mu \tens I^l_{\Delta_I} \rTo L^\mu \tens I^l_{\Delta_I}/I^{l+1}_{\Delta_I} \simeq (S^l \Omega^1_X \tens L^{|\mu_I|} )_I \tens L^{\mu_{\bar{I}}} \;.$$Notation $d_I^l$ has first been introduced in remark \ref{rmk: affine} if $L = \FS_X$; the notation we are giving here is compatible with the previous one. With the help 
of the differentials $d^l_I$
we can define a 
differential 
$$ d^l_{\Delta_m}: L^\mu(-l \Delta_m) \rTo \bigoplus_{\substack{I \subseteq \{ 1, \dots, m\} \\ |I|=2} } L^\mu \tens I^l_{\Delta_I} \rTo^{\oplus_I d^l_I} \bigoplus_{\substack{I \subseteq \{ 1, \dots, m \} \\ |I|=2} } L^\mu  \tens I^l_{\Delta_I}/I^{l+1}_{\Delta_I}  \simeq 
 \bigoplus_{\substack{I \subseteq \{ 1, \dots, m\} \\ |I|=2} } (S^l \Omega^1_X \tens L^{|\mu_I|})_I \tens L^{\mu_{\bar{I}}} \;.$$
 whose kernel is easily seen, after (\ref{eq: bigdiagonal}),  to be $L^\mu(-(l+1) \Delta_m)$. Remark that if $\supp \mu = \{1, \dots, m \}$ the differential 
 $d^l_{\Delta_m}$ is $\Stab_G(\mu)$-equivariant. 
Therefore,  if $\mu \in p_n(k)$ and $m= l(\mu)$, applying the functor $\pi_*^{\Stab_G(\mu)}$,  the previous map descends to a differential, noted with $d^l_{\Delta}$
\begin{equation}\label{eq: partialdifferentialsnx} d^l_\Delta : \mathcal{L}^{\mu} (-l \Delta) \rTo  \Bigg[ \bigoplus_{\substack{I \subseteq \{ 1, \dots, l(\mu) \} \\ |I|=2} } \pi_* \left( (S^l \Omega^1_X \tens L^{|\mu_I|})_I \tens L^{\mu_{\bar{I}}} \right) \Bigg]^{\Stab_G(\mu)} \;.\end{equation}Its kernel is $\ker d^l_{\Delta} = \mathcal{L}^\mu(-(l+1) \Delta)$. 
\end{remark}
\begin{example} For any $\mu \in c_2(k)$, the differential 
$d^l_{\Delta_2}$ is a map $d^l_{\Delta_2}: L^\mu (-l \Delta_2) \rTo (S^l \Omega_X^1 \tens L^k)_{\Delta_2} $ and its  kernel is $L^\mu (-(l+1) \Delta_2)$. For any $\mu \in p_2(k)$ of lenght exactly $2$, $d^l_\Delta$ is a map  
$$d^l_\Delta : {\mathcal{L}}^\mu (-l \Delta) {\rTo} [ v_* (  (S^l \Omega_X^1 \tens L^k)_\Delta  \boxtimes 
\FS_{ S^{n-2}X } ) ]^{ \Stab_{\perm_2 } (\mu) } $$ of kernel $\mathcal{L}^\mu (-(l+1) \Delta)$. 
Again, if $\mu \neq (h,h)$, the stabilizer is trivial.
\end{example}
\begin{example}\label{ex: 1k}Let $\mu = 1^k$ in exponential notation. Then 
$d^l_\Delta$ is defined over $S^kX$ as: 
$$ d^l_\Delta: \mathcal{L}^{1^k}(-l\Delta) \rTo \Big[ v_*  ( ( S^l \Omega^1_X \tens L^2)_\Delta \boxtimes \mathcal{L}^{1^{k-2}})  \Big]^{\perm_2  }\;,$$where $\perm_2$ is naturally acting on the factor $(S^l \Omega^1_X \tens L^2)_\Delta$ with the sign $(-1)^l$.
Hence, if $l$ is odd, $\mathcal{L}^{1^k}(-l \Delta) \simeq \mathcal{L}^{1^k}(-(l+1) \Delta)$. 
 \end{example}
\begin{remark}\label{rmk: restriction1k}The restriction of the operator $\D^0_{L}$ to $\mathcal{L}^{1^k}$: 
$$ \D^0_L \trest_{\mathcal{L}^{1^k}}: \mathcal{L}^{1^k} \rTo \K^0_{(1)(1^{k-2})} \simeq v_*( L^2_\Delta \boxtimes \mathcal{L}^{1^{k-2}}) $$coincides with 
$d^0_\Delta$. Indeed it follows by definitions that the component
$\D^0_{(1)(1^{k-2})}$ is defined over $\mathcal{L}^{1^k}$ as: 
$$ \mathcal{L}^{1^k} \rTo^{q_{(1^{k-2})}} v_*(  \mathcal{L}^{(1,1)} \boxtimes \mathcal{L}^{1^{k-2}}) \rTo^{v_*(\D^0_{(1)}  \boxtimes \id)} v_*(L^2_\Delta \boxtimes \mathcal{L}^{1^{k-2}}) \;,$$and $\D^0_{(1)}$ identifies to the restriction to $\Delta$, as seen with the definition. 
On the other $d^0_\Delta$ is defined as: 
$$ \pi_*(L^{1^k})^{\Stab_G(\mu)} \rTo \bigoplus_{\substack{I \subseteq \{ 1, \dots, k \} \\ |I | =2}} \pi_* (L^{1^k} \tens \FS_{X^n}/I_{\Delta_I})^{\Stab_G(1^k)} \simeq \pi_*(L^{1^k} \tens \FS_{X^n}/I_{\Delta_{12}})^{\perm_2 \times \perm_{n-2}} $$where the last identification is due to Danila's lemma. It's clear that $d^0_\Delta$ factors via 
$$\pi_*(L^{1^k})^{\Stab_G(\mu)} \rInto \pi_*(L^{1^k} )^{\perm_2 \times \perm_{n-2}} \rTo \pi_*(L^{1^k} \tens \FS_{X^n}/I_{\Delta_{12}})^{\perm_2 \times \perm_{n-2}} \;.$$Now 
$\pi_*(L^{1^k} )^{\perm_2 \times \perm_{n-2}} \simeq v_*(\mathcal{L}^{(1,1)} \boxtimes \mathcal{L}^{(1^{k-2})})$, 
$\pi_*(L^{1^k} \tens \FS_{X^n}/I_{\Delta_{12}}) ^{\perm_2 \times \perm_{n-2}} \simeq v_*(L^2_{\Delta} \boxtimes \mathcal{L}^{1^{k-2}} )$: 
the first map here above identifies to $q_{(1^{k-2})}$, and the second one to the restriction to $\Delta$ on the first factor, and hence to
$v_*(\D^0_{(1)} \boxtimes \id)$. Hence we have $d^0_\Delta = {\D}^0_{(1)(1^{k-2})}$. 
\end{remark}
\paragraph{The natural line bundle $\mc{D}_A$.} Before proving the main result in the next subsections, in order to state it in the most general form, 
we introduce the \emph{natural line bundle}  on the Hilbert scheme of points $X^{[n]}$, associated to a line bundle on $X$. More precisely, if $A$ is a line bundle on $X$, the line bundle $A \boxtimes \cdots \boxtimes A$ ($n$-factors) on $X^n$ \emph{descends} to a line bundle $\mc{D}_A$ on $S^n X$, in the sense that $\pi^* \mc{D}_A = A \boxtimes \cdots \boxtimes A$ \cite[Thm 2.3]{drezetNarasimhan1989}. As a consequence, the line bundle 
$\mc{D}_A$ on $S^n X$ coincides with the sheaf of $G$-invariants, on $S^n X$, of the line bundle $A \boxtimes \cdots \boxtimes A$. Pulling-back the line bundle $\mc{D}_A$ on $S^n X$ via the Hilbert-Chow morphism $\mu: X^{[n]} \rTo S^n X$ we get a line bundle $\mu^* \mc{D}_A$ on the Hilbert scheme, called the \emph{natural line bundle} on $X^{[n]}$ associated to the line bundle $A$ on $X$. For brevity's sake, we will denote it again with $\mc{D}_A$.  
\begin{remark}We have natural filtrations 
$\mc{V}^\bullet \tens \mc{D}_A$ and $\E^\bullet (n,k) \tens \mc{D}_A$ of $(S^k V_L)^G \tens \mc{D}_A$ over the symmetric variety $S^n X$, and hence a natural filtration $\W^\bullet \tens \mc{D}_A = ( \V^\bullet \cap \E^{k-1} (n,k) ) \tens \mc{D}_A$; but, because of projection formula 
$ \mu_* (S^k L^{[n]} \tens \mc{D}_A) := \mu_*(S^k L^{[n]} \tens \mu^* \mc{D}_A) \simeq \mu_* S^k L^{[n]} \tens \mc{D}_A $,  the last filtration yields naturally a filtration of $\mu_*(S^k L^{[n]} \tens \mc{D}_A)$.  
\end{remark}Since the line bundle $\mc{D}_A$ on the symmetric variety $S^n X$ is the descent of the line bundle $A \boxtimes \cdots \boxtimes A$ on $X^n$, using projection formula  and taking $G$-invariants we have the following
\begin{lemma}\label{lemma: invariantsFDA}Let $F$ be a $G$-equivariant  coherent sheaf over $X^n$. Then, over $S^nX$ we have
$$ (\pi^G_*F) \tens \mc{D}_A \simeq \pi^G_*( F \tens A^{\boxtimes n}) \;.$$
\end{lemma}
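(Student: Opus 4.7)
The plan is to combine the projection formula with the characterization of $\mc{D}_A$ as the descent of $A^{\boxtimes n}$, and then take $G$-invariants. Concretely, I would start from the general projection formula applied to the (finite, hence affine) morphism $\pi: X^n \rTo S^nX$, which yields a canonical isomorphism
\[
\pi_* F \tens \mc{D}_A \;\simeq\; \pi_*(F \tens \pi^* \mc{D}_A) \;=\; \pi_*(F \tens A^{\boxtimes n}),
\]
where the last equality uses the defining property $\pi^* \mc{D}_A = A \boxtimes \cdots \boxtimes A$ recalled just before the statement.

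Next I would check that this isomorphism is $G$-equivariant with respect to the natural $G$-linearizations on both sides. On the left, $\pi_* F$ carries the pushforward of the $G$-linearization of $F$, and $\mc{D}_A$ is $G$-equivariant trivially (since it lives on the quotient $S^nX$); via adjunction, this corresponds on the right to the $G$-action on $A^{\boxtimes n}$ permuting the $n$ factors, tensored with the $G$-action on $F$. The projection formula isomorphism is functorial and thus intertwines these two $G$-actions.

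Finally, since $G = \perm_n$ is finite and we are working over a field of characteristic zero, taking $G$-invariants is an exact functor that commutes with tensor products by a trivially linearized sheaf. Applying $(\cdot)^G$ to both sides and using that $\mc{D}_A = (\pi_* A^{\boxtimes n})^G$ already carries trivial $G$-action on $S^nX$, we get
\[
(\pi_* F)^G \tens \mc{D}_A \;\simeq\; \bigl(\pi_*(F \tens A^{\boxtimes n})\bigr)^G \;=\; \pi^G_*(F \tens A^{\boxtimes n}),
\]
which is the claimed formula. There is no genuine obstacle here: the only point deserving a sentence of justification is the compatibility of the projection formula with the $G$-linearizations, which is a formal functoriality check.
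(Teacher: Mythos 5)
Your proof is correct and follows exactly the route the paper intends: the projection formula for the finite morphism $\pi$ combined with the descent property $\pi^*\mc{D}_A \simeq A^{\boxtimes n}$, followed by taking $G$-invariants (the paper dispatches the lemma with precisely this one-sentence argument). The equivariance of the projection-formula isomorphism and the fact that tensoring with the locally free, trivially linearized $\mc{D}_A$ commutes with $(\cdot)^G$ are the only points needing mention, and you address both.
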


\begin{remark}\label{rmk: k2}For $k=2$ and any $n \geq 2$ it is immediate to prove the exact sequence: 
$$ 0 \rTo \mc{L}^{1,1}(-2 \Delta) \tens \mc{D}_A \rTo \mu_* S^2 L^{[n]} \tens  \mc{D}_A \rTo \mathcal{L}^2  \tens \mc{D}_A \rTo 0 \;.$$Indeed, 
it is sufficient to prove it for $A$--and hence $\mc{D}_A$ -- trivial. The sequence above is obtained 
taking kernels in the diagram
\begin{diagram} 0 & \rTo & \mc{L}^{1,1} & \rTo & \V^0 & \rTo & \mathcal{L}^2 & \rTo & 0 \\
& &           \dTo^{\D^0_L \trest_{\mathcal{L}^{1,1}}} & &  \dTo^{\D^0_L} & & \dTo^0 & & \\
0 &\rTo & \K^0(L) & \rTo^{\simeq} & \K^0(L) & \rTo & 0 & \rTo & 0 
\end{diagram}Note that $\K^0(L) \simeq v_*( L^2_\Delta  \boxtimes \FS_{S^{n-2}X})$ and that the restriction $\D^0_L \trest_{\mathcal{L}^{1,1}}$ coincides with $d^0_\Delta~:~\mathcal{L}^{1,1} \rTo v_*( L^2_\Delta \boxtimes \FS_{S^{n-2}X})$, that is, with the restriction of sections of $\mathcal{L}^{1,1}$ to the big diagonal. The exactness on the right of the sequence of kernels comes from the fact that 
the operator $\D^0_L \trest_{\mathcal{L}^{1,1}}$ is surjective. 
\end{remark}

\subsection{The case $n=2$}
\subsubsection{Toeplitz Matrices} 
\begin{definition}
A $m \times n$ ($m$ rows
and $n$ columns) complex matrix $A=(a_{ij}) \in M_{m \times
  n}(\mbb{C})$ is Toeplitz if there exist numbers $t_{-m+1}, \dots, t_0,
\dots, t_{n-1}$ such that $a_{ij} = t_{j-i}$, that is, if the diagonals of $A$ are constant. 
\end{definition}
\begin{remark}To a squared Toeplitz $m \times m$ matrix $A$ we can associate a function $f_A: \mbb{R} \rTo \mbb{C}$, by the formula 
$$ f_A(y) = \sum_{k=1-m}^{m-1} t_k e^{iky} \;.$$It is clear that $ \overline{f_{A}} = f_{\transpose{\bar{A}}} $ and that $f_{A^h} = {\rm Re} f_A$, where $A^h = (1/2)(A+ \transpose{\bar{A}})$ is the hermitian part of $A$. Hence the function $f_A$ is real if and only if $A$ is hermitian and if $A$ is real, then $f_A$ is real if and only if $A$ is symmetric. Given the function $f_A$, we can reconstruct the matrix $A$ via the anti-Fourier transform: 
$$ t_l(A) = \frac{1}{2\pi } \int^{2 \pi}_0 f_A(y) e^{-ily} dy \;.$$
If $A$ is a complex $m \times m$ Toeplitz matrix, then for any 
$x = (x_1, \dots, x_m) \in \mbb{C}^m$ we have
\cite[page 42]{Gray2006}:  
\begin{equation}\label{eq: txAx} {}^{t}\bar{x} A x = 
 \frac{1}{2 \pi}\int_0^{2 \pi}
\Big | \sum_{l=1}^m x_l e^{-il y} \Big |^2 f_A(y) dy \;.\end{equation}\end{remark}

\begin{crl}\label{crl: hermitiantoeplitz}
Let $A$ be an hermitian Toeplitz matrix such that $f_{A}(y)$, 
the function associated to $A$, is positive (negative),
apart from at most a countable number of points. Then $A$  is positive
(negative) definite and hence nondegenerate. 
\end{crl}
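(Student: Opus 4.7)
The plan is to apply formula (\ref{eq: txAx}) directly. Since $A$ is hermitian, the associated function $f_A$ is real-valued, and the positivity hypothesis makes sense. Given a nonzero vector $x = (x_1, \dots, x_m) \in \mbb{C}^m$, I would consider the trigonometric polynomial $p_x(y) := \sum_{l=1}^m x_l e^{-ily}$ and write
\[
\transpose{\bar x}\, A \, x \;=\; \frac{1}{2\pi}\int_0^{2\pi} |p_x(y)|^2\, f_A(y)\, dy.
\]
The integrand is the product of a nonnegative function with a function that is positive outside a countable set; to conclude that the integral is strictly positive, I need to show that the integrand itself is strictly positive outside a set of Lebesgue measure zero.

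The key observation is that $p_x(y) = e^{-iy}\, q_x(e^{-iy})$, where $q_x(z) = x_1 + x_2 z + \cdots + x_m z^{m-1}$ is a nonzero polynomial of degree at most $m-1$ in the complex variable $z$. Such a polynomial has at most $m-1$ zeros in $\mbb{C}$, and in particular its restriction to the unit circle $\{z = e^{-iy} : y \in [0, 2\pi)\}$ vanishes at only finitely many points. Therefore $|p_x(y)|^2 > 0$ except on a finite subset of $[0, 2\pi]$. Combined with the hypothesis that $f_A(y) > 0$ outside a countable subset, the product $|p_x(y)|^2 f_A(y)$ is strictly positive outside a countable, hence Lebesgue-negligible, set.

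Since the integrand is a nonnegative measurable function that is strictly positive almost everywhere on $[0, 2\pi]$, the integral is strictly positive. Thus $\transpose{\bar x}\, A\, x > 0$ for every nonzero $x \in \mbb{C}^m$, proving that $A$ is positive definite. The negative case follows by applying the positive case to $-A$, whose associated function is $-f_A$. The only subtlety worth double-checking is the identification of $p_x$ with a polynomial in $e^{-iy}$ and the resulting finiteness of its zero set, but this is purely formal and presents no real obstacle.
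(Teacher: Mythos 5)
Your proof is correct and follows the same route as the paper: apply formula (\ref{eq: txAx}) and conclude that the integral is strictly positive. The paper states this in one sentence; you have supplied the justification it leaves implicit, namely that $\sum_l x_l e^{-ily}$ is a nonzero trigonometric polynomial and so vanishes at only finitely many points, making the integrand positive almost everywhere.
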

\begin{proof}
If $f_{A}(y)$ is positive (outside a countable number of points) 
the integral in (\ref{eq: txAx}) above is strictly positive, and the matrix
$A$ is positive definite. 
\end{proof}

We will now introduce a few Toeplitz matrices which will be useful in the sequel. 

\begin{notat}Given a vector $a = (a_0, \dots, a_{n-1}) \in \mbb{C}^{n}$,    we will denote with $T_{m \times n}(a)$ the $m \times n$ 
Toeplitz matrix with
$t_i = a_i$ if $i \geq 0$ and  $t_i=0 $ if $i<0$.
\end{notat}
\begin{notat}Let $m, n \in \mbb{N}$. 
Denote with $T_{2n,m}$ and $T_{2n+1,m}$ the $m \times m$ Toeplitz matrices defined by: 
\begin{align*} \qquad t_{k} =  & (T_{2n,m})_{i+k,i}  =  (-1)^{k}\binom{2n}{n+k} \; \; & \mbox{if $-n \leq k \leq n$} \quad \qquad \qquad & t_k= 0 \mbox{ otherwise } \qquad  \\ 
\qquad s_{k} =&  (T_{2n+1, m})_{i+k,i}  =  (-1)^{k}\binom{2n+1}{n+k+1} \; \; & \mbox{if $-n-1 \leq k \leq n$} \quad  \qquad \qquad & s_k= 0 \mbox{ otherwise.} \qquad \end{align*}\end{notat}
\begin{lemma}\label{lemma: Tnondeg} For all $m,n \in \mbb{N}$, $m \neq 0$, the matrices $T_{2n, m}$, $T_{2n+1,m}$ are nondegenerate. 
\end{lemma}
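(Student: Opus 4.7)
My strategy is to use Corollary~\ref{crl: hermitiantoeplitz} directly on $T_{2n,m}$ after identifying its associated function, and then to obtain nondegeneracy of $T_{2n+1,m}$ by exhibiting an explicit factorization $T_{2n+1,m}=T(1-e^{-iy})\cdot T_{2n,m}$, in which the first factor is a unipotent lower bidiagonal matrix.

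\textbf{Step 1 (the even case).} I will first compute the associated function of $T_{2n,m}$. Reindexing $j=n+k$ and using the binomial theorem,
\[
f_{T_{2n,m}}(y)=\sum_{k=-n}^{n}(-1)^{k}\binom{2n}{n+k}e^{iky}
=(-1)^{n}e^{-iny}(1-e^{iy})^{2n}.
\]
Since $1-e^{iy}=-2ie^{iy/2}\sin(y/2)$, raising to the $2n$-th power and cancelling the factors of $e^{iny}$ and $(-1)^{n}$ gives
\[
f_{T_{2n,m}}(y)=4^{n}\sin^{2n}(y/2).
\]
This is a nonnegative real function, strictly positive outside a finite set in $[0,2\pi)$. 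Moreover the defining coefficients $t_{k}=(-1)^{k}\binom{2n}{n+k}$ satisfy $t_{-k}=t_{k}$, so $T_{2n,m}$ is real symmetric and thus Hermitian. Corollary~\ref{crl: hermitiantoeplitz} then gives that $T_{2n,m}$ is positive definite, in particular nondegenerate.

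\textbf{Step 2 (the odd case, via factorization).} The same computation, applied this time to $T_{2n+1,m}$, yields
\[
f_{T_{2n+1,m}}(y)=(-1)^{n+1}e^{-i(n+1)y}(1-e^{iy})^{2n+1}
= (1-e^{-iy})\cdot 4^{n}\sin^{2n}(y/2)=(1-e^{-iy})\,f_{T_{2n,m}}(y),
\]
where I use again $1-e^{iy}=-2ie^{iy/2}\sin(y/2)$ and simplify the single leftover factor. The factorization of symbols suggests the matrix factorization
\[
T_{2n+1,m}=T(1-e^{-iy})\cdot T_{2n,m},
\]
where $T(1-e^{-iy})$ is the $m\times m$ lower bidiagonal matrix with $1$'s on the diagonal and $-1$'s on the subdiagonal. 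I will verify this identity entry by entry: the $(i,j)$-entry of the product equals
\[
(-1)^{j-i}\binom{2n}{n+j-i}+(-1)^{j-i}\binom{2n}{n+j-i+1}=(-1)^{j-i}\binom{2n+1}{n+j-i+1},
\]
by Pascal's rule, which matches $(T_{2n+1,m})_{ij}$ exactly. Since $T(1-e^{-iy})$ is unipotent lower triangular (hence has determinant $1$) and $T_{2n,m}$ is nondegenerate by Step~1, the product $T_{2n+1,m}$ is nondegenerate as well.

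\textbf{Main obstacle.} There is no serious obstacle once the symbol computation is carried out: the even case reduces to Corollary~\ref{crl: hermitiantoeplitz} after identifying $f_{T_{2n,m}}$ as $4^{n}\sin^{2n}(y/2)$, and the odd case is reduced to the even case via a Pascal-rule factorization. The only mildly delicate point is noticing that the factorization at the level of symbols actually lifts to the finite Toeplitz matrices, which works here precisely because the factor $1-e^{-iy}$ has only nonpositive Fourier modes, so $T(1-e^{-iy})$ is lower triangular and the usual ``boundary corrections'' in Toeplitz products vanish.
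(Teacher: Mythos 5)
Your overall strategy coincides with the paper's: positivity of the binomial symbol for the even matrices via Corollary \ref{crl: hermitiantoeplitz}, plus a Pascal-rule factorization by a unipotent bidiagonal Toeplitz matrix to link the odd and even cases (the paper writes $T_{2n+2,m}=T_{m\times m}(e_1-e_2)\,T_{2n+1,m}$ and deduces the odd case from the even case $2n+2$; you go from $2n+1$ down to $2n$ --- the same trick in the other direction). However, Step 1 has a gap when $m\leq n$. By definition the associated function of an $m\times m$ Toeplitz matrix is the \emph{truncated} sum $f_A(y)=\sum_{k=1-m}^{m-1}t_ke^{iky}$: it does not see the coefficients $t_k$ with $|k|\geq m$. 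So your identity $f_{T_{2n,m}}(y)=\sum_{k=-n}^{n}(-1)^k\binom{2n}{n+k}e^{iky}=4^n\sin^{2n}(y/2)$ identifies the associated function of $T_{2n,m}$ only for $m\geq n+1$. For smaller $m$ the actual associated function can fail to be positive: for $n=2$, $m=2$ the matrix is $\left(\begin{smallmatrix}6&-4\\ -4&6\end{smallmatrix}\right)$, whose associated function is $6-8\cos y$, negative near $y=0$; Corollary \ref{crl: hermitiantoeplitz} therefore does not apply to it as stated, even though the matrix is positive definite.

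Two repairs are available. The paper's: first treat $m$ large enough that the full binomial sum is the associated function, conclude positive definiteness there, and then observe that $T_{2n,m}$ for small $m$ is a leading principal submatrix of a positive definite matrix (the paper phrases this via Jacobi's criterion on the leading principal minors). Alternatively, and closer to your write-up: the right-hand side of (\ref{eq: txAx}) is unchanged if $f_A$ is replaced by any integrable function whose Fourier coefficients agree with the $t_k$ in the range $|k|\leq m-1$, because $\bigl|\sum_{l}x_le^{-ily}\bigr|^2$ contains no frequencies outside $[1-m,m-1]$; with that one extra remark your computation does give positive definiteness for every $m$. Separately, a minor slip in Step 2: with the paper's indexing $(T_{2n+1,m})_{i+k,i}=(-1)^k\binom{2n+1}{n+k+1}$, the matrix $T_{1,m}$ (the case $n=0$) is \emph{upper} bidiagonal, so the correct unipotent factor sits on the superdiagonal; as written, your product $T(1-e^{-iy})\cdot T_{2n,m}$ equals the transpose of $T_{2n+1,m}$. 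This costs nothing for nondegeneracy, but the entrywise Pascal verification should be carried out with the factor transposed (or placed on the other side).
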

\begin{proof}It suffices to prove that $T_{2n,m}$ is nondegenerate, since $T_{2n+2,m}= T_{m \times m}(e_1 - e_2) T_{2n+1,m}$ and $T_{m \times m}(e_1 - e_2)$ is trivially nondegenerate, where we indicated with $e_i$ the vectors of the canonical basis of $\mbb{C}^n$. 

We prove first that $T_{2n,m}$ is nondegenerate for $m \geq 2n$. In this case  $T_{2n, m}$ is a real symmetric matrix
whose associated function is $f_{T_{2n,m}}(y)= (-1)^n (e^{iy/2}-e^{-iy/2})^{2n}= (2 \sin y/2)^{2n}$ which is positive apart from the countable set of zeros of $\sin y/2$. Hence, by corollary \ref{crl: hermitiantoeplitz} it is positive definite, and hence nondegenerate, since symmetric. 

For $m < 2n$, take $l \geq 2n$. Then, for all $k \leq 2n$, $\det T_{2n,k}$ is a leading principal minor of $T_{2n,l}$, which is positive definite. 
Hence, by Jacobi criterion, $\det T_{2n,k}$ is positive for all $k \leq 2n$. Hence $T_{2n,m}$ is nondegenerate (and positive definite). 
\end{proof}
\begin{notat}
For $l, k, j \in \mbb{N}$, $ l \leq k$, $2j \leq k$,  denote with $R_{l,k}(j)$ the $(k-l+1) \times (k-2j+1)$ the Toeplitz matrix such that $$ t_i = (-1)^i \binom{l}{j+i} \qquad \mbox{for $-j \leq i \leq l-j$}, \hspace{3cm} t_i =0 \mbox{ otherwise} \:.$$ 
\end{notat}
\begin{remark}\label{rmk: Rmatrix}
 If $l \leq 2j < k+1$ then $R_{l,k}(j)$ is the matrix of an injective morphism and 
$R_{2j,k}(j) = 
T_{2j, k+1-2j}$ is the matrix of an isomorphism. 
Indeed, if $l \leq 2j <k+1$, deleting the first $j-[(l+1)/2]$ and the last $j-[l/2]$ rows of $R_{l, k}(j)$, we get the matrix $T_{l, k+1-2j}$,  which is nondegenerate by lemma \ref{lemma: Tnondeg}. 
\end{remark}

\subsubsection{The filtration.}
Recall that we denoted  the diagonal in $X^2$ and $S^2 X$ just with $\Delta$. 
Denote for brevity's sake the filtration $E^l(2, k)$ just with $E^l$. 
Since differentials $D^l_L : E^l \rTo K^l(L)$ are surjective for $n=2$ by remark \ref{rmk: DLsurj}, the graded sheaves are exactly $\gr^E_l = K^l(L)= \bigoplus_{\mu \in c_2(k-l-1)}K^l_{\mu}(L)$, where, recalling notation \ref{notat: brevdelta},  $K^l_\mu(L) = (S^l \Omega_X \tens L^k)_\Delta$ and we can see $\mu$ just as a label. 

\begin{definition}
For $j = 0, \dots, k$, denote with $V^j = \bigoplus_{i=j}^{k-j} L^{(k-i, i)} \subseteq E^0 =
S^k V_L$. \end{definition}
\begin{remark}
The subsheaves $V^j$ of $S^k V_L$ define a $G$-equivariant 
decreasing filtration of $S^k V_L$: 
$$ 0=V^{[k/2]+1} \subsetneq V^{[k/2]} \subseteq \cdots \subseteq V^1
\subsetneq V^0 = S^k V_L \;.$$Taking $G$-invariants, we get exactly the filtration $\V^\bullet$ of $(S^k V_L)^G$, introduced in definition \ref{def: filtrationVW}. 
\end{remark}Set now $S_{j,l}:= \oplus_{i=j}^{k-j} L^{(k-i, i)}(-l \Delta)$. Note that, if $l \leq 2j-1$, $S_{j,l} \subseteq V^j \cap E^l$: indeed, 
$S_{j,l}$ consists of elements $(x_i)_i$ $j \leq i \leq k-j$, with $x_i \in L^{k-i, i} \tens I^l_\Delta$ and hence all operators $D^h_L$, for $h <l$,  vanish on them. 
Hence we can apply the operator $D^l_L$ to elements in $S_{j,l}$. 

\begin{lemma}\label{lmm: factornoninv}For $0 \leq l \leq 2j-1$
the $G$-equivariant restriction 
$ D^l_L \trest_{S_{j,l}} :  S_{j,l} \rTo \gr_l^E $ of the operator $D^{l}_L$  to $S_{j,l}$ factorizes as: 
$$
S_{j,l} = \oplus_{i=j}^{k-j} L^{(k-i, i)}(-l \Delta) 
\rTo^{\oplus_{i=j}^{k-j} d_{\Delta}^l}  \oplus_{i=j}^{k-j}
 (S^l \Omega^1_X \tens L^k)_\Delta  \\
 \rTo^{A} \gr_l^E \simeq \oplus_{r=0}^{k-l-1}
 (S^l \Omega^1_X \tens L^k)_\Delta 
$$where $d^l_{\Delta} : L^{k-s, s}(-l \Delta) \rTo (S^l \Omega_X \tens L^k)_\Delta$ is the differential introduced in remark \ref{rmk: differential} and 
where $A$ is a $G$-equivariant \emph{injective}
morphism of vector bundles over $\Delta$ having as matrix the
$ (k-l) \times (k+1-2j)$ Toeplitz matrix $R_{l+1, k}(j)$. 
\end{lemma}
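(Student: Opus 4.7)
The plan is to compute $D^l_L$ restricted to $S_{j,l}$ directly from its defining formula and read off the factorization, then appeal to remark \ref{rmk: Rmatrix} for injectivity.

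\textbf{Step 1 (explicit formula).} Since $n=2$, the components of $D^l_L$ are indexed by compositions $\mu = (k-l-1-r,r)$ for $r \in \{0,\dots,k-l-1\}$, and the definition gives, for $(x_\lambda)_\lambda \in S^kV_L$ with $\lambda = (k-i,i)$,
$$D^l_{\mu}(x_\lambda)_\lambda = \Bigl[\sum_{s=0}^{l+1}(-1)^s\binom{l+1}{s}\,x_{(k-r-s,\,r+s)}\Bigr]_{I_\Delta^{l+1}}.$$
Now, an element of $S_{j,l}$ has its components $x_{(k-i,i)}$ lying in $L^{(k-i,i)}\otimes I_\Delta^l$ (with the convention $x_{(k-i,i)} = 0$ for $i < j$ or $i > k-j$). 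Consequently each individual summand $x_{(k-r-s,\,r+s)}$ lies in $L^{k}\otimes I_\Delta^l$, and its class modulo $I_\Delta^{l+1}$ is exactly $d^l_\Delta(x_{(k-r-s,\,r+s)}) \in (S^l\Omega^1_X\otimes L^k)_\Delta$, where $d^l_\Delta$ is the differential introduced in remark~\ref{rmk: differential}. Thus the restriction of $D^l_L$ to $S_{j,l}$ factors as claimed through $\oplus_{i=j}^{k-j} d^l_\Delta$ followed by a $\mathbb{C}$-linear combination $A$ of the outputs.

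\textbf{Step 2 (identification of $A$ with $R_{l+1,k}(j)$).} Reading off from the formula above, the $(r,i)$-entry of $A$ (row $r \in \{0,\dots,k-l-1\}$, column $i \in \{j,\dots,k-j\}$) is $(-1)^{i-r}\binom{l+1}{i-r}$, with the convention that the binomial is zero for $i-r < 0$ or $i-r > l+1$. Setting $c := i-j \in \{0,\dots,k-2j\}$, this becomes $(-1)^{j}\cdot (-1)^{c-r}\binom{l+1}{j+(c-r)}$, which up to the global sign $(-1)^j$ is exactly the Toeplitz pattern $t_{c-r}$ with $t_\alpha = (-1)^\alpha \binom{l+1}{j+\alpha}$ for $-j \leq \alpha \leq l+1-j$. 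Hence $A = (-1)^j\, R_{l+1,k}(j)$, a $(k-l)\times (k+1-2j)$ Toeplitz matrix. The $G$-equivariance of $A$ comes for free from the $G$-equivariance of $D^l_L$ and of $\oplus d^l_\Delta$, and can also be verified directly using the identity $\binom{l+1}{l+1-i+r} = \binom{l+1}{i-r}$.

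\textbf{Step 3 (injectivity).} By hypothesis $0 \leq l \leq 2j-1$, so $l+1 \leq 2j$. Moreover $l+1 \leq 2j$ combined with $j \leq k-j$ (which is implicit since the sum $S_{j,l}$ is non-empty) gives $l+1 \leq 2j \leq k < k+1$, so we are in the range where remark~\ref{rmk: Rmatrix} applies: the matrix $R_{l+1,k}(j)$ is injective (it contains, after deleting a suitable number of rows at the top and bottom, the square nondegenerate Toeplitz block $T_{l+1,\,k+1-2j}$ from lemma~\ref{lemma: Tnondeg}). Multiplication by the global sign $(-1)^j$ does not change injectivity, so $A$ is injective as claimed.

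\textbf{Main obstacle.} No genuine difficulty is expected, since the content of the lemma is essentially bookkeeping: the single substantive ingredient is the identification of $A$ with the Toeplitz matrix $R_{l+1,k}(j)$ on the nose (up to the innocuous sign $(-1)^j$), after which injectivity is imported from the already-established nondegeneracy of the symmetric Toeplitz matrices $T_{2n,m}$. The only point requiring mild care is keeping the index conventions for $\mu$, $\beta$, and the Toeplitz parameter straight, and verifying that the range conditions $0 \leq i-r \leq l+1$ and $j \leq i \leq k-j$ do not impose further truncations that would break the Toeplitz identification.
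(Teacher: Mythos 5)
Your proof is correct and follows essentially the same route as the paper's: write out the components $D^l_{\mu}$ explicitly, observe that each summand factors through the jet class $d^l_\Delta$, identify the resulting constant matrix as the Toeplitz matrix $R_{l+1,k}(j)$, and import injectivity from remark \ref{rmk: Rmatrix}. Your tracking of the global sign $(-1)^j$ in the identification $A=(-1)^jR_{l+1,k}(j)$ is in fact slightly more careful than the paper's, and as you note it is immaterial for injectivity.
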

\begin{proof}Let $x =(x_i)_i \in S_{j,l}$, $j  \leq i \leq k-j$. 
The component $D^l_{(r, k-l-1-r)}$ of the operator $D^l_L$  can be explicitely written as 
\begin{align*} D^l_{(r, k-l-1-r)} x =& \sum_{s=0}^{l+1}(-1)^s \binom{l+1}{s} j^l_\Delta(x_{(s, l+1-s)(r, k-l-1-r)}) \\
& =  \sum_{s=0}^{l+1}(-1)^s \binom{l+1}{s} j^l_\Delta(x_{(r+s, k-r-s)}) \; ,
\end{align*}where $j^l_\Delta$ denotes the $l$-jet projection. 
It is immediate that $D^l_L$ is a composition of $\oplus_{i=j}^{k-j} d^l_\Delta: S_{j,l} \rTo \oplus_{i=j}^{k-j}
(S^l \Omega^1_X \tens L^k)_\Delta$ and a linear map $A: \bigoplus_{i=j}^{k-j} (S^l \Omega^1_X \tens L^k)_\Delta \rTo K^l(L)$ 
whose $(k-l) \times (k-2j+1)$ matrix has entries  $(-1)^s \binom{l+1}{s}$. Ordering the terms, that is, 
setting $i = j+a$, $0 \leq a \leq k-2j$, we have $r+s = j+a$ and $s = j+a-r$; we then find that the matrix of the  map $A$ is given 
by $$A_{r,a} = (-1)^{j+a-r} \binom{l+1}{j+a-r}$$which is Toeplitz  and coincides easily with $R_{l+1, k}(j)$. Hence $A$ is injective if $l \leq 2j-1$ and an isomorphism if $l = 2j-1$  by remark \ref{rmk: Rmatrix}. 
\end{proof}
\begin{remark}Note that the term $\oplus_{i = j}^{k-j}(S^l \Omega^1_X \tens L^k)_\Delta$ in the preceding lemma is isomorphic to $$\bigoplus_{i = j}^{k-j}(S^l \Omega^1_X \tens L^k)_\Delta \simeq 
\oplus_{i = j}^{k-j} L^{ k-i, i}(I_\Delta^l/I_{\Delta}^{l+1})$$and hence naturally $G$-equivariant. The $G$ actions 
exchanges terms $ L^{i, k-i}(I_\Delta^l/I_{\Delta}^{l+1})$ and $ L^{k-i, i}(I_\Delta^l/I_{\Delta}^{l+1})$ $i \neq k/2$, 
that is, it exchanges factors $(S^l \Omega^1_X \tens L^k)_\Delta$ indexed by $i \neq k/2$, 
and, if $k$ is even, it acts on the term  $ L^{k/2, k/2}(I_\Delta^l/I_{\Delta}^{l+1})$, that is, on the term $(S^l \Omega^1_X \tens L^k)_\Delta$ indexed by $k/2$, with the sign $(-1)^l$. \end{remark}
Let's now descend on $S^2X$. For $l \leq 2j-1$ we naturally have that $S_{j,l}^G \subseteq \V^j \cap \E^l$: in particular $S^G_{j, l}$ is in the domain of definition of $\D^l_L$. We indicate with $\oplus_{i \geq k/2}^{h}$ a direct sum indexed by natural numbers $i$ such that $k/2 \leq i \leq h$; analogously for $\oplus_{i \gneq  k/2}^{h}$ or $\oplus_{i > k/2}^{h}$. 
Taking $G$-invariants in lemma \ref{lmm: factornoninv}, we get the following. 
\begin{pps}\label{pps: SjlG}Suppose that $0 \leq l \leq 2j-1$. 
Then
the restriction $\D^l_L \trest_{S_{j, l}^G}:  S_{j,l}^G 
 \rTo  \gr_l^{\E}$ of the operator $\D^l_L$ to $S_{j,l}^G$ factorizes
as: 
$$ S_{j,l}^G = \bigoplus_{i \geq k/2}^{k-j} \mathcal{L}^{i, k-i}(-l\Delta)
\rTo^{\oplus_{i \geq k/2}^{k-j} d^l_{\Delta}} \left[ \oplus_{i \geq j}^{k-j} (S^l \Omega_X^1 \tens L^k)_{\Delta} \right]^G \rTo^{A^G} \gr_l^{\E}
$$where $A^G$ is an isomorphism if $l=2j-1, 2j-2$ and injective if $l \leq 2j-3$.
 Hence, if $l \leq 2j-1$,  the kernel of the
restriction is $$\ker \D^l_L \trest_{S_{j,l}^G} \simeq  
\bigoplus_{i \geq k/2}^{k-j} \mathcal{L}^{i,
  k-i}(-(l+1)\Delta) \simeq S^G_{j, l+1}\;.$$
  \end{pps}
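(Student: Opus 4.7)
The plan is to take $G$-invariants of the factorization furnished by Lemma \ref{lmm: factornoninv}. Since we work over a field of characteristic zero, the functor $(-)^G$ is exact; applied to the $G$-equivariant composition $D^l_L \trest_{S_{j,l}} = A \circ (\oplus_i d^l_\Delta)$, this immediately yields the claimed factorization on $S^2 X$, with source
$$
S_{j,l}^G \simeq \bigoplus_{i \geq k/2}^{k-j} \mc{L}^{i,k-i}(-l\Delta),
$$
since the $G$-action on $S_{j,l}$ pairs the summands $i$ and $k-i$ (with the middle index $i=k/2$, when $k$ is even, being $G$-fixed and contributing its own invariant summand), and with target $\gr_l^{\E} = K^l(L)^G$.

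Injectivity of $A^G$ for all $l \leq 2j-1$ is then automatic: by Lemma \ref{lmm: factornoninv} and Remark \ref{rmk: Rmatrix}, $A$ is injective as a morphism of vector bundles over $\Delta$, and $(-)^G$ preserves injectivity. To upgrade this to the isomorphism claim at $l=2j-1$, I would invoke Remark \ref{rmk: Rmatrix} once more: for $l=2j-1$ the matrix of $A$ is the square matrix $R_{2j,k}(j) = T_{2j,k+1-2j}$, which by Lemma \ref{lemma: Tnondeg} is invertible, so $A$ itself is already an isomorphism and a fortiori so is $A^G$.

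The genuinely new case is $l = 2j-2$, where $R_{2j-1,k}(j)$ is rectangular of size $(k-2j+2) \times (k-2j+1)$: here one has to check the dimension match on $G$-invariants and then surjectivity. The strategy I would follow is to make the $G$-action on rows (exchanging $r \leftrightarrow k-l-1-r$) and on columns (exchanging $a \leftrightarrow k-2j-a$) explicit, and after suitable symmetrizing row and column operations identify the matrix of $A^G$ with a square Toeplitz block of the form $T_{2j-1,m}$ for the common $G$-invariant dimension $m$; this block is nondegenerate by Lemma \ref{lemma: Tnondeg}, so $A^G$ is an isomorphism. A parity case-split is needed because the middle column (when $k$ is even) is $G$-fixed in the source and the middle row (when $k-2j+1$ is even) is $G$-fixed in the target with sign $(-1)^l = +1$; each of the four parity combinations yields a square Toeplitz block with the required nondegeneracy.

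The kernel computation at the end is then formal: since $A^G$ is injective for all $l \leq 2j-1$, the kernel of $\D^l_L \trest_{S_{j,l}^G}$ coincides with the kernel of $\oplus_{i \geq k/2}^{k-j} d^l_\Delta$, which by Remark \ref{rmk: differential} is $\oplus_{i \geq k/2}^{k-j} \mc{L}^{i,k-i}(-(l+1)\Delta) = S^G_{j,l+1}$. The main obstacle is therefore the case $l=2j-2$: one needs the parity-sensitive bookkeeping alluded to above in order to recognize the $G$-invariant part of the Toeplitz matrix $R_{2j-1,k}(j)$ as a square nondegenerate Toeplitz matrix of the type handled by Lemma \ref{lemma: Tnondeg}.
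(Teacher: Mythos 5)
Your proof is correct in outline and shares the paper's skeleton (apply $(-)^G$ to the factorization of Lemma \ref{lmm: factornoninv}, deduce injectivity of $A^G$, then upgrade to an isomorphism for $l=2j-1,2j-2$ and read off the kernel), but the two isomorphism cases are handled differently. For $l=2j-1$ your shortcut is cleaner than the paper's: $R_{2j,k}(j)=T_{2j,k+1-2j}$ is already a square nondegenerate matrix, so $A$ itself is an isomorphism before taking invariants; the paper instead runs the same counting argument it uses for $l=2j-2$. For $l=2j-2$ the paper simply counts the number of copies of $(S^l\Omega^1_X\tens L^k)_\Delta$ on each side — $\bigl[\tfrac{k-l}{2}\bigr]$ in $\gr^{\E}_l$ versus the number of $G$-invariant summands of $\oplus_{i\geq j}^{k-j}(S^l\Omega^1_X\tens L^k)_\Delta$, which depends on the parity of $k$ and on the sign $(-1)^l$ with which $G$ acts on the middle summand — checks they agree, and concludes because an injective \emph{constant-coefficient} morphism between vector bundles of equal rank over $\Delta$ is automatically an isomorphism. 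Your proposed route of symmetrizing $R_{2j-1,k}(j)$ and recognizing $A^G$ as a square Toeplitz block $T_{2j-1,m}$ is therefore unnecessary work: once you have the dimension match (which you say you would check), the injectivity you already established finishes the argument, and the Toeplitz identification — which you leave unverified and which need not produce a matrix literally of the form $T_{2j-1,m}$ after the row/column symmetrizations — can be dropped. One small point in the kernel step: when $k$ is even and $l$ is odd the middle target component dies in the invariants, so the entire summand $\mc{L}^{k/2,k/2}(-l\Delta)$ lies in the kernel, not just $\ker d^l_\Delta$; this is harmless because $\mc{L}^{k/2,k/2}(-l\Delta)=\mc{L}^{k/2,k/2}(-(l+1)\Delta)$ for $l$ odd (the $\perm_2$-sign on $I_\Delta^l/I_\Delta^{l+1}$), but it should be said, as the paper does.
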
\begin{proof}
Let's prove first the statement about $A^G$. 
It follows from lemma \ref{lmm: factornoninv} that $A^G$ is injective if $l \leq 2j-1$. 
The graded sheaf $\gr_l^{\E}$ is given by $$ \gr_l^{\E} \simeq \bigoplus_{\substack{ \mu \in p_2(k-l-1) \\ \mu \neq (h,h)}}  \K^l_{\mu} $$and hence consists of $\displaystyle \big[  \frac{k-l}{2} \big]$ terms, all isomorphic to $(S^l \Omega_X^1 \tens L^k)_{\Delta}$. On the other hand, the sheaf $\big[ \oplus_{i \geq j}^{k-j} (S^l \Omega_X^1 \tens L^k)_{\Delta}
\big]^G$ changes if $k$ is odd or even. Let's see the two cases.
\begin{itemize}
\item $k$ is odd. Then 
$$ \left[ \oplus_{i \geq j}^{k-j} (S^l \Omega_X^1 \tens L^k)_{\Delta} \right]^G
\simeq \oplus_{i \gneq k/2}^{k-j} (S^l \Omega_X^1 \tens L^k)_{\Delta} $$ and hence it consists of 
a direct sum of $\displaystyle \frac{k+1}{2} -j $ terms. If $l =2j-1$ or $l = 2j-2$, we have, since $k$ is odd, that
$$ \frac{k+1}{2} - j = \frac{k -2j +1}{ 2} = \big[ \frac{k - l}{ 2} \big] \;.$$
Hence in the two cases $l = 2j-1, 2j-2$ the two sides are direct sums with the same number of copies of $(S^l \Omega^1_X \tens L^k)_\Delta$. Since $A^G$ is a constant injective endomorphism of a vector  bundle (over the diagonal $\Delta$), it is an isomorphism in these cases. 
\item $k $ is even. The 
sheaf of invariants  $\left[ \oplus_{i \geq j}^{k-j} (S^l \Omega_X^1 \tens L^k)_{\Delta} \right]^G$ is given by
$$\left[ \oplus_{i \geq j}^{k-j} (S^l \Omega_X^1 \tens L^k)_{\Delta} \right]^G \simeq  \left \{ \begin{array}{cc}
 \oplus_{i > k/2}^{k-j} (S^l \Omega_X^1 \tens L^k)_{\Delta} & \mbox{if $l$ odd} \\ \mbox{} & \\
\oplus_{i \geq k/2}^{k-j} (S^l \Omega_X^1 \tens L^k)_{\Delta} & \mbox{if $l$ even} 
\end{array}
\right. $$
since, for the $G$-action, the term 
$(S^l\Omega_{X}\tens L^k)_{\Delta}$, indexed by $i=k/2$, is pure of parity $(-1)^l$. Hence the sheaf of invariants is a direct sum of 
$ k/2 -j $ copies of $(S^l \Omega_X^1 \tens L^k)_\Delta$
 if $l$ is odd and of 
$ k/2 -j + 1$ copies if $l$ is even. 
 We have the following cases.
\begin{enumerate} 
\item $l=2j-1$, hence odd. We have: $\displaystyle \big[\frac{k- l}{2} \big] = \frac{k}{2} - j
$;
\item $l = 2j-2$, even. We have $\displaystyle   \big[\frac{k-l}{2} \big] =  \big[\frac{k - 2j + 2}{2} \big] =\frac{k}{2} -j + 1$.
\end{enumerate}
Hence for $l = 2j-1, 2j-2$ $A^G$ is an isomorphism and  injective if $l < 2j-2$. 
\end{itemize}
In order to compute the kernel of the restriction, for $l \leq 2j-1$, we just have to compute the kernel of the map $$S_{j,l}^G = \bigoplus_{i \geq k/2}^{k-j} \mathcal{L}^{k-i, i}(-l\Delta)
 \rTo^{ \oplus_{i \geq k/2}^{k-j} d^l_{\Delta}}
\Big [ \bigoplus_{i  \geq j}^{k-j} (S^l \Omega_X^1 \tens L^k)_{\Delta} \Big] ^G \;.$$
In the case $k$ is odd, it is the kernel of the map: 
$$ \bigoplus_{i > k/2}^{k-j} \mathcal{L}^{k-i, i}(-l\Delta) \rTo ^{\oplus_{i \geq k/2}^{k-j} d^l_{\Delta}} \bigoplus_{i  > k/2}^{k-j} (S^l \Omega_X^1 \tens L^k)_{\Delta}$$and hence $S^G_{j,l+1}$. In the case $k$ is even, 
$$\Big[ \bigoplus_{i  \geq j}^{k-j} (S^l \Omega_X^1 \tens L^k)_{\Delta} \Big] ^G \simeq \left \{ \begin{array}{ll}  \displaystyle
\bigoplus_{i  > k/2}^{k-j} (S^l \Omega_X^1 \tens L^k)_{\Delta}  & \qquad \mbox{if $l$ is odd} \\
\displaystyle
\bigoplus_{i  \geq k/2}^{k-j} (S^l \Omega_X^1 \tens L^k)_{\Delta} & \qquad \mbox{if $l$ is even} 
\end{array}
\right.
$$
In the case $l$ even, the kernel   is easily 
$ \oplus_{i \geq k/2}^{k-j} \ker d^l_{\Delta} \simeq S^G_{j,l+1}$. 
Now in the case $l$ odd, $\mathcal{L}^{k/2,k/2}(-l \Delta)$ is isomorphic to 
$\mathcal{L}^{k/2,k/2}(-(l+1) \Delta)$, which is in the kernel of 
$[\oplus_{i \geq j}^{k-j} d^l_{\Delta} ]^G = \oplus_{i \geq k/2}^{k-j} d_{\Delta}^l$ and hence the kernel is again
$\oplus_{i > k/2}^{k-j} \ker d_{\Delta}^l \oplus \mathcal{L}^{k/2, k/2}(-(l+1)\Delta)  \simeq \oplus_{i \geq k/2}^{k-j} \mathcal{L}^{k-i, i}(-(l+1)\Delta) \simeq S^{G}_{j,l+1}$. 
\end{proof}As an immediate consequence of proposition \ref{pps: SjlG} we have
\begin{crl}\label{crl: bifiltrationVE}The natural map $\V^j \cap \E^l \rTo \gr^{\E}_l$ is surjective for $l \geq 2j-2$. 
\end{crl}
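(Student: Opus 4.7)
The plan is to deduce the corollary from Proposition \ref{pps: SjlG} together with a straightforward inclusion, and then to extend to the remaining range of $l$ by a downward induction on $j$ using only the containment $\V^{j+1}\subseteq\V^j$.

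First I would verify the inclusion $S^G_{j,l}\subseteq \V^j\cap\E^l$ whenever $l\le 2j-1$. That $S_{j,l}\subseteq V^j$ is clear from the definitions. For $S_{j,l}\subseteq E^l$, a local section $x=(x_i)_i$ of $S_{j,l}$ has each component $x_i\in L^{(k-i,i)}\otimes I_\Delta^{l}$; since for $h\le l-1$ the partial jet projection $j^h_{\{1,2\}}$ kills any element of $I_\Delta^{h+1}\supseteq I_\Delta^l$, the local formula for $D^h_{\mu}$ (see Definition \ref{def: Hor} and Remark \ref{rmk: affine}) gives $D^h_L x=0$, so $x\in E^l$. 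Taking $G$-invariants yields the claimed inclusion.

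Next, for $l\in\{2j-2,2j-1\}$, I would invoke Proposition \ref{pps: SjlG}: the restriction $\D^l_L\big|_{S^G_{j,l}}$ factors as
$$ S^G_{j,l}=\bigoplus_{i\geq k/2}^{k-j}\mathcal{L}^{i,k-i}(-l\Delta)\xrightarrow{\oplus\, d^l_\Delta}\Bigl[\bigoplus_{i\geq j}^{k-j}(S^l\Omega^1_X\otimes L^k)_\Delta\Bigr]^G\xrightarrow{A^G}\gr^\E_l, $$
where $A^G$ is an isomorphism in these two cases. The first map is surjective: $d^l_\Delta:L^{(k-i,i)}(-l\Delta_2)\to(S^l\Omega^1_X\otimes L^k)_\Delta$ is the standard surjection $I_\Delta^l\twoheadrightarrow I_\Delta^l/I_\Delta^{l+1}$ (tensored with $L^k$), and this surjectivity is preserved after applying $\pi_*$ (finite) and taking $G$-invariants (exact in characteristic zero), one checking separately the parities of $k$ and $l$ to see that all summands on the right are indeed hit. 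Composing with the inclusion $S^G_{j,l}\subseteq\V^j\cap\E^l$ from the previous paragraph gives surjectivity of $\V^j\cap\E^l\to\gr^\E_l$ for $l\in\{2j-2,2j-1\}$.

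Finally, to handle $l\ge 2j$, I would fix $l$ and run downward induction on $j$, starting from the maximal valid value $j_0=\lfloor l/2\rfloor+1$. The base case $j=j_0$ forces $l\in\{2j_0-2,2j_0-1\}$ and is settled by the previous step. For the inductive step from $j+1$ to $j$ (with $j+1\le j_0$, i.e.\ $l\ge 2j$), the containment $\V^{j+1}\subseteq\V^j$ yields $\V^{j+1}\cap\E^l\subseteq\V^j\cap\E^l$, and surjectivity of the map from the larger subsheaf is immediate from surjectivity of the map from the smaller one, which holds by inductive hypothesis since $l\ge 2j=2(j+1)-2$. I do not foresee any genuine obstacle: the whole argument is a bookkeeping consequence of Proposition \ref{pps: SjlG}, whose nondegeneracy input (the Toeplitz-matrix computation in Lemma \ref{lemma: Tnondeg}) has already been carried out.
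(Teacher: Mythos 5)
Your proposal is correct and follows essentially the same route as the paper: the surjectivity for $l\in\{2j-2,2j-1\}$ is read off from Proposition \ref{pps: SjlG} (isomorphism of $A^G$ plus surjectivity of $\oplus\, d^l_\Delta$, via the already-established inclusion $S^G_{j,l}\subseteq\V^j\cap\E^l$), and the remaining range $l\ge 2j$ is handled by the decreasing nature of the filtration $\V^\bullet$ — your downward induction is just a more verbose phrasing of the paper's one-line observation that $\V^{j'}\subseteq\V^j$ for the $j'$ with $l\in\{2j'-2,2j'-1\}$.
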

\begin{proof}We just have to notice that, if $l = 2j-2$ or $l= 2j-1$ the map $\oplus_{i \geq k/2}^{k-j} d^l_{\Delta}$ is surjective and $A^G$ is an isomorphism. The case $l \geq 2j-2$ follows because $\V^j$ is a decreasing filtration. 
\end{proof}
\begin{lemma}\label{lemma: bifiltration}Let $F$ be a coherent sheaf over an algebraic variety $Y$. Let $M^h \subseteq M^{h-1} \subseteq \cdots  \cdots \subseteq M^1 \subseteq M^0 = F$ and $N^k \subseteq N^{k-1} \subseteq \cdots  \subseteq N^1 \subseteq N^0 = F$, $h, k \in \mbb{N}$, two finite decreasing filtrations of $F$. The filtration $N^\bullet$ induces a finite decreasing filtration $N^\bullet \cap M^l$ on each coherent sheaf $M^l$, for $0 \leq l \leq h$.  Suppose that, for a certain $j$ and $l$, the natural map $$ N^{j+1} \cap M^l \rTo M^l / M^{l+1} \simeq \gr^{M}_l F $$is surjective; then, for all $i \leq j$,  
$$ \gr^{N}_i M^{l+1} \simeq \gr^{N}_i M^l \;.$$
\end{lemma}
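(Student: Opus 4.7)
The plan is to construct a natural morphism $\gr^{N}_i M^{l+1} \to \gr^{N}_i M^l$ induced by the inclusion $M^{l+1} \hookrightarrow M^l$, and then to verify, for $i \leq j$, that it is both injective and surjective. Constructing the map itself is formal: the inclusions $N^i \cap M^{l+1} \subseteq N^i \cap M^l$ and $N^{i+1} \cap M^{l+1} \subseteq N^{i+1} \cap M^l$ descend to a morphism between the quotients $\gr^{N}_i M^{l+1} = (N^i \cap M^{l+1})/(N^{i+1} \cap M^{l+1})$ and $\gr^{N}_i M^l = (N^i \cap M^l)/(N^{i+1} \cap M^l)$.

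Injectivity is a routine diagram chase that does not use the hypothesis: if $x \in N^i \cap M^{l+1}$ represents a class which vanishes in $\gr^{N}_i M^l$, then $x \in N^{i+1} \cap M^l$; combined with the fact that already $x \in M^{l+1}$, this forces $x \in N^{i+1} \cap M^{l+1}$, so its class in $\gr^{N}_i M^{l+1}$ is zero.

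Surjectivity is the one step where the hypothesis enters. Given a class in $\gr^{N}_i M^l$ represented by some $x \in N^i \cap M^l$, the assumed surjectivity of $N^{j+1} \cap M^l \to M^l/M^{l+1} \simeq \gr^{M}_l F$ provides an element $y \in N^{j+1} \cap M^l$ such that $x - y \in M^{l+1}$. The key observation is that the condition $i \leq j$ implies $N^{j+1} \subseteq N^{i+1}$, so $y$ actually lies in $N^{i+1} \cap M^l$; hence $x$ and $x - y$ have the same class in $\gr^{N}_i M^l$, while $x - y \in N^i \cap M^{l+1}$ furnishes the desired preimage in $\gr^{N}_i M^{l+1}$.

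There is no genuine obstacle: the argument is purely formal, relying only on the lattice-theoretic compatibility of the two filtrations together with the single surjectivity hypothesis. The only point worth highlighting is the use of $N^{j+1} \subseteq N^{i+1}$, which is precisely what allows one to correct a representative without disturbing its class modulo $N^{i+1}$, and which is exactly the content of the inequality $i \leq j$.
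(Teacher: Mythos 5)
Your proof is correct and is essentially the paper's argument: the paper first reduces to $i=j$ and then packages the same content as a $3\times 3$ commutative diagram with exact rows and columns (a nine-lemma argument), whereas you carry out the underlying diagram chase directly on representatives, using the hypothesis only to correct a representative modulo $N^{i+1}\cap M^l$ via $N^{j+1}\subseteq N^{i+1}$. The only point worth keeping in mind is that, since these are sheaves, the element-level chase should be read on stalks (or local sections), where it is valid verbatim.
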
\begin{proof}
It is sufficient to prove the lemma for $i=j$,  since it is evident
that, if $N^{j+1} \cap M^l \rTo
\gr_l^M F$ is surjective, then for all $i \leq j$, then also $N^{i+1}
\cap M^l \rTo \gr_l^M F$ is surjective. For $i=j$, the lemma is a
consequence of the following commutative diagram, whose rows and columns are all
exact. 
\begin{diagram}
& & 0   &    & 0    &  & 0  &  & \\
& & \dTo&    & \dTo &  &\dTo&  & \\
0 &\rTo  &N^{j+1} \cap M^{l+1} & \rTo & N^j \cap M^{l+1} & \rTo &
\gr_j^N M^{l+1} & \rTo & 0 \\
& & \dTo&    & \dTo &  &\dTo&  & \\
0 &\rTo  &N^{j+1} \cap M^{l} & \rTo & N^j \cap M^{l} & \rTo &
\gr_j^N M^{l} & \rTo & 0 \\
& & \dTo&    & \dTo &  &\dTo&  & \\
0 &\rTo  &\gr_l^M F & \rTo & \gr_l^M F& \rTo &
0 & \rTo & 0 \\
& & \dTo&    & \dTo &  &\dTo&  & \\
& & 0   &    & 0    &  & 0  &  & \\
\end{diagram}
\end{proof}
\begin{lemma}\label{lemma: gradedE}Consider the bifiltration $\V^j \cap \E^l$ on
  $(S^k V_L)^G$. For all $l \geq 2i$ we have
 $$\gr_i^{\V} \E^l \simeq \gr_i^{\V} \E^{2i}  \;.$$
\end{lemma}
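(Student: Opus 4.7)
The plan is to apply Lemma~\ref{lemma: bifiltration} iteratively, taking $M^{\bullet} = \E^{\bullet}$ and $N^{\bullet} = \V^{\bullet}$ as the two filtrations on $F = (S^k V_L)^G$. The only non-formal input is the surjectivity statement of Corollary~\ref{crl: bifiltrationVE}: the natural map
\[ \V^{j+1} \cap \E^{l} \;\longrightarrow\; \gr^{\E}_{l} \]
is surjective whenever $l \geq 2(j+1) - 2 = 2j$. Under this hypothesis, Lemma~\ref{lemma: bifiltration} (applied to the pair of indices $(j,l)$) yields the isomorphism $\gr^{\V}_{i} \E^{l+1} \simeq \gr^{\V}_{i} \E^{l}$ for every $i \leq j$.

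First I would fix $i$ and specialize the preceding observation by taking $j = i$. This gives: for every $l \geq 2i$,
\[ \gr^{\V}_{i} \E^{l+1} \;\simeq\; \gr^{\V}_{i} \E^{l} . \]
Then, starting from $l = 2i$ and iterating the isomorphism above, a straightforward induction on $l$ produces
\[ \gr^{\V}_{i} \E^{l} \;\simeq\; \gr^{\V}_{i} \E^{l-1} \;\simeq\; \cdots \;\simeq\; \gr^{\V}_{i} \E^{2i} \]
for all $l \geq 2i$, which is exactly the claim.

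There is no genuine obstacle here: the real work has already been carried out in Proposition~\ref{pps: SjlG} (which produces the sheaves $S_{j,l}^G \subseteq \V^{j} \cap \E^l$ that surject onto the relevant graded piece via the Toeplitz matrix $R_{l+1,k}(j)$), packaged into the corollary. The present lemma is the bookkeeping statement that exploits this surjectivity to stabilize $\gr^{\V}_i \E^l$ in $l$, setting up the computation of the graded sheaves of the filtration $\W^{\bullet} = \V^{\bullet} \cap \E^{k-1}(2,k)$ on $\mu_* S^k L^{[2]}$ in the subsequent main theorem.
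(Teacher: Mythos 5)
Your proof is correct and follows exactly the paper's argument: both invoke Corollary \ref{crl: bifiltrationVE} to get surjectivity of $\V^{j+1}\cap\E^l \to \gr^{\E}_l$ for $l \geq 2j$, feed this into Lemma \ref{lemma: bifiltration} with $j=i$, and iterate the resulting isomorphism $\gr^{\V}_i\E^{l+1}\simeq\gr^{\V}_i\E^{l}$ from $l=2i$ upward. The index bookkeeping matches the paper's.
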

\begin{proof}
By corollary \ref{crl: bifiltrationVE} the map $\V^j \cap \E^l \rTo \gr^{\E}_l$ is surjective for $j \leq [ l/2] + 1$. Hence, by lemma 
\ref{lemma: bifiltration}, for all $i \leq [l/2]$ we have that $\gr^{\V}_i \E^{l+1} \simeq \gr^{\V}_i \E^l$. But this means that, for all $l \geq 2i$, $ \gr^{\V}_i \E^{l+1} \simeq \gr^{\V}_i \E^l $, which implies the statement. 
\end{proof}
\begin{pps}\label{pps: bifiltrationdue}Let $h, l \in \mbb{N}$ such that $ \displaystyle h \geq \big[ \frac{l +1}{2 } \big]$. Then 
$$ \V^h \cap \E^l \simeq \bigoplus_{i \geq k/2}^{ k- h} \mathcal{L}^{(i,k-i)}(-l \Delta)  \simeq S^G_{h, l}\;.$$
\end{pps}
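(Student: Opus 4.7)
The plan is to proceed by induction on $l$ with $h$ fixed, reducing everything at each step to the kernel computation already done in Proposition~\ref{pps: SjlG}.

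The base case $l = 0$ is essentially an unfolding of definitions. Here $\V^h \cap \E^0 = \V^h = \bigoplus_{\lambda \in p_2(k),\, \lambda \geq_{\rm rlex} (k-h,h)} \mathcal{L}^\lambda$. Under the reverse lexicographic order on $p_2(k) = \{(k-p,p) : 0 \leq p \leq \lfloor k/2 \rfloor\}$, the qualifying partitions are exactly the $(k-p,p)$ with $h \leq p \leq \lfloor k/2 \rfloor$; reindexing by the larger part $i = k - p \in [\lceil k/2 \rceil, \, k-h]$ identifies this direct sum with $S^G_{h,0}$ term by term.

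For the inductive step, one uses that $\E^{l+1} = \ker(\D^l_L \colon \E^l \to \K^l(L))$. Intersecting with $\V^h$ gives
$$\V^h \cap \E^{l+1} = (\V^h \cap \E^l) \cap \ker \D^l_L = \ker\bigl(\D^l_L |_{\V^h \cap \E^l}\bigr),$$
and by the inductive hypothesis this equals $\ker\bigl(\D^l_L |_{S^G_{h,l}}\bigr)$. Now Proposition~\ref{pps: SjlG}, applied with $j = h$, gives precisely $S^G_{h, l+1}$ as the kernel, which closes the induction.

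The only technical point worth watching --- and the only real obstacle --- is matching the numerical hypothesis of Proposition~\ref{pps: SjlG} with the hypothesis of the statement. Proposition~\ref{pps: SjlG} requires $l \leq 2h-1$ at the step that produces the level $l+1$ conclusion, and this is exactly what the bound $h \geq [(l+1)/2]$ of the statement guarantees: the induction propagates precisely through the range allowed by the hypothesis, and fails exactly when that hypothesis fails.
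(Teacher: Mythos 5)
Your proof is correct and follows essentially the same route as the paper's: both arguments reduce the identification of $\V^h \cap \E^{l+1} = \ker\bigl(\D^l_L|_{\V^h\cap\E^l}\bigr)$ to the kernel computation of Proposition \ref{pps: SjlG}, and your matching of the numerical hypotheses ($l \le 2h-1$ at the step producing level $l+1$, which is exactly $h \ge [(l+2)/2]$) is accurate. The paper merely organizes the same induction differently --- on $j$ along the diagonal $h=j$, $l \in \{2j-1, 2j\}$, then extending to larger $h$ via compatibility with the direct-sum decomposition --- which is a cosmetic rather than substantive difference.
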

\begin{proof}It is sufficient to prove that, if $l = 2j-1$ or $l = 2j$ and if $h \geq j$, we have
\begin{equation}\label{eq: VE} \V^h \cap \E^l \simeq \bigoplus_{i \geq k/2}^{ k-  h } \mathcal{L}^{(i,k-i)}(-l \Delta) \;.\end{equation}It's
 evident that it is sufficient to prove (\ref{eq: VE}) for $h = j$, since for higher $h$ it follows immediately from the definition of $\V^h$. We prove it by induction on $j$. For $j=0$ there is nothing to prove, since $l=0$ and (\ref{eq: VE}) is the definition of 
 $\V^0$. Suppose the result is true for $j \geq 0$. Then we have, in the notations of proposition \ref{pps: SjlG}
 \begin{gather*}
\V^j \cap \E^{2j-1} \simeq \bigoplus_{i \geq k/2}^{k-j} \mathcal{L}^{(i,
  k-i)}(-(2j-1) \Delta) = S^G_{j, 2j-1}\\
\V^j \cap \E^{2j} \simeq \bigoplus_{i \geq k/2}^{k-j} \mathcal{L}^{(i,
 k- i)}(-2j \Delta) = S^G_{j, 2j} \;.
\end{gather*}Then, by definition of $\V^{j+1}$ we have that 
$\V^{j+1} \cap \E^{2j-1} \simeq \bigoplus_{i \geq k/2}^{k-j-1} \mathcal{L}^{(i,
 k- i)}(-(2j-1) \Delta)=S^G_{j+1, 2j-1}$, and $
\V^{j+1} \cap \E^{2j} \simeq \bigoplus_{i \geq k/2}^{k-j-1} \mathcal{L}^{(i,
 k- i)}(-2j \Delta) =S^G_{j+1, 2j}$. Consider now the sequence
$$ 0 \rTo \V^{j+1} \cap \E^{2j+1} \rTo \V^{j+1} \cap \E^{2j} \rTo^{\D^l_L}
\gr_{2j}^\E (S^k V_L)^G \rTo 0 \;.$$It is left exact by definition, but, since $2j= 2(j+1)-2$, by corollary \ref{crl: bifiltrationVE}, 
it is exact also on the right. Hence, by proposition \ref{pps: SjlG} $$ \V^{j+1} \cap \E^{2j+1}  \simeq S^G_{j+1, 2j+1} \simeq 
\bigoplus_{i \geq k/2}^{k-j-1}\mathcal{L}^{(i,k-i)}(-(2j+1) \Delta) \;.$$Now, for $l = 2j+2$, by corollary \ref{crl: bifiltrationVE} the sequence 
$$ 0 \rTo \V^{j+1} \cap \E^{2j+2} \rTo \V^{j+1} \cap \E^{2j+1} \rTo
\gr_{2j+1}^\E (S^k V_L)^G \rTo 0 $$is exact, since $2j+1 = 2(j+1) -1$. Hence, by proposition \ref{pps: SjlG}, $$ \V^{j+1} \cap \E^{2j+2} = 
\ker \D^l_L \trest_{S^G_{j+1, 2j+1}} = S^G_{j+1, 2j+2} \simeq \bigoplus_{i \geq k/2}^{k-j-1}\mathcal{L}^{(i,k-i)}(-(2j+2) \Delta) \;.$$
\end{proof}
\begin{theorem}\label{thm: main2}
The finite decreasing filtration $\W^{[ \frac{k}{2}] } \tens \mc{D}_A \subseteq \cdots \W^1 \tens \mc{D}_A \subseteq \W^0  \tens \mc{D}_A \simeq \mu_* S^k L^{[2]} \tens \mc{D}_A$ of length $[k/2]$ on the  on the sheaf $\mu_* ( S^k L^{[2]} \tens \mc{D}_A)$  over the symmetric variety $S^2 X$
has graded sheaves $$\gr^{\W \tens \mc{D}_A}_j ( \mu_* S^k L^{[2]} \tens \mc{D}_A ) \simeq \mathcal{L}^{(k-j, j)}(-2j \Delta) \tens \mc{D}_A \qquad \quad \mbox{$j = 0, \dots, \displaystyle \big[ \frac{k}{2} \big]$ }\;.$$
\end{theorem}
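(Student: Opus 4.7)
The plan is to identify the graded pieces of $\mathcal{W}^\bullet = \mathcal{V}^\bullet \cap \mathcal{E}^{k-1}(2,k)$ by combining the two key pieces already established: Proposition \ref{pps: bifiltrationdue}, which gives explicit formulas for $\mathcal{V}^j \cap \mathcal{E}^l$ when $j \geq [(l+1)/2]$, and Lemma \ref{lemma: gradedE}, which stabilizes the $\mathcal{V}$-graded pieces of $\mathcal{E}^l$ beyond $l = 2j$. Since for $n=2$ one has $\mathcal{E}^{k-1}(2,k) \simeq \mu_* S^k L^{[2]}$ by Theorem \ref{thm: directimage}, and since tensoring with the line bundle $\mathcal{D}_A$ is exact and commutes with kernels and intersections, it is enough to carry out the computation for $A$ trivial and apply Lemma \ref{lemma: invariantsFDA} at the end.

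First I would write
\[
\gr^{\mathcal{W}}_j \mu_* S^k L^{[2]} \;=\; \frac{\mathcal{V}^j \cap \mathcal{E}^{k-1}(2,k)}{\mathcal{V}^{j+1} \cap \mathcal{E}^{k-1}(2,k)} \;=\; \gr^{\mathcal{V}}_j \mathcal{E}^{k-1}(2,k).
\]
Since $k-1 \geq 2j$ for every $0 \leq j \leq [k/2]$, Lemma \ref{lemma: gradedE} identifies this graded sheaf with $\gr^{\mathcal{V}}_j \mathcal{E}^{2j}$. The point is now that both terms of the latter quotient fall inside the range of Proposition \ref{pps: bifiltrationdue}: for $(h,l) = (j,2j)$ the inequality $h \geq [(l+1)/2] = j$ is satisfied, and for $(h,l) = (j+1, 2j)$ the inequality $j+1 \geq j$ is also satisfied. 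Thus
\[
\mathcal{V}^j \cap \mathcal{E}^{2j} \;\simeq\; \bigoplus_{i \geq k/2}^{k-j} \mathcal{L}^{(i,k-i)}(-2j\Delta), \qquad \mathcal{V}^{j+1} \cap \mathcal{E}^{2j} \;\simeq\; \bigoplus_{i \geq k/2}^{k-j-1} \mathcal{L}^{(i,k-i)}(-2j\Delta),
\]
and their quotient is precisely the single summand $\mathcal{L}^{(k-j,j)}(-2j\Delta)$ corresponding to the extremal index $i = k-j$.

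Finally, to get the twisted statement, I would note that the natural inclusions defining $\mathcal{W}^j \otimes \mathcal{D}_A$ are obtained from those defining $\mathcal{W}^j$ by tensoring with the invertible sheaf $\mathcal{D}_A$, so $\gr^{\mathcal{W} \otimes \mathcal{D}_A}_j (\mu_* S^k L^{[2]} \otimes \mathcal{D}_A) \simeq \gr^{\mathcal{W}}_j (\mu_* S^k L^{[2]}) \otimes \mathcal{D}_A$, which yields the claimed formula. The projection formula $\mu_*(S^k L^{[2]} \otimes \mathcal{D}_A) \simeq \mu_* S^k L^{[2]} \otimes \mathcal{D}_A$ ensures that the resulting filtration is indeed on the sheaf of interest.

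The only potentially delicate step is verifying that $(\mathcal{V}^{j+1} \cap \mathcal{E}^{2j})$ sits inside $(\mathcal{V}^j \cap \mathcal{E}^{2j})$ in exactly the way suggested by the indexing, so that the quotient really is the top summand $\mathcal{L}^{(k-j,j)}(-2j\Delta)$ and not some more subtle extension; but this is immediate from the fact that the identification in Proposition \ref{pps: bifiltrationdue} is compatible with the natural inclusion $\mathcal{V}^{j+1} \subseteq \mathcal{V}^j$, which simply drops the $i=k-j$ summand. No further obstruction is expected, since the hard technical work (the surjectivity of $\mathcal{V}^j \cap \mathcal{E}^l \to \gr^{\mathcal{E}}_l$ via Toeplitz nondegeneracy, and the inductive construction of Proposition \ref{pps: bifiltrationdue}) has already been done upstream.
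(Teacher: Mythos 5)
Your argument is the paper's argument for $j<[k/2]$: reduce to $A$ trivial, identify $\gr^{\W}_j$ with $\gr^{\V}_j\E^{k-1}(2,k)$, stabilize via Lemma \ref{lemma: gradedE}, and read off the quotient from the two instances of Proposition \ref{pps: bifiltrationdue}. That part is correct. However, there is a genuine (if small) gap at the top index: your claim that $k-1\geq 2j$ for every $0\leq j\leq [k/2]$ fails when $k$ is even and $j=k/2$, since then $2j=k>k-1$. So Lemma \ref{lemma: gradedE} cannot be invoked to replace $\gr^{\V}_j\E^{k-1}$ by $\gr^{\V}_j\E^{2j}$ in that case; worse, $\E^{2j}=\E^{k}$ is not even a term of the filtration $\E^{\bullet}(2,k)$, which stops at $\E^{k-1}$, so the objects you would feed into Proposition \ref{pps: bifiltrationdue} are not defined there.

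The fix, which is how the paper handles it, is to treat $j=[k/2]$ separately: since $\V^{[k/2]+1}=0$, the top graded piece is simply $\V^{[k/2]}\cap\E^{k-1}$, and Proposition \ref{pps: bifiltrationdue} applies directly with $(h,l)=([k/2],k-1)$ (the inequality $h\geq[(l+1)/2]=[k/2]$ holds), giving $\mathcal{L}^{(k-[k/2],[k/2])}(-(k-1)\Delta)$. For $k$ odd this is already the claimed $\mathcal{L}^{(k-[k/2],[k/2])}(-2[k/2]\Delta)$. For $k$ even one must additionally use the parity isomorphism $\mathcal{L}^{(k/2,k/2)}(-(k-1)\Delta)\simeq\mathcal{L}^{(k/2,k/2)}(-k\Delta)$ (the differential $d^{k-1}_{\Delta}$ vanishes on the symmetric summand because $\perm_2$ acts on $S^{k-1}\Omega^1_X$ by the sign $(-1)^{k-1}=-1$, as in the proof of Proposition \ref{pps: SjlG}), which converts the twist $-(k-1)\Delta$ into $-2[k/2]\Delta$. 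With this case added, your proof is complete and matches the paper's.
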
\begin{proof}It is sufficient to prove it for $A$ and $\mc{D}_A$ trivial.  By  proposition \ref{pps: bifiltrationdue} and by lemma \ref{lemma: gradedE} we immediately have  that for $l \geq 2j$, $$ \gr^\V_j \E^l \simeq \gr^\V_j \E^{2j} \simeq \mathcal{L}^{(k-j, j)}(-2j \Delta) \;.$$
Since $\E^{k-1} \simeq \mu_* S^k L^{[2]}$, this implies the statement of the theorem for $j<[k/2]$. For $j=[k/2]$ proposition \ref{pps: bifiltrationdue} implies directly that 
$$ \gr^{\V}_{[k/2]} \E^{k-1} \simeq \mathcal{L}^{(k-[k/2], [k/2])}(-2 [k/2] \Delta) \;.$$
\end{proof}

\subsection{The case $k=3$.}
Fix $k=3$. Over $S^nX$ we have: 
\begin{align}
\V^{3} = & \;\mathcal{L}^{3} \oplus \mathcal{L}^{2,1} \oplus \mathcal{L}^{1,1,1}  \label{eq: directsumdecomposition1} \\
\K^0(L) = & \; \K^0_{(2)} \oplus \K^0_{(1)(1)} \simeq v_* (L_\Delta^3 \boxtimes \FS_{S^{n-2}X})  \oplus v_*(L_\Delta^2 \boxtimes 
\mathcal{L}) \label{eq: directsumdecomposition2}\\
\K^1(L) = &\; \K^1_{(1)}  \simeq v_*((\Omega_X^1 \tens L^3)_\Delta \boxtimes \FS_{S^{n-2}X}) \notag
\end{align}The operator $\D^0_L: \V^3 \rTo \K^0(L)$, according to the direct sum decompositions (\ref{eq: directsumdecomposition1}) and (\ref{eq: directsumdecomposition2}) above, 
can be written as: 
\begin{equation}\label{eq: matrixD} \D^0_L =  \left( \begin{array}{ccc} \D^3_{(2)} & \D_{(2)}^{2,1} & 0 \\ 0
    & \D^{2,1}_{(1), (1)} & \D^{(1,1,1)}_{(1),
      (1)} \end{array}\right ) \;.\end{equation}
Remark that for $\mu= (2,1)$ and $n \geq 2$ the differential $d^1_\Delta$ in (\ref{eq: partialdifferentialsnx}) is a map
$d^1_\Delta: \mathcal{L}^{2,1}(-\Delta) \rTo \K^1_{(1)}$. 
\begin{lemma}\label{lemma: D1factors}The restriction of the operator $\D^1_{(1)} $ to $\V^{2,1} \cap \E^1$ factors through: 
 $$ \V^{2,1} \cap \E^1 \rTo \mathcal{L}^{2,1}(-\Delta) \rTo^{-3d^1_\Delta} \K^1_{(1)} \;,$$where the first map is induced by the  projection
 $\V^{2,1} \rTo \mathcal{L}^{2,1}$ onto a summand of the direct sum. 
\end{lemma}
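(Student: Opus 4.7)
The plan is to verify the factorization locally on affine open sets of the form $S^n U$, where $U \subseteq X$ is affine with $L$ trivial over $U$, using the explicit formulas from Example \ref{ex: local3}. By definition of $\V^{2,1}$, a local section $x \in \V^{2,1} \cap \E^1$ has trivial $\mc{L}^3$-component, so I may write $x = (0,\, g_1 \tens g_2 \tens b,\, h_1 h_2 h_3 \tens c)$ under the decomposition $\mc{L}^3 \oplus \mc{L}^{2,1} \oplus \mc{L}^{1,1,1}$; the projection to $\mc{L}^{2,1}$ is then $y := g_1 \tens g_2 \tens b$. The claim to prove is (i) $y \in \mc{L}^{2,1}(-\Delta)$, and (ii) $\D^1_{(1)}(x) = -3\, d^1_\Delta(y)$.

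For (i), inspection of the matrix (\ref{eq: matrixD}) shows that the $\K^0_{(2)}$-component of $\D^0_L(x)$ reduces to $\D^0_{(2),(0)}(y)$. By Proposition \ref{pps: formula2} applied with $\nu_0 = (0)$, this reduces to the $n=2$ local formula for $\D^0_{(2)}$ (a difference of restrictions to $\Delta$, cf.\ Remark \ref{rmk: D0}), giving $\D^0_{(2),(0)}(y) = g_1 g_2|_{\Delta} \tens b$ in $v_*(L^3_\Delta \boxtimes \FS_{S^{n-2}X})$. Its vanishing forces $g_1 g_2 = 0$ as a section of $L^3$, which is precisely the condition that $g_1 \tens g_2 \tens b$ defines a local section of $L^{(2,1,0,\dots,0)} \tens I_{\Delta_{12}}$; by linearity this shows $y \in \mc{L}^{2,1}(-\Delta)$.

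For (ii), I would read off $\D^1_{(1)}(x)$ directly from Example \ref{ex: local3}: since $f = 0$ and the $\mc{L}^{1,1,1}$-summand does not appear in the formula, one gets $\D^1_{(1)}(x) = -(2 g_1\, dg_2 - g_2\, dg_1) \tens b$. The key algebraic observation is then that the identity $g_1 g_2 = 0$ from step (i) differentiates to $g_1\, dg_2 + g_2\, dg_1 = 0$ in $\Omega^1_X \tens L^3$, whence $g_2\, dg_1 = -g_1\, dg_2$, and substituting gives $\D^1_{(1)}(x) = -3\, g_1\, dg_2 \tens b$. A short local-coordinate computation along $\Delta_{12}$ (writing $x_2 = x_1 + v$ and truncating mod $I_{\Delta_{12}}^2$), using once more $g_1 g_2 = 0$, shows that $d^1_\Delta(y) = g_1\, dg_2 \tens b$ in $\K^1_{(1)}$. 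The identity $\D^1_{(1)}(x) = -3\, d^1_\Delta(y)$ follows. The only mildly delicate point is the bookkeeping of sign and normalization conventions (Convention \ref{conv: sym}, the identifications underlying Proposition \ref{pps: formula2}, and compatibility of the local formula under change of trivialization of $L$), but these are routine verifications rather than a conceptual obstacle; notice in particular that the $\mc{L}^{1,1,1}$-summand $h_1 h_2 h_3 \tens c$ enters only implicitly, through the second row of $\D^0_L(x) = 0$, and is absent from both sides of the identity being verified.
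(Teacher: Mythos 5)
Your argument is correct and is essentially the paper's proof: the paper likewise reduces to an affine open $S^nU$ with $L$ trivialized, uses the triangular form of the matrix (\ref{eq: matrixD}) to deduce from $\D^0_L(x)=0$ that the $\mathcal{L}^{2,1}$-component lies in $\mathcal{L}^{2,1}(-\Delta)$, reads $\D^1_{(1)}$ off example \ref{ex: local3} (noting it ignores the $\mathcal{L}^{1,1,1}$-summand), and trades $g_2\,dg_1$ for $-g_1\,dg_2$ via the Leibniz relation coming from the vanishing on the diagonal. The only cosmetic caveat is that a general local section of $\mathcal{L}^{2,1}(-\Delta)$ is a sum $\sum_i a_i\tens b_i$ with $a_i=\sum_j u_{ij}\tens v_{ij}\in I_\Delta$, so only $\sum_j u_{ij}v_{ij}=0$ (not each individual product); the Leibniz identity must therefore be applied to these sums, exactly as the paper phrases it, but since every step is linear in the element of $I_\Delta$ this changes nothing.
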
\begin{proof}
First of all, if $(x,y) \in \mathcal{L}^{2,1} \oplus \mathcal{L}^{1,1,1}= \V^{2,1}$ is in $\E^1$ this means 
that $\D^0_L(x,y)=0$ and this implies, by the form of the matrix (\ref{eq: matrixD}) that $\D^{2,1}_{(2)} x = 0$ and hence, by definition of $\D^{2,1}_{(2)}$, that $d^0_\Delta x =0$: therefore $x \in \mathcal{L}^{2,1}(-\Delta)$. As a consequence, it is sufficient to compare the two morphism in the statement locally; by lemma \ref{lmm: auxU} it is sufficient to compare them on an affine open set of the form $S^n U$, with $U=\Spec(R)$ an affine open set of $X$ over which $L$ is trivial. 

Over $S^n U = \Spec(S^n R)$, identifying sheaves with their module of sections, we have $\V^{2,1} \simeq (R \tens R \tens S^{n-2}R) \oplus 
(S^3 R \tens S^{n-3} R)$. Let $(x,y) \in \V^{2,1} \cap \E^1$. It is clear that $\D^1_L (x,y) $ just depends on $x$ and not on $y$. 
Let then $x$ be of the form $\sum_i a_i \tens b_i$, with $b_i \in S^{n-2}R$, and with $a_i = \sum_{ij} u_{ij} \tens v_{ij} \in I_\Delta \subseteq R \tens R$; we have: $\sum_j u_{ij}v_{ij} =0$ for all $i$. Hence $\sum_{j} ( v_{ij}d u_{ij} + u_{ij}d v_{ij}) =0$ in $\Omega^1_R$.  
Then, by the local formula in example \ref{ex: local3}, 
\begin{align*} \D^1_L (x,y) =  & \sum_i \Big[ -2 \sum_{j} u_{ij}dv_{ij} + \sum_{j} v_{ij} du_{ij} \Big] \tens b_i \\
 = & \: -3  \sum_{ij}u_{ij}dv_{ij} \tens b_i 
= -3 d_\Delta^1 x \;.
\end{align*}This proves the lemma.
 \end{proof}

 \begin{pps}\label{pps: shortsequences3}We have the following short left exact sequences over $S^n X$: 
\begin{gather}
0 \rTo \W^{2,1} \rTo \W^3  \rTo \mathcal{L}^3  \label{eq: 3grad}\\
0 \rTo \W^{1,1,1} \rTo \W^{2,1}  \rTo \mathcal{L}^{2,1}(-2\Delta) \label{eq: 2,1grad} \;.
\end{gather}Moreover the term $\W^{1,1,1}$ is isomorphic to 
$\W^{1,1,1} \simeq \mathcal{L}^{1,1,1}(-\Delta) \simeq \mathcal{L}^{1,1,1}(-2\Delta)$. \end{pps}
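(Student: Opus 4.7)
The plan is to obtain both sequences by intersecting the natural short exact sequences
$$0 \rTo \V^{2,1} \rTo \V^3 \rTo \mathcal{L}^3 \rTo 0, \qquad 0 \rTo \mathcal{L}^{1,1,1} \rTo \V^{2,1} \rTo \mathcal{L}^{2,1} \rTo 0,$$
coming from the direct sum decomposition (\ref{eq: directsumdecomposition1}), with the subsheaf $\E^2(n,3) \simeq \mu_* S^3 L^{[n]}$. Left-exactness is then automatic; the only substantive work is to show that in the second sequence the image actually lies in the proper subsheaf $\mathcal{L}^{2,1}(-2\Delta) \subseteq \mathcal{L}^{2,1}$.

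For (\ref{eq: 3grad}), I would observe that $\W^3 = \V^3 \cap \E^2 = \E^2$ and $\W^{2,1} = \V^{2,1} \cap \E^2$ by definition, so restricting the first exact sequence above to the subsheaf $\E^2$ immediately yields (\ref{eq: 3grad}).

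For (\ref{eq: 2,1grad}), I consider the projection $p: \V^{2,1} \rTo \mathcal{L}^{2,1}$ with kernel $\mathcal{L}^{1,1,1}$; restricting to $\W^{2,1}$ gives a map whose kernel is $\W^{2,1} \cap \mathcal{L}^{1,1,1} = \W^{1,1,1}$. What remains is to show $p(\W^{2,1}) \subseteq \mathcal{L}^{2,1}(-2\Delta)$. Write a local section $x \in \W^{2,1}$ as $(x_{2,1}, x_{1,1,1})$. Since $x \in \E^1$, the first row of the matrix (\ref{eq: matrixD}) applied to $x$ (the $\mathcal{L}^3$-component being absent) gives $\D^{2,1}_{(2)}(x_{2,1}) = 0$. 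Since $\D^{2,1}_{(2)}$ is, up to sign, the natural restriction of sections of $\mathcal{L}^{2,1}$ to the pairwise diagonal $\Delta_{12}$, whose kernel is by definition $\mathcal{L}^{2,1}(-\Delta)$, we conclude $x_{2,1} \in \mathcal{L}^{2,1}(-\Delta)$. Moreover $x \in \E^2 \subseteq \ker \D^1_L$, so Lemma \ref{lemma: D1factors} forces $-3\, d^1_\Delta(x_{2,1}) = 0$, hence $x_{2,1} \in \ker d^1_\Delta = \mathcal{L}^{2,1}(-2\Delta)$ by Remark \ref{rmk: differential}.

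The identification $\W^{1,1,1} \simeq \mathcal{L}^{1,1,1}(-\Delta) \simeq \mathcal{L}^{1,1,1}(-2\Delta)$ proceeds in two steps. First, $\mathcal{L}^{1,1,1} \cap \E^1 = \ker(\D^0_L|_{\mathcal{L}^{1,1,1}})$, which by Remark \ref{rmk: restriction1k} coincides with $d^0_\Delta$ and hence equals $\mathcal{L}^{1,1,1}(-\Delta)$. To pass from $\E^1$ to $\E^2$, I note that the projection $p : \V^{2,1} \rTo \mathcal{L}^{2,1}$ vanishes on the summand $\mathcal{L}^{1,1,1}$, so Lemma \ref{lemma: D1factors} implies that $\D^1_L$ is identically zero on $\mathcal{L}^{1,1,1}(-\Delta)$; thus $\W^{1,1,1} = \mathcal{L}^{1,1,1}(-\Delta)$. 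The final isomorphism $\mathcal{L}^{1,1,1}(-\Delta) \simeq \mathcal{L}^{1,1,1}(-2\Delta)$ is exactly the content of Example \ref{ex: 1k} for $k = 3$ and $l = 1$ odd, where the alternating $\perm_2$-action on the codomain of $d^1_\Delta$ kills the induced map on invariants. No serious obstacle is anticipated: once the matrix form (\ref{eq: matrixD}) and Lemma \ref{lemma: D1factors} are in hand, the proposition reduces to bookkeeping with the identifications $\ker d^l_\Delta = \mathcal{L}^\mu(-(l+1)\Delta)$.
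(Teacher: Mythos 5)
Your proposal is correct and follows essentially the same route as the paper: both arguments rest on the lower-triangular form of the matrix (\ref{eq: matrixD}), the identification of $\D^0_{(2)}$ and $\D^0_{(1)(1)}$ with $d^0_\Delta$, Lemma \ref{lemma: D1factors}, Example \ref{ex: 1k}, and the identity $\ker d^l_\Delta = \mathcal{L}^\mu(-(l+1)\Delta)$ from Remark \ref{rmk: differential}. The paper merely packages the same computations as a chain of commutative diagrams in which one takes kernels of the vertical maps $\D^0_L$ and $\D^1_L$, whereas you phrase it as intersecting the split exact sequences with $\E^2$ and verifying where the images land; the content is identical.
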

\begin{proof}
We start with the split short exact sequence: 
$ 0 \rTo \V^{2,1} \rTo \V^3 \rTo \mathcal{L}^3 \rTo 0 $ and we fit it into the diagram: 
\begin{diagram}
0 & \rTo & \V^{2,1}         &\rTo& \V^3    & \rTo &\mathcal{L}^3 & \rTo & 0   \\
 &           &  \dTo^{\D^0_L}&       & \dTo^{\D^0_L} &  & \dTo & & \\
0 & \rTo & \K^0(L) & \rTo^\sim & \K^0(L) & \rTo & 0  & \rTo & 0 
\end{diagram}Taking kernels yields the left exact sequence 
$ 0 \rTo \V^{2,1} \cap \E^1 \rTo \V^3 \cap \E^1 \rTo \mathcal{L}^3 $. We fit the latter into the diagram: 
\begin{diagram} 
0 & \rTo & \V^{2,1} \cap \E^1 & \rTo & \V^3 \cap \E^1 & \rTo & \mathcal{L}^3 \\
 &           &  \dTo^{\D^1_L} &       & \dTo^{\D^1_L} &  & \dTo^0 \\
 0 & \rTo & \K^1(L) & \rTo^\sim & \K^1(L) & \rTo & 0 
\end{diagram}and the kernels of the vertical maps give the left exact sequence
$ 0  \rTo  \V^{2,1} \cap \E^2  \rTo  \V^{3} \cap \E^{2}  \rTo  \mathcal{L}^3 $, which is precisely the first the statement. 

To get the second, consider the term $\V^{2,1}$. By definition we have the splitting short exact sequence; 
$ 0 \rTo \mathcal{L}^{1,1,1} \rTo \V^{2,1} \rTo \mathcal{L}^{2,1} \rTo 0 $. Moreover, the term $\K^0(L)$ splits as
$\K^0(L) \simeq  \K^0_{(2)} \oplus \K^0_{(1)(1)}$. Since the matrix of the restriction of 
$\D^0_L$ to $\V^{2,1}$ is lower triangular --- see (\ref{eq: matrixD})--- with respect to the direct sum decompositions  (\ref{eq: directsumdecomposition1}), (\ref{eq: directsumdecomposition2}), the following diagram is commutative: 
\begin{diagram}
0 & \rTo & \mathcal{L}^{1,1,1} & \rTo & \V^{2,1} & \rTo & \mathcal{L}^{2,1} & \rTo & 0 \\
&           &  \dTo^{\D^0_{(1)(1)}} &       & \dTo^{\D^0_L} &  & \dTo^{\D^0_{(2)}} \\
0 & \rTo & \K^0_{(1)(1)} & \rTo & \K^0(L) & \rTo & \K^0_{(2)} & \rTo &  0  \;.
\end{diagram}Now, by remark \ref{rmk: restriction1k}, the operator $\D^0_{(1)(1)}$ coincides with $d^0_\Delta$; 
moreover, the operator $\D^0_{(2)}$ coincides  as well with $d^0_\Delta$, as seen directly with the definitions. Hence
taking kernels for vertical maps in the above diagrams  and recalling example \ref{ex: 1k} and remark \ref{rmk: restriction1k}, we get the sequence
$ 0 \rTo \mathcal{L}^{1,1,1}(-2\Delta) \rTo \V^{2,1} \cap \E^1 \rTo \mathcal{L}^{2,1}(-\Delta) $. Now by lemma \ref{lemma: D1factors} the operator $\D^1_L$, restricted to $\V^{2,1} \cap \E^1$ factors through the differential $-3 d^1_\Delta : \mathcal{L}^{2,1}(-\Delta) {\rTo} \K^1_{(1)}$. Moreover the restriction of $\D^1_L$ to $\mathcal{L}^{1,1,1}(- 2 \Delta)$ is zero. Hence we get the commutative diagram: 
\begin{diagram}
0 & \rTo & \mathcal{L}^{1,1,1}(-2\Delta) & \rTo & \V^{2,1} \cap \E^1 & \rTo & \mathcal{L}^{2,1}(-\Delta) & &  \\
&           &  \dTo^{0} &       & \dTo^{\D^1_L} &  & \dTo^{-3 d^1_\Delta} & & \\
0 & \rTo &0 & \rTo & \K^1(L) & \rTo^\simeq & \K^1(L) & \rTo &  0 
\end{diagram}from which, taking kernels of vertical maps, by remark \ref{rmk: differential}, we get exactly the second left exact sequence in the statement. 
\end{proof}
\begin{notat}Let $R$ a commutative $\mbb{C}$-algebra. Consider elements $a_1, \dots, a_m$ of $R$. 
For all $I \subseteq \{1, \dots, m \}$, $I \neq \emptyset$, we denote with $\widehat{a}_I $ the element of $S^{m-|I|}R$ given by the symmetric product of $a_i$, with $i \not \in I$. 
\end{notat}
\begin{notat}\label{notat: mmu}Let $n , k \in \mbb{N}$, $n \geq 1$.  Consider $\mu \in p_n(k)$, that is, a partition of $k$ of length $l(\mu) \leq n$. 
Define the integer $m_\mu$ as $0$ if $l(\mu)=1$ and 
$$ m_\mu := \min_{2 \leq i \leq l(\mu)} \mu_i $$
if $l(\mu) \geq 2$. 
In other words $m_\mu = \mu_{l(\mu)}$ if $l(\mu) \geq 2$ and  $m_\mu = 0$ if $l(\mu) = 1$. 
\end{notat}
\begin{theorem}\label{thm: main3}Let $n \in \mbb{N}$, $n \geq 1$. The graded sheaves for the filtration 
$\mc{W}^\mu \tens \mc{D}_A$ on $\mu_* ( S^3 L^{[n]} \tens \mc{D}_A )$, indexed by partitions $\mu \in p_n(3)$, equipped with the reverse lexicographic order, are given by 
$$ \gr^{\W \tens \mc{D}_A }_{\mu} \simeq \mathcal{L}^\mu(-2 m_\mu \Delta) \tens \mc{D}_A \;.$$
\end{theorem}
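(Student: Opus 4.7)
The plan is to deduce the theorem directly from Proposition \ref{pps: shortsequences3}, which provides the two left exact sequences
\begin{gather*}
0 \rTo \W^{2,1} \rTo \W^3 \rTo \mc{L}^3, \\
0 \rTo \W^{1,1,1} \rTo \W^{2,1} \rTo \mc{L}^{2,1}(-2\Delta),
\end{gather*}
together with the identification $\W^{1,1,1} \simeq \mc{L}^{1,1,1}(-2\Delta)$. Since under the reverse lexicographic order the three partitions of $3$ sit in $p_n(3)$ as $(3) <_{\rm rlex} (2,1) <_{\rm rlex} (1,1,1)$, the decreasing filtration $\W^\bullet$ reads $\W^3 \supseteq \W^{2,1} \supseteq \W^{1,1,1} \supseteq 0$, and the values $m_{(3)}=0$, $m_{(2,1)}=m_{(1,1,1)}=1$ match the right-hand terms. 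So everything reduces to showing that the two sequences above are in fact short exact, i.e., that the rightmost maps are surjective. The twisted statement then follows immediately: tensoring by the line bundle $\mc{D}_A$ is exact, and by projection formula (Lemma \ref{lemma: invariantsFDA}) one has $\mu_*(S^3 L^{[n]}\tens \mc{D}_A) \simeq \mu_* S^3 L^{[n]} \tens \mc{D}_A$, so the induced filtration $\W^\bullet \tens \mc{D}_A$ has graded pieces $\mc{L}^\mu(-2 m_\mu\Delta) \tens \mc{D}_A$.

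Surjectivity is a local question on $S^n X$, so it suffices to verify it on a cover by open sets of the form $S^n U$ with $U$ affine and $L$ trivial on $U$, granted by Lemma \ref{lmm: auxU}. After trivializing $L$, local sections of $\mc{L}^\lambda$ become symmetric tensors in $R = \Gamma(U, \FS_X)$ and the operators $\D^0_L, \D^1_L$ take the explicit form computed in Example \ref{ex: local3}. For the first sequence, given a local section $f\tens a_1\cdots a_{n-1}$ of $\mc{L}^3$, I would construct an explicit lift $(f\tens a, g, h) \in \W^3 \subseteq \V^3 \simeq \mc{L}^3 \oplus \mc{L}^{2,1}\oplus \mc{L}^{1,1,1}$ by polarization-type formulas that send $f\tens a$ to natural contractions in the lower strata (modelled on the Danila contraction sketched after Theorem \ref{thm: main2}), and then check via Example \ref{ex: local3} that the residues produced in $\K^0_{(2)}$, $\K^0_{(1)(1)}$ and $\K^1_{(1)}$ simultaneously cancel. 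For the second sequence, given $g_1\tens g_2\tens b$ with $g_1\tens g_2 \in I_\Delta^2$, the vanishing of $\D^{2,1}_{(2)}$ and of $\D^1_{(1)} = -3\, d^1_\Delta$ is automatic from the $I_\Delta^2$-hypothesis (using Lemma \ref{lemma: D1factors}), leaving only the equation $\D^{1,1,1}_{(1)(1)}(h_1 h_2 h_3\tens c) = -\D^{2,1}_{(1)(1)}(g_1\tens g_2\tens b)$ to be solved; expanding $g_1\tens g_2 \in I_\Delta^2$ via Leibniz-style symmetrizations gives a canonical element of $\mc{L}^{1,1,1}$ that does the job.

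The main obstacle is the construction and verification of the lift for the first sequence, since the naive candidate $(f\tens a, 0, 0) \in \V^3$ does not satisfy $\D^0_L = 0$, so genuine correction terms are needed in both $\mc{L}^{2,1}$ and $\mc{L}^{1,1,1}$, and the three operator conditions in $\K^0_{(2)}$, $\K^0_{(1)(1)}$ and $\K^1_{(1)}$ must be made to vanish simultaneously by a single coherent choice. The key mechanism is the same polarization/cancellation principle underlying the Toeplitz-matrix computation of Lemma \ref{lemma: Tnondeg} and Proposition \ref{pps: SjlG} in the $n=2$ case: the binomial coefficients built into the operators $\D^l_L$ conspire with the contraction coefficients to allow explicit cancellation. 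Since $k=3$ admits only three strata, this verification reduces to a short and essentially elementary computation in $R$, which yields the surjectivity and completes the proof.
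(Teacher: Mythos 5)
Your proposal follows the paper's proof essentially verbatim: reduce to the right-exactness of the two left exact sequences of Proposition \ref{pps: shortsequences3}, localize to affine opens $S^nU$ with $L$ trivial via Lemma \ref{lmm: auxU}, and exhibit explicit polarization/contraction-type lifts whose images under $\D^0_{(2)}$, $\D^0_{(1)(1)}$ and $\D^1_{(1)}$ vanish by the local formulas of Example \ref{ex: local3}. The only thing left to do is to actually write down the two lifts — the paper takes $x=\big(3\,y\tens a,\ 2\sum_i y\tens a_i\tens\widehat{a_i}+\sum_i a_i\tens y\tens\widehat{a_i},\ \sum_{ij}[\,y\tens a_i\tens a_j+a_i\tens y\tens a_j+a_i\tens a_j\tens y\,]\tens\widehat{a_{ij}}\big)$ for $\mathcal{L}^3$ and a two-component element lifting $2fg\tens a$ for $\mathcal{L}^{2,1}(-2\Delta)$ — which is exactly the short elementary verification you anticipate.
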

\begin{proof}{\it Step 0.} We will prove the theorem for $n \geq 3$, since for $1 \leq n \leq 2$ the proof is analogous and easier; the result for $n =2$ is actually already provided by theorem \ref{thm: main2}. 
It is sufficient to prove the theorem for $A$, and hence $\mc{D}_A$, trivial. Moreover, it is sufficient to prove that the that the sequences in the statement proposition \ref{pps: shortsequences3} are right exact, that is, that the maps $\W^3 \rTo \mathcal{L}^3$ and $\W^{2,1} \rTo \mathcal{L}^{2,1}(-2\Delta)$ are surjective. 
Analogously to what done in the proof of lemma \ref{lemma: D1factors}, it is sufficient to prove the surjectivity of the same maps over an affine open set of the form $S^nU$, with $U=\Spec(R)$ an affine open set of $X$ over which $L$ is trivial.

{\it Step 1.} Over $S^n U= \Spec(S^n R)$ we can identify coherent sheaves with the modules of their sections; hence, as in example \ref{ex: local3}, $\mathcal{L}^3$ will be identified with $R \tens S^{n-1} R$, $\mathcal{L}^{2,1}$ with 
$R \tens R \tens S^{n-2} R$, and $\mathcal{L}^{1,1,1}$ with $S^{3} R \tens S^{n-3}R$. Moreover $\mathcal{L}^{2,1}(-2 \Delta)$ will be identified with $I_\Delta^2 \tens S^{n-2} R$, where $I_\Delta$ is the ideal of the diagonal in $R \tens R$. 
To prove that the map 
$\W^{2,1} \rTo \mathcal{L}^{2,1}(-2 \Delta)$ is surjective, it is sufficient to show that we can lift any element of the form $y =fg \tens a$
to $\W^{2,1}$, where $f, g \in I_\Delta$ and $a = a_1 \cdots a_{n-2} \in S^{n-2}R$, $a_i \in R$. Write $f = \sum_i s_i \tens t_i$ and $g = \sum_j u_i \tens v_i$, $s_i, t_i , u_j, v_j \in R$, such that $\sum_i s_i t_i =0 \in R$, $\sum_j u_j v_j = 0 \in R$. Then 
$fg= \sum_{ij}s_i u_j \tens t_i v_j$. The element $$x = \Big( 2 \sum_{ij} s_i u_j \tens t_i v_j \tens a, \sum_{ij h} s_i . t_i v_j . u_ja_h \tens \widehat{a_h} \Big)$$obviously projects to $2 y$ via the map $\V^{2,1} \rTo \mathcal{L}^{2,1} $. 
Let's prove that it is in $\W^{2,1}$. Indeed 
$$ \D^0_{(2)} x = 0 $$because $fg \trest_\Delta = 0$. Moreover, 
$$ \D^0_{(1)(1)} x = 2 \sum_{ijh} s_i u_j a_h \tens t_i v_j \tens \widehat{a_h} - 2 \sum_{ijh} s_i u_j a_h \tens t_i v_j \tens \widehat{a_h}$$since any other contraction of $\sum_{ij h} s_i . t_i v_j . u_ja_h$ is zero. Finally 
$$ \D^1_{(1)} x = -3 d^1_\Delta y = -3 d^1_\Delta (fg) \tens a = 0 $$since $fg \in I_\Delta^2$. 

{\it Step 2.} Let's now prove that the map $\W^3 \rTo \mathcal{L}^3$ is surjective. It is sufficient to prove that we can lift any element of the form $y \tens a \in R \tens S^{n-1}R$, where $y \in R$, $a \in S^{n-1}R$, to $\W^3$. Consider the element
$$ x= \Big(3 y \tens a, 2 \sum_i y \tens a_i \tens \widehat{a_i} + \sum_i a_i \tens y \tens \widehat{a_i}, \sum_{ij} [ y \tens a_i \tens a_j  + a_i \tens y \tens a_j + a_i \tens a_j \tens y ] \tens \widehat{a_{ij}} \Big ) \;.$$
The element $x$ projects to 
the element $3 y \tens a$ via the map $\V^3 \rTo \mathcal{L}^3$. We just need to show that $x \in \V^3 \cap \E^2$. 
We have: $$ \D^0_{(2)} x = [  \sum_i -3y a_i +2 y a_i +ya_i] \tens \widehat{a_i} =0 \;.$$
$$\D^0_{(1)(1)} x= -2 \sum_{ij} y a_j \tens a_i \tens \widehat{a_{ij}} - \sum_{ij} a_i a_j \tens y \tens \widehat{a_{ij}} +2 \sum_{ij} y a_j \tens a_i \tens \widehat{a_{ij}} +  \sum_{ij} a_i a_j \tens y \tens \widehat{a_{ij}} =0 \;.$$
Finally
$$\D^1_{(1)} x = 3 \sum_i y d a_i \tens \widehat{a_i} - 2 \sum_{i} [ 2 y da_i +a_i dy] \tens \widehat{a_i} + \sum_i [2 a_i dy + y da_i ] \tens \widehat{a_i} =0 \;.$$

\end{proof}

\subsection{The case $k=4$.}
Fix now $k=4$. Over $S^nX$ we have
\begin{align}  \V^4 = &\; \mathcal{L}^4 \oplus \mathcal{L}^{3,1} \oplus \mathcal{L}^{2,2} \oplus \mathcal{L}^{2,1,1} \oplus \mathcal{L}^{1,1,1,1}  \label{eq: V4}\\
\K^0(L)= & \;\K^0_{(3)} \oplus \K^0_{(2,1)} \oplus \K^0_{(2)(1)} \oplus \K^0_{(1)(2)} \oplus \K^0_{(1)(11)} \label{eq: K14} \\
\K^1(L)= & \;  \K^1_{(2)} \oplus \K^1_{(1)(1)} \notag \\
\K^2(L) = & \; \K^2_{(1)} \notag \;.
\end{align}The operator $\D^0_L$, according the direct sum decompositions (\ref{eq: V4}) and (\ref{eq: K14}) can be written as: 
\begin{equation}\label{eq: matrixD4} \left( \begin{array}{ccccc}
\D^4_{(3)} & \D^{3,1}_{(3)}     &        0                      &    0     & 0\\
0                 & \D^{3,1}_{(2,1)}  & \D^{2,2}_{(2,1)} & 0 & 0 \\
0                 & \D^{3,1}_{(2)(1)} & 0                          & \D^{2,1,1}_{(2)(1)} & 0 \\
0                 & 0                            & \D^{2,2}_{(1)(2)}&\D^{2,1,1}_{(1)(2)} & 0 \\
0                 &     0                        &                  0           &  \D^{2,1,1}_{(1)(11)}& \D^{1,1,1,1}_{(1)(11)}
\end{array}
\right)\end{equation}
We begin by some technical lemmas about the restriction of operators $\D^l_L$ to terms of the bifiltration of the kind $\V^\mu \cap \E^l$. 
\begin{lemma}\label{lemma: restrictionD14}The restriction of the operator $\D^1_{(2)}$ to $\V^{3,1} \cap \E^1$ factors through
$$ \V^{3,1} \cap \E^1 \rTo^r \mathcal{L}^{3,1}(-\Delta) \rTo^{-2 d^1_\Delta} \K^1_{(2)} \;,$$where the map $r$ is induced by the projection $\V^{3,1} \rTo \mathcal{L}^{3,1}$ onto a summand of the direct sum. 
Moreover, the restriction of the same operator 
to $\V^{2,2} \cap \E^1$ is zero. 
\end{lemma}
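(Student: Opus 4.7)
The approach closely mirrors that of lemma \ref{lemma: D1factors} for the case $k=3$. Both assertions concern morphisms of coherent sheaves on $S^nX$, so by lemma \ref{lmm: auxU} I would reduce the verification to an affine open of the form $S^nU$ with $U = \Spec R$ affine in $X$ and $L$ trivial on $U$. Identifying sheaves with their modules of sections via example \ref{ex: local4}, a local section of $\V^{3,1}$ is a sum of atoms $y = (0,\,g_1\otimes g_2\otimes b,\,h_1h_2\otimes c,\,k_1\otimes k_2k_3\otimes d,\,m_1m_2m_3m_4\otimes e)$; local sections of $\V^{2,2}$ have in addition $x_{3,1}=0$.

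Next I would exploit the condition $\D^0_L x = 0$ through the first two rows of the matrix \eqref{eq: matrixD4}. Reducing each invariant operator $\D^0_{(\mu),\emptyset}$ to the $n=2$ case via proposition \ref{pps: formula2} and applying Danila's lemma (in the spirit of example \ref{ex: local3}), one identifies $\D^{3,1}_{(3)}$, $\D^{3,1}_{(2,1)}$ and $\D^{2,2}_{(2,1)}$ with restrictions to the diagonal, up to the combinatorial coefficients produced by convention \ref{conv: sym}. The $\K^0_{(3)}$-row then yields $\sum_i g_{1,i}g_{2,i}\otimes b_i = 0$ -- exactly the condition $x_{3,1}\in\mathcal{L}^{3,1}(-\Delta)$ -- and the $\K^0_{(2,1)}$-row, combined with this, forces $\sum_j h_{1,j}h_{2,j}\otimes c_j = 0$.

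Feeding these identities into the local formula for $\D^1_{(2)}$ from example \ref{ex: local4}, with $x_4=0$,
\[
\D^1_{(2)}(x) \;=\; -2\sum_i g_{1,i}\,dg_{2,i}\otimes b_i \;+\; \sum_j\bigl(h_{1,j}\,dh_{2,j}+h_{2,j}\,dh_{1,j}\bigr)\otimes c_j,
\]
the second summand equals $\sum_j d(h_{1,j}h_{2,j})\otimes c_j = d\bigl(\sum_j h_{1,j}h_{2,j}\otimes c_j\bigr)=0$, while the first equals $-2\,d^1_\Delta(x_{3,1})$: the relation $\sum_i g_{1,i}g_{2,i}\otimes b_i=0$ places $x_{3,1}$ in $I_\Delta\otimes S^{n-2}R$, and the standard isomorphism $I_\Delta/I_\Delta^2\simeq\Omega^1_R$ sending $1\otimes g-g\otimes 1$ to $dg$ identifies $d^1_\Delta(x_{3,1})$ with $\sum_i g_{1,i}\,dg_{2,i}\otimes b_i$. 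This establishes the first factorization. For $x\in\V^{2,2}\cap\E^1$ the same argument applies with $x_{3,1}=0$: the $\K^0_{(2,1)}$-row still forces $\sum_j h_{1,j}h_{2,j}\otimes c_j=0$, so both summands of $\D^1_{(2)}(x)$ vanish.

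The main technical point -- and the only genuine calculation required -- is the identification of $\D^{3,1}_{(3)}$, $\D^{3,1}_{(2,1)}$ and $\D^{2,2}_{(2,1)}$ with explicit diagonal restrictions on the local model, via proposition \ref{pps: formula2} and careful bookkeeping of the combinatorial coefficients. Once those identifications are in hand, the remainder is a short Leibniz-rule manipulation on $\Omega^1_R$.
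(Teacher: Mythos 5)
Your proof is correct and follows essentially the same route as the paper's: reduce to an affine chart $S^nU$ with $L$ trivial, use the lower-triangular form of the matrix (\ref{eq: matrixD4}) to deduce that the $\mathcal{L}^{3,1}$-component lies in $\mathcal{L}^{3,1}(-\Delta)$ and the $\mathcal{L}^{2,2}$-component in $\mathcal{L}^{2,2}(-\Delta)$, then read off $\D^1_{(2)}$ from the local formula of example \ref{ex: local4} and kill the $\mathcal{L}^{2,2}$-contribution by the Leibniz rule. The paper phrases that last cancellation as $d^1_\Delta y=0$ because $\mathcal{L}^{2,2}(-\Delta)=\mathcal{L}^{2,2}(-2\Delta)$, which is the same computation you carry out explicitly.
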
\begin{proof}The proof is very similar to the proof of lemma \ref{lemma: D1factors}, so we will just sketch it. 
Write $\V^{3,1} = \mathcal{L}^{3,1} \oplus \mathcal{L}^{2,2} \oplus \V^{2,1,1}$. Take an element $(x,y, z) \in ( \mathcal{L}^{3,1} \oplus \mathcal{L}^{2,2} \oplus \V^{2,1,1} ) \cap \E^1$. 
Hence we have that $\D^0_L(x,y,z) = 0$ which implies, because of the form of the matrix \ref{eq: matrixD4}, $\D^{3,1}_{(3)} x =0$, which, as seen by definitions, implies in turn $d^0_\Delta x =0$ and hence that $x \in \mathcal{L}^{3,1}(-\Delta)$. 
Notice that $\K^0_{(3)} \simeq \K^0_{(2,1)} $. 
Moreover we  have
$\D^{3,1}_{(2,1)} x + \D^{2,2}_{(2,1)} y = 0$, which becomes $-d^0_\Delta x + d^0_\Delta y =0$, and hence implies $d^0_\Delta y=0$. Therefore $y \in \mathcal{L}^{2,2}(-\Delta) = \mathcal{L}^{2,2}(-2 \Delta)$. 
Now we just have to check that $\D^1_{(2)} \trest_{\V^{3,1} \cap \E^1}$ coincide locally with $-2 d^1_\Delta x  \circ r$; by lemma \ref{lmm: auxU} 
it is sufficient to do this over an open set of the form $S^nU$, with 
$U$ an affine open set of $X$ over which $L$ trivial. In this case, 
the local formula 
in example \ref{ex: local4} gives $$\D^1_{(2)}(x,y,z) = -2d^1_\Delta x + d^1_\Delta y = -2 d^1_\Delta x \;.$$
The statement for $\V^{2,2} \cap \E^1$ now follows taking $x=0$. 
\end{proof}
\begin{lemma}\label{lemma: restrictionD214}The restriction of the operator $\D^0_{(2)(1)} \oplus \D^0_{(1)(2)} $ to $\mathcal{L}^{2,1,1}$:
$$ \D^0_{(2)(1)} \oplus \D^0_{(1)(2)}: \mathcal{L}^{2,1,1} \rTo \K^{0}_{(2)(1)} \oplus \K^0_{(1)(2)} $$
 coincides with 
$d^0_\Delta$. Hence its kernel is $\mathcal{L}^{2,1,1}(-\Delta) \simeq \pi_* (L^{2,1,1} \tens I_{\Delta_{12}} \cap I_{\Delta_{13}}  \cap I_{\Delta_{23}}^2 )^{\Stab_G(2,1,1)}$. 
\end{lemma}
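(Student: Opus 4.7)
The strategy splits the proof into three steps: identify $\D^0_{(2)(1)} \oplus \D^0_{(1)(2)} \trest_{\mathcal{L}^{2,1,1}}$ with $d^0_\Delta$; read off the kernel as $\mathcal{L}^{2,1,1}(-\Delta)$ from its definition; and upgrade the first-order vanishing on $\Delta_{23}$ to second-order by a symmetry argument.

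For the first step, apply proposition \ref{pps: formula2} to each component: one obtains
$\D^0_{(2)(1)} \trest_{\mathcal{L}^{2,1,1}} = v_*(\D^0_{(2)} \boxtimes \id) \circ q_{(1)}$ and
$\D^0_{(1)(2)} \trest_{\mathcal{L}^{2,1,1}} = v_*(\D^0_{(1)} \boxtimes \id) \circ q_{(2)}$,
where the operators $\D^0_\theta$ are simply first-order restrictions to the diagonal in $S^2X$. This is the $\mu=(2,1,1)$ analogue of remark \ref{rmk: restriction1k}: the two $\Stab_G(2,1,1)$-orbits of pairs $I\subseteq\{1,2,3\}$ --- namely $\{\{1,2\},\{1,3\}\}$ with unequal weights (yielding $\K^0_{(2)(1)}$) and $\{\{2,3\}\}$ with equal weights (yielding $\K^0_{(1)(2)}$) --- correspond via Danila's lemma to the two $\Stab_G(2,1,1)$-invariant direct summands in the target of $d^0_\Delta$ of remark \ref{rmk: differential}. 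A local check on an open set of the form $S^n U$ provided by lemma \ref{lmm: auxU}, modulo the canonical identification $(\sigma_A)_*$ of remark \ref{rmk: identification}, verifies that the two operators agree term by term. Once equality is established, the kernel is $\ker d^0_\Delta = \mathcal{L}^{2,1,1}(-\Delta) \simeq \pi_*(L^{2,1,1} \otimes I_{\Delta_{12}} \cap I_{\Delta_{13}} \cap I_{\Delta_{23}})^{\Stab_G(2,1,1)}$ by remark \ref{rmk: differential} and notation \ref{notat: lDelta}.

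The substantive point is then to upgrade $I_{\Delta_{23}}$ to $I_{\Delta_{23}}^2$ under $\Stab_G(2,1,1)$-invariance. The transposition $\tau \in \perm(\{2,3\}) \subseteq \Stab_G(2,1,1)$ preserves $\Delta_{23}$ pointwise and acts as $-1$ on its conormal bundle, while the swap of the tensor factors $L_2 \otimes L_3$ restricted to $\Delta_{23}$ is the identity --- a property specific to line bundles, since for a line bundle $L$ the canonical tensor swap of $L\otimes L$ is trivial. For a $\tau$-invariant local section $f$ of $L^{2,1,1}$, Taylor-expand in normal coordinates $u=x_3-x_2$ as $f=\sum_\alpha f_\alpha(x_1,x_2,x_4,\ldots)\,u^\alpha$ and match the coefficient of $u^\alpha$ with $|\alpha|=1$ in the identity $f(x_1,x_2,x_3,\ldots)=f(x_1,x_3,x_2,\ldots)$: expanding the right-hand side as $\sum_\alpha f_\alpha(x_1,x_2+u,x_4,\ldots)(-u)^\alpha$ and comparing yields a relation of the form $2f_\alpha=\partial^\alpha_{x_2} f_0$. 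Hence $f_0=0$ (i.e.\ $f\in I_{\Delta_{23}}$) forces $f_\alpha=0$ for every $|\alpha|=1$, so that $f\in I_{\Delta_{23}}^2$. Taking $\pi_*^{\Stab_G(2,1,1)}$ delivers the claimed isomorphism.

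The hard part is the last paragraph: the argument genuinely needs that the swap of $L_2\otimes L_3$ on the diagonal is the identity (so that $\tau$-invariance of $f$ translates into a parity condition on its normal-coordinate expansion, and not a twisted version of it), and implicitly uses the invertibility of $2$, so it is an essentially characteristic-zero statement. One should also verify that the two steps Taylor expansion performed --- first along $\Delta_{23}$ and then in $x_2$ to rewrite $f_\alpha(x_1,x_3,\ldots)$ in terms of $f_\alpha(x_1,x_2,\ldots)$ --- are compatible with the local trivialisation of $L^{2,1,1}$ chosen via lemma \ref{lmm: auxU}.
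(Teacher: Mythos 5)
Your proposal is correct. The first part --- identifying $\D^0_{(2)(1)} \oplus \D^0_{(1)(2)}$ restricted to $\mathcal{L}^{2,1,1}$ with $d^0_\Delta$ --- follows essentially the paper's own route: the paper unwinds the definition of $d^0_\Delta$, notes that after taking $\Stab_G(2,1,1)$-invariants the three pairwise diagonals in $\{1,2,3\}$ collapse via Danila's lemma into the two orbits $\{\{1,2\},\{1,3\}\}$ and $\{\{2,3\}\}$, and matches these summands with $\K^0_{(2)(1)}$ and $\K^0_{(1)(2)}$ respectively; your invocation of proposition \ref{pps: formula2} plus a local check achieves the same identification.

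Where you genuinely diverge is the final claim $\mathcal{L}^{2,1,1}(-\Delta) \simeq \pi_*(L^{2,1,1}\otimes I_{\Delta_{12}}\cap I_{\Delta_{13}}\cap I_{\Delta_{23}}^2)^{\Stab_G(2,1,1)}$. The paper's proof is in fact silent on this point: it stops once the two components are identified, leaving the reader to supply the sharpening of $I_{\Delta_{23}}$ to $I_{\Delta_{23}}^2$ from the mechanism of example \ref{ex: 1k}, namely that the transposition $(23)$ acts by $-1$ on $I_{\Delta_{23}}/I_{\Delta_{23}}^2$ and trivially on $L_2\otimes L_3\vert_{\Delta_{23}}$, so the $\perm(\{2,3\})$-invariant part of the degree-one graded piece vanishes and an invariant section lying in $I_{\Delta_{23}}$ automatically lies in $I_{\Delta_{23}}^2$. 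Your Taylor-expansion computation ($2f_\alpha=\partial^\alpha_{x_2}f_0$ for $\vert\alpha\vert=1$) is exactly the coordinate form of this representation-theoretic fact; it is correct, it correctly isolates the two inputs that make it work (triviality of the swap on $L\otimes L\vert_\Delta$ for a line bundle, and invertibility of $2$), and it usefully makes explicit a step the paper takes for granted. The only cosmetic remark is that the equal/unequal-weight dichotomy you use to label the two orbits is precisely the distinction between $\nu_A\neq(h,h)$ and $\nu_A=(h,h)$ that the paper tracks throughout section 2, so your argument is fully consistent with the surrounding machinery.
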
\begin{proof}
The differential $d^0_\Delta$ is defined as 
$$d^0_\Delta : \pi_*(L^{2,1,1})^{\Stab_G(2,1,1)} \rTo
\pi_*(L^{2,1,1} \tens \FS_{X^n}/I_{\Delta_{12}})^{\perm(\{ 4,\dots, n \})} \oplus  \pi_*(L^{2,1,1} \tens \FS_{X^n}/I_{\Delta_{23}})^{\perm(\{ 2,3\}) \times 
 \perm(\{4,\dots, n\})} \;.$$The component 
 $\pi_*(L^{2,1,1})^{\Stab_G(2,1,1)} \rTo \pi_*(L^{2,1,1} \tens \FS_{X^n}/I_{\Delta_{12}})^{\perm(\{4,\dots, n\})} $ is given by 
 the composition $\pi_*(L^{2,1,1})^{\Stab_G(2,1,1)}  \rTo \pi_*(L^{2,1,1}) \rTo \pi_*(L^{2,1,1} \tens \FS_{X^n}/I_{\Delta_{12}})$. The second map is $\pi_*(d^0_{\Delta_2} \boxtimes \id) = \pi_*(D^0_{(2)} \boxtimes \id)$: hence the composition coincides with 
 $\D^0_{(2)(1)}$. Analogously, the component
 \small
 $$\pi_*(L^{2,1,1})^{\Stab_G(2,1,1)} \rTo  \pi_*(L^{2,1,1} \tens \FS_{X^n}/I_{\Delta_{23}})^{\perm(\{2,3\}) \times 
 \perm(\{4,\dots, n\})} \simeq \pi_*(L^{1,1,2} \tens \FS_{X^n}/I_{\Delta_{12}} )^{\perm(\{1,2\}) \times \perm(\{4, \dots, n\})}$$
 \normalsize
 coincides with $\D^0_{(1)(2)}$. 
 \end{proof}
\begin{lemma}\label{lemma: restrictionD114}The restriction of the operator $\D^1_{L}$ to
$\V^{2,1,1} \cap \E^1$ factors through 
$$ \V^{2,1,1} \cap \E^1 \rTo \mathcal{L}^{2,1,1}(-\Delta) \rTo \K^{1}_{(1)(1)} $$
where the first map is induced by the projection $\V^{2,1,1} \rTo \mathcal{L}^{2,1,1}$ onto a summand of the direct sum and 
where the second map is 
the only nonzero component \footnote{the other component $\mathcal{L}^{2,1,1}(-\Delta) \rTo \pi_*(L^{2,1,1} 
\tens I_{\Delta_{23}}/I_{\Delta_{23}}^2)^{\perm(\{2,3\}) \times \perm(\{4, \dots, n\})}$ is zero since 
$\perm(\{2, 3\})$ acts with a sign on $I_{\Delta_{23}}/I_{\Delta_{23}}^2$. } of $-3 d^1_\Delta$. 
\end{lemma}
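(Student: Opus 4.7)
The plan is to follow the pattern already used in lemmas \ref{lemma: D1factors} and \ref{lemma: restrictionD14}. Write $\V^{2,1,1} = \mathcal{L}^{2,1,1} \oplus \mathcal{L}^{1,1,1,1}$ and take $(x,z) \in \V^{2,1,1} \cap \E^1$, with $x \in \mathcal{L}^{2,1,1}$ and $z \in \mathcal{L}^{1,1,1,1}$. Since the matrix of $\D^0_L$ in (\ref{eq: matrixD4}) is block lower-triangular with respect to the decomposition (\ref{eq: V4}), the vanishing of $\D^0_L(x,z)$ forces in particular $\D^{2,1,1}_{(2)(1)}x = 0$ and $\D^{2,1,1}_{(1)(2)}x = 0$, so by lemma \ref{lemma: restrictionD214} one has $x \in \mathcal{L}^{2,1,1}(-\Delta)$. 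Moreover, in the reverse lexicographic order on $p_n(4)$ we have $(2,1,1) <_{\rm rlex} (2,2)$, hence $\V^{2,1,1} \subset \V^{2,2}$; by lemma \ref{lemma: restrictionD14} the component $\D^1_{(2)}$ then vanishes identically on $\V^{2,1,1} \cap \E^1$. So the whole content of the lemma reduces to computing $\D^1_{(1)(1)}$ on this subsheaf.

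By lemma \ref{lmm: auxU} it is enough to check the claimed factorization locally on an affine open set of the form $S^n U$, where $U = \Spec(R)$ is affine and $L$ is trivial on $U$. Using the explicit local description of example \ref{ex: local4}, a generator of $\V^{2,1,1}$ over $S^n U$ is of the form $(k \tens k_2 k_3 \tens d,\; m_1 \cdots m_4 \tens e)$, and the formula there reads
$$ \D^1_{(1)(1)}(x,z) = -2k\bigl[dk_2 \tens k_3 + dk_3 \tens k_2\bigr]\tens d + \bigl[k_2\,dk \tens k_3 + k_3\,dk \tens k_2\bigr]\tens d. $$
The $\mathcal{L}^{1,1,1,1}$-component $z$ thus contributes nothing, so $\D^1_{(1)(1)}$ factors through the projection $r\colon \V^{2,1,1}\cap \E^1 \to \mathcal{L}^{2,1,1}(-\Delta)$. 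Next I would identify $d^1_\Delta$ in this setting: since $\mu = (2,1,1)$ has length $3$, the target of $d^1_\Delta$ is the $\Stab_G(2,1,1)$-invariants of the three pieces indexed by $I = \{1,2\},\{1,3\},\{2,3\}$; the swap $(23) \in \Stab_G(2,1,1)$ identifies the $I=\{1,2\}$ and $I=\{1,3\}$ summands and acts by $-1$ on $(\Omega^1_X)_{\Delta_{23}}$ (recalling that $I_\Delta/I_\Delta^2 \simeq \Omega^1_X$ carries the sign representation of $\perm_2$), so the $I=\{2,3\}$ summand is killed by invariance. Only the component $I = \{1,2\}$ survives, valued in $\K^1_{(1)(1)} \simeq v_*((\Omega^1_X \tens L^3)_\Delta \boxtimes \mathcal{L}^{(1)})$.

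What remains is the combinatorial comparison, which is the main obstacle: showing that, on sections of $\mathcal{L}^{2,1,1}(-\Delta)$, the local expression above coincides with $-3$ times this surviving component of $d^1_\Delta$. To do this, I would expand a generic local section of $\mathcal{L}^{2,1,1}$ satisfying $\D^{2,1,1}_{(2)(1)}x = 0 = \D^{2,1,1}_{(1)(2)}x$, which translates into the pointwise identities $k k_2 \tens k_3 + k k_3 \tens k_2 \equiv 0$ modulo $I_{\Delta_{12}}$ (and symmetrically for $\Delta_{13}$), plus a vanishing to order two along $\Delta_{23}$. Differentiating these identities once across $\Delta_{12}$ and combining the two contributions from $I=\{1,2\}$ and $I=\{1,3\}$ via the swap $(23)$ should produce exactly the combination $-2k\,dk_2\tens k_3 -2k\,dk_3\tens k_2 + k_2\,dk\tens k_3 + k_3\,dk\tens k_2$, multiplied by a constant that works out to $-3$ by counting the multiplicities introduced by the symmetrizations in convention \ref{conv: sym}. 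This constant is entirely analogous to the $-3$ of lemma \ref{lemma: D1factors} and the $-2$ of lemma \ref{lemma: restrictionD14}, and once checked on generators the lemma follows by linearity and the standard gluing argument used throughout the section.
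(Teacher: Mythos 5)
Your proposal follows essentially the same route as the paper's proof: reduce to an affine chart $S^nU$ with $L$ trivial, apply the local formula of example \ref{ex: local4}, observe that the $\mathcal{L}^{1,1,1,1}$-component contributes nothing, and use the vanishing of the section along the diagonal to identify the result with $-3$ times the surviving component of $d^1_\Delta$. Your preliminary observations (that the projection of $\V^{2,1,1}\cap\E^1$ lands in $\mathcal{L}^{2,1,1}(-\Delta)$ via lemma \ref{lemma: restrictionD214}, and that $\D^1_{(2)}$ vanishes there) are left implicit in the paper but are correct and worth making explicit.

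One correction to your account of the constant: the $-3$ does not come from ``counting the multiplicities introduced by the symmetrizations''. For a section $x = a \tens b_1 b_2 \tens c$ of $\mathcal{L}^{2,1,1}(-\Delta)$ the vanishing along $\Delta_{12}$ reads $ab_1 \tens b_2 + ab_2\tens b_1 = 0$; differentiating gives $a\,db_1\tens b_2 + a\,db_2\tens b_1 = -(b_1\,da\tens b_2 + b_2\,da\tens b_1)$, and substituting this into the local expression $-2(a\,db_1\tens b_2 + a\,db_2\tens b_1)\tens c + (b_1\,da\tens b_2 + b_2\,da\tens b_1)\tens c$ turns it into $(-2-1)A = -3A$, where $A = (a\,db_1\tens b_2 + a\,db_2\tens b_1)\tens c$ is exactly the nonzero component of $d^1_\Delta x$. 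This is the same mechanism that produces the $-3$ in lemma \ref{lemma: D1factors}, as you anticipate; so the only thing missing from your write-up is this two-line substitution.
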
\begin{proof}By lemma \ref{lmm: auxU}, 
it is sufficient to prove the statement over  an open set of the form $S^nU$, where $U= \Spec(R)$ is an affine open set of $X$ over which $L$ is trivial. Identifying coherent sheaves with the modules of their sections, we have that, if $
x = a \tens b_1 b_2 \tens c \in R \tens S^2 R \tens S^{n-3} R$ is a global section of $\mathcal{L}^{2,1,1}(-\Delta)$, we have, by the local formula in example \ref{ex: local4}: 
\begin{align}  \D^1_{(1)(1)} x = & \; -2 [ a db_1 \tens b_2 + a db_2 \tens b_1 ] \tens c + [ b_1 da \tens b_2 + b_2 da \tens b_1 ] \tens c  \notag \\
= & \; [-2  a db_1 \tens b_2   -2 ab_2 \tens b_1] \tens c  
+[ b_1 da \tens b_2  + b_2 da \tens b_1  ] \tens c   \notag\\
= & \; -3 adb_1 \tens b_2 \tens c - 3 a db_2 \tens b_1 \tens c \label{eq: expression}
\end{align}since $ab_1 \tens b_2 + ab_2 \tens b_1 = 0$ and hence 
$$ 0 = d(ab_1) \tens b_2 + d(ab_2) \tens b_1 = (a db_1 \tens b_2 + a db_2 \tens b_1) + (b_1 da \tens b_2 + b_2 da \tens b_1 )
\;.$$
 Now it is immediate to see that 
(\ref{eq: expression}) is the expression of the only nonzero component of  $-3 d^1_\Delta$. 
\end{proof}
\begin{lemma}\label{lemma: restrictionD24}The restriction of the operator $\D^2_L$ to $\V^{2,2} \cap \E^2$ factors through
$$ \V^{2,2} \cap \E^2 \rTo \mathcal{L}^{2,2}(-2 \Delta) \rTo^{-3 d^2_\Delta} \K^2_{(1)}  \;,$$where the first map is induced by the projection $\V^{2,2} \rTo \mathcal{L}^{2,2}$ onto a summand of the direct sum. 
\end{lemma}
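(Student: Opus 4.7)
The approach will closely mirror the pattern established in lemmas \ref{lemma: D1factors}, \ref{lemma: restrictionD14}, and \ref{lemma: restrictionD114}. Since all operators in play are $\FS_{S^nX}$-linear morphisms of coherent sheaves, by lemma \ref{lmm: auxU} the verification reduces to the affine setting $S^nU = \Spec(S^nR)$, with $U = \Spec R$ an affine open of $X$ over which $L$ trivializes; in this setting coherent sheaves will be identified with their modules of sections as done in examples \ref{ex: local3} and \ref{ex: local4}.

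First I would characterize the elements of $\V^{2,2}\cap \E^2$. Write a local section as $x=(y,z,w)\in \mathcal{L}^{2,2}\oplus \mathcal{L}^{2,1,1}\oplus \mathcal{L}^{1,1,1,1}$. Inspecting the matrix (\ref{eq: matrixD4}), the condition $\D^0_L(x)=0$ forces in particular $\D^{2,2}_{(2,1)}y=0$, which, as in the proof of lemma \ref{lemma: restrictionD14}, amounts to $d^0_\Delta y=0$, i.e.\ $y\in \mathcal{L}^{2,2}(-\Delta)$. Now the stabilizer $\Stab_G(2,2)$ contains the factor $\perm(\{1,2\})$, which acts with a sign on $I_\Delta/I_\Delta^2\simeq \Omega^1_X|_\Delta$, so exactly the phenomenon recorded at the end of the proof of lemma \ref{lemma: restrictionD14} (and analogous to example \ref{ex: 1k}) yields $\mathcal{L}^{2,2}(-\Delta)=\mathcal{L}^{2,2}(-2\Delta)$. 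Already the membership in $\E^1$ therefore delivers $y\in \mathcal{L}^{2,2}(-2\Delta)$, so in particular $d^2_\Delta y$ is well-defined.

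Next, I would invoke the explicit local expression for $\D^2_{(1)(0)}$ recorded in example \ref{ex: local4}:
\[
\D^2_{(1)}(x) \;=\; -\sum_i f\,d^2 a_i \tens \widehat{a_i} + 3 g_1 d^2 g_2 \tens b - 3\bigl[h_1 d^2 h_2+h_2 d^2 h_1\bigr]\tens c + g_2 d^2 g_1 \tens b .
\]
Two observations are crucial: the components in $\mathcal{L}^{2,1,1}$ and $\mathcal{L}^{1,1,1,1}$ do not appear in this formula, and for $x\in \V^{2,2}$ we have $f=0$ and $g_1\tens g_2=0$, leaving only the $\mathcal{L}^{2,2}$-contribution $-3[h_1 d^2 h_2+h_2 d^2 h_1]\tens c$. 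This already shows that $\D^2_L$ restricted to $\V^{2,2}\cap \E^2$ factors through the projection $\V^{2,2}\twoheadrightarrow \mathcal{L}^{2,2}$, and by the first step, further through $\mathcal{L}^{2,2}(-2\Delta)$.

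The final, and most delicate, step is to identify the resulting expression with $-3\, d^2_\Delta y$. By the definition recalled in remark \ref{rmk: differential}, $d^2_\Delta y$ is the class of $h_1h_2\in I_\Delta^2$ in $I_\Delta^2/I_\Delta^3\simeq (S^2\Omega^1_X)_\Delta$, tensored with $c$. Working in the second-order partial jet ring $\FS_{X^n}/I_{\Delta_{12}}^3$ and expanding the product $h_1h_2$ via the Leibniz rule, using that the $\E^0_L$-condition $h_1h_2\in I_\Delta^2$ forces $h_1dh_2+h_2dh_1\equiv 0 \bmod I_\Delta$, one obtains precisely the class $[h_1 d^2 h_2+h_2 d^2 h_1]$ modulo $I_\Delta^3$. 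This purely local Leibniz calculation is where the numerical constant $-3$ appears, exactly as in the parallel manipulations of lemmas \ref{lemma: D1factors} and \ref{lemma: restrictionD114}, and is the main technical obstacle; everything else is formal bookkeeping. Since all constructions commute with trivializations of $L$ on open sets from lemma \ref{lmm: auxU}, the local identity globalizes to the claimed factorization on $S^nX$.
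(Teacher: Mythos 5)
Your proof is correct and follows essentially the same route as the paper's: reduce to an affine chart via lemma \ref{lmm: auxU}, observe that membership in $\E^1$ already forces the $\mathcal{L}^{2,2}$-component into $\mathcal{L}^{2,2}(-\Delta)\simeq\mathcal{L}^{2,2}(-2\Delta)$, and then read off the factorization and the constant $-3$ from the local formula in example \ref{ex: local4}. The only quibble is that your last step is more labored than necessary — since $y$ lies in $I_\Delta^2$, the map $d^2_\Delta$ is just the quotient $I_\Delta^2\to I_\Delta^2/I_\Delta^3$, and remark \ref{rmk: affine} gives $d^2_\Delta(h_1\tens h_2)=h_1d^2h_2$ directly, so the identification with $-3[h_1d^2h_2+h_2d^2h_1]\tens c$ is immediate without any Leibniz expansion (and note the individual summands $h_1.h_2$ need not lie in $I_\Delta^2$, only their sum does, so the argument should be kept linear in $y$).
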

\begin{proof}By lemma \ref{lemma: restrictionD14} it is clear that if $u=(x, y, z)$ is a local section of $\mathcal{L}^{2,2} \oplus \mathcal{L}^{2,1,1} \oplus \mathcal{L}^{1,1,1,1} = \V^{2,2}$ and $u$ is in $\V^{2,2} \cap \E^2$, then $u \in \V^{2,2} \cap \E^1$ and 
$\D^0_{(2,1)} x =0$ which implies that $x$ is in $\mathcal{L}^{2,2}(-2 \Delta)$. Then we can compare the two morphisms of the statement; the comparison is done via the local formula for $\D^2_L$ for $X$ a complex affine surface and $L$  trivial --- see example \ref{ex: local4} --- and analogously to the proofs of lemmas \ref{lemma: D1factors}, \ref{lemma: restrictionD14} and \ref{lemma: restrictionD114}. 
\end{proof}

\begin{pps}\label{pps: shortsequences4}We have the following short left exact sequences on $S^n X$: 
\begin{gather}
0 \rTo \W^{3,1}  \rTo \W^4  \rTo \mathcal{L}^4 \label{eq: sequence4}\\
0 \rTo \W^{2,2} \rTo \W^{3,1}  \rTo \mathcal{L}^{3,1}(-2\Delta) \label{eq: sequence31}\\
0 \rTo \W^{2,1,1}  \rTo \W^{2,2}  \rTo \mathcal{L}^{2,2}(-4 \Delta) \label{eq: sequence22}\\
0 \rTo \W^{1,1,1,1}  \rTo \W^{2,1,1}  \rTo \mathcal{L}^{2,1,1}(-2\Delta) \label{eq: sequence211}\;.
\end{gather}Moreover the term $\W^{1,1,1,1}$ is isomorphic to $\mathcal{L}^{1,1,1,1}(- 2 \Delta)$. 
\end{pps}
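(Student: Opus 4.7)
My plan is to prove each of the four short left exact sequences, together with the description of $\W^{1,1,1,1}$, by closely following the template of Proposition \ref{pps: shortsequences3}. For each partition $\mu \in p_n(4)$ with reverse-lexicographic successor $\mu'$, I would begin with the split short exact sequence $0 \to \V^{\mu'} \to \V^\mu \to \mathcal{L}^\mu \to 0$ and successively take kernels of the operators $\D^0_L$, $\D^1_L$, $\D^2_L$ in commutative diagrams. The lower-triangular structure of the matrix (\ref{eq: matrixD4}) with respect to the reverse-lexicographic decomposition of $\V^4$ ensures that at each level one obtains a commutative diagram of the form
\begin{diagram}
0 & \rTo & \V^{\mu'} \cap \E^l & \rTo & \V^\mu \cap \E^l & \rTo & F^l(\mu) \\
& & \dTo & & \dTo^{\D^l_L} & & \dTo^{\phi_l} \\
0 & \rTo & \K^l(L)' & \rTo & \K^l(L) & \rTo & C_l(\mu) & \rTo & 0
\end{diagram}
with $F^l(\mu)$ a subsheaf of $\mathcal{L}^\mu$ carrying successive diagonal vanishing and $\phi_l$ provided by the lemmas of this section.

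The $\D^0_L$-step is done observing that, in the $\mathcal{L}^\mu$-column of (\ref{eq: matrixD4}), every nonzero entry coincides locally with a partial-diagonal restriction $d^0_\Delta$; for $\mu = (2,1,1)$ this is the content of Lemma \ref{lemma: restrictionD214}, and for $\mu \in \{4, (3,1), (2,2), (1,1,1,1)\}$ the analogous identification is immediate from Definition \ref{def: D}, as in the proof of Proposition \ref{pps: shortsequences3}. Taking kernels produces $F^1(\mu) = \mathcal{L}^\mu(-\Delta)$ (and $F^1(4) = \mathcal{L}^4$). For $\mu \in \{(2,2), (1,1,1,1)\}$ the stabilizer $\Stab_G(\mu)$ contains a $\perm_2$ that acts by $-1$ on $I_\Delta/I_\Delta^2$, and the parity argument of Example \ref{ex: 1k} upgrades $\mathcal{L}^\mu(-\Delta)$ to $\mathcal{L}^\mu(-2\Delta)$ for free. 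The $\D^1_L$-step is then handled for $\mu = (3,1)$ by Lemma \ref{lemma: restrictionD14}, which factors $\D^1_{(2)}|_{\V^{3,1} \cap \E^1}$ through $-2d^1_\Delta : \mathcal{L}^{3,1}(-\Delta) \to \K^1_{(2)}$ and kills $\D^1_{(2)}|_{\V^{2,2} \cap \E^1}$; for $\mu = (2,1,1)$ by Lemma \ref{lemma: restrictionD114}, which factors $\D^1_L|_{\V^{2,1,1} \cap \E^1}$ through $-3d^1_\Delta : \mathcal{L}^{2,1,1}(-\Delta) \to \K^1_{(1)(1)}$. In both cases this upgrades $F^1$ to $\mathcal{L}^\mu(-2\Delta)$. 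For $\mu = (2,2)$ the operator $\D^1_L$ vanishes on $\V^{2,2} \cap \E^1$ (Lemma \ref{lemma: restrictionD14} again), so nothing changes at this step.

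Only $\mu = (2,2)$ requires the $\D^2_L$-step: Lemma \ref{lemma: restrictionD24} factors $\D^2_L|_{\V^{2,2} \cap \E^2}$ through $-3d^2_\Delta : \mathcal{L}^{2,2}(-2\Delta) \to \K^2_{(1)}$, so the kernel is $\mathcal{L}^{2,2}(-3\Delta)$. Since now $\perm_2$ acts on $I_\Delta^3/I_\Delta^4 \simeq S^3 \Omega^1_X$ with sign $(-1)^3 = -1$, the same parity argument as in Example \ref{ex: 1k} identifies $\mathcal{L}^{2,2}(-3\Delta) \simeq \mathcal{L}^{2,2}(-4\Delta)$, completing sequence (\ref{eq: sequence22}). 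Finally, the last assertion $\W^{1,1,1,1} \simeq \mathcal{L}^{1,1,1,1}(-2\Delta)$ is just the Step 1 output for $\mu = (1,1,1,1)$: the higher operators $\D^1_L, \D^2_L$ all automatically vanish on $\mathcal{L}^{1,1,1,1}$ because their factorizations in Lemmas \ref{lemma: restrictionD14}, \ref{lemma: restrictionD114}, \ref{lemma: restrictionD24} route through projections $\V^{2,1,1} \to \mathcal{L}^{2,1,1}$ and $\V^{2,2} \to \mathcal{L}^{2,2}$, each of which annihilates the $\mathcal{L}^{1,1,1,1}$ summand.

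The main obstacle I anticipate is verifying that, in sequence (\ref{eq: sequence31}), the $\D^1_{(1)(1)}$-component of $\D^1_L$ is also controlled by the $\mathcal{L}^{3,1}(-\Delta)$-projection, not only $\D^1_{(2)}$. Lemma \ref{lemma: restrictionD14} only addresses $\D^1_{(2)}$; a parallel local computation on an open set of the form $S^n U$ (as in Lemma \ref{lmm: auxU}), using the explicit formula of Example \ref{ex: local4}, is needed to show that the $\D^1_{(1)(1)}$-contribution coming from the $\mathcal{L}^{3,1}$-factor is already absorbed by the coupling constraint $\D^{3,1}_{(2,1)} b + \D^{2,2}_{(2,1)} c = 0$ imposed by $\D^0_L = 0$, so that the composite diagram still commutes with right column $(-2d^1_\Delta, 0) : \mathcal{L}^{3,1}(-\Delta) \to \K^1_{(2)} \oplus \K^1_{(1)(1)}$. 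An entirely analogous local verification will be needed for sequence (\ref{eq: sequence22}) at the $\D^2_L$-step, to confirm that the $\mathcal{L}^{3,1}(-2\Delta)$-contribution to $\D^2_L$ on $\V^{3,1} \cap \E^2$ vanishes modulo the $\V^{2,2}$-part. These bookkeeping computations, though tedious, are routine applications of Example \ref{ex: local4}.
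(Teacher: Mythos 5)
Your overall strategy --- successive kernel diagrams exploiting the lower-triangular shape of (\ref{eq: matrixD4}) together with Lemmas \ref{lemma: restrictionD14}, \ref{lemma: restrictionD214}, \ref{lemma: restrictionD114}, \ref{lemma: restrictionD24} --- is exactly the paper's, and most of your steps match: the $\D^0_L$-stage, the parity upgrades $\mathcal{L}^{2,2}(-\Delta)\simeq\mathcal{L}^{2,2}(-2\Delta)$, $\mathcal{L}^{2,2}(-3\Delta)\simeq\mathcal{L}^{2,2}(-4\Delta)$, $\mathcal{L}^{1,1,1,1}(-\Delta)\simeq\mathcal{L}^{1,1,1,1}(-2\Delta)$, and the identification of $\W^{1,1,1,1}$. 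The genuine problem is in what you single out as the ``main obstacle'' for (\ref{eq: sequence31}). The statement you propose to verify there --- that on $\V^{3,1}\cap\E^1$ the full operator $\D^1_L$ factors through the projection to $\mathcal{L}^{3,1}(-\Delta)$ with right column $(-2d^1_\Delta,0)$ into $\K^1_{(2)}\oplus\K^1_{(1)(1)}$ --- is false, and no local computation will rescue it. Indeed $\V^{2,1,1}\cap\E^1\subseteq\V^{3,1}\cap\E^1$ projects to zero in $\mathcal{L}^{3,1}$, yet by Lemma \ref{lemma: restrictionD114} the component $\D^1_{(1)(1)}$ restricted to $\V^{2,1,1}\cap\E^1$ is $-3d^1_\Delta$ on the $\mathcal{L}^{2,1,1}(-\Delta)$-quotient, which is not the zero map. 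The coupling constraint you invoke, $\D^{3,1}_{(2,1)}b+\D^{2,2}_{(2,1)}c=0$, relates the $\mathcal{L}^{3,1}$- and $\mathcal{L}^{2,2}$-components via $\K^0_{(2,1)}$ and has no bearing on the $\K^1_{(1)(1)}$-component, which couples $\mathcal{L}^{3,1}$ with $\mathcal{L}^{2,1,1}$.

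The fix, which is what the paper does, is to not ask the $(1)(1)$-component to factor at all: take as bottom row the short exact sequence $0\to\K^1_{(1)(1)}\to\K^1(L)\to\K^1_{(2)}\to 0$, as right column only $-2d^1_\Delta:\mathcal{L}^{3,1}(-\Delta)\to\K^1_{(2)}$, and as left column $\D^1_{(1)(1)}$ restricted to $\V^{2,2}\cap\E^1$. The right square commutes by the first half of Lemma \ref{lemma: restrictionD14}; the left square commutes because $\D^1_{(2)}$ vanishes on $\V^{2,2}\cap\E^1$ (the second half of the same lemma), so $\D^1_L$ carries $\V^{2,2}\cap\E^1$ into $\K^1_{(1)(1)}$. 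Taking kernels then yields $0\to\V^{2,2}\cap\E^2\to\V^{3,1}\cap\E^2\to\mathcal{L}^{3,1}(-2\Delta)$ with no further computation. Your analogous worry at the $\D^2_L$-step of (\ref{eq: sequence22}) is also unfounded: there one works inside $\V^{2,2}\cap\E^2$, which contains no $\mathcal{L}^{3,1}$-summand, and by the local formula of Example \ref{ex: local4} the operator $\D^2_{(1)}$ involves only the $\mathcal{L}^4$-, $\mathcal{L}^{3,1}$- and $\mathcal{L}^{2,2}$-components, so Lemma \ref{lemma: restrictionD24} already supplies everything needed.
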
\begin{proof}To get the first left exact sequence, one proceeds analogously as in the case $k=3$, that is, 
taking recursively kernels of vertical maps in the diagrams
\begin{diagram} 0 & \rTo & \V^{3,1} \cap \E^l & \rTo & \V^4 \cap \E^l & \rTo & \mathcal{L}^4 \\
 &           &  \dTo^{ \D^l_L \trest_{\V^{3,1} \cap \E^l} } &       & \dTo^{\D^l_L} &  & \dTo^{0} & \\
0 & \rTo & \K^l(L) & \rTo^{\simeq} & \K^l(L) & \rTo & 0 &  
\end{diagram}for $l=0, 1, 2$. 

As for the second, consider the term $\V^{3,1}$: for convenience, we split the term 
$\K^0(L)$ as $\K^0_{\leq (2,1)}  \oplus \K^0_{(3)}  $, where $\K^0_{\leq (2,1)}  = \K^0_{(2,1)}  \oplus \K^0_{(2)(1)}  \oplus \K^0_{(1)(2)}  \oplus \K^0_{(1)(11)} $. The matrix of $\D^0_L$ with respect to the direct sum decompositions 
$\V^{3,1} = \mathcal{L}^{3,1} \oplus \V^{2,2}$ and $\K^0(L) = \K^0_{(3)}  \oplus \K^0_{\leq (2,1)} $ is lower triangular, as seen in
(\ref{eq: matrixD4}). Hence we can form the commutative diagram \sloppy
\begin{diagram}
0 & \rTo & \V^{2,2}  & \rTo & \V^{3,1} & \rTo & \mathcal{L}^{3,1} & \rTo & 0  \\
&           &  \dTo^{\D^0_L \trest_{\V^{2,2}}} &       & \dTo^{\D^0_L} &  & \dTo^{\D^{3,1}_{(3)}} &  \\
0 & \rTo & \K^0_{\leq (2,1)}  & \rTo & K^0(L) & \rTo & \K^0_{(3)} & \rTo & 0
\end{diagram}which yields, taking kernels of vertical maps and noting that $\D^{3,1}_{(3)}$ coincides with $d^0_\Delta$, the left exact sequence $0 \rTo \V^{2,2} \cap \E^1 \rTo \V^{3,1} \cap \E^1 \rTo \mathcal{L}^{3,1}(-\Delta) $. Lemma \ref{lemma: restrictionD14} immediately gives the diagram
\begin{diagram}
0 & \rTo & \V^{2,2} \cap \E^1 & \rTo & \V^{3,1} \cap \E^1 & \rTo & \mathcal{L}^{3,1}(-\Delta)  \\
 &           &  \dTo^{\D^1_{(1)(1)} }&       & \dTo^{\D^1_L} &  & \dTo^{-2 d^1_\Delta} &  \\
0 & \rTo & \K^1_{(1)(1)}  & \rTo & \K^1(L) & \rTo & \K^1_{(2)} & \rTo & 0
\end{diagram}from which, taking kernels of vertical maps, we get the left exact sequence 
$ 0 \rTo \V^{2,2} \cap \E^2 \rTo \V^{3,1} \cap \E^2 \rTo \mathcal{L}^{3,1}(-2 \Delta) $. Finally, the commutative diagram 
\begin{diagram}
0 & \rTo & \V^{2,2} \cap \E^2 & \rTo & \V^{3,1} \cap \E^2 & \rTo & \mathcal{L}^{3,1}(-2\Delta)  \\
 &           &  \dTo^{\D^2_L} &       & \dTo^{\D^2_L} &  & \dTo^{0} &  \\
0 & \rTo & \K^2(L)  & \rTo^\simeq & \K^2(L) & \rTo & 0& 
\end{diagram}yields, taking kernels of vertical maps, the second left exact sequence of the statement. 

Let's now work at the third sequence. We split the term $\K^0_{\leq (2,1)} $ as $
\K^0_{(2,1)}  \oplus \K^0_{\leq (2)(1)} $ where $\K^0_{\leq (2)(1)} = \K^0_{(2)(1)} \oplus \K^0_{(1)(2)}  \oplus \K^0_{(1)(11)} $. Since the matrix of the restriction of $\D^0_L$ to $\V^{2,2}$, with respect to the 
direct sum decompositions $\V^{2,2} = \mathcal{L}^{2,2} \oplus \V^{2,1,1}$ and $\K^0_{\leq (2,1)}  = \K^0_{(2,1)} \oplus \K^0_{\leq (2)(1)}  $ is lower triangular --- see (\ref{eq: matrixD4})~--- we can consider the commutative diagram
\begin{diagram}0 & \rTo & \V^{2,1,1}  & \rTo & \V^{2,2} & \rTo & \mathcal{L}^{2,2} & \rTo & 0  \\
 &           &  \dTo^{\D^0_L \trest_{\V^{2,1,1}} }&       & \dTo^{\D^0_L } &  & \dTo^{\D^0_{(2,1)}} &  & \\
0 & \rTo & \K^0_{\leq (2)(1)}  & \rTo & \K^0_{\leq (2,1)} & \rTo & \K^0_{(2,1)} & \rTo & 0 \;.
\end{diagram}Taking kernels of vertical maps and noting that the restriction of $\D^0_{(2,1)}$ to $\mathcal{L}^{2,2}$ coincides with $d^0_\Delta$, we get the left exact sequence $0 \rTo \V^{2,1,1} \cap \E^1  \rTo \V^{2,2} \cap \E^1 \rTo \mathcal{L}^{2,2}(-\Delta)$. Note that $\mathcal{L}^{2,2}(-\Delta) \simeq \mathcal{L}^{2,2}(-2 \Delta)$. Lemma \ref{lemma: restrictionD14} allows to build the commutative diagram
\begin{diagram}0 & \rTo & \V^{2,1,1} \cap \E^1 & \rTo & \V^{2,2} \cap \E^1& \rTo & \mathcal{L}^{2,2}(-2\Delta)  \\
 &           &  \dTo^{\D^1_{L} \trest_{\V^{2,1,1} \cap \E^1} }&       & \dTo^{\D^1_L \trest_{\V^{2,2} \cap \E^1}} &  & \dTo^{0}   \\
0 & \rTo & \K^1_{(1)(1)}  & \rTo^{\simeq} & \K^1_{(1)(1)} & \rTo & 0 
\end{diagram}from which we get, taking kernels, the exact sequence $0 \rTo \V^{2,1,1} \cap \E^2 \rTo \V^{2,2} \cap \E^2 \rTo \mathcal{L}^{2,2}(-2 \Delta)$. Finally, since the restriction of $\D^2_L$ to $\V^{2,1,1} \cap \E^2$ is zero, 
and thanks to lemma \ref{lemma: restrictionD24}, we get the commutative diagram
\begin{diagram}0 & \rTo & \V^{2,1,1} \cap \E^2 & \rTo & \V^{2,2} \cap \E^2& \rTo & \mathcal{L}^{2,2}(-2\Delta)  & & \\
 &           &  \dTo^0&       & \dTo^{\D^2_L \trest_{\V^{2,2} \cap \E^2}} &  & \dTo^{-3 d^2_{\Delta}}  & &  \\
0 & \rTo & 0 & \rTo & \K^2(L) & \rTo^\simeq & \K^2(L) &  \rTo & 0 \;.
\end{diagram}Taking kernels of vertical maps we get the third left exact sequence in the statement. 

For the fourth sequence in the statement, we can consider the commutative diagram
\begin{diagram}0 & \rTo & \mathcal{L}^{1,1,1, 1} & \rTo & \V^{2,1,1} & \rTo & \mathcal{L}^{2,1,1}  & \rTo & 0 \\
 &           &  \dTo^{\D^{1,1,1,1}_{(1)(11)}}&       & \dTo^{\D^0_L \trest_{\V^{2,1,1}}} &  & \dTo_{\D^{2,1,1}_{(2)(1)} \oplus \D^{2,1,1}_{(1)(2)}}  & &  \\
0 & \rTo & \K^0_{(1)(11)}  & \rTo & \K^0_{\leq (2)(1) } & \rTo & \K^0_{(2)(1)}  \oplus \K^0_{(1)(2)}  &  \rTo & 0 
\end{diagram}since the matrix of the restriction of $\D^0_L$ to $\V^{2,1,1}$, with respect to the direct sum decompositions $\V^{2,1,1} = \mathcal{L}^{2,1,1} \oplus \mathcal{L}^{1,1,1,1}$ and $\K^0_{\leq (2)(1)}  =  ( \K^0_{(2)(1)}  \oplus \K^0_{(1)(2)} ) \oplus \K^0_{(1)(11)} $ is lower triangular --- see (\ref{eq: matrixD4}). Taking kernels of vertical maps, we get, using lemma \ref{lemma: restrictionD214} and remark \ref{rmk: restriction1k}, the left exact sequence $0 \rTo \mathcal{L}^{1,1,1,1}(-\Delta) \rTo \V^{2,1,1} \cap \E^1 \rTo \mathcal{L}^{2,1,1}(-\Delta) $. Note now that the restriction of $\D^1_L$ to $\mathcal{L}^{1,1,1,1}(-\Delta)$ is zero. Hence, by lemma \ref{lemma: restrictionD114} we can form the commutative diagram
 \begin{diagram}0 & \rTo & \mathcal{L}^{1,1,1, 1}(-\Delta)  & \rTo & \V^{2,1,1} \cap \E^1 & \rTo & \mathcal{L}^{2,1,1}(-\Delta)  \\
 &           &  \dTo^{0}&       & \dTo^{\D^1_L \trest_{ \V^{2,1,1} \cap \E^1}} &  & \dTo^{-3 d^1_\Delta}  & &  \\
0 & \rTo & 0 & \rTo & \K^1_{(1)(1) }   & \rTo^\simeq &  \K^1_{(1)(1) }  &  \rTo & 0 \;.
\end{diagram}Taking kernels of vertical maps in the last diagram yields the fourth left exact sequence in the statement, since the restriction of $\D^2_{L}$ to $\V^{2,1,1} \cap \E^2$ is zero. 
\end{proof}
The following fact will be needed in the proof of the main theorem \ref{thm: main4}. 
\begin{lemma}\label{lemma: ideal4inva}Let $R$ be a commutative $\mbb{C}$-algebra. Let $I_\Delta$ the ideal of the diagonal in $R \tens R$.
Then in $S^2 R$ we have: $$( I_{\Delta}^4 )^{\perm_{2}} = (I_{\Delta}^2)^{\perm_2}(I_{\Delta}^2)^{\perm_2} \;, $$where the group $\perm_2$ operates on $R \tens R$ exchanging the factors. 
\end{lemma}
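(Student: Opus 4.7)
The plan is to prove the two inclusions separately. The inclusion $(I_\Delta^2)^{\perm_2} \cdot (I_\Delta^2)^{\perm_2} \subseteq (I_\Delta^4)^{\perm_2}$ is immediate: the product of two $\perm_2$-invariant elements of $I_\Delta^2$ lies both in $I_\Delta^4$ and in $S^2R$.

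The key for the reverse inclusion is a convenient set of generators for $I_\Delta$, namely the \emph{anti-invariant} elements $\delta_r := r \otimes 1 - 1 \otimes r$ with $r \in R$. That these generate $I_\Delta$ as an $(R \otimes R)$-module follows from the identity $\sum_i a_i \otimes b_i = -\sum_i (a_i \otimes 1)\,\delta_{b_i}$, valid whenever $\sum_i a_i b_i = 0$. The involution $\tau$ acts as $\tau(\delta_r) = -\delta_r$, so any product $\delta_{r_1}\delta_{r_2}\delta_{r_3}\delta_{r_4}$ of four such generators is already $\tau$-invariant.

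Given $x \in (I_\Delta^4)^{\perm_2}$, I would write
$$x = \sum_i a_i\, \delta_{r_{i,1}}\delta_{r_{i,2}}\delta_{r_{i,3}}\delta_{r_{i,4}}, \qquad a_i \in R \otimes R,$$
decompose $a_i = a_i^+ + a_i^-$ according to the splitting $R \otimes R = S^2R \oplus \Lambda^2 R$, and observe that, since each four-fold product is $\tau$-invariant, the part of $x$ with anti-invariant coefficients is $\tau$-anti-invariant. By the invariance of $x$ this anti-invariant part must vanish, so we may assume all $a_i$ lie in $S^2R$. Each term then splits as
$$a_i\, \delta_{r_{i,1}}\delta_{r_{i,2}}\delta_{r_{i,3}}\delta_{r_{i,4}} = \bigl(a_i\, \delta_{r_{i,1}}\delta_{r_{i,2}}\bigr)\cdot\bigl(\delta_{r_{i,3}}\delta_{r_{i,4}}\bigr),$$
where both factors are products of an invariant scalar and an even number of $\delta$'s, hence lie in $(I_\Delta^2)^{\perm_2}$.

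No serious obstacle is expected: the entire argument rests on the anti-invariance of the generators $\delta_r$ and on the decomposition $R \otimes R = S^2 R \oplus \Lambda^2 R$ into $\perm_2$-isotypic components. The only minor subtlety — that replacing coefficients by their symmetric parts does not alter membership in $I_\Delta^4$ — is automatic because the four-fold products of $\delta$'s are themselves $\perm_2$-invariant.
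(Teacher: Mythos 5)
Your proof is correct and takes essentially the same route as the paper's: both arguments rest on the fact that $I_\Delta$ is generated over $R\otimes R$ by the anti-invariant elements $\delta_r = r\otimes 1 - 1\otimes r$, so that a four-fold product of generators is already $\perm_2$-invariant and the coefficient in $R\otimes R$ may be replaced by its symmetric part. The paper carries out this replacement by explicitly averaging each term via $\tfrac12(u+\tau u)$, while you use the isotypic splitting $R\otimes R\simeq S^2R\oplus\Lambda^2R$ to kill the anti-invariant coefficients; these are equivalent, and your explicit factorization into two elements of $(I_\Delta^2)^{\perm_2}$ is exactly the (tacit) conclusion of the paper's computation.
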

\begin{proof}An element in $I_{\Delta}^4$ can be written as a sum of products $xyzw$, were $x, y, z, w \in I_{\Delta}$. 
Let $x= \sum_i a_i \tens \alpha_i $, $y = \sum_j b_j \tens \beta_j$, $z = \sum_h c_h \tens \gamma_h $, $w = \sum_l d_l \tens \delta_l$. Then $xyzw = \sum_{ijhl} a_i b_j c_h d_l \tens \alpha_i \beta_j \gamma_h \delta_l $. We have that 
$$ xyzw = \sum_{ijhl}( a_i b_j c_h d_l \tens 1)(1\tens \alpha_i - \alpha_i \tens 1)(1 \tens \beta_j - \beta_j \tens 1)(1 \tens \gamma_h - \gamma_h \tens 1)(1 \tens \delta_l - \delta_l \tens 1) \;.$$
If $xyzw \in (I_{\Delta}^4)^{\perm_2}$ then $xyzw$ is equal to its invariant part, and hence
$$ xyzw = \frac{1}{2}   \sum_{ijhl}( a_i b_j c_h d_l . 1)(1\tens \alpha_i - \alpha_i \tens 1)(1 \tens \beta_j - \beta_j \tens 1)(1 \tens \gamma_h - \gamma_h \tens 1)(1 \tens \delta_l - \delta_l \tens 1)$$since 
any term of the form $1 \tens f - f \tens 1$ is anti-invariant and the term $( a_i b_j c_h d_l  . 1)$ is invariant. 
\end{proof}
\begin{remark}\label{rmk: ideal2antiinva}Similarly, one can easily prove that the anti-invariant 
part $(I^2_\Delta)^{-}$ of $I^2_\Delta$, that is, $(I_\Delta^2)^{-} := \{ f \in I_\Delta^2 \: | \: \sigma(f) = -f \}$ --- where $\sigma$ is the transposition $(12)$ --- is contained in $I^3_\Delta$. 
\end{remark}
Recalling notation \ref{notat: mmu}, we can now prove the following
\begin{theorem}\label{thm: main4}Let $n \in \mbb{N}$, $n \geq 1$. The graded sheaves for the filtration 
$\W^\mu\tens \mc{D}_A$ on $\mu_* ( S^4 L^{[n]} \tens \mc{D}_A )$,  indexed by partitions $\mu \in p_n(4)$,  equipped with the reverse lexicographic order, are given by 
$$ \gr^{\W \tens \mc{D}_A }_{\mu} \simeq \mathcal{L}^\mu(-2 m_\mu \Delta) \tens \mc{D}_A\;.$$ 
\end{theorem}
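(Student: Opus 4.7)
\smallskip
\noindent\textbf{Proof proposal for Theorem \ref{thm: main4}.} The plan is to mimic the structure of the proof of Theorem \ref{thm: main3}: by projection formula we reduce to the case $A=\mathcal{O}_X$, and by Proposition \ref{pps: shortsequences4} we are already given the four left exact sequences whose graded terms are exactly $\mathcal{L}^\mu(-2 m_\mu \Delta)$ for $\mu\in\{4,(3,1),(2,2),(2,1,1)\}$, together with the identification $\mathcal{W}^{1,1,1,1}\simeq \mathcal{L}^{1,1,1,1}(-2\Delta)$. So the whole problem reduces to proving that each of the four boundary maps
\[
\mathcal{W}^{4}\twoheadrightarrow\mathcal{L}^4,\quad \mathcal{W}^{3,1}\twoheadrightarrow\mathcal{L}^{3,1}(-2\Delta),\quad \mathcal{W}^{2,2}\twoheadrightarrow\mathcal{L}^{2,2}(-4\Delta),\quad \mathcal{W}^{2,1,1}\twoheadrightarrow\mathcal{L}^{2,1,1}(-2\Delta)
\]
is surjective. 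Surjectivity is local on $S^nX$; by Lemma \ref{lmm: auxU} it is enough to check it over affine open sets $S^n U$ with $U=\Spec R$ and $L|_U$ trivial, where we identify sheaves with their $S^nR$-modules of sections using Convention \ref{conv: sym} and the explicit local formulas of Examples \ref{ex: local3}--\ref{ex: local4}.

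Concretely, I would pick generators of the target on the right and construct liftings by hand, verifying vanishing under $\mathcal{D}^0_L,\mathcal{D}^1_L,\mathcal{D}^2_L$ via Examples \ref{ex: local3}--\ref{ex: local4}. For $\mathcal{W}^4\twoheadrightarrow\mathcal{L}^4$ one lifts a generator $y\tens a_1\cdots a_{n-1}$ by putting symmetrised copies of $y$ into each slot of $\mathcal{L}^{3,1},\mathcal{L}^{2,2},\mathcal{L}^{2,1,1},\mathcal{L}^{1,1,1,1}$ with coefficients chosen so that the columns of the matrix \eqref{eq: matrixD4} annihilate the lift (an exact analogue of Step~2 in the proof of Theorem \ref{thm: main3}). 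The cases $\mathcal{W}^{3,1}\twoheadrightarrow\mathcal{L}^{3,1}(-2\Delta)$ and $\mathcal{W}^{2,1,1}\twoheadrightarrow\mathcal{L}^{2,1,1}(-2\Delta)$ are handled similarly: for a generator of the form $fg\tens b\tens a$, write $f=\sum s_i\tens t_i$, $g=\sum u_j\tens v_j$ with $\sum s_it_i=\sum u_jv_j=0$, construct an obvious lift in $\mathcal{L}^{3,1}$ (resp.\ $\mathcal{L}^{2,1,1}$), then repair the components along $\mathcal{L}^{2,1,1}$, $\mathcal{L}^{1,1,1,1}$ by adding symmetrisations that kill the remaining diagonal-restriction components $\mathcal{D}^{2,1,1}_{(2)(1)}$, $\mathcal{D}^{2,1,1}_{(1)(2)}$, $\mathcal{D}^{1,1,1,1}_{(1)(11)}$; the higher operators $\mathcal{D}^1_L,\mathcal{D}^2_L$ then vanish automatically by Lemmas \ref{lemma: restrictionD14}, \ref{lemma: restrictionD114} and the vanishing order $I_\Delta^{2}$ of $fg$.

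The main obstacle is the case $\mathcal{W}^{2,2}\twoheadrightarrow\mathcal{L}^{2,2}(-4\Delta)$. Here a typical section of $\mathcal{L}^{2,2}(-4\Delta)$ has the form $h\tens c$ with $h\in (I_\Delta^4)^{\perm_2}$, which is strictly larger than $(I_\Delta^4)^{\perm_2}\cap (I_\Delta^2\cdot I_\Delta^2)$ one would naively produce from a product-of-two-squares lift. This is exactly what Lemma \ref{lemma: ideal4inva} is designed for: it asserts that $(I_\Delta^4)^{\perm_2}=(I_\Delta^2)^{\perm_2}(I_\Delta^2)^{\perm_2}$, so that every such $h$ can be written as $\sum fg$ with $f,g\in(I_\Delta^2)^{\perm_2}$. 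Each product $fg$ is then the leading piece of an explicit lift in $\mathcal{V}^{2,2}$ whose components in $\mathcal{L}^{2,1,1}$ and $\mathcal{L}^{1,1,1,1}$ are chosen, as in the other cases, to kill the columns of $\mathcal{D}^0_L$ indexed by $\mathcal{K}^0_{\le(2,1)}$; the vanishing of $\mathcal{D}^1_L$ follows from Lemmas \ref{lemma: restrictionD14}--\ref{lemma: restrictionD114}, and the vanishing of $\mathcal{D}^2_L$ from Lemma \ref{lemma: restrictionD24} together with $fg\in I_\Delta^4$. The remark \ref{rmk: ideal2antiinva} (with $(I_\Delta^2)^{-}\subseteq I_\Delta^3$) is what makes the lift $\mathcal{D}^0_{(2,1)}$-closed and not merely close to closed. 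Once these four surjectivities are established, the graded pieces read off directly from Proposition \ref{pps: shortsequences4}, and tensoring with $\mathcal{D}_A$ via Lemma \ref{lemma: invariantsFDA} gives the full statement.
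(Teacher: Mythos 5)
Your overall architecture does match the paper's: reduce to the four surjectivity statements supplied by Proposition \ref{pps: shortsequences4}, localize to affine charts $S^nU$ via Lemma \ref{lmm: auxU}, and exhibit explicit lifts checked against the local formulas for $\D^0_L,\D^1_L,\D^2_L$. Your treatment of $\W^4\twoheadrightarrow\mathcal{L}^4$ and your identification of Lemma \ref{lemma: ideal4inva} as the key to $\W^{2,2}\twoheadrightarrow\mathcal{L}^{2,2}(-4\Delta)$ are both on target. But there is a genuine gap in the $(2,1,1)$ case: a local section of $\mathcal{L}^{2,1,1}(-2\Delta)$ is an element of $(I_{\Delta_{12}}^2\cap I_{\Delta_{13}}^2\cap I_{\Delta_{23}}^2)^{\perm(\{2,3\})}\tens S^{n-3}R$ sitting inside $R\tens S^2R\tens S^{n-3}R$ --- it must vanish to order two along all three pairwise diagonals among the first three factors --- and it is not of the form $fg\tens b\tens a$ with $f=\sum s_i\tens t_i$, $g=\sum u_j\tens v_j$ in $I_\Delta\subseteq R\tens R$. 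What is actually needed is the factorization $y=fg+\sigma(fg)$ with $f,g\in I_{\Delta_3}=I_{\Delta_{12}}\cap I_{\Delta_{13}}\cap I_{\Delta_{23}}\subseteq R^{\tens 3}$, and the fact that such products generate the target rests on the nontrivial equality $\bigcap_{i<j}I_{\Delta_{ij}}^2=\bigl(\bigcap_{i<j}I_{\Delta_{ij}}\bigr)^2$ of Remark \ref{rmk: bigdiagonal}, i.e.\ on Haiman's polygraph theorem. Without this input your proposed generators do not span $\mathcal{L}^{2,1,1}(-2\Delta)$, so the surjectivity argument for this graded piece does not get off the ground.

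A second, smaller gap concerns $\W^{3,1}\twoheadrightarrow\mathcal{L}^{3,1}(-2\Delta)$: a single uniform lift of a product $fg$ with $f,g\in I_\Delta$ cannot work, because the $\mathcal{L}^{2,2}$-slot of any lift lives in $S^2R\tens S^{n-2}R$ and can only carry the $\perm_2$-invariant part of $y=fg$. The paper therefore splits $y=y^++y^-$: the invariant part is lifted with $\mathcal{L}^{2,2}$-component proportional to $y^+\tens a$, while the anti-invariant part is lifted with vanishing $\mathcal{L}^{2,2}$-component, using $(I_\Delta^2)^-\subseteq I_\Delta^3$ (Remark \ref{rmk: ideal2antiinva}) to factor $y^-$ as a triple product $\alpha\beta\gamma$ with $\alpha,\beta,\gamma\in I_\Delta$. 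You do cite that remark, but you place it in the $(2,2)$ case (where the paper instead relies on Lemma \ref{lemma: ideal4inva}); as written, your plan stalls precisely where the invariant/anti-invariant decomposition is required.
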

\begin{proof}{\it Step 0}. We will prove the theorem for $n \geq 4$, since for $1 \leq n \leq 3$ the proof is analogous and easier. 
It is sufficient to consider the case in which $A$ is trivial. Then, exactly as we did in step 0 of the proof of theorem \ref{thm: main3}, by lemma \ref{lmm: auxU}, we reduce the proof to showing 
the surjectivity of the maps 
on the right in (\ref{eq: sequence4}), (\ref{eq: sequence31}), (\ref{eq: sequence22}), (\ref{eq: sequence211}) 
over an open set of the form $S^n U = \Spec(S^n R)$, where 
$U = \Spec(R) $ is an affine open set of $X$ over which $L$ is trivial. Over $S^n U$ we can identify coherent sheaves with the modules of their sections; hence as in example \ref{ex: local4}, $\mathcal{L}^4$ will be identified with $R \tens S^{n-1} R$, $\mathcal{L}^{3,1}$ with 
$R \tens R \tens S^{n-2} R$, $\mathcal{L}^{2,2}$ with $S^2R \tens S^{n-2}R$, $\mathcal{L}^{2,1,1}$ with $R \tens S^2 R \tens S^{n-3}R$ and finally $\mathcal{L}^{1,1,1, 1}$ with $S^{4} R \tens S^{n-4}R$. Denoting with $I_{\Delta}$ the ideal of the diagonal in $R \tens R$, the sheaf $\mathcal{L}^{3,1}(-2 \Delta)$ will be identified with $I_\Delta^2 \tens S^{n-2} R$ and $\mathcal{L}^{2,2}(-4\Delta)$ with $(I_{\Delta}^4)^{\perm_2} \tens S^{n-2}R$. 

{\it Step 1: surjectivity of the map $\W^4 \rTo \mathcal{L}^4$}. It is sufficient to prove that we can lift any element of the form $y \tens a \in R \tens S^{n-1}R$ to $\W^4$, where $y \in R$  and  $a =a_1 \cdots a_{n-1} \in S^{n-1}R$, $a_i \in R$. Consider the element
\begin{multline*}x =\Big( 4 y \tens a,  \sum_i  3 y \tens 
a_i \tens \widehat{a_i} + a_i \tens y \tens \widehat{a_i}, \sum_i 2 y \tens a_i \tens \widehat{a_i} + 2 a_i \tens y \tens \widehat{a_i}, \\ \sum_{ij} 2 y \tens a_i \tens a_j \tens \widehat{a_{ij}} + a_i \tens y \tens a_j \tens \widehat{a_{ij}} + a_i \tens a_j \tens y \tens \widehat{a_{ij}}, \\ \sum_{ijk}y \tens a_i \tens a_j \tens a_k \tens \widehat{a_{ijk} } + a_i \tens y \tens a_j \tens a_k \tens \widehat{a_{ijk}} + a_i \tens a_j \tens y \tens a_k \tens \widehat{a_{ijk}} + a_i \tens a_j \tens a_k \tens y \tens \widehat{a_{ijk}} \Big) \end{multline*}Let's prove that this element is in $\W^4 = \V^4 \cap \E^3$. It is in $\E^1$ since
 \begin{align*} \D^0_{(3)} x =& \: \sum_i (-4 y a_i + 3 y a_i + y a_i) \tens \widehat{a_i} =0 \;. \\
  \D^0_{(2,1)} x = & \: \sum_i ( -3 y a_i - a_i y + 2 y a_i + 2 ya_i) \tens \widehat{a_i} =0 \\
  \D^0_{(2)(1)}x = & \: \sum_{ij} \big( -3 y a_j \tens a_i - a_i a_j \tens y + 2 y a_j \tens a_i + a_i a_j \tens y + a_i y \tens a_j) \tens \widehat{a_{ij}} = 0 \\
  \D^0_{(1)(2)} x = & \: \sum_{ij} ( -2 y a_j \tens a_i -  2 a_i a_j \tens y + 2 a_i a_j \tens y + y a_j \tens a_i + y a_j \tens a_i) \tens \widehat{a_{ij}} =0 \\
  \D^0_{(1)(11)} x= & \: \sum_{ijk} \big( -2 y a_k \tens a_i \tens a_j - a_i a_k \tens y \tens a_j - a_i a_k \tens a_j \tens y \\ & \hspace{1.5cm}  +2 y a_i \tens a_j \tens a_k +  a_i a_j \tens y \tens a_k + a_i a_j \tens a_k \tens y \big) \tens \widehat{a_{ijk}}  =0
\end{align*}  
To show that $x \in \E^2$, we compute: 
\begin{align*} \D^1_{(2)} x = & \: \sum_i \Big[ 4 y da_i - 2 (3 y da_i + a_i dy) + 2 y d a_i + 2 a_i d y \Big] \tens \widehat{a_i} = 0 \\ 
\D^1_{(1)(1)} x = & \: \sum_{ij} \Big[ 3 y da_j \tens a_i + a_i da_j \tens y - 2 (2 y d a_i \tens a_j + a_i d y \tens a_j+a_i d a_j \tens y ) + \\ & \hspace{1cm} 2 a_i d y \tens a_j + y d a_i \tens a_j + a_j d a_i \tens y \Big] \tens \widehat{a_{ij}} = 0 
\end{align*}Finally in $\K^2_{(1)}  $ we have: 
$$ \D^2_{(1)} x = \sum_{i} \Big[ -4 y d^2 a_i + 3 (3 y d^2 a_i + a_i d^2 y ) - 3(2 y d^2 a_i + 2 a_i d^2 y) +3 a_i d^2 y + y d^2 a_i \Big] \tens \widehat{a_i} = 0 \;.$$

{\it Step 2: surjectivity of the map $\W^{3,1} \rTo \mathcal{L}^{3,1}(-2\Delta)$.} Over the open set $S^nU = \Spec(S^nR)$ the sheaf $\mathcal{L}^{3,1}(-2 \Delta)$ can be identified with the module $(I_\Delta^2) \tens S^{n-2} R$. It is then sufficient to show that it is possible to lift to $\W^{3,1}$ elements of the form $y \tens a$, where $y \in I_\Delta^2 \subseteq R \tens R$ and $a = a_1 \dots a_{n-2} \in S^{n-2}R$. We can split $y$ in its invariant and anti-invariant part, for the action of $\perm_2$ exchanging the two factors in $R \tens R$: 
$ y = y^+ + y^-$, where $y^+ = (1/2)(y + \sigma(y))$, $y^- = (1/2)(y - \sigma(y))$, and where $\sigma$ is the transposition $(12)$. Hence it is sufficient to show that it is possible to lift to $\W^{3,1}$ each of the summands $y^+ \tens a , y^- \tens a$.

Let's begin with $y^+ \tens a$. The element $y^+ $ belongs to $(I^2_\Delta)^{\perm_2}$ and hence can be written as the invariant part of an element $2 \sum_i s_i \tens t_i \in I^2_\Delta$. Hence $y^+ = \sum_i s_i . t_j$, with 
$\sum_i s_i t_i =0$ and \begin{equation}\label{eq: sidti=0}\sum_i s_i dt_i = \sum_i t_i d s_i =0\end{equation}and hence 
$d_\Delta^1 y^+ = \sum_i s_i d t_i + t_i ds_i =0$, since $\sum_i s_i \tens t_i \in  I^2_\Delta$. Then the element 
$$ x = (6 y^+ \tens a, 8 y^+ \tens a, S, T) \in \V^{3,1}$$where 
\begin{align*}
S = & \; 4 \sum_{ij} s_i \tens 
t_i . a_j \tens \widehat{a_j} + 4 \sum_{ij} t_i \tens s_i . a_j \tens \widehat{a_j} + 2 \sum_{ij} a_j \tens s_i .t_i \tens \widehat{a_j}
\in R \tens S^2 R \tens S^{n-3}R\\
T = & \; \sum_{ijl} s_i . t_i . a_j . a_l \tens \widehat{a_{jl}} \in S^{4}R \tens S^{n-4}R
\end{align*}is in  $\W^{3,1}$ and lifts the element $6 y^+ \tens a$. Indeed $\D^0_{(3)} x =0 = \D^0_{(2,1)} x =0$ since $y^+ \in I_\Delta^2$. 
We have to check
\begin{align*}
\D^0_{(2)(1)} x = & \; - 6 \sum_{ij} ( s_i a_j \tens t_i  +  t_i a_j \tens s_i ) \tens \widehat{a_j}  \\&  \hspace{1.4cm}  +  \sum_{ij} ( 4 s_i a_j \tens t_i + 4 t_i a_j \tens s_j  + 2s_i a_j \tens t_j +2 t_i a_j \tens s_i )  \tens \widehat{a_j} =0 \;,
\end{align*}and 
\begin{align*}\D^0_{(1)(2)} x = & - 8 \sum_{ij} ( s_i a_j \tens t_i + t_i a_j \tens s_i ) \tens \widehat{a_j}   + 8 \sum_{ij} (  t_i a_j \tens s_i + s_i a_j \tens t_i ) \tens \widehat{a_j} = 0 \;.
\end{align*}As for the operator $\D^1_{(2)}$, we have immediately $\D^1_{(2)} x =0$, since $x \in \V^{3,1} \cap \E^1$
and since, by lemma  \ref{lemma: restrictionD14}, $$ \D^1_{(2)} x = -2 d^1_\Delta ( 6y^+ \tens a) = 0 \;,$$since $y^+ \tens a \in \mathcal{L}^{3,1}(-2 \Delta)$. We still have to check that: 
\begin{align*} \D^1_{(1)(1)} x = & \; 6 \sum_{ij} ( s_i da_j \tens t_i  + 
t_i d a_j  \tens s_i ) \tens \widehat{a_j}  \\ & \; - 2 
 \sum_{ij} ( 4 s_i d t_i \tens a_j  + 4 s_i da_j \tens t_i  + 4 t_i 
d s_i \tens a_j  \\ & \hspace{2cm}+ 4 t_i d a_j \tens s_i  +  2 a_j d s_i \tens t_i  + 2 a_j d t_i \tens s_i ) 
\tens \widehat{a_j} 
  \\ & \qquad + \sum_{ij} 
 \left ( 4 t_i d s_i \tens a_j + 4 a_j d s_i \tens t_i + 4 
s_i d t_i \tens a_j  \right. \\ & \hspace{2.2cm} \left. + 4 a_j d t_i \tens s_i + 
2 s_i d a_j \tens t_i + 2 t_i d a_j \tens s_i \right ) \tens \widehat{a_j} =0 \;,
\end{align*}because of (\ref{eq: sidti=0}). Finally the operator $\D^2_{(1)}$ vanishes on $x$, since 
\begin{align*} \D^2_{(1)}x =3 \cdot 6 \sum_{ij} (s_i d^2 t_i + t_i d^2 s_i ) \tens a 
 - 3 \cdot 8 \sum_{ij} (s_i d^2 t_i + t_i d^2 s_i ) \tens a 
+ 6 \sum_{ij} (s_i d^2 t_i + t_i d^2 s_i ) \tens a  =0 \;.
\end{align*}

Let's now lift the element $y^- \tens a$. By remark \ref{rmk: ideal2antiinva}, we can write $y^-$ as a product $\alpha \beta \gamma$, 
$\alpha, \beta, \gamma \in I_\Delta$. Writing $\alpha= \sum_i s_i \tens t_i$, $\beta = \sum_j u_j \tens v_j$, $\gamma = \sum_h z_h \tens w_h$, we have $y^- = \sum_{ijh} s_i u_j z_h \tens t_i v_j w_h$. Consider the element 
$$ x = (6 y^- \tens a, 0, S, T) \in \V^{3,1}$$where 
\begin{align*}
S = & \;  2 \sum_{ijhl} \big( s_i u_j  \tens t_i v_j w_h . z_h a_l + s_i z_h \tens t_i v_j w_h . u_j a_l + u_j z_h a_l \tens t_i v_j w_h . s_i  \big) \tens \widehat{a_l} \in R \tens S^2 R \tens S^{n-2} R\\
T =  & \; \sum_{ijhlm} s_i . t_i v_j  w_h . u_j a_l . z_h a_m \tens \widehat{a_{lm}}  \in S^4 R \tens S^{n-4}R \;.
\end{align*}Then $x \in \W^{3,1}$ and $x$ lifts the element $6 y^- \tens a$. Indeed we have that 
$\D^0_{(3)} x  = 0  = \D^0_{(2,1)} x  $ since $y^- \in I_\Delta^3$. Moreover 
\begin{align*} \D^0_{(2)(1)} x =  & \; - 6 \sum_{ijhl} s_i u_j z_h a_l \tens t_i v_j w_h \tens \widehat{a_l}  
+ 2 \sum_{ijhl} s_i u_j z_h a_l \tens t_i v_j w_h \tens \widehat{a_l} \\
& \; \hspace{1.4cm}+ 2 \sum_{ijhl} s_i u_j z_h a_l \tens t_i v_j w_h \tens \widehat{a_l} 
+ 2 \sum_{ijhl} s_i u_j z_h a_l \tens t_i v_j w_h \tens \widehat{a_l} 
  = 0 \;. \\  \D^0_{(1)(2)} x =  & \: 4   \sum_{ijl} \sum_h (z_h w_h) t_i v_j a_l \tens s_i u_j  \\ & \hspace{1cm} \;  + 4 \sum_{ihl} \sum_j (u_j v_j) t_i w_h a_l \tens s_i z_h 
   +   4 \sum_{jlh} \sum_i (s_i t_i) v_j w_h \tens u_j z_h  a_l  \tens \widehat{a_l} = 0 \\
 \D^0_{(1)(11)} x = & \, -2 \sum_{ijhlm} \Big ( 
s_i u_j a_m \tens t_i v_j w_h . z_h a_l + s_i z_h a_m \tens t_i v_j w_h . u_j a_l + u_j z_h a_l a_m \tens t_i v_j w_h . s_i  \Big) \tens \widehat{a_{lm}} \\ &\hspace{0.6cm}  +2 \sum_{ijlhm} \Big (  s_i u_j a_l \tens t_i v_j w_h . z_h a_m + s_i z_h a_m \tens t_i v_j w_h . u_j a_l +  u_j z_h a_l a_m \tens t_i v_j w_h . s_i 
\Big) \tens \widehat{a_{lm}} \\ = & \; 0  \;.
\end{align*}
As for the operator $\D^1_L$, the component $\D^1_{(2)} x$ is immediately zero by lemma \ref{lemma: restrictionD14}. 
For the component $\D^1_{(1)(1)} x $ we have: 
\begin{align*}\D^1_{(1)(1)} x = & \: 6 \sum_{ijhl} s_i u_j z_h d a_l \tens t_i v_j w_h \tens \widehat{a_l} - 4 \sum_{ijhl} \big ( s_i u_j d (t_i v_j w_h ) \tens z_h a_l + 
s_i u_j d(z_h a_l) \tens t_i v_j w_h \big ) \tens  \widehat{a_l} + \\
& \: \hspace{0.5cm}- 4 \sum_{ijhl} \big( s_i z_h d (t_i v_j w_h) \tens u_j a_l  + s_i z_h d(u_j a_l) \tens t_i v_j w_h \big) \tens \widehat{a_l} \\
& \: \hspace{0.5cm} - 4 \sum_{ijhl} \big(  u_j z_h a_l d (t_i v_j w_h) \tens s_i +  u_j z_h a_l d s_i \tens t_i v_j w_h \big ) \tens \widehat{a_l}  \\
& \: \hspace{0.5cm} + 2\sum_{ijhl} \big( 
t_i v_j w_h d(s_i u_j) \tens z_h a_l + 
z_h a_l d (s_i u_j) \tens t_i v_j w_h \tens \widehat{a_l }  \\ 
& \: \hspace{0.5cm} + 2\sum_{ijhl} \big( t_i v_j w_h d(s_i z_h) \tens u_j a_l + u_j a_l d (s_i z_h) \tens t_i v_j w_h ) \tens \widehat{a_l} \\
& \: \hspace{0.5cm} + 2\sum_{ijhl} \big( t_i v_j w_h d(u_j z_h a_l ) \tens s_i + 
s_i d (u_j z_h a_l) \tens t_i v_j w_h 
\big) \tens \widehat{a_l}  \\
 = & \: \sum_{ijhl} \Big( 
6 s_i u_j z_h d a_l - 4s_i u_j d(z_h a_l) -4s_i z_h d(u_j a_l) -4   u_j z_h a_l d s_i \\ & \hspace{1cm} +  2z_h a_l d (s_i u_j) + 2u_j a_l d (s_i z_h) + 2s_i d (u_j z_h a_l) \Big)
\tens t_i v_j w_h \tens \widehat{a_l}  =0 \;.
\end{align*}As for the operator $\D^2_{(1)}$, we have immediately that $\D^2_{(1)} x = 0$ since 
$$ \D^2_{(2)} x = 3 \cdot 6 \sum_{ijh} s_i u_j z_h d^2 (t_i v_j w_h) \tens a = 18 d^2_\Delta (y^- \tens a) = 0 $$since $y^-$ was chosen in
$I^3_\Delta$.

{\it Step 3: surjectivity of the map $\W^{2,2} \rTo \mathcal{L}^{2,2}(-4 \Delta)$.} Over the open set $S^nU = \Spec(S^nR)$ the sheaf $\mathcal{L}^{2,2}(-4 \Delta)$ can be identified with the module $(I_\Delta^4)^{\perm_2} \tens S^{n-2} R$;  by lemma \ref{lemma: ideal4inva}
$(I_\Delta^4)^{\perm_2} = (I_\Delta^2)^{\perm_2} (I_\Delta^2)^{\perm_2}$. 
It is hence sufficient to lift 
any element of the form $y_1 y_2 \tens a$, with $y_i \in (I_\Delta^2)^{\perm_2}$, $a =a_1 \dots a_{n-2} \in S^{n-2}R$, $a_i \in R$. Since they are $\perm_2$-invariants, we can write $y_1 = \sum_i s_i.t_i$, $y_2 = \sum_j u_j .v_j$. The product  $y_1 y_2$ will then be $y_1 y_2 = \sum_{ij} (s_i u_j . t_i v_j + s_i v_j . t_i u_j)$. Consider the element 
$$ x = ( 4y_1 y_2 \tens a, S, T) \;,$$where 
\begin{align*} S = & \: 2 \sum_{ijl} [s_i u_j \tens (a_l t_i). v_j + 
t_i u_j \tens (a_l s_i).v_j + s_i v_j \tens (a_l t_i). u_j + 
t_i v_j \tens (a_l s_i). u_j] \tens \widehat{a_l} \in R \tens S^2 R \tens S^{n-3}R \\
T = & \: \sum_{ijlh}  (a_h s_i). (a_l t_i). u_j.v_j \tens \widehat{a_{hl}} \in S^{4} R \tens S^{n-4}R \;.
\end{align*}Note that $S$ is in  $  [ I_{\Delta_{12}}^2 \cap I^2_{\Delta_{13}} ] ^{\perm(\{2,3\})} \tens S^{n-3}R$, since the starting elements $y_i$ have been chosen in $I_\Delta^2$. Then $x$ is in $\W^{2,2}$ and it lifts $4 y_1 y_2 \tens a$. Indeed $\D^0_{(3)} x =0 $  because $x \in \V^{2,2}$ and 
$\D^0_{(2,1)} x = -4 d^0_\Delta ( y_1y_2 \tens a )=0 $ since $y_1 y_2  \in I_\Delta^4 \subset I_\Delta^2$. Moreover
$\D^0_{(2)(1)} x = 0$ because it just depends on $S$ and $S$ is evidently in $ (I_{\Delta_{12}} \cap I_{\Delta_{13}})^{\perm(\{2,3\})} \tens S^{n-3} R$. Then
\begin{multline*}  \D^0_{(1)(2)} x = - 4 \sum_{ijl} [ s_i u_j a_l \tens t_i v_j
+ t_i v_j a_l \tens s_i u_j  +s_i v_j a_l
 \tens t_i u_j + t_i u_j a_l \tens s_i v_j ] \tens \widehat{a_l}   \\
+ 4 \sum_{ijl} [ a_l  t_i v_j \tens s_i u_j   + 
 a_l s_i v_j \tens t_i u_j   +  a_l t_i u_j \tens  s_i v_j  + 
a_l s_i u_j  \tens t_i v_j ] \tens \widehat{a_l}   = 0\end{multline*}and 
\begin{align*} \D^0_{(1)(11)} x = & - 2 \sum_{ijlh} \Big [ s_i u_j a_h \tens (a_l t_i). v_j + 
t_i u_j a_h \tens (a_l s_i).v_j  + s_i v_j a_h \tens (a_l t_i). u_j + 
t_i v_j a_h \tens (a_l s_i). u_j \Big] \tens \widehat{a_{hl}}  \\
& \hspace{0.3cm} +2 \sum_{ijlh} \Big [ s_i u_j a_h \tens (a_l t_i). v_j + s_i v_j a_h 
\tens  (a_l t_i) . u_j   + t_i u_j a_l \tens (a_h s_i).v_j + t_i v_j a_l \tens ( a_h s_i) . u_j \Big] \tens \widehat{a_{hl}} \\ =&\; 0 \;. 
\end{align*}We have $ \D^1_{(2)} x = 0$ by lemma \ref{lemma: restrictionD14}. Moreover
$$\D^1_{(1)(1)} x = -3 d^1_\Delta S = 0 \;$$by the same argument in the proof of lemma $\ref{lemma: restrictionD114}$, since $S$ is in $ [ I_{\Delta_{12}}^2 \cap I^2_{\Delta_{13}} ] ^{\perm(\{2,3\})} \tens S^{n-3}R$. Finally 
$$ \D^2_{(1)} x = -12 d^2_\Delta (y_1 y_2 \tens a) = 0 \;,$$by lemma \ref{lemma: restrictionD24},  since $y_1 y_2 \in I_\Delta^4$.

{\it Step 4: surjectivity of the map $\W^{2,1,1} \rTo \mathcal{L}^{2,1,1}(-2 \Delta)$.} Denote first of all with $c_{hl}$, for $1 \leq h < l \leq 3$ the contractions $c_{hl}: 
R \tens R \tens R \rTo R \tens R $ defined as $c_{hl} (b_1 \tens b_2 \tens b_3) = (b_h b_l) \tens \widehat{b_{hl}}$. 
The sheaf $\mathcal{L}^{2,1,1}(-2 \Delta)$ can be 
identified with the module $( I^2_{\Delta_{12}} \cap I^2_{\Delta_{13}} \cap I^2_{\Delta_{23}})^{\perm_{(\{2,3\})}} \tens S^{n-3} R$. The ideal $( I^2_{\Delta_{12}} \cap I^2_{\Delta_{13}} \cap I^2_{\Delta_{23}})^{\perm(\{2,3\})}$ is naturally seen in $R \tens S^2 R = (R \tens R \tens R)^{\perm(\{2,3\})}$. To prove the surjectivity of the map $\W^{2,1,1} \rTo \mathcal{L}^{2,1,1}(-2 \Delta)$ it is sufficient to show that it possible to lift to $\W^{2,1,1}$ elements of the form $y \tens a$, $y \in ( I^2_{\Delta_{12}} \cap I^2_{\Delta_{13}} \cap I^2_{\Delta_{23}})^{\perm(\{2,3\})}$, $a \in S^{n-3}R$. 
By remark \ref{rmk: bigdiagonal} and equation (\ref{eq: bigdiagonal}), for $s =2$, $m=3$,
we can write $y$ as the invariant sum $fg + \sigma(fg)$, 
of a product $fg$, where $f, g \in I_{\Delta_{3}} = I_{\Delta_{12}} \cap I_{\Delta_{13}} \cap I_{\Delta_{23}}$, and where $\sigma$ is the permutation exchanging the second and third factor. 
Hence, writing $f = \sum_i r_i \tens s_i \tens t_i$, $g = \sum_j u_i \tens v_i \tens z_j$, with 
\begin{equation}\label{eq: contrazioni} c_{hl} f = c_{hl} g = 0 
\end{equation} for all $1 \leq h < l \leq 3$, the element $y$ can be finally written as
$ y = \sum_{ij} r_i u_j \tens s_i v_j . t_i z_j $. We claim that the element 
$$ x = (2 y \tens a, \sum_{ijk} u_j. s_i v_j. t_i z_j.r_i a_k \tens \widehat{a_k} ) $$is in $\W^{2,1,1}$ and lifts 
the element $2 y \tens a$. Indeed the operators $\D^0_{(3)}$, $\D^0_{(2,1)}$ are trivially zero on $\V^{2,1,1}$; the operators $\D^0_{(2)(1)}$, $\D^0_{(1)(2)}$ and $\D^1_{(1)(1)}$ just involve $2 y \tens a$ 
and hence are zero by definition of $\mathcal{L}^{2,1,1}(- 2 \Delta)$. We just have to verify the vanishing of $\D^0_{(1)(11)} x$: 
$$ \D^0_{(1)(11)} x = -2 \sum_{ijk} r_i u_j a_k \tens s_i v_j . t_i z_j \tens \widehat{a_k} +2 \sum_{ijk} r_i u_j a_k \tens  s_i v_j . t_i z_j \tens \widehat{a_k} = 0$$which holds, since all the other contractions of $\sum_{ijk} u_j. s_i v_j. t_i z_j.r_i a_k$ are zero, because of (\ref{eq: contrazioni}). For example the contraction 
\begin{align*} 2 \sum_{ijk} u_j s_i v_j  \tens t_i z_j . r_i a_k = & \; 2 \sum_i (s_i \tens t_i \tens r_i) \sum_k (1 \tens 1 \tens a_k)\sum_j (u_j v_j \tens z_j \tens 1)  \\
& \quad \quad + 2 \sum_i (s_i \tens r_i \tens t_i) \sum_k (1 \tens a_k \tens 1) \sum_j( u_j v_j \tens 1 \tens z_j) =0 \end{align*}
which vanishes because of the vanishing of $c_{12}g = \sum_j u_j v_j \tens z_j $ in (\ref{eq: contrazioni}). 
The other terms are analougous. 
\end{proof}

\subsection{Toward the general case.}\label{toward}
The aim of this subsection is to explain what we expect about a possible 
filtration $\W^\mu \mu_*S^k L^{[n]}$ of the direct image $\mu_* S^k L^{[n]}$, for general $n$ and $k$, and its graded sheaves $\gr^\W_\mu$. 
We start with the direct sum decomposition 
$$ (S^k V_L)^G \simeq \bigoplus_{\lambda \in p_n(k)} \mc{L}^\lambda \;.$$In the previous subsections we used the reverse lexicographic order $\leq_{\rm rlex}$ to define a filtration $\V^\mu$ on the sheaf $(S^k V_L)^G$ and, consequently, a filtration $\W^\mu$ on $\E^{k-2}(n,k) \simeq \mu_* S^k L^{[n]}$.

The first remark is that, to hope to have a nice description in the general case, the reverse lexicographic order has to be replaced by another total order $\preceq$ on $p_n(k)$, which can be defined as follows. 
\begin{definition}\label{def: toward}For two partitions $\mu_1$, $\mu_2$ in $p_n(k)$ we write 
$ \mu_1 \preceq \mu_2$ if $l(\mu_1) \leq l(\mu_2)$ and, in case of equality, $\mu_1 \leq_{\rm rlex} \mu_2$. 
\end{definition}
\begin{remark}For $k\leq 5$ the total order $\preceq$ on $p_n(k)$ coincides with the reverse lexicographic order; as soon as $k \geq 6$, the two order are different. Indeed, we have
$$ (6) \prec (5,1) \prec (4,2) \prec (3,3) \prec (4,1,1) \prec (3,2,1) \prec (2,2,2) \prec (2,2,1,1) \prec (2,1,1,1,1) \prec (1^6) $$which is not an increasing sequence in the reverse lexicographic order, since $(3,3) \not \leq_{\rm rlex} (4,1,1)$.  
\end{remark}Thanks to the  total order $\preceq$ just defined, consider now a filtration $\V^\mu$ on $(S^k V_L)^G$ by setting 
$$  \V^\mu := \bigoplus_{\substack{\lambda \in p_n(k) \\ \lambda \succcurlyeq \mu }} \mathcal{L}^\lambda \;.$$As a consequence we have an induced finite decreasing filtration on any sheaf $\E^{l}(n,k)$ and hence an induced finite decreasing filtration 
$$ \W^\mu := \V^\mu \cap \E^{k-1}(n,k) $$of $\mu_* S^k L^{[n]}$. 

Secondly, in order to hope to give a general description of the graded sheaves for the filtration $\W^\bullet \mu_* S^k L^{[n]}$ for any $n$ and $k$, we propose the following definition. 
\begin{definition}\label{def: gradedgeneral}Let $\mu \in p_n(k)$. For any $m \in \mbb{N}$ define the sheaf $\mathcal{L}^\mu( - m \mu \Delta)$ over the symmetric variety $S^n X$ as 
$$ \mathcal{L}^\mu(-m \mu \Delta) := \pi_* \Big( L^\mu \tens \bigcap_{1 \leq i <j \leq l(\mu)} I_{\Delta_{ij}}^{m \mu_j} \Big)^{\Stab_G(\mu)} \;.$$
\end{definition}With this definition we can write down our hope for a general structure result about $\mu_* S^k L^{[n]}$: 
\begin{expect} For any $n, k \in \mbb{N}$, $n \geq 1$, the graded sheaves of the finite decreasing filtration 
$\W^\bullet$ of $\mu_* S^k L^{[n]}$ are given by: 
$$ \gr^\W_\mu \simeq \mathcal{L}^{\mu}(-2 \mu \Delta) \;.$$
\end{expect}
\begin{remark}Note that the graded sheaves $\gr^\W_\mu$ of the filtration $\W^\bullet$, considered in theorems  \ref{thm: main2}, \ref{thm: main3}, \ref{thm: main4} are all of the form $\mathcal{L}^\mu(-2 \mu \Delta)$: hence the above expectation is true for $k \leq 4$ or for $n \leq 2$. 
\end{remark}
\begin{remark}We worked out by hand several cases for $k \geq 5$ and we proved, in the studied examples, that over $S^nX$ \emph{there are left exact sequences
\begin{equation}\label{eq: leftsequencesgeneral} 0 \rTo \W^{\mu^\prime} \rTo \W^{\mu} \rTo \mathcal{L}^\mu(-2 \mu \Delta) \end{equation}
whose restriction to $S^n X_{**}$ is exact}. Here 
$\mu \prec \mu^\prime$ are consecutive partitions in $p_n(k)$, according to the total order $\preceq$.  The existence of such left exact sequences should be true for general $n$ and $k$: we plan to prove it in a forthcoming paper. 
In a future work we also plan to provide evidence that the sequences (\ref{eq: leftsequencesgeneral}) are indeed exact. \end{remark}
\begin{remark}As mentioned in the introduction, it is likely that the previous expectation holds for general tensor powers of tautological bundles or even in the case of general tensor products. Indeed, the filtration $E^\bullet(n,k)$ can be defined in an analogous way for general tensor products of tautological bundles via kernels of operators $\phi_l$ in Krug's work \cite{Krug2014}; the operators $\phi_l$ might be considered the $\perm_k$-equivariant version of operators $D^l_L$. 
As for the filtration $\mc{V}^\bullet$, there should be a natural way of extending it in the case of tensor product of tautological bundles, and maybe even in the case of general tensor products.
\end{remark}

\section{Cohomological consequences.}

\subsection{A spectral sequence for the cohomology.}
Thanks to the main theorems \ref{thm: main2}, \ref{thm: main3}, \ref{thm: main4} of the previous subsections, we can express the cohomology $H^*(X^{[n]}, S^k L^{[n]} \tens \mc{D}_A)$ of the symmetric powers of tautological bundles $S^k L^{[n]} \tens  \mc{D}_A$, twisted by the determinant line bundle $\mc{D}_A$, via the spectral sequence of the cohomology of a graded sheaf. Using definitions and notations from subsection \ref{toward}, we can formulate the results as follows. 
Let, first of all, $l \rMapsto \mu(l)$ be the only order preserving bijection between the totally ordered sets $(\{0, \dots, |p_n(k)|-1 \}, \leq )$ and $(p_n(k), \preceq)$. Then we have
\begin{crl}\label{crl: spectcohom}Let $n \in \mbb{N}^*$ and $k \in \mbb{N}$, such that $n \leq 2$ or $k \leq 4$. Then the cohomology $H^*(X^{[n]}, S^k L^{[n]} \tens~\mc{D}_A)$ is the abutment of the spectral sequence 
$$ E^{p,q}_1 := H^{p+q}(S^n X, \mc{L}^{\mu(p)}(-2 \mu(p) \Delta) \tens \mc{D}_A) \;.$$
\end{crl}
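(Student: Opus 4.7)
The proof is essentially a formal consequence of the main structure theorems \ref{thm: main2}, \ref{thm: main3}, \ref{thm: main4} combined with standard hyperchomology machinery, so I would organize it in three short steps.

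First, I would pass from the cohomology of $S^k L^{[n]} \tens \mc{D}_A$ on $X^{[n]}$ to the cohomology of a coherent sheaf on $S^n X$. Since $\mc{D}_A$ on the Hilbert scheme is by definition $\mu^*$ of the line bundle $\mc{D}_A$ on $S^n X$, the projection formula gives $R\mu_*(S^k L^{[n]} \tens \mc{D}_A) \simeq (R\mu_* S^k L^{[n]}) \tens \mc{D}_A$. Theorem \ref{thm: directimage} asserts $R\mu_* S^k L^{[n]} \simeq \mu_* S^k L^{[n]}$ (concentrated in degree $0$), so the Leray spectral sequence degenerates and yields $H^i(X^{[n]}, S^k L^{[n]} \tens \mc{D}_A) \simeq H^i(S^n X, \mu_* S^k L^{[n]} \tens \mc{D}_A)$ for every $i$.

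Second, I would invoke the structure theorems. Under the hypothesis $n \leq 2$ or $k \leq 4$, theorems \ref{thm: main2}, \ref{thm: main3}, \ref{thm: main4} provide a finite decreasing filtration $\W^\bullet$ on $\mu_* S^k L^{[n]}$ indexed by the totally ordered set $(p_n(k), \preceq)$, where in the cases under consideration the order $\preceq$ coincides with the reverse lexicographic order. Tensoring with $\mc{D}_A$ (which is locally free, hence exact) yields a finite decreasing filtration $\W^\bullet \tens \mc{D}_A$ on $\mu_* S^k L^{[n]} \tens \mc{D}_A$. A direct check on the definitions of $\mc{L}^\mu(-2 m_\mu \Delta)$ (notation \ref{notat: lDelta}) and of $\mc{L}^\mu(-2\mu \Delta)$ (definition \ref{def: gradedgeneral}) shows that for every partition $\mu$ with $|\mu| \leq 4$ or with $l(\mu) \leq 2$ the two sheaves coincide, since in those ranges all parts $\mu_2,\dots,\mu_{l(\mu)}$ equal $m_\mu$. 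Consequently the graded sheaves are given by
\[
\gr^{\W \tens \mc{D}_A}_{\mu(p)} \bigl(\mu_* S^k L^{[n]} \tens \mc{D}_A\bigr) \simeq \mc{L}^{\mu(p)}(-2\mu(p)\Delta) \tens \mc{D}_A,
\]
where $l \mapsto \mu(l)$ is the order-preserving bijection of the statement.

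Finally, I would conclude by invoking the standard spectral sequence associated to a finitely filtered sheaf (or equivalently, to the hypercohomology of the associated graded complex). Applied to the filtered sheaf $(\mu_* S^k L^{[n]} \tens \mc{D}_A, \W^\bullet \tens \mc{D}_A)$ on $S^n X$, this gives
\[
E^{p,q}_1 = H^{p+q}\bigl(S^n X, \gr^{\W \tens \mc{D}_A}_{\mu(p)}\bigr) \Longrightarrow H^{p+q}\bigl(S^n X, \mu_* S^k L^{[n]} \tens \mc{D}_A\bigr),
\]
which after substituting the computation of the graded pieces and using the identification obtained in step one becomes exactly the spectral sequence of the statement. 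Finiteness of the filtration guarantees convergence without any further hypothesis. Since every ingredient has been established earlier in the paper, there is no genuine obstacle; the only mildly delicate verification is the compatibility of the two notational conventions $\mc{L}^\mu(-2m_\mu\Delta)$ and $\mc{L}^\mu(-2\mu\Delta)$, which as noted above holds automatically in the ranges $n\le 2$ or $k\le 4$.
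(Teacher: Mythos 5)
Your proof is correct and follows exactly the route the paper intends: reduce to $S^nX$ via the projection formula and the degenerate Leray spectral sequence, apply Theorems \ref{thm: main2}--\ref{thm: main4} to identify the graded pieces of $\W^\bullet \tens \mc{D}_A$, and invoke the spectral sequence of a finitely filtered sheaf. The verification that $\mc{L}^\mu(-2m_\mu\Delta)=\mc{L}^\mu(-2\mu\Delta)$ in the ranges $n\le 2$ or $k\le 4$ is the right (and only) point needing a check, and you handle it correctly.
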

\begin{remark}\label{rmk: spectcohom}Note that, defining the sheaf $L^\mu(-m \mu \Delta)$ over $X^n$ as 
$$ L^\mu(- m \mu \Delta ) := L^\mu \tens \bigcap_{1 \leq i <j \leq l(\mu) } I_{\Delta_{ij}}^{m \mu_j}$$for $\mu$ a partition of $k$ 
of length less or equal than $n$
and $m \in \mbb{N}$, the previous spectral sequence can be rewritten as 
$$ E^{p,q}_1 \simeq H^{p+q}(X^n, L^{\mu(p)}(-2\mu(p) \Delta)  \tens A^{\boxtimes n})^{\Stab_{G}(\mu(p))} \;.$$The cohomology groups in question are the $\Stab_{G}(\mu)$-invariant parts of 
cohomology groups of the sheaves $L^{\mu}(-2\mu \Delta)  \tens A^{\boxtimes n}$, which are essentially (up to tensorization by line bundles) ideal sheaves over the smooth variety $X^n$. In other words the difficulty of understanding the cohomology 
of symmetric powers of tautological bundles has been reduced (at least for $k \leq 4$ or $n \leq 2$) to the difficulty of understanding ideal sheaves of the form $$\FS_{X^n}(-2\mu \Delta) := \bigcap_{1 \leq i <j \leq l(\mu)} I_{\Delta_{ij}}^{2 \mu_j}$$over the smooth variety $X^n$. 
After what we explained in subsection \ref{toward}, we believe that this might be done in general. 
\end{remark}

\subsection{Vanishing.}
As an application of theorems \ref{thm: main2}, \ref{thm: main3} and \ref{thm: main4} and of corollary \ref{crl: spectcohom} we give here, when $X$ is projective,  some \emph{effective vanishing results} about the cohomology of twisted symmetric powers of tautological bundles. 
Since $$H^p(X^{[n]}, S^k L^{[n]} \tens \mc{D}_A) \simeq H^p(S^nX, \mu_*(S^k L^{[n]}) \tens \mc{D}_A) $$by Leray spectral sequence, it is  
 indeed clear that, for $A$ sufficiently ample (or for $L$ and $A$ sufficiently ample), the higher cohomology $H^p(X^{[n]}, S^k L^{[n]} \tens \mc{D}_A)$ has to vanish, since the sheaf $A^{\boxtimes n}$ is ample on $X^n$ and hence $\mc{D}_A$ is ample on $S^nX$; this fact is even more evident in corollary \ref{crl: spectcohom} and remark \ref{rmk: spectcohom}.   Here, for $k \leq 4$ or $n \leq 2$, \emph{we will give an explicit  bound  for the positivity of $L$ and $A$ in order to achieve the vanishing}. 
We will need 
the notion
 of a $m$-very ample line bundle \cite{BeltramettiSommese1991}. 

We start by recalling that we denoted with $\Delta_n$ the big diagonal in $X^n$, that is, the closed subscheme of $X^n$ defined by the ideal 
$ \cap_{1 \leq i <j \leq n} I_{\Delta_{ij}}$ (see notation \ref{notat: diagm}). 

\begin{remark}Indicate now with $E$ the exceptional divisor (or the boundary) of $X^{[n]}$: it is the exceptional divisor for the Hilbert-Chow morphism and the branching divisor for the map $q: B^n \rTo X^{[n]}$. It is well known \cite[Lemma 3.7]{Lehn1999} that 
$$ \FS_{ X^{ [n] } } (-E) \simeq (\det \FS_{X}^{[n]} )^{\tens 2} \:. $$As a consequence there exists a divisor $e$ on the Hilbert scheme $X^{[n]}$ such that $E = 2e$, and such that $\FS_{X^{[n]}}(-e) \simeq \det \FS_X^{[n]}$. It is also well known that $\det L^{[n]} \simeq \mc{D}_L \tens \det \FS_X^{[n]}$, which can be rewritten, with the notations just explained, as \begin{equation} \label{eq: detL[n]}\det L^{[n]} \simeq \mc{D}_L(-e) \;.\end{equation}
\end{remark}
Denote now with $E_B$ the exceptional divisor over the isospectral Hilbert scheme, that is, the exceptional divisor 
for the map $p : B^n \rTo X^n$. We have $\FS_{B^n}(-E_B) \simeq q^*\FS_{X^{[n]}}(-e)$, and, as $G$-equivariant sheaves: \begin{equation}\label{eq: projectionformulaideali} \B{R}p_* \FS_{B^n}(-l E_B) = I^l_{\Delta_n} \tens \epsilon_G^{l }\;,\end{equation}where $\epsilon_G$ is the alternating representation of $G \simeq \perm_n$. 
Taking $G$-invariants we immediately get
\begin{equation}\label{eq: projectionformula[n]} 
\B{R}\mu_*\FS_{X^{[n]}}(-l e) = \pi^G_*(  I^l_{\Delta_n} \tens \epsilon_G^{l} ) \;.
\end{equation}The proof of (\ref{eq: projectionformulaideali}) appeared in \cite[Theorem 1.7]{Scaladiagarxiv2015}.

We now study the positivity of the line bundle $\det L^{[n]}$ on $X^{[n]}$  in terms of the positivity of $L$. The following example (communicated to us by Eyal Markman) proves that 
the fact that $L$ is ample (or even very ample) is not sufficient for $\det L^{[n]}$  to be big and nef.  
\begin{example}\label{ex: markman}Let $X$ be a complex projective K3 surface and let $L$  be a line bundle on $X$. Note that, in our notations, we have, at the cohomology level,  the isomorphism of lattices \cite{Beauville1983}
\begin{equation} \label{eq: 2cohom} H^2(X^{[n]}, \mbb{Z}) \simeq H^2(X, \mbb{Z}) \oplus^{\perp} \mbb{Z} e \;, \end{equation} 
when we equip the left hand side with the Beauville-Bogomolov form $q_n$ and the right hand side with the form $(q, 2-2n)$, where $q$ is the intersection form.  The isomorphism (\ref{eq: detL[n]}) implies precisely that the class $c_1(\det L^{[n]})$ corresponds to the couple $(c_1(L), -e)$ under (\ref{eq: 2cohom}). Therefore  $$q_n(  c_1(\det L^{[n]} ) )=  \int_X c_1(L)^2   + \: 2 - 2n\;.$$The Beauville-Bogomolov form  $q_n( c_1(\det L^{[n]}))$ is now linked to the self-intersection number of $\det L^{[n]}$ by the formula \cite{Beauville1983, Fujiki1987}
$$ \int_{X^{[n]}} c_1(\det L^{[n]} )^{2n} = \frac{2n}{n!2^n} q_n(c_1( \det L^{[n]}))^n \;.$$Consequently, even if $L$ is ample or very ample, but $n$ is a sufficiently big odd natural number, the integral at the left is 
negative; therefore $\det L^{[n]}$ can't be big and nef in this case. 
\end{example}

On the other hand 
the notion of $m$-very ample line bundle \cite{BeltramettiSommese1991, CataneseGoettsche1990} will be fundamental to prove the effective vanishing results. 
\begin{definition} Let $L$ be a line bundle on a smooth complex projective algebraic surface $X$. Then $L$ is called $m$-very ample 
if for all $\xi \in X^{[m+1]}$ the restriction map $\Gamma(L) \rTo \Gamma(L_\xi)$ is surjective. 
\end{definition}The concept of a $m$-very ampleness  generalizes the notion of very ampless. Indeed, with the terminology just introduced, very ampleness is precisely $1$-very ampleness. In the case the map $\Gamma(L) \rTo \Gamma(L_\xi)$ is surjective, let's indicate with $N_\xi$ its kernel. It is a subspace of $\Gamma(L)$ of dimension $h^0(L) - m-1$. 
\begin{remark}If $L$ is $m-1$-very ample, the assignment $\xi \rMapsto N_\xi$ defines a morphism 
$$ \phi_m : X^{[m]} \rTo \Grass(h^0(L) -m, \Gamma(L)) \;.$$Consider now the Pl\"ucker embedding
$ P:  \Grass(h^0(L) -m, \Gamma(L))   \rTo  \mbb{P}(\Lambda^{h^0(L) -m}\Gamma(L))=:\mbb{P}^M$, sending a subspace
 $W$  to $\Lambda^{h^0(L) - m} W$ and where $M = \binom{h^0(L)}{m} -1$. Denote with $\mc{N}$ the universal bundle on $\Grass(h^0(L) -m, \Gamma(L))$ and $\Q$ the universal quotient, that is, the sheaf $\Q$ fitting in the exact sequence
$$ 0 \rTo \mc{N} \rTo \Gamma(L) \tens \FS_{\Grass(h^0(L) -m, \Gamma(L))} \rTo \Q \rTo 0 \;.$$By construction, under the Pl\"ucker embedding $P$, we have $P^* \FS(-1) \simeq \det \mc{N}$, and hence $P^* \FS(1) \simeq \det \Q$. 
\end{remark}
\begin{lemma}If $L$ is $m-1$-very ample, we have that $\phi_m^* \Q \simeq L^{[m]}$. 
\end{lemma}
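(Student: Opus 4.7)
The plan is to realize the morphism $\phi_m$ as the classifying map, via the universal property of the Grassmannian, of the kernel of the natural evaluation map $\Gamma(X, L)\otimes \FS_{X^{[m]}} \rTo L^{[m]}$, and then read off the statement about $\phi_m^*\Q$.

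First I would set up the evaluation map. Let $p_X$ and $p_{X^{[m]}}$ denote the two projections from the universal subscheme $\Xi\subseteq X^{[m]}\times X$ (equivalently, from $X^{[m]}\times X$ restricted to $\Xi$). By definition $L^{[m]}={p_{X^{[m]}}}_*(\FS_\Xi\otimes p_X^* L)$. The restriction $p_X^*L\rOnto p_X^*L\otimes \FS_\Xi$ pushed forward by ${p_{X^{[m]}}}_*$ yields a canonical morphism
$$ ev: {p_{X^{[m]}}}_* p_X^* L \rTo L^{[m]}. $$
Flat base change along the cartesian square with corners $X^{[m]}\times X$, $X^{[m]}$, $X$, $\mathrm{pt}$ identifies the source with $\Gamma(X,L)\otimes \FS_{X^{[m]}}$. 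The fiber of $ev$ at a closed point $\xi\in X^{[m]}$ is precisely the restriction map $\Gamma(X,L)\rTo \Gamma(L_\xi)\simeq (L^{[m]})_\xi$; the $(m-1)$-very ampleness hypothesis is exactly what is needed to make this fiberwise surjective for all $\xi\in X^{[m]}$. Since $L^{[m]}$ is locally free of rank $m$ and $ev$ is fiberwise surjective, it is surjective as a morphism of sheaves.

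Next I would study its kernel. Let $\mathcal{K}:=\ker(ev)$; by surjectivity of $ev$ and local freeness of $L^{[m]}$, the sequence
$$ 0\rTo \mathcal{K} \rTo \Gamma(X,L)\otimes\FS_{X^{[m]}} \rTo L^{[m]} \rTo 0 $$
is an exact sequence of locally free sheaves on $X^{[m]}$; in particular $\mathcal{K}$ is a subbundle of rank $h^0(L)-m$ whose fiber at $\xi$ is the subspace $N_\xi\subseteq\Gamma(X,L)$ defined in the remark. This datum is precisely what the universal property of $\Grass(h^0(L)-m,\Gamma(L))$ classifies: the short exact sequence above determines a unique morphism $\psi:X^{[m]}\rTo\Grass(h^0(L)-m,\Gamma(L))$ with $\psi^*\mathcal{N}\simeq \mathcal{K}$ as subbundles of $\Gamma(L)\otimes\FS_{X^{[m]}}$, and consequently $\psi^*\Q\simeq L^{[m]}$.

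Finally, I would identify $\psi$ with $\phi_m$. This is essentially tautological: on closed points both $\psi$ and $\phi_m$ send $\xi$ to the subspace $N_\xi\in\Grass(h^0(L)-m,\Gamma(L))$, and since $X^{[m]}$ is reduced this point-wise equality, together with the universal property, forces $\psi=\phi_m$ as morphisms of schemes. Pulling back the tautological exact sequence on the Grassmannian and comparing with the evaluation sequence above then gives $\phi_m^*\Q\simeq L^{[m]}$. The only step requiring any real input beyond formal manipulations is the surjectivity of $ev$, which is exactly what the $(m-1)$-very ampleness hypothesis delivers; everything else is the universal property of the Grassmannian applied to the evaluation sequence.
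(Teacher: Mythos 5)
Your proof is correct and follows essentially the same route as the paper: both exhibit the short exact sequence $0 \to \ke(ev) \to \Gamma(L)\otimes\FS_{X^{[m]}} \to L^{[m]} \to 0$, use $(m-1)$-very ampleness plus Nakayama/fiberwise surjectivity to get exactness on the right, and then invoke the universal property of the Grassmannian to identify the classifying map with $\phi_m$ and conclude $\phi_m^*\Q\simeq L^{[m]}$. The only cosmetic difference is that the paper produces the sequence by pushing forward $0\to\mc{I}_\Xi\otimes p_X^*L\to p_X^*L\to \FS_\Xi\otimes p_X^*L\to 0$, whereas you describe the evaluation map directly; these are the same sequence.
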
\begin{proof}Indeed, consider the two projections $p_{X^{[m]}}$ and $p_X$ of the universal subscheme $\Xi \subseteq X^{[m]} \times X$ over $X^{[m]}$ and $X$, respectively. Tensorizing
the exact sequence 
$ 0 \rTo \mc{I}_\Xi \rTo \FS_{X^{[m]} \times X} \rTo \FS_{\Xi} \rTo 0 $ by $p_X^* L$ 
and pushing everything forward on $X^{[m]}$ via $p_{X^{[m]}}$ we get the exact sequence
\begin{equation}\label{eq: familysubspaces} 0 {\rTo} {p_{X^{[m]}}}_* (\mc{I}_\Xi \tens p_X^* L) \rTo \Gamma(L) \tens_{\mbb{C}} \FS_{X^{[m]}} \rTo L^{[m]} \rTo 0 \;.\end{equation}
The surjectivity on the right follows from  the fact that $L$ is $m-1$-very ample and by Nakayama lemma. The sequence 
(\ref{eq: familysubspaces}) provides a family of subspaces of $\Gamma(L)$ of the right dimension which corresponds to the embedding $\phi_m$. Therefore $\phi_m^* \mc{N} \simeq {p_{X^{[m]}}}_* (\mc{I}_\Xi \tens p_X^* L)$ and hence $\phi^*_m \Q \simeq L^{[m]}$. 
\end{proof}
\begin{crl}\label{crl: mample}If $L$ is $m-1$-very ample, then $\det L^{[m]}$  is nef. 
If $L$ is $m$-very ample, then $\det L^{[m]}$ is very ample. 
\end{crl}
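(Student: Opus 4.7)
The plan is to exploit directly the morphism $\phi_m$ and its interpretation via the preceding lemma. Since $L$ is $(m-1)$-very ample, the map $\phi_m : X^{[m]} \rTo \Grass(h^0(L)-m, \Gamma(L))$ is defined and the previous lemma gives $\phi_m^* \Q \simeq L^{[m]}$. Taking top exterior powers in this isomorphism yields $\phi_m^* \det \Q \simeq \det L^{[m]}$. Composing with the Pl\"ucker embedding $P: \Grass(h^0(L)-m, \Gamma(L)) \rInto \mbb{P}^M$, which satisfies $P^* \FS_{\mbb{P}^M}(1) \simeq \det \Q$, we obtain
$$ \det L^{[m]} \simeq (P \circ \phi_m)^* \FS_{\mbb{P}^M}(1) \;. $$

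For the first claim, nefness of $\det L^{[m]}$ follows immediately: $\FS_{\mbb{P}^M}(1)$ is very ample, hence nef, and the pullback of a nef line bundle by any morphism of projective varieties is nef.

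For the second claim, the extra assumption that $L$ is $m$-very ample should be used to upgrade $\phi_m$ to a closed immersion. More precisely, the plan is to invoke the result of Beltrametti--Sommese and Catanese--G\"ottsche \cite{BeltramettiSommese1991, CataneseGoettsche1990}, which characterizes $m$-very ampleness of $L$ in terms of the injectivity of $\phi_m$ on closed points and on tangent spaces: when $L$ is $m$-very ample, $\phi_m$ is a closed embedding. Once this is known, $P \circ \phi_m$ is a closed immersion into $\mbb{P}^M$, and $\det L^{[m]}$, being the restriction of $\FS_{\mbb{P}^M}(1)$ under this closed immersion, is very ample.

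The only substantive step here is the one appealing to \cite{BeltramettiSommese1991, CataneseGoettsche1990}; the rest is formal manipulation with the universal quotient and the Pl\"ucker embedding. The main potential obstacle would be if one wanted a self-contained proof of the closed-immersion statement: one would then need to verify separation of points and tangent vectors of $X^{[m]}$ directly from the surjectivity of $\Gamma(L) \rTo \Gamma(L_\xi)$ for all $\xi \in X^{[m+1]}$, but since this is precisely the content of the cited papers we assume it as a black box.
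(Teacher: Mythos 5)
Your proposal is correct and follows essentially the same route as the paper: both arguments rest on the identity $\det L^{[m]} \simeq (P\circ\phi_m)^*\FS(1)$, deduce nefness from pulling back an ample line bundle along a morphism, and invoke \cite{BeltramettiSommese1991, CataneseGoettsche1990} for the fact that $m$-very ampleness makes $\phi_m$ a closed embedding. The only detail the paper makes explicit that you leave implicit is that $m$-very ample implies $(m-1)$-very ample, which is needed so that $\phi_m$ is defined and the relation holds in the second case.
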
\begin{proof}If $L$ is $m-1$-very ample then the map $\phi_m$ is everywhere defined. 
Under the hypothesis, \begin{equation}\label{eq: relation} \det L^{[m]} \simeq \phi_m^* \det \Q = (P \circ \phi_m)^* \FS(1) \:. \end{equation}Hence  $\det L^{[n]}$ is the pull back of an ample line bundle by the morphism $P \circ \phi_m$, and consequently nef. 
If, additionaly,  $L$ is $m$-very ample, in particular it is $m-1$-very ample \cite{BeltramettiSommese1991}: hence (\ref{eq: relation}) holds. But if $L$ is $m$-very ample, the morphism $\phi_m$ is one to one \cite{BeltramettiSommese1991}, and actually a closed embedding \cite{CataneseGoettsche1990}: hence 
(\ref{eq: relation}) implies that $\det L^{[m]}$ is very ample.  
\end{proof}

\begin{notat}If $A$ is a line bundle on $X$, by abuse of notation we will still indicate the line bundle 
$q^* \mc{D}_A = p^* A^{\boxtimes n}$ on the isospectral Hilbert scheme $B^n$ with $\mc{D}_A$.
\end{notat}
\begin{pps}\label{pps: DLveryample}If $L  = \Tens_{i=1}^l B_i$, with $B_1$ $m$-very ample and $B_i$, $i=2, \dots, l$,  $(m-1)$-very ample line bundles on $X$, then $\mc{D}_L(-le)$ is ample on $X^{[m]}$. If all the $B_i$'s are $m$-very ample then $\mc{D}_L(-le)$ 
is $l$-very ample on $X^{[m]}$. 
\end{pps}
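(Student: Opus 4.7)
The plan is to reduce the statement to Corollary \ref{crl: mample} applied to each factor $B_i$ separately, using the multiplicativity of the descent line bundles $\mathcal{D}_{(\cdot)}$ in their argument.

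First, I would observe that the assignment $A \mapsto \mathcal{D}_A$ is multiplicative, i.e.\ $\mathcal{D}_{A \otimes A'} \simeq \mathcal{D}_A \otimes \mathcal{D}_{A'}$. This is immediate from the construction: $\pi^*\mathcal{D}_{A\otimes A'}=(A\otimes A')^{\boxtimes m}\simeq A^{\boxtimes m}\otimes (A')^{\boxtimes m}=\pi^*(\mathcal{D}_A \otimes \mathcal{D}_{A'})$, and both sides descend uniquely to $S^mX$. Hence, writing $L=\bigotimes_{i=1}^l B_i$, one gets $\mathcal{D}_L \simeq \bigotimes_{i=1}^l \mathcal{D}_{B_i}$ on $X^{[m]}$, and therefore, by (\ref{eq: detL[n]}) applied to each $B_i$,
\[
\mathcal{D}_L(-l\,e) \;\simeq\; \bigotimes_{i=1}^l \mathcal{D}_{B_i}(-e)\;\simeq\; \bigotimes_{i=1}^l \det B_i^{[m]}\;.
\]

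Now I would apply Corollary \ref{crl: mample} factor by factor. Under the first hypothesis, $B_1$ is $m$-very ample, so $\det B_1^{[m]}$ is very ample, while each $B_i$ for $i\geq 2$ is $(m-1)$-very ample, so each $\det B_i^{[m]}$ is nef. Since the tensor product of an ample line bundle with nef line bundles is ample, $\mathcal{D}_L(-l\,e)$ is ample on $X^{[m]}$, proving the first claim.

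For the second claim, if every $B_i$ is $m$-very ample, then by Corollary \ref{crl: mample} each $\det B_i^{[m]}$ is very ample on $X^{[m]}$, so $\mathcal{D}_L(-l\,e)$ is a tensor product of $l$ very ample line bundles on the smooth projective variety $X^{[m]}$. The main step is then to invoke the standard result of Beltrametti--Sommese that a tensor product of $l$ very ample line bundles on a smooth projective variety is $l$-very ample; this concludes the proof. The only nontrivial input is this $l$-very ampleness statement for tensor products; everything else is a bookkeeping consequence of (\ref{eq: detL[n]}) and the multiplicativity of $\mathcal{D}_{(\cdot)}$.
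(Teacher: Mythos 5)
Your proof is correct and follows the same route as the paper: decompose $\mc{D}_L(-le)\simeq\Tens_{i=1}^l\det B_i^{[m]}$, apply Corollary \ref{crl: mample} to each factor, and conclude ampleness (ample tensor nef) respectively $l$-very ampleness (tensor of $l$ $1$-very ample bundles, by Beltrametti--Sommese). No gaps.
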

\begin{proof}Under the hypothesis, $\mc{D}_L(-le) = \Tens_{i=1}^l \mc{D}_{B_i}(-e)$; after corollary \ref{crl: mample}, the line bundle
$\mc{D}_{B_1}(-e) \simeq \det B_1^{[m]}$ is very ample on $X^{[m]}$,  while 
each of the $\mc{D}_{B_i}(-e) \simeq \det B_i^{[m]}$, $i = 2, \dots, l$  is nef. Hence $\mc{D}_L(-le)$ is ample on $X^{[m]}$, since it is the tensor product of an ample with a number of nef line bundles. If all the $B_i$'s are $m$-very ample, then all the $\det B_i^{[m]}$ are very ample and hence $\mc{D}_L(-le) $ is $l$-very ample since it is the tensor product of $l$ $1$-very ample 
line bundles \cite{BeltramettiSommese1991}. \end{proof}
\begin{crl}\label{crl: ampleB}If $L = \Tens_{i=1}^{l}B_i$, with $B_1$ $m$-very ample and $B_i$, $i=2, \dots, l$,  $(m-1)$-very ample line bundles on $X$,
 then $\mc{D}_L(-l E_B)$ 
is ample on the isospectral Hilbert scheme $B^m$. \end{crl}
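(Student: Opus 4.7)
The plan is to reduce the claim to Proposition \ref{pps: DLveryample} via pullback along the finite morphism $q : B^m \rTo X^{[m]}$. The first step is to identify the line bundle $\mc{D}_L(-lE_B)$ on $B^m$ as the pullback by $q$ of the line bundle $\mc{D}_L(-le)$ on $X^{[m]}$. For this, I would use the defining relation $\FS_{B^m}(-E_B) \simeq q^*\FS_{X^{[m]}}(-e)$ recalled just before example \ref{ex: markman}, together with the abuse of notation $\mc{D}_L = q^* \mc{D}_L = p^* L^{\boxtimes m}$ on $B^m$ introduced right before the statement. Combining these two identifications gives
\[
\mc{D}_L(-lE_B) \;\simeq\; q^*\bigl(\mc{D}_L(-le)\bigr)
\]
as line bundles on $B^m$.

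Next, I would invoke Proposition \ref{pps: DLveryample}: under the very-ampleness hypothesis on the factors $B_1, \dots, B_l$, the line bundle $\mc{D}_L(-le)$ is ample on $X^{[m]}$. Finally, I would use the standard fact that the pullback of an ample line bundle by a finite morphism is ample (a direct consequence of the Nakai--Moishezon criterion, or equivalently of the fact that finite morphisms are affine, so that the pullback of an ample bundle has all the requisite cohomological vanishing properties). Since $q : B^m \rTo X^{[m]}$ is finite, as recalled in the setup of Section~1, we conclude that $q^*\bigl(\mc{D}_L(-le)\bigr) \simeq \mc{D}_L(-lE_B)$ is ample on $B^m$.

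There is no real obstacle here: the corollary is a bookkeeping consequence of the identification of line bundles on $B^m$ and $X^{[m]}$, combined with the ampleness already established downstairs and the elementary fact about finite pullbacks. The only point deserving a moment of care is keeping track of the notational abuse, ensuring that $\mc{D}_L$ on $B^m$ really means $q^*\mc{D}_L = p^*L^{\boxtimes m}$, so that the pullback identification above is correct.
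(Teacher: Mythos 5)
Your proposal is correct and follows exactly the same route as the paper: identify $\mc{D}_L(-lE_B)$ with $q^*\bigl(\mc{D}_L(-le)\bigr)$, apply Proposition \ref{pps: DLveryample} to get ampleness downstairs, and conclude via finiteness of $q$. No gaps.
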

\begin{proof}Since $\mc{D}_L (-l E_B) = q^* \mc{D}_L(-le)$, and since $q$ is finite, we have that $\mc{D}_L(-l E_B)$ is ample if  $\mc{D}_L(-le)$ is. Under the hypothesis, proposition \ref{pps: DLveryample} assures that this is the case. \end{proof}
\begin{pps}\label{pps: vanishingx[m]}
Let $X$ be a smooth projective surface. 
If $L \tens \omega_X^{-1} = \Tens_{i=1}^l B_i$, with $B_1$ $m$-very ample and $B_i$, $i =2, \dots, l$,  $(m-1)$-very ample line bundles on $X$, then $H^i(X^{[m]}, \mc{D}_L(-le) )=0$ for all $i>0$. 
\end{pps}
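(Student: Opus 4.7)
The plan is to rewrite $\mc{D}_L(-le)$ as an ample line bundle tensored with the canonical bundle $\omega_{X^{[m]}}$ of the Hilbert scheme, and then invoke Kodaira vanishing on the smooth projective variety $X^{[m]}$.

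The first step is to recall (or note) the canonical bundle formula $\omega_{X^{[m]}} \simeq \mc{D}_{\omega_X}$. This is standard and follows from the fact that the Hilbert-Chow morphism $\mu: X^{[m]} \rTo S^m X$ is a crepant resolution of the symmetric variety, together with the observation that $\omega_{S^m X}$ is precisely the descent of $\omega_X^{\boxtimes m}$ from $X^m$, i.e.\ $\omega_{S^m X} \simeq \mc{D}_{\omega_X}$ on $S^m X$; pulling back via $\mu$ gives the desired formula.

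The second step is a bookkeeping identity. Setting $M := L \tens \omega_X^{-1} = \Tens_{i=1}^l B_i$, the multiplicativity of $A \mapsto \mc{D}_A$ yields
$$ \mc{D}_L(-le) \simeq \mc{D}_M(-le) \tens \mc{D}_{\omega_X} \simeq \mc{D}_M(-le) \tens \omega_{X^{[m]}} \;. $$
Under the hypothesis on the $B_i$, proposition \ref{pps: DLveryample} applied to $M$ (in place of $L$) tells us exactly that $\mc{D}_M(-le)$ is ample on $X^{[m]}$.

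The conclusion is then immediate from Kodaira vanishing on the smooth projective variety $X^{[m]}$: since $\mc{D}_M(-le)$ is ample, one has $H^i(X^{[m]}, \mc{D}_M(-le) \tens \omega_{X^{[m]}}) = 0$ for all $i>0$, which is the required vanishing by the identification above. There is no real obstacle here; the whole content of the proposition has been packaged into proposition \ref{pps: DLveryample} (which produces the ampleness from $m$-very ampleness of the factors of $M$) and into the canonical bundle formula for $X^{[m]}$, so the proof amounts to assembling these two ingredients.
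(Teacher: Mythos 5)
Your proof is correct and coincides with the paper's own argument: both rewrite $\mc{D}_L(-le)$ as $\mc{D}_{L\tens\omega_X^{-1}}(-le)\tens\omega_{X^{[m]}}$ via the canonical bundle formula $\omega_{X^{[m]}}\simeq\mc{D}_{\omega_X}$, get ampleness of $\mc{D}_{L\tens\omega_X^{-1}}(-le)$ from proposition \ref{pps: DLveryample}, and conclude by Kodaira vanishing. No gaps.
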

\begin{proof}Since the canonical line bundle $\omega_{X^{[m]}}$ of the Hilbert scheme of points $X^{[m]}$ is isomorphic to 
the line bundle $\mc{D}_{\omega_X}$,  we have $H^i(X^{[m]}, \mc{D}_L(-le) )=H^i(X^{[m]}, \mc{D}_{L \tens \omega_X^{-1}}(-le) \tens \omega_{X^{[m]}}) = 0$ for all $i>0$ by Kodaira vanishing theorem, since $\mc{D}_{L \tens \omega_X^{-1}}(-le)$ is  ample, in our hypothesis, by proposition \ref{pps: DLveryample}. \end{proof}
\begin{pps}\label{pps: vanishingmu}Let $X$ be a smooth projective surface. Let $\mu$ be  a partition of $k$ of length $l(\mu)=m$ and let $l \in \mbb{N}$.  Let $A$ and $L$ be line bundles on $X$ such that 
\begin{itemize} \item[i)] $L$ is nef \item[ii)] there exists an integer $r$, such that $1 \leq r \leq \mu_m$ and that $ L^{r} \tens A \tens \omega^{-1}_X = \Tens^{l+1}_{i=1} B_i$, where $B_1$ is $m$-very ample and $B_i$, $i = 2, \dots, l+1$,  are $m-1$-very ample line bundles on $X$.   \end{itemize}
Suppose moreover that the isospectral Hilbert scheme $B^m$ has log-canonical singularities. Then
\begin{enumerate} \item $H^i(X^m, L^\mu(- l \Delta) \tens A^{\boxtimes m}) = 0$ for all $i>0$; 
\item $H^i(S^m X, \mathcal{L}^\mu (-l \Delta) \tens \mc{D}_A) =0 $ for all $i>0$. 
\end{enumerate}
\end{pps}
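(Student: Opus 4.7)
The plan is to derive (2) from (1) by taking $\Stab_G(\mu)$-invariants: since $\pi \colon X^m \rTo S^m X$ is finite and taking $\Stab_G(\mu)$-invariants is an exact functor in characteristic zero, one has $H^i(S^m X, \mc{L}^\mu(-l\Delta) \tens \mc{D}_A) \simeq H^i(X^m, L^\mu(-l\Delta_m) \tens A^{\boxtimes m})^{\Stab_G(\mu)}$, so the vanishing on $X^m$ implies the vanishing on $S^m X$. To prove (1), the first step is to transfer the cohomology to the isospectral Hilbert scheme $B^m$ via $p \colon B^m \rTo X^m$. By formula (\ref{eq: projectionformulaideali}), the derived direct image $\B{R}p_* \FS_{B^m}(-lE_B) \simeq I_{\Delta_m}^l \tens \epsilon_G^l$ is concentrated in degree zero; hence the projection formula and Leray's spectral sequence give
$$H^i(X^m, L^\mu(-l\Delta) \tens A^{\boxtimes m}) \simeq H^i\bigl(B^m,\, p^*(L^\mu \tens A^{\boxtimes m}) \tens \FS_{B^m}(-lE_B) \bigr),$$
the twist by $\epsilon_G^l$ being irrelevant as an abstract sheaf.

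Next I would factor out an ample piece on $B^m$. Since $1 \leq r \leq \mu_m \leq \mu_i$ for every $i$, letting $\mu' := (\mu_1 - r, \ldots, \mu_m - r)$ one has $L^\mu \tens A^{\boxtimes m} = L^{\mu'} \tens (L^r \tens A)^{\boxtimes m}$, and substituting $L^r \tens A = \omega_X \tens \Tens_{i=1}^{l+1} B_i$ transforms the pulled-back line bundle on $B^m$ into
$$p^* L^{\mu'} \tens \mc{D}_{\omega_X} \tens \mc{D}_{\Tens_{i=1}^{l+1} B_i}(-lE_B).$$
By Corollary \ref{crl: ampleB} applied to the first $l$ factors $B_1, \ldots, B_l$ (with $B_1$ being $m$-very ample and $B_2, \ldots, B_l$ being $(m-1)$-very ample), the line bundle $\mc{D}_{\Tens_{i=1}^{l} B_i}(-lE_B)$ is ample on $B^m$; tensoring with $\mc{D}_{B_{l+1}}$, which is nef as the pull back of a nef bundle on $X^m$, preserves ampleness by Nakai--Moishezon. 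Since $p^* L^{\mu'}$ is itself nef on $B^m$, the product $N := p^* L^{\mu'} \tens \mc{D}_{\Tens_{i=1}^{l+1} B_i}(-lE_B)$ is an ample line bundle on $B^m$.

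The last step is to identify the remaining factor $\mc{D}_{\omega_X}$ with the dualizing sheaf $\omega_{B^m}$, up to a divisor supported on $E_B$ which can be absorbed into the ample factor $N$, and then to apply a Kodaira-type vanishing theorem valid on the singular variety $B^m$. Recall that $\omega_{X^{[m]}} \simeq \mc{D}_{\omega_X}$; since $q \colon B^m \rTo X^{[m]}$ is finite and flat of degree $m!$ and $B^m$ is Gorenstein by Haiman, the relative dualizing sheaf $\omega_{B^m / X^{[m]}}$ is a line bundle structurally supported along $E_B$. Once one has written $\mc{D}_{\omega_X} \simeq \omega_{B^m} \tens \FS_{B^m}(D)$ with $D$ an effective $E_B$-divisor absorbed into $N$, Fujino's generalization of Kawamata--Viehweg--Kodaira vanishing for projective log-canonical varieties yields $H^i(B^m, \omega_{B^m} \tens N) = 0$ for all $i>0$, completing the argument. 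The main obstacle is precisely the clean identification of $\omega_{B^m}$ in terms of $\mc{D}_{\omega_X}$ and $E_B$, requiring a careful dualizing-sheaf analysis of the isospectral cover in the spirit of Haiman; once this is in hand, the log-canonical vanishing theorem does the rest.
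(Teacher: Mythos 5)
Your overall route is the same as the paper's: deduce (2) from (1) by exactness of invariants and finiteness of $\pi$, move the computation to the isospectral Hilbert scheme via the projection formula and the fact that $\B{R}p_*\FS_{B^m}(-lE_B)\simeq I_{\Delta_m}^l\tens\epsilon_G^l$ is concentrated in degree zero, peel off $(L^r\tens A)^{\boxtimes m}$ using $1\leq r\leq\mu_m$, and conclude with Fujino's vanishing on the log-canonical variety $B^m$. The step you leave open, however, is exactly the one that makes the numerology of hypothesis (ii) work, and your guess for it has the wrong sign. The identity needed is $\omega_{B^m}\simeq \mc{D}_{\omega_X}(E_B)$, i.e.\ $\mc{D}_{\omega_X}\simeq\omega_{B^m}(-E_B)$: the correction divisor is $-E_B$, not an effective $D$ as you write. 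After substituting, the line bundle whose ampleness must be checked is therefore not your $N=p^*L^{\mu'}\tens\mc{D}_{\Tens_{i=1}^{l+1}B_i}(-lE_B)$ but $N(-E_B)=p^*L^{\mu'}\tens\mc{D}_{\Tens_{i=1}^{l+1}B_i}(-(l+1)E_B)$; this is where all $l+1$ bundles $B_i$ are consumed (corollary \ref{crl: ampleB} applied with $l+1$ factors and twist $-(l+1)E_B$), and it is precisely why the hypothesis provides $l+1$ rather than $l$ very ample factors. With your sign convention ($D$ effective, absorbed into $N$) you would instead be facing $\omega_{B^m}\tens N(D)$ with $N(D)$ ample plus effective, hence only big and in general not nef, and no Kodaira- or Kawamata--Viehweg-type statement applies; the argument genuinely breaks at that point.

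Your decomposition of the positivity is salvageable once the sign is corrected: one has $\mc{D}_{\Tens_{i=1}^{l+1}B_i}(-(l+1)E_B)=\mc{D}_{\Tens_{i=1}^{l}B_i}(-lE_B)\tens\mc{D}_{B_{l+1}}(-E_B)$, the first factor is ample by corollary \ref{crl: ampleB}, and the second equals $q^*\det B_{l+1}^{[m]}$, which is nef by corollary \ref{crl: mample} since $B_{l+1}$ is $(m-1)$-very ample. The point is that the last factor must carry its own $-E_B$; the undecorated $\mc{D}_{B_{l+1}}$ in your $N$ does not account for the twist produced by rewriting $\mc{D}_{\omega_X}$ as $\omega_{B^m}(-E_B)$. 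Finally, no new dualizing-sheaf analysis of the isospectral cover is required: the paper simply invokes the identity $\omega_{B^m}=\mc{D}_{\omega_X}(E_B)$, which combines the crepancy $\omega_{X^{[m]}}\simeq\mc{D}_{\omega_X}$ of the Hilbert--Chow resolution with the behaviour of the finite flat Gorenstein cover $q$ along $E_B$, and then applies \cite[Theorem 4.4]{Fujino2009} to $\omega_{B^m}$ tensored with the ample bundle above times the nef bundle $p^*L^{\mu'}$.
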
\begin{proof}Let's prove the first statement. Let $\mu^\prime$ the partition of $k-m r$ defined by 
$\mu^\prime_i = \mu_i -r$ for all $i=1, \dots, m$. Then 
$$ L^\mu(-l \Delta) \tens A^{\boxtimes m} = (L^{r} \tens A)^{\boxtimes m}(-l \Delta) \tens L^{\mu^\prime} \;,$$
where we simply wrote $(L^{r} \tens A)^{\boxtimes m}(-l \Delta)$ instead of $(L^r \tens A)^{1^m}(-l \Delta)$. 
By projection formula and by  (\ref{eq: projectionformulaideali}) we have, forgetting the $G$-action: 
$$ \B{R}p_*(\mc{D}_{L^{r} \tens A}(-l E_B) \tens p^* L^{\mu^\prime}) = L^\mu(-l \Delta) \tens A^{\boxtimes m} \;.$$Hence
we have: 
\begin{align*} H^i(X^m,  L^\mu(-l \Delta) \tens A^{\boxtimes m}) \simeq & \: H^i(B^m, \mc{D}_{L^{r} \tens A}(-l E_B) \tens p^* L^{\mu^\prime}) \\
\simeq & \: H^i(B^m, \mc{D}_{L^{r} \tens A \tens \omega^{-1}_X} (-l E_B) \tens p^* L^{\mu^\prime} \tens \mc{D}_{\omega_X} ) \\
\simeq & \:  H^i(B^m, \mc{D}_{L^{r} \tens A \tens \omega^{-1}_X} (-(l+1)E_B) \tens p^* L^{\mu^\prime} \tens \omega_{B^n} )
\end{align*}since $\omega_{B^n} = \mc{D}_{\omega_X}(E_B)$. Now the line bundle $\mc{D}_{L^{r} \tens A \tens \omega^{-1}_X} (-(l+1)E_B)$ is ample, under the hypothesis and after corollary \ref{crl: ampleB}; moreover the line bundle $p^* L^{\mu^\prime}$ is nef, since pull-back of a nef. Their tensor product is then ample  and we conclude by Kodaira vanishing 
on a variety with log-canonical singularities \cite[Theorem 4.4]{Fujino2009}. 

As for the second statement, it follows from lemma \ref{lemma: invariantsFDA} and from what we said above, noting that: 
$$ H^i(S^m X, \mc{L}^\mu(-l \Delta) \tens \mc{D}_A) \simeq  H^i ( S^m X, \pi_* (L^\mu(- l \Delta) \tens A^{\boxtimes m})) ^{\Stab_G(\mu)} \simeq H^i(X^m, L^\mu(- l \Delta) \tens A^{\boxtimes m})^{\Stab_G(\mu)} ,$$
because $\pi$ is a finite morphism. 
\end{proof}
\begin{remark}The isospectral Hilbert scheme $B^n$ is obviously smooth for $n=1,2$. We proved in \cite{Scala2015isospectral} that $B^n$ has canonical singularities for $n \leq 5$. However, it is not difficult to see that $B^n$ can't have canonical singularities for high $n$. Already for $n \geq 9$ its singularities  are not even log-canonical. \end{remark}

\begin{theorem}\label{thm: vanishing}Let $X$ be a smooth projective surface. Let $L$, $A$ be line bundles on $X$ such that $L$ is nef and $A \tens \omega_X^{-1}$ is big and nef. 
Then 
\begin{itemize}\item $ H^i(X^{[2]}, S^k L^{[2]} \tens \mc{D}_A) = 0$ \qquad for all $i >0$ \quad if $L \tens A \tens \omega^{-1}_X = \Tens_{j=1}^{k+1} B_j$; 
\item $H^i(X^{[n]}, S^3 L^{[n]} \tens \mc{D}_A) = 0$ \qquad for all $i>0$ \quad if $L \tens A \tens \omega^{-1}_X = \Tens_{j=1}^5 B_j$; 
\item $H^i(X^{[n]}, S^4 L^{[n]} \tens \mc{D}_A) = 0$ \qquad for all $i>0$ \quad if $L \tens A \tens \omega_X^{-1} = \Tens_{j=1}^7 B_j$\;,
\end{itemize}where $B_j$ are very ample line bundles on $X$. 
\end{theorem}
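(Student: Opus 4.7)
The plan is to combine the structure theorems with classical Kodaira--Kawamata--Viehweg vanishing, reducing the problem piece by piece along the filtration $\W^\bullet$. First I would apply Leray together with projection formula to identify
$$H^i(X^{[n]}, S^k L^{[n]} \tens \mc{D}_A) \simeq H^i(S^n X, \mu_* S^k L^{[n]} \tens \mc{D}_A).$$
Next I would use the finite decreasing filtration $\W^\bullet$ on $\mu_* S^k L^{[n]}$ provided by Theorems~\ref{thm: main2}--\ref{thm: main4}: since the graded pieces are $\gr^\W_\mu \simeq \mathcal{L}^\mu(-2m_\mu \Delta)$, the spectral sequence of a filtered sheaf converging to $H^{\bullet}(S^n X, \mu_* S^k L^{[n]} \tens \mc{D}_A)$ shows it suffices to establish, for every partition $\mu \in p_n(k)$,
$$H^i(S^n X, \mathcal{L}^\mu(-2m_\mu \Delta) \tens \mc{D}_A) = 0 \quad \text{for all } i > 0.$$

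For each such $\mu$ with $m = l(\mu)$, the sheaf $\mathcal{L}^\mu(-2m_\mu\Delta)$ is supported on $v(S^m X \times S^{n-m} X)$ and factors as $v_*(\mathcal{L}^\mu_m(-2m_\mu\Delta) \boxtimes \FS_{S^{n-m}X})$ along the finite map $v: S^m X \times S^{n-m} X \to S^n X$. Projection formula for $\mc{D}_A$ (using lemma~\ref{lemma: invariantsFDA}) together with Künneth then decompose the relevant cohomology as a direct sum of terms of the form $H^p(S^m X, \mathcal{L}^\mu_m(-2m_\mu\Delta) \tens \mc{D}_A) \tens H^q(S^{n-m}X, \mc{D}_A)$. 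The second factor vanishes for $q > 0$: since $A \tens \omega_X^{-1}$ is big and nef, Kawamata--Viehweg gives $H^{>0}(X, A) = 0$, which propagates by Künneth on $X^{n-m}$ and by taking $\perm_{n-m}$-invariants. The first factor is then attacked through Proposition~\ref{pps: vanishingmu}: for each $\mu$ I would choose $r \in \{1, \ldots, m_\mu\}$ so that the $k+1$, $5$, or $7$ very ample line bundles composing $L \tens A \tens \omega_X^{-1}$, together with the extra factors of the nef $L^{r-1}$ (absorbed into individual $B_j$'s, keeping them very ample), can be regrouped into the $2m_\mu+1$ pieces required by the proposition, with the first one $m$-very ample (a tensor of $m$ very amples) and the remaining $(m-1)$-very ample (tensors of $m-1$ very amples). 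The log-canonicity of $B^m$ required by the proposition holds by \cite{Scala2015isospectral} in the ranges $m \leq 2$, $m \leq 3$, $m \leq 4$ that cover the three cases, since $B^m$ has canonical singularities in those ranges.

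The main technical obstacle, and the case that fixes the sharp count of seven very ample factors in the $k=4$ statement, is the partition $\mu = (2,1,1)$ for $k=4$: here the graded sheaf $\mathcal{L}^{(2,1,1)}(-2\Delta)$ sits at length $3$, and neither a Kodaira argument on $S^n X$ nor on $X^{[n]}$ applies directly; the proof of Proposition~\ref{pps: vanishingmu} must be carried out on the isospectral Hilbert scheme $B^3$, via the BKR projection formula (\ref{eq: projectionformulaideali}) turning $\mathcal{L}^{(2,1,1)}_3(-2\Delta) \tens \mc{D}_A$ on $S^3 X$ into the pushforward of $\mc{D}_{L \tens A}(-2 E_B) \tens p^* L^{(1,0,0)}_3$ from $B^3$, the canonical (hence log-canonical) singularities of $B^3$ proved in \cite{Scala2015isospectral}, and Kodaira vanishing for varieties with log-canonical singularities in the form of \cite[Theorem~4.4]{Fujino2009}. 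The resulting positivity requirement $\mc{D}_{L \tens A \tens \omega_X^{-1}}(-3 E_B) \tens p^* L^{(1,0,0)}_3$ ample then translates back, via Corollary~\ref{crl: ampleB} and Proposition~\ref{pps: DLveryample}, into exactly the hypothesis $L \tens A \tens \omega_X^{-1} = \bigotimes_{j=1}^7 B_j$; the other partitions arising for $k = 4$, and all those for $k = 3$ or $n = 2$, impose weaker constraints and follow from the same pattern with smaller counts.
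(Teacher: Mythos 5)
Your proposal is correct and follows essentially the same route as the paper: reduce via the filtration spectral sequence to the vanishing of $H^{>0}$ of each graded piece $\mathcal{L}^\mu(-2m_\mu\Delta)\tens\mc{D}_A$, split off the $S^{n-l(\mu)}X$ factor by K\"unneth and kill it with Kawamata--Viehweg using that $A\tens\omega_X^{-1}$ is big and nef, and handle the remaining factor through Proposition~\ref{pps: vanishingmu} (conditions $(*)$/$(**)$), with the $(2,1,1)$ piece treated on $B^3$ via its canonical singularities and Fujino's vanishing. The only small imprecision is attributing the sharp count of seven very ample factors solely to $\mu=(2,1,1)$: the partition $(1^4)$ also forces seven (via condition $(**)$ with $B_1$ $4$-very ample and $B_2$ $3$-very ample), though it is handled on the smooth $X^{[4]}$ rather than on $B^3$.
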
\begin{proof}
By theorems \ref{thm: main2}, \ref{thm: main3}, \ref{thm: main4} and by the spectral sequence for the cohomology of a filtered sheaf, that is, corollary \ref{crl: spectcohom}, to prove the vanishing of the higher cohomology of $S^k L^{[n]} \tens \mc{D}_A$ for $k$ and $n$ as in the hypothesis, we just need to prove that 
$$ H^i (S^n X, \gr^{\W \tens \mc{D}_A}_\mu) = 0$$for $i>0$ and for any $\mu \in p_n(k)$. In notation \ref{notat: lDelta}, we can write 
$ \gr^{\W \tens \mc{D}_A}_\mu \simeq \mc{L}^\mu(-2 m_\mu \Delta)$, where $m$ is the length of the partition $\mu$. Then, by K\"unneth formula, we have 
$$ H^*(S^nX, \gr^{\W \tens \mc{D}_A}_\mu) \simeq H^*(S^mX, \mc{L}^\mu_m(-2 m_\mu \Delta) \tens \mc{D}_A) \tens H^*(S^{m-n}X, \mc{D}_A) \;.$$
where we have emphasized that $\mc{L}^\mu_m(-2 m_\mu \Delta)$ is now a sheaf on $S^m X$. The hypothesis that $A \tens \omega_X^{-1}$ is big and nef guarantees that the higher cohomology of $\mc{D}_A$ over $S^{n-m}X$ vanishes by Kawamata-Viehweg. On the other hand, for the vanishing 
$$ H^i(S^m X, \mc{L}^\mu_m(-2 m_\mu \Delta)\tens \mc{D}_A) =0 \qquad \forall i >0$$it is sufficient, if $\mu \neq (h, \dots, h)$, by proposition \ref{pps: vanishingmu}, the condition that $L$ is nef and that \\

$(*)$  $\exists r \in \mbb{N} \:, 1 \leq r \leq \mu_m$ such that  $L^r \tens A \tens \omega_X^{-1} = \Tens_{i=1}^{2 m_\mu +1} B_i$ with $B_1$ $m$-very ample and $B_i$ $(m-1)$-very ample for $i = 2, \dots, 2 m_\mu+1$; 

\vspace{0.3cm}
\noindent
If $\mu = (h, \dots, h)$, we can use that 
$$ H^*(S^m X, \mc{L}^\mu_m(-2 m_\mu \Delta) \tens \mc{D}_A) \simeq H^*(X^{[m]}, \mc{D}_{L^h \tens A \tens \omega_X^{-1}} (-2 h e) \tens \omega_{X^{[m]}} ) \;,$$by projection formula and by (\ref{eq: projectionformula[n]}), the fact that $L$ is nef  and the fact that, after proposition \ref{pps: vanishingx[m]}, condition $(*)$ can be replaced by condition \\

$(**)$ $\exists h^ \prime \in \mbb{N} \:, 1 \leq h^ \prime \leq h$, such that $L^{h^\prime} \tens A \tens \omega_X^{-1} = \Tens_{i=1}^{2 h} B_i$ with $B_1$ $m$-very ample and $B_i$ $(m-1)$-very ample for $i = 2, \dots, 2 h$. 

\vspace{0.3cm}
\noindent
It is now easy to see that, if $n =2$ or if $k \leq 4$, the hypothesis on $L \tens A \tens \omega_X^{-1}$ in the statement implies both conditions $(*)$ and $(**)$ for any $\mu \in p_n(k)$, for $n \leq 2$ or $k \leq 4$. 
\end{proof}
For $n=2$, we can prove a variant of theorem \ref{thm: vanishing} above in which we achieve the vanishing with a positivity bound on $L$ and $A$ independent on  $k$. 
\begin{theorem}\label{thm: vanishingvariant}Let $X$ be a smooth projective surface and $L$, $A$ be line bundles on $X$.  Then $$H^i(X^{[2]}, S^k L^{[2]} \tens \mc{D}_A) = 0 \qquad \mbox{ for all $i >0$}$$
if both $L$ and $A \tens \omega_X^{-1}$ are tensor product of two very ample line bundles. 
\end{theorem}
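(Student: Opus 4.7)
The plan is to apply Theorem \ref{thm: main2} to $\mu_*(S^kL^{[2]})\tens\mc{D}_A$ and then verify cohomology vanishing on each graded piece of the resulting finite decreasing filtration. By Theorem \ref{thm: main2}, the filtration $\W^\bullet\tens\mc{D}_A$ has graded sheaves $\mc{L}^{(k-j,j)}(-2j\Delta)\tens\mc{D}_A$ on $S^2X$ for $j\in\{0,1,\ldots,[k/2]\}$. Hence, by the long exact sequences associated to the filtration (equivalently, by the spectral sequence of Corollary \ref{crl: spectcohom}), it suffices to show
\[
H^i(S^2X,\,\mc{L}^{(k-j,j)}(-2j\Delta)\tens\mc{D}_A)=0\qquad\text{for all $i\geq1$ and all $j\in\{0,1,\ldots,[k/2]\}$.}
\]

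For every $j\geq 1$ I plan to invoke Proposition \ref{pps: vanishingmu} with $m=2$, $l=2j$, and $r=j\in[1,\mu_m]=[1,j]$. Its hypotheses reduce cleanly: $L=L_1\tens L_2$ is ample, hence nef; $B^2$ is smooth, hence has log-canonical singularities; and the required factorisation $L^r\tens A\tens\omega_X^{-1}=\Tens_{i=1}^{l+1}B_i$, with $B_1$ being $2$-very ample and $B_2,\ldots,B_{l+1}$ very ample, will be exhibited by writing
\[
L^j\tens A\tens\omega_X^{-1}=L_1^jL_2^jA_1A_2,
\]
a tensor product of exactly $2j+2$ very ample line bundles. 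Setting $B_1:=L_1\tens L_2$ (which is $2$-very ample, since a tensor product of two very ample line bundles is $2$-very ample, as recalled around Proposition \ref{pps: DLveryample}) and distributing the remaining $2j$ very ample factors individually as $B_2,\ldots,B_{2j+1}$ produces a decomposition into exactly $l+1=2j+1$ factors, satisfying the hypothesis.

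The residual case $j=0$ corresponds to the length-one partition $\mu=(k)$, for which the graded piece is $\mc{L}^{(k)}_2\tens\mc{D}_A\simeq\pi_*(L^k\boxtimes\FS_X)\tens\mc{D}_A$. Since $\pi$ is finite, Leray and the projection formula give
\[
H^i(S^2X,\,\pi_*(L^k\boxtimes\FS_X)\tens\mc{D}_A)\simeq H^i(X^2,\,L^kA\boxtimes A)\simeq\bigoplus_{p+q=i}H^p(X,L^kA)\tens H^q(X,A)
\]
by Künneth. Both $A=\omega_X\tens(A\omega_X^{-1})$ and $L^kA=\omega_X\tens(L^k\tens A\omega_X^{-1})$ equal $\omega_X$ tensored with a big and nef line bundle---the former ample, the latter a product of the nef $L^k$ with the ample $A\omega_X^{-1}$---so Kawamata--Viehweg on the smooth surface $X$ yields $H^{\geq1}(X,A)=0=H^{\geq1}(X,L^kA)$, closing this case.

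The main obstacle is the combinatorial tightness in the $j\geq 1$ case: under the hypotheses of the theorem, $L^jA\omega_X^{-1}$ has exactly $2j+2$ very ample factors while Proposition \ref{pps: vanishingmu} requires a decomposition into $2j+1$ factors (one $2$-very ample, the rest very ample). The surplus of exactly one very ample factor is what gets absorbed into the single $2$-very ample $B_1=L_1\tens L_2$. This exact matching, uniform in $j$, is precisely what allows the positivity bound to be independent of $k$, strengthening Theorem \ref{thm: vanishing} in the $n=2$ case.
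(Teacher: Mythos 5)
Your proof is correct and follows essentially the same route as the paper's: reduce to the graded pieces $\mc{L}^{(k-j,j)}(-2j\Delta)\tens\mc{D}_A$ via Theorem \ref{thm: main2} and verify condition $(*)$ of the proof of Theorem \ref{thm: vanishing} (i.e.\ the hypotheses of Proposition \ref{pps: vanishingmu} with $r=j$, $l=2j$) by regrouping the $2j+2$ very ample factors of $L^j\tens A\tens\omega_X^{-1}$ as one $2$-very ample bundle $L_1\tens L_2$ plus $2j$ very ample ones. The only (harmless) divergence is that you apply Proposition \ref{pps: vanishingmu} uniformly, including to $\mu=(h,h)$, whereas the paper routes that case through $X^{[2]}$ and condition $(**)$; both work, since Proposition \ref{pps: vanishingmu} places no restriction on the shape of $\mu$ and $B^2$ is smooth.
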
\begin{proof}
After conditions $(*)$ and $(**)$ in the proof of theorem \ref{thm: vanishing}, for the vanishing of the graded sheaves 
$\gr_\mu^{\W \tens \mc{D}_A}$, with $\mu = (k-i, i)$ with $1 < i < k/2$,  it is sufficient that $L^i \tens A \tens \omega_X^{-1}$ is a product of $(2i +2)$ $1$-very ample line bundles; analogously, if $k = 2h$, for the vanishing of the graded sheaf $\mc{L}^{h, h}(-2h\Delta) \tens \mc{D}_A$ it is sufficient that $L^h \tens A \tens \omega_X^{-1}$ is a tensor product of  $(2h+1)$ $1$-very ample line bundles. These conditions are implied by the hypothesis that both $L$ and $A \tens \omega_X^{-1}$ are tensor products of two very ample line bundles. 
\end{proof}
\begin{remark}The same proof of theorem \ref{thm: vanishing}, together with remark \ref{rmk: k2}, yields the vanishing 
$H^i(X^{[n]}, S^2 L^{[n]} \tens \mc{D}_A)=0$ for  $i>0$, provided that $L$ is nef, that $A \tens \omega_X^{-1}$ is big and nef and that $L \tens A \tens \omega_X^{-1}$ is a tensor product of three $1$-very ample line bundles. 
However, using \cite[Theorem 5.1.6]{Scala2009D}, other hypothesis on $L$ and $A$ might be used to achieve the vanishing. 
\end{remark}
The preceding results can be rephrased in an easier way if the Picard number of the surface is one. For example theorem \ref{thm: vanishing} becomes: 
\begin{crl}Let $X$ be a smooth  projective surface with Picard group $\Pic(X) \simeq \mbb{Z} B$, where $B$ is the ample generator. Let $r$ be the minimum positive power of $B$ such that $B^r$ is very ample. Suppose, moreover, that $\omega_X \simeq B^w$, for some integer $w$. Let $l$ and $a$ be integers such that $l >0$ and $a >w$. Then, for $n = 2$ or $k \leq 4$ 
$$ H^i(X^{[n]}, S^k (B^l)^{[n]} \tens \mc{D}_{B^a} ) = 0 \qquad \mbox{for $i >0$} \quad \mbox{if $l+a \geq c$,} $$where the constant $c$ is: $c = r(k+1)+w$ if $n=2$; $c = 5r+w$ if $k=3$, $c=7r+w$ if $k=4$. 
\end{crl}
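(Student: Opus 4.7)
The plan is to reduce the statement directly to Theorem \ref{thm: vanishing} by setting $L = B^l$ and $A = B^a$, and then verifying its three numerical hypotheses in the Picard-rank-one setting.

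First I would check the two positivity hypotheses on $L$ and $A$. The line bundle $L = B^l$ is ample (hence nef) because $l \geq 1$ and $B$ is ample. The line bundle $A \tens \omega_X^{-1} = B^{a-w}$ is ample (hence big and nef) because the assumption $a > w$ forces $a-w \geq 1$. It remains to write
\[
L \tens A \tens \omega_X^{-1} \;=\; B^{l+a-w}
\]
as a tensor product of $N$ very ample line bundles, where $N$ is the integer dictated by Theorem \ref{thm: vanishing}: $N = k+1$ when $n=2$, $N = 5$ when $k=3$, and $N = 7$ when $k=4$. The decomposition I would use is
\[
B^{l+a-w} \;=\; \underbrace{B^r \tens \cdots \tens B^r}_{N-1 \text{ copies}} \tens B^{\,l+a-w-(N-1)r},
\]
and the last factor needs exponent at least $r$ to be very ample. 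This is precisely the condition $l+a-w-(N-1)r \geq r$, i.e.\ $l+a \geq Nr + w$, which matches the constants $c = r(k+1)+w$, $c = 5r+w$, $c = 7r+w$ in the three cases.

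The only point that deserves care is justifying that $B^s$ is very ample for every $s \geq r$, not merely for $s = r$. For this I would use the standard fact that the tensor product of a very ample line bundle with a globally generated one is very ample: since $B^r$ is very ample it is globally generated, so by induction on $s - r \geq 0$ the bundle $B^s = B^r \tens B^{s-r}$ is very ample (with $B^{s-r}$ being globally generated by the same inductive hypothesis for $s-r \geq r$, and trivially globally generated when $s - r < r$ since one can rewrite $B^s = B^{r-1} \tens B^{s-r+1}$ and iterate — alternatively, one can simply group factors differently to avoid small exponents). With this in hand, all hypotheses of Theorem \ref{thm: vanishing} are satisfied under $l + a \geq c$, and the desired vanishing follows immediately. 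I do not anticipate any real obstacle beyond this bookkeeping.
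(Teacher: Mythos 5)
Your reduction to Theorem \ref{thm: vanishing} with $L = B^l$ and $A = B^a$ is exactly the intended one (the paper states this corollary without proof, as an immediate rephrasing of that theorem), and your verification of the two positivity hypotheses and of the numerology $c = Nr + w$ with $N = k+1,\,5,\,7$ is correct.

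The genuine gap is in the step you yourself single out: that $B^s$ is very ample for \emph{every} $s \geq r$. Your induction only works when $s-r$ is zero or at least $r$; for $r < s < 2r$ the assertion that $B^{s-r}$ is ``trivially globally generated'' is false --- an ample line bundle of exponent strictly less than $r$ need not be globally generated and may have no sections at all. The fallback $B^s = B^{r-1}\otimes B^{s-r+1}$ does not help, since $B^{r-1}$ is not very ample (by minimality of $r$) and is not known to be globally generated either; nor does ``grouping factors differently'', because $\Pic(X)\simeq \mbb{Z}B$ forces every factor of any decomposition of $B^{l+a-w}$ into $N$ very ample line bundles to be a power $B^{s_i}$ with $B^{s_i}$ very ample, so if the very ample exponents were, say, exactly the multiples of $r$, then $l+a-w = Nr+1$ would admit no admissible decomposition at all. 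As written, your argument therefore proves the corollary only when $l+a-w-(N-1)r$ lies in the semigroup of very ample exponents. To close the gap you should either add the (mild, and in all standard examples satisfied) hypothesis that $B^s$ is very ample for all $s\geq r$ --- which does follow by your tensor-product argument whenever $B$ itself is globally generated --- or observe explicitly that this assumption is already implicit in the corollary as stated; the paper, giving no proof, does not resolve this point either.
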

\begin{remark}
Of course over $X = \mbb{P}_2$ we can state more simply that if $l >0$, $a>-3$, 
then we have the vanishing $ H^i(\mbb{P}_2^{[n]}, S^k \FSP{2}(l)^{[n]} \tens \mc{D}_{\FSP{2}(a)}) = 0$ for $i>0$ if $l+a \geq c$, where $c =k-2$, $c=2$, $c=4$, if $n=2$, $k=3$, $k=4$, respectively. 
\end{remark}

\subsection{The Euler-Poincar\'e characteristic.}
For the sake of completeness, we give here formulas for the Euler-Poincar\'e characteristic of twisted symmetric powers $S^k L^{[n]} \tens \mc{D}_A$ of tautological bundles for $n=2$ and general $k$, or for $k=3,4$, and general $n$.

\begin{notat}For $l \in \mbb{N}$, denote with $\mc{K}^1_{(1)(1)}(- l \Delta)$ the sheaf over $S^n X$ defined as: 
$$ \mc{K}^1_{(1)(1)}(-l \Delta) = \pi_* \Big(\mc{K}^1_{((101), \{1,2 \})} \tens I^l_{\Delta_{23}} \Big)= \pi_*\Big[ ( (\Omega^1_X \tens L^3)_{\Delta_{12}}  \boxtimes L^{1} ) \tens I^l_{\Delta_{23}} \Big] \;.$$where we remember that $L^{1}$, according to notation \ref{notat: Lmu},  is the sheaf $L \boxtimes \FS_{X^{n-3}}$ on $X^{n-2}$. 
\end{notat}
\begin{notat}Let $l \in \mbb{N}$, $0 \leq l \leq n$. Denote with $v^l$ the map from the partial quotient to the total quotient
$v^l : S^l X \times S^{n-l} X \rTo S^n X$. 
\end{notat}The proof of the following lemma can be found in \cite[Prop. 4.17]{Scaladiagarxiv2015}. 
\begin{lemma}\label{lmm: exactL211}We have the exact sequence over $S^n X$: 
\begin{multline} 0 \rTo \mc{L}^{2,1,1}(-2 \Delta) \rTo \mc{L}^{2,1,1}(-\Delta) \rTo \mc{K}^1_{(1)(1)}(-2 \Delta) \rTo \\ \rTo v^3_*( (S^3 \Omega^1_X \tens L^4)_{\{123\}} \boxtimes \FS_{S^{n-3}X} ) \rTo 0 \;,\end{multline}where the middle map is the differential $d^1_{\Delta}$ and the fourth one is the composition: \small
\begin{equation*}\begin{split} \K^1_{(1)(1)} (-2 \Delta)  = \pi_*\Big[ ( (\Omega^1_X \tens L^3)_{\Delta_{12}}  \boxtimes L^{1} ) \tens I^2_{\Delta_{23}} \Big]^{ \perm(\{3, \dots, n\}) } \! \! \! \! \! \! \!\rTo \pi_*\Big[ ( (\Omega^1_X \tens L^3)_{\Delta_{12}}  \boxtimes L^{1} ) \tens I^2_{\Delta_{23}} / I^3_{\Delta_{23}} \Big]^{\perm(\{3, \dots, n\})} \simeq \\ 
\simeq v_*^3 \Big[  (\Omega^1_X \tens S^2 \Omega^1_X \tens L^4)_{\{123\}}  \boxtimes \FS_{S^{n-3}X } \Big] \rTo 
v^3_* \Big[  (S^3 \Omega^1_X \tens L^4)_{\{123\}}  \boxtimes \FS_{S^{n-3}X}  \Big] \;.\end{split}\end{equation*}\normalsize
\end{lemma}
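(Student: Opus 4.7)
The strategy is to work upstairs on $X^n$ with $\Stab_G(2,1,1)$-equivariant sheaves and take invariants at the end. By lemma \ref{lmm: auxU} it suffices to check the statement on an affine open of the form $S^n U$ with $L$ trivialized on $U$; since $\pi$ is finite and we are in characteristic zero, the functor $\pi_*^{\Stab_G(2,1,1)}$ is exact, so exactness downstairs follows from exactness of the corresponding $\Stab_G(2,1,1)$-equivariant sequence upstairs. Throughout we work with local coordinates $A = u_2 - u_1$, $B = v_2 - v_1$, $A' = u_3 - u_1$, $B' = v_3 - v_1$ around the triple diagonal $\Delta_{123}$, and exploit Haiman's identity (remark \ref{rmk: bigdiagonal}) to identify $I_{\Delta_3}^s$ with $\bigcap_{|I|=2} I_{\Delta_I}^s$ in $\{1,2,3\}$.

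For left exactness at $\mc{L}^{2,1,1}(-\Delta)$, I would invoke remark \ref{rmk: differential} applied to $\mu = (2,1,1)$, $m=3$, $l=1$: the kernel of the full jet map $d^1_{\Delta_3}$ valued in $\bigoplus_{|I|=2} (\Omega^1_X \tens L^{|\mu_I|})_I \tens L^{\mu_{\bar I}}$ is exactly $L^{2,1,1}(-2\Delta_3)$. Under $\Stab_G(2,1,1) = \perm(\{2,3\}) \times \perm(\{4,\dots,n\})$ the $\Delta_{23}$-summand is killed, since $(23)$ acts by $-1$ on $N^*_{\Delta_{23}}$ while fixing $L^{(2,1,1)}|_{\Delta_{23}}$; the orbit $\{\Delta_{12}, \Delta_{13}\}$ contributes, via Danila's lemma, the sheaf $\K^1_{(1)(1)}$. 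To see that $d^1_\Delta$ lands in $\K^1_{(1)(1)}(-2\Delta)$, I would prove the local inclusion $I_{\Delta_3} \subseteq I^2_{\Delta_{12}} + I^2_{\Delta_{23}}$ by checking it on generators of $I_{\Delta_3}$ of Vandermonde type (e.g.\ $A A'(A-A')$, $A B'(A-A')$, $\dots$); a direct computation shows, for instance, that $A A'^2 = A(A-A')^2 - A^3 + 2 A^2 A' \in I^2_{\Delta_{23}} + I^2_{\Delta_{12}}$, and the general case reduces to such cases by the symmetric action of $(23)$.

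The composition $\mc{L}^{2,1,1}(-\Delta) \to \K^1_{(1)(1)}(-2\Delta) \to v^3_*((S^3 \Omega^1_X \tens L^4)_{\{123\}} \boxtimes \FS_{S^{n-3}X})$ must vanish. Geometrically, the last map factors through the intermediate sheaf $v^3_*((\Omega^1_X \tens S^2\Omega^1_X \tens L^4)_{\{123\}} \boxtimes \FS)$ obtained by passing $I^2_{\Delta_{23}}$ to $I^2_{\Delta_{23}}/I^3_{\Delta_{23}} \cong S^2 \Omega^1_X|_{\Delta_{23}}$, followed by full symmetrization $\Omega^1 \tens S^2 \Omega^1 \to S^3 \Omega^1$. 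For a section of $L^{2,1,1}(-\Delta_3)$, its first jet along $\Delta_{12}$ restricted to $\Delta_{123}$ yields an element of $\Omega^1 \tens S^2 \Omega^1$ whose structure is determined by the derivation of a function already vanishing on all three pairwise diagonals; a short local calculation in the coordinates $A, B, A', B'$ shows this lies in the kernel of the symmetrization map, exploiting the natural antisymmetrization coming from comparing derivatives across the $(23)$-swap.

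The hard part—and main obstacle—is exactness in the middle and surjectivity on the right. The plan is to prove both simultaneously by a local integration argument near $\Delta_{123}$. Given $\omega \in \K^1_{(1)(1)}(-2\Delta)$, write $\omega = P\,dA + Q\,dB$ modulo $I^2_{\Delta_{12}}$ with $P, Q \in I^2_{\Delta_{23}}$; if the image of $\omega$ in $(S^3 \Omega^1_X)_{\{123\}}$ vanishes, I would show by a Koszul-type argument on the non-transverse intersection of the three pairwise diagonals that the 3-jet at $\Delta_{123}$ lies in the antisymmetric complement of $\ker(\Omega^1 \tens S^2 \Omega^1 \twoheadrightarrow S^3 \Omega^1)$, and hence can be written as a formal derivative of an expression lying in $L^{2,1,1} \tens I_{\Delta_3}$. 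Surjectivity of the last map then follows by exhibiting, for each $\sigma \in S^3 \Omega^1_X$, an explicit preimage in $\K^1_{(1)(1)}(-2\Delta)$. The technical heart is the Koszul-type resolution for the non-transverse intersection $\Delta_{12} \cup \Delta_{13} \cup \Delta_{23}$ at $\Delta_{123}$, which is precisely the kind of phenomenon that forced the higher-order restrictions $D^l_L$ to appear in the first place; I expect the cleanest way to handle it is to pass through the isospectral Hilbert scheme $B^3$, where $p^*I_{\Delta_3}$ becomes an invertible sheaf and the relevant Koszul calculations linearize.
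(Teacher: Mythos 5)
Your framework (reduce to a local, $\Stab_G(2,1,1)$-equivariant computation upstairs and take invariants at the end) is reasonable, and the identification of $\ker d^1_\Delta$ with $\mc{L}^{2,1,1}(-2\Delta)$ via remark \ref{rmk: differential} is fine. But the step where you justify that $d^1_\Delta$ takes values in the subsheaf $\K^1_{(1)(1)}(-2\Delta)$ rests on a false ideal-theoretic claim: the inclusion $I_{\Delta_3}\subseteq I^2_{\Delta_{12}}+I^2_{\Delta_{23}}$ does not hold. With $A=u_1-u_2$, $B=v_1-v_2$, $A'=u_1-u_3$, $B'=v_1-v_3$, the ideal $I_{\Delta_3}$ for points on a \emph{surface} is not generated by the cubic Vandermonde-type elements you list; its unique minimal-degree generator is the quadric $T=AB'-A'B$ (the determinant $\det(1,u_i,v_i)$), and a degree count in the associated graded at a point of $\Delta_{123}$ shows $T\notin (A,B)^2+(A-A',B-B')^2$: the degree-two part of the right-hand side is spanned by $A^2$, $AB$, $B^2$, $(A-A')^2$, $(A-A')(B-B')$, $(B-B')^2$, and $AB'-A'B$ is not in their span. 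Accordingly $d^1_{\Delta_{12}}(T)=B'\,du-A'\,dv$ vanishes only to order one along $\Delta_{123}$. What saves the lemma is precisely the $\perm(\{2,3\})$-equivariance, which you never invoke at this step: $T$ is $(23)$-anti-invariant and spans the whole degree-two part of $I_{\Delta_3}$ along $\Delta_{123}$, so it cannot be the leading term of an invariant section, while for elements of $I_{\Delta_3}$ of order at least three the required order-two vanishing of the $\Delta_{12}$-jet along $\Delta_{23}$ is automatic. Any correct proof of well-definedness must pass through this invariance argument; your ``reduction by the symmetric action of $(23)$'' as stated proves a false inclusion.

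The second and larger problem is that exactness in the middle and surjectivity of the last map --- which you yourself call ``the hard part'' --- are not proved but only announced (``I would show\dots'', ``I expect the cleanest way\dots''). This is exactly the nonformal content of the lemma: it requires an explicit analysis of the non-transverse configuration of the three pairwise diagonals near $\Delta_{123}$, which is why the paper defers it to \cite[Prop.~4.17]{Scaladiagarxiv2015}, where it is carried out through the hyperderived spectral sequence of the derived tensor product of the ideals $I_{\Delta_{ij}}$. As written, your text is a plan of attack rather than a proof, and its one concretely stated lemma is incorrect.
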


\begin{notat}Let $M$ a line bundle on $X$. Denote with 
$\mc{A}_4(M)$ the sheaf on $S^4 X$ defined by: 
$$ \mc{A}_4(M):= [ \pi_* (M_{12} \boxtimes M_{34}) \tens \Res_{\Stab_{\perm_4}(\{ \{1,2\}, \{3,4\} \})} \epsilon_\Sigma ]^{\Stab_{\perm_4}(\{ \{ 1, 2 \}, \{3,4 \} \} ) } \;.$$where $\Sigma$ is the symmetric group $\perm(\{1, 2 \}, \{3,4 \}) \simeq \perm_2$ and where $\epsilon_\Sigma$ is its associated alternating representation. Here the restricted representation $ \Res_{\Stab_{\perm_4}(\{ \{1,2\}, \{3,4\} \})} \epsilon_\Sigma$ is induced by the natural homomorphism 
$ \Stab_{\perm_4}(\{ \{1,2\}, \{3,4\} \}) \rTo \Sigma$. 
The local sections of $ \mc{A}_4(M)$ over an open set of the form $S^4 U$, where $U$ is an open set of $X$, are $H^0(S^4 U,  \mc{A}_4(M)) = \Lambda^2 H^0(U, M)$. 
\end{notat}
\begin{lemma}\label{lemma: Ktheory}In the K-theory $K(S^n X)$ of the symmetric variety $S^n X$ we have: 
\begin{align*} 
[\mathcal{L}^{1,1,1}(-\Delta)] = & \: [\mathcal{L}^{1,1,1}] - [\mc{K}^0_{(1)(1)}] + [v^3_*( (\Omega^1_X \tens L^3)_{\{1, 2, 3 \}} \boxtimes \FS_{S^{n-3}X} ) ]  \\
[\mathcal{L}^{2,1,1}(-\Delta)] = & \: [\mathcal{L}^{2,1,1}] - [\mc{K}^0_{(2)(1)}] - [\mc{K}^0_{(1)(2)}] + [v^3_*((L^4)_{\{1,2,3\}} \boxtimes \FS_{S^{n-3}X}) ]+ [v^3_*( (\Omega^1_X \tens L^4)_{\{1, 2, 3 \}} \boxtimes \FS_{S^{n-3}X} ) ] \\
[\mathcal{L}^{1,1,1,1}(-\Delta)] = & \: [\mathcal{L}^{1,1,1,1}] - [\mc{K}^0_{(1)(11)}] + [v^4_*(\mc{A}_4(L^2) \boxtimes \FS_{S^{n-4} X})] + 
[v^4_*(  (\Omega^1_X \tens L^3)_{\{ 1,2,3 \}} \boxtimes \mathcal{L} )] \\ & \qquad  - [v^4_* (  (\Omega^1_X \tens L^4)_{\{ 1,2,3,4 \}}  \boxtimes \FS_{S^{n-4} X}) ] -[v^4_*( (K_X \tens L^4)_{\{ 1, 2, 3, 4 \}} \boxtimes \FS_{S^{n-4}X }  ) ] \\ & \qquad \qquad -  [v^4_*( (S^3 \Omega^1_X \tens L^4)_{\{ 1, 2, 3, 4 \}} \boxtimes \FS_{S^{n-4}X }  )]
\end{align*}
\end{lemma}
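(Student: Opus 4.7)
The strategy in all three cases is to produce, for the appropriate partition $\mu$, a finite exact sequence of coherent sheaves on $S^n X$ starting with $\mc{L}^\mu(-\Delta)$, $\mc{L}^\mu$ and the relevant $\mc{K}^0$-terms, and to take the alternating sum of classes in $K(S^nX)$. The starting point is the definition of $\mc{L}^\mu(-\Delta)$ as the kernel of the differential $d^0_\Delta$: by remark \ref{rmk: differential} combined with Haiman's identity \eqref{eq: bigdiagonal} in remark \ref{rmk: bigdiagonal}, one has a left-exact sequence
\[
0 \to \mc{L}^\mu(-\Delta) \to \mc{L}^\mu \xrightarrow{d^0_\Delta} T_\mu,
\]
where $T_\mu = \mc{K}^0_{(1)(1)}$ for $\mu=(1,1,1)$, $T_\mu = \mc{K}^0_{(2)(1)} \oplus \mc{K}^0_{(1)(2)}$ for $\mu=(2,1,1)$, and $T_\mu = \mc{K}^0_{(1)(11)}$ for $\mu=(1,1,1,1)$. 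Thus the heart of the proof is to extend this to a finite resolution by identifying the cokernel of $d^0_\Delta$, after which the K-theoretic identity follows by taking the alternating sum.

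\textbf{The cases $\mu=(1,1,1)$ and $\mu=(2,1,1)$.} Here the union $\bigcup_{|I|=2,\,I\subseteq\{1,2,3\}}\Delta_I$ of the three pairwise diagonals in (the first three factors of) $X^n$ has a unique deeper stratum, the triple diagonal $\Delta_{123}$, and all pairwise meetings $\Delta_I\cap\Delta_J$ (with $I\neq J$) coincide set-theoretically with $\Delta_{123}$ and are transverse. A local calculation on an affine open set $S^nU$ with $L$ trivializing on $U$ shows that the sheaf-theoretic cokernel of $\FS_{X^n}\to\bigoplus_{|I|=2}\FS_{\Delta_I}$ is supported on $\Delta_{123}$ and is isomorphic to $\FS_{\Delta_{123}}\otimes\rho$, where $\rho$ is the two-dimensional standard representation of $\perm_3$ (coming from the three differences modulo the cocycle relation). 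Tensoring by $L^\mu$ and taking $\Stab_G(\mu)$-invariants of the pushforward, one identifies the cokernel of $d^0_\Delta$: for $\mu=(1,1,1)$ (where $\perm_3$ acts on the index set), the invariants select the factor $\Omega^1_X$ inside the standard representation via the identification of the conormal bundle of $\Delta\subset X^2$ with $\Omega^1_X$, giving $v^3_*((\Omega^1_X\tens L^3)_{\{1,2,3\}}\boxtimes\FS_{S^{n-3}X})$. For $\mu=(2,1,1)$ the stabilizer $\perm_2$ only permutes indices $\{2,3\}$, so the standard representation splits equivariantly into a trivial piece (giving the $v^3_*((L^4)_{\{1,2,3\}}\boxtimes\FS_{S^{n-3}X})$ summand) and a sign piece (giving the $v^3_*((\Omega^1_X\tens L^4)_{\{1,2,3\}}\boxtimes\FS_{S^{n-3}X})$ summand). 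The resulting 4-term exact sequence
\[
0 \to \mc{L}^\mu(-\Delta) \to \mc{L}^\mu \to T_\mu \to \mathrm{Coker} \to 0
\]
yields the announced formula by taking Euler classes in $K(S^nX)$.

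\textbf{The case $\mu=(1,1,1,1)$.} Now the union $Z = \bigcup_{|I|=2,\,I\subseteq\{1,2,3,4\}}\Delta_I$ of the six pairwise diagonals in (the first four factors of) $X^n$ has a richer stratification: two diagonals $\Delta_I$, $\Delta_J$ intersect in a triple diagonal $\Delta_{I\cup J}$ when $|I\cap J|=1$, but in a codimension-four pair-of-pairs scheme $\Delta_I\cap\Delta_J\subsetneq\Delta_{1234}$ when $I\cap J=\emptyset$. The Mayer--Vietoris/Čech complex associated to the six-fold closed cover of $Z$ is therefore strictly longer. Its analysis produces, besides the obvious $\mc{K}^0_{(1)(11)}$ contribution, cokernel contributions of two geometric origins: (i) the three disjoint-pair intersections $\Delta_{ij}\cap\Delta_{kl}$ with $\{i,j,k,l\}=\{1,2,3,4\}$, which are permuted by $\perm_4$ with stabilizer the wreath-product-like $\Stab_{\perm_4}(\{\{1,2\},\{3,4\}\})$ carrying the sign representation of $\perm_2$ that swaps the two pairs---this is exactly the definition of $\mc{A}_4(L^2)$---and (ii) terms supported on deeper strata, in particular the quadruple diagonal $\Delta_{1234}\simeq X$, where higher-order jets intervene and produce sheaves involving $\Omega^1_X$, $K_X\simeq\Lambda^2\Omega^1_X$ and $S^3\Omega^1_X$ with signs dictated by the alternating pattern of the Čech differentials. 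Taking the alternating sum of K-theory classes in this longer exact sequence and passing to $\Stab_G(1^4)$-invariants of the pushforward under $\pi$ yields the announced formula, with the plus signs attached to the cycle at the pair-of-pairs stratum and at the first $\Omega^1_X$-contribution, and the minus signs to the three terms living on $\Delta_{1234}$.

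\textbf{Main obstacle.} The genuinely delicate step is the case $\mu=(1,1,1,1)$: identifying the higher terms of the Čech resolution on the quadruple diagonal (the $\Omega^1_X\otimes L^4$, $K_X\otimes L^4$ and $S^3\Omega^1_X\otimes L^4$ summands) requires decomposing the $\perm_4$-representation carried by the conormal data along the various strata into its isotypic pieces, and checking that, after taking $\Stab_G((1,1,1,1))$-invariants, the combinatorial signs collapse exactly to the pattern $+\mc{A}_4(L^2)+\Omega^1_X\cdot L^3 -\Omega^1_X\cdot L^4 -K_X\cdot L^4 -S^3\Omega^1_X\cdot L^4$ (in the abbreviated notation of the statement). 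One can alternatively pull back to the isospectral Hilbert scheme $B^4$ via projection formula (cf.\ equation (\ref{eq: projectionformulaideali})) and perform the calculation there, using that $B^4$ has canonical singularities (\cite{Scala2015isospectral}), which should simplify the identification of the deeper strata but still requires the same careful bookkeeping of the $\perm_4$-isotypic content.
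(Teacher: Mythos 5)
Your overall framework---the left-exact sequence $0 \to \mc{L}^\mu(-\Delta) \to \mc{L}^\mu \xrightarrow{d^0_\Delta} T_\mu$, extended to a finite resolution whose alternating sum is taken in $K(S^nX)$---is sound, but your identification of $\mathrm{coker}\,d^0_\Delta$ in the three-point cases is incorrect, and the error sits exactly where the interesting terms of the formula are produced. Localize at the generic point of $\Delta_{123}$ and write $I_{13}=(u_1,v_1)$, $I_{23}=(u_2,v_2)$, $I_{12}=(u_1-u_2,v_1-v_2)$. A triple $(g_{12},g_{13},g_{23})$ lies in the image of $\FS_{X^3}\to\oplus_{i<j}\FS_{\Delta_{ij}}$ if and only if (i) the pairwise differences vanish on $\Delta_{123}$, \emph{and} (ii) a first-order condition holds: once $f_0$ is chosen with $f_0\equiv g_{13}$, $f_0\equiv g_{23}$, the remaining freedom is $I_{13}\cap I_{23}=I_{13}I_{23}$, whose image in $\FS_{\Delta_{12}}$ is the \emph{square} of the ideal of $\Delta_{123}$ in $\Delta_{12}$, so one must also be able to match $g_{12}$ to first order along $\Delta_{123}\subset\Delta_{12}$. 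Hence the cokernel has generic rank $4$ along $\Delta_{123}$, not $2$: it is an extension of the rank-$2$ zeroth-order piece $\FS_{\Delta_{123}}\otimes\rho_3$ by a rank-$2$ first-order piece isomorphic to $(\Omega^1_X)_{\Delta_{123}}$ (the conormal of $\Delta_{123}$ in $\Delta_{12}$). This is exactly the phenomenon of the paper's three-concurrent-lines remark. Since $\rho_3$ has no $\perm_3$-invariants, the correction term $+[v^3_*((\Omega^1_X\tens L^3)_{\{1,2,3\}}\boxtimes\FS_{S^{n-3}X})]$ comes entirely from the first-order piece you omitted; your attempt to recover it by ``selecting the factor $\Omega^1_X$ inside the standard representation via the conormal bundle'' conflates two different rank-$2$ sheaves ($\FS_{\Delta_{123}}\otimes\rho_3$ is a trivial bundle with nontrivial $\perm_3$-action; $\Omega^1_X$ is the cotangent bundle), and with your rank-$2$ cokernel the first formula would wrongly read $[\mc{L}^{1,1,1}(-\Delta)]=[\mc{L}^{1,1,1}]-[\mc{K}^0_{(1)(1)}]$. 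The same omission affects $\mu=(2,1,1)$, where the correct count of invariant ranks is $1$ (from $\rho_3$ restricted to $\perm(\{2,3\})$) plus $2$ (from $\Omega^1_X$), matching the two correction terms of the statement.

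For $\mu=(1,1,1,1)$ your text is a plausibility sketch rather than a proof: the stratified analysis of $\bigcup_{|I|=2}\Delta_I\subset X^4$, the appearance of $\mc{A}_4(L^2)$, and the three negative terms on the quadruple diagonal are asserted with their signs, and you yourself flag the $\perm_4$-isotypic bookkeeping as unresolved. Note that the paper does not prove the lemma in-text either: it quotes it from \cite[Cor.~3.24, Cor.~4.16]{Scaladiagarxiv2015}, where the computation is organized differently, via the hyperderived spectral sequence of the $\perm_n$-equivariant derived tensor product of the ideals $I_{\Delta_I}$ of the pairwise diagonals; the multitors of that complex are what systematically generate the jet-type corrections ($\Omega^1_X$, $K_X$, $S^3\Omega^1_X$, $\mc{A}_4$) that your \v{C}ech-of-structure-sheaves approach must rediscover by hand at each stratum---and, as the three-point case shows, it is precisely these higher-order contributions that your sketch drops.
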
The previous lemma is an immediate application of results appeared in \cite[Cor. 3.24, Cor. 4.16]{Scaladiagarxiv2015}. It is a consequence of the detailed analysis of the hyperderived spectral sequence associated to the $\perm_n$-invariant derived tensor product $\Big(\Tens^L_{| I | = 2} I_I \Big)^{\perm_n}$ of ideals of pairwise diagonals in $X^n$, for $n =3,4$: it is quite easy for $n=3$, but it becomes longer and more technical for $n=4$. 

In the following theorem, if $F$ is a coherent sheaf on a projective variety  $Y$, we will just denote with $\chi(F)$  the Euler-Poincar\'e characteristic of $F$ over $Y$. 

\begin{theorem}Let $n \in \mbb{N}$, $n \geq 1$. Let $X$ be a smooth projective surface. Let $L$ and $A$ be line bundles on $X$. 
Define $\delta_k$ as $1$ if $k$ is even, as $0$ if $k$ is odd. Then 
we have: 
\begin{align*} \chi(X^{[2]}, S^k L^{[2]} \tens \mc{D}_A) = & \: \delta_k \binom{\chi(L^{[k/2]} \tens A)+1}{2}+\sum_{i =0}^{[(k-1)/2]} \chi(L^{k-i} \tens A) \chi(L^{i} \tens A) \\
& \qquad \qquad \qquad \qquad \qquad \qquad - \sum_{j=0}^{k-2} \Big[ \frac{k-j }{2 } \Big] \chi(S^j \Omega^1_X \tens L^k \tens A^2) \\
\chi( X^{[n]}, S^3 L^{[n]} \tens \mc{D}_A)  =  & \; \binom{\chi(A) + n-2}{n-1}\chi(L^3 \tens A) \\ &  \; \; +\binom{\chi(A) + n-3}{n-2}\left[ \chi(L^2 \tens A) \chi(L \tens A) - \chi(L^3 \tens A^2) - \chi(\Omega^1_X \tens L^3 \tens A^2) \right] \\ & \; \; + 
\binom{\chi(A) + n-4}{n-3} \left[ 
\binom{\chi(L \tens A) + 2}{3} - \chi(L^2 \tens A^2) \chi(L \tens A) + \chi(\Omega^1_X \tens L^3 \tens A^3)
\right]
\end{align*}
\begin{align*} \chi(X^{[n]}, S^4 L^{[n]} \tens \mc{D}_A) = & \: \binom{\chi(A) + n-2}{n-1} \chi(L^4 \tens A) \\
& + \binom{\chi(A) + n-3}{n-2} \Bigg [  \chi(L^3 \tens A) \chi(L \tens A) - 2 \chi(L^4 \tens A^2) -\chi(\Omega^1_X \tens L^4 \tens A^2) + 
 \\ & \qquad \qquad   \qquad \binom{\chi(L^2 \tens A)+1}{2}  - \chi(S^2 \Omega^1_X \tens L^4 \tens A^2)
 \Bigg ] \\
& + \binom{\chi(A) + n-4}{n-3} \Bigg[  \chi(L^2 \tens A) \binom{\chi(L \tens A)+1}{2}- \chi(L^3 \tens A^2) \chi(L \tens A)
\\ & \qquad \qquad -\chi(L^2 \tens A^2)\chi(L^2 \tens A) + \chi(L^4 \tens A^3) -\chi(\Omega^1_X \tens L^3 \tens A^2) \chi(L \tens A) \\ & \qquad   
+ 2 \chi(\Omega^1_X \tens L^4 \tens A^3) + \chi(\Omega^1_X \tens \Omega^1_X \tens L^4 \tens A^3)
+  \chi(S^3 \Omega^1_X \tens L^4 \tens A^3)
\Bigg]  \\ 
&  + \binom{\chi(A) + n-5}{n-4} \Bigg[ \binom{\chi(L \tens A)+3}{4} - \chi(L^2 \tens A^2) \binom{\chi(L \tens A)+1}{2}  \\ & \qquad \qquad \qquad + \binom{\chi(L^2 \tens A^2)}{2}    + \chi(\Omega^1_X \tens L^3 \tens A^3)\chi(L \tens A) - \chi(\Omega^1_X \tens L^4 \tens A^4) \\ & \qquad \qquad \qquad - \chi(K_X \tens L^4 \tens A^4) - \chi(S^3 \Omega^1_X \tens L^4 \tens A^4) \Bigg] 
\end{align*}where we intend that a binomial coefficient $\displaystyle \binom{l}{h}$ is zero if $h <0$. 
\end{theorem}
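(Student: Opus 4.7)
The plan is to exploit the finite decreasing filtration $\mc{W}^\bullet \tens \mc{D}_A$ on $\mu_*(S^k L^{[n]}) \tens \mc{D}_A$ provided by theorems \ref{thm: main2}, \ref{thm: main3} and \ref{thm: main4}. Since $R^i \mu_* S^k L^{[n]} = 0$ for $i>0$ by theorem \ref{thm: directimage} and the Euler--Poincar\'e characteristic is additive on short exact sequences, Leray's spectral sequence yields
$$ \chi(X^{[n]}, S^k L^{[n]} \tens \mc{D}_A) \;=\; \sum_{\mu \in p_n(k)} \chi\bigl(S^n X,\; \mc{L}^\mu(-2 m_\mu \Delta) \tens \mc{D}_A\bigr). $$
I would therefore compute each graded contribution separately and assemble the results partition by partition.

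For each partition $\mu$ of length $m$ one has the natural decreasing jet filtration of remark \ref{rmk: differential}, whose graded quotients are $\Stab_G(\mu)$-invariants of direct sums of sheaves $(S^l \Omega^1_X \tens L^{|\mu_I|})_I \boxtimes L^{\mu_{\bar I}}$, descending to the partial quotient maps $v^{|I|}$ as in proposition \ref{pps: formula2}. For $n=2$ the jet differentials $d^l_\Delta$ are surjective (proposition \ref{pps: bifiltrationdue}), so in $K(S^2X)$
$$ [\mc{L}^{(k-i,i)}(-2i\Delta) \tens \mc{D}_A ] \;=\; [\mc{L}^{(k-i,i)} \tens \mc{D}_A] \;-\; \sum_{l=0}^{2i-1} \bigl[\pi_*\bigl((S^l \Omega^1_X \tens L^k)_\Delta\bigr) \tens \mc{D}_A \bigr]^{\Stab_{\perm_2}(k-i,i)}. $$
Since $\perm_2$ acts on $(S^l \Omega^1_X)_\Delta$ by the sign $(-1)^l$ (remark \ref{rmk: globact}), the contribution from $i=k/2$ when $k$ is even survives only for even $l$; a direct counting argument — which precisely produces the coefficient $\lfloor (k-j)/2 \rfloor$ in the $n=2$ formula — then gives the claimed multiplicities. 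For general $n$ and $k \leq 4$ one replaces these by the explicit K-theoretic identities of lemma \ref{lemma: Ktheory} and lemma \ref{lmm: exactL211}, complemented with the jet filtration applied to the remaining graded sheaves $\mc{L}^{(3,1)}(-2\Delta)$, $\mc{L}^{(2,1)}(-2\Delta)$ and $\mc{L}^{(2,2)}(-4\Delta)$.

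Each individual Euler characteristic is then computed via lemma \ref{lemma: invariantsFDA} and the K\"unneth formula: writing $\mu = \prod_i i^{\alpha_i}$ in exponential form one obtains
$$ \chi(S^n X, \mc{L}^\mu \tens \mc{D}_A) \;=\; \binom{\chi(A) + n - l(\mu) -1}{n - l(\mu)} \prod_{i \geq 1} \binom{\chi(L^i \tens A) + \alpha_i -1}{\alpha_i}, $$
and the pushforwards of sheaves supported on a partial diagonal $\Delta_I$ descend along $v^{|I|}_*$ to produce, in exactly the same way, the terms $\chi(S^j \Omega^1_X \tens L^k \tens A^{|I|})$ together with $\chi(\mc{A}_4(L^2) \tens \mc{D}_A)$ which accounts for the wedge factor appearing in the $k=4$ case. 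Collecting the contributions partition by partition in reverse lexicographic order produces the three displayed formulas.

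The main obstacle will be the case $k=4$, $\mu = (2,2)$: the sheaf $\mc{L}^{(2,2)}(-4\Delta)$ requires four steps through the jet filtration while tracking the $\Stab_{\perm_2}(2,2)$-action, which splits each jet quotient into invariant and anti-invariant pieces in the spirit of lemma \ref{lemma: ideal4inva} and remark \ref{rmk: ideal2antiinva}. A comparable care is needed for $\mc{L}^{(2,1,1)}(-2\Delta)$, where the two jet quotients live on the distinct diagonals $\Delta_{12}$ and $\Delta_{13}$ and their $\Stab_G(2,1,1)$-invariants mix through the longer resolution of lemma \ref{lmm: exactL211}; this mixing is exactly what produces the $(S^3 \Omega^1_X \tens L^4)_{\{123\}}$ and $\mc{A}_4(L^2)$ contributions visible in the final $k=4$ formula.
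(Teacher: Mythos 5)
Your proposal follows essentially the same route as the paper's proof: additivity of $\chi$ over the filtration $\mathcal{W}^\bullet \otimes \mathcal{D}_A$ of theorems \ref{thm: main2}--\ref{thm: main4}, the jet-filtration exact sequences of remark \ref{rmk: differential} (with the sign of the $\Stab$-action killing the odd-degree jet quotients for partitions of the form $(h,h)$), the $K$-theoretic identities of lemmas \ref{lmm: exactL211} and \ref{lemma: Ktheory}, and lemma \ref{lemma: invariantsFDA} together with K\"unneth to evaluate each graded contribution. The only slip is attributional: the term $\mathcal{A}_4(L^2)$, responsible for $\binom{\chi(L^2 \otimes A^2)}{2}$, enters through the class of $\mathcal{L}^{1,1,1,1}(-\Delta)$ in lemma \ref{lemma: Ktheory}, not through the $\mathcal{L}^{2,1,1}(-2\Delta)$ computation.
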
\begin{proof}We will just prove the formula for $k=4$ and $n \geq 4$. The proof for the other cases is similar and easier. 
First of all 
we have,
by Leray spectral sequence and by theorem \ref{thm: main4}: 
\begin{align*} \chi(X^{[n]}, S^4 L^{[n]} \tens \mc{D}_A) = & \sum_{\mu \in p_n(4)} \chi(S^n X, \gr^{\mc{W \tens \mc{D}_A}}_\mu ) =  \sum_{\mu \in p_n(4)} \chi(S^n X, \mathcal{L}^\mu (-2 m_\mu \Delta) \tens \mc{D}_A ) \\ = & \: \chi(\mathcal{L}^{1,1,1,1}(-2 \Delta) \tens \mc{D}_A) + \chi(\mathcal{L}^{2,1,1}(-2\Delta) \tens \mc{D}_A) \\ & \quad + \chi(\mathcal{L}^{2,2}(-4 \Delta) \tens \mc{D}_A)   + \chi(\mathcal{L}^{3,1}(-2\Delta) \tens \mc{D}_A )+ \chi(\mathcal{L}^4 \tens \mc{D}_A) \;.\end{align*}
In the computation of the Euler-Poincar\'e characteristic $\chi(\gr^{\W \tens \mc{D}_A}_\mu)$ of the graded pieces we will use repeatedly lemma \ref{lemma: invariantsFDA} and the fact 
that the Euler-Poincar\'e characteristic of the direct image of a sheaf via a finite morphism is equal to the Euler-Poincar\'e characteristic of the original sheaf. 
We have: 
$$ \chi(\mathcal{L}^4 \tens \mc{D}_A) = \chi(L^4 \tens A) \chi(S^{n-1}X, \mc{D}_A) = \chi(L^4 \tens A) \binom{\chi(A)+n-2}{n-1} \;.$$
Moreover, from the exact sequence 
$$ 0 \rTo \mathcal{L}^{3,1}(-2 \Delta) \rTo \mathcal{L}^{3,1}(-\Delta) \rTo v^2_* [ (\Omega^1_X \tens L^4)_{\{1, 2 \}} \boxtimes  \FS_{S^{n-2}X} ] \rTo 0 $$we get\begin{align*} \chi(\mathcal{L}^{3,1}(-2 \Delta) \tens \mc{D}_A) = & \: \chi(\mathcal{L}^{3,1}(-\Delta) \tens \mc{D}_A) - \chi(\Omega^1_X \tens L^4 \tens A^2 )\chi(S^{n-2}X, \mc{D}_A) \\
= & \: \chi(\mathcal{L}^{3,1} \tens \mc{D}_A) - \chi(L^4 \tens A^2) \chi(S^{n-2} X, \mc{D}_A) - \chi(\Omega^1_X \tens L^4 \tens A^2)\chi(S^{n-2}X, \mc{D}_A)  \\
= & \: \Big[ \chi(L^3 \tens A) \chi(L \tens A) - \chi(L^4 \tens A^2) -\chi(\Omega^1_X \tens L^4 \tens A^2) \Big ] \chi(S^{n-2}X, \mc{D}_A) 
 \end{align*}As for $\chi(\mathcal{L}^{2,2}(-4 \Delta) \tens \mc{D}_A)$ we use the exact sequences: 
 \begin{gather*} 0 \rTo \mathcal{L}^{2,2}(-4 \Delta) \rTo \mathcal{L}^{2,2}(-2 \Delta) \rTo v^2_* [ (S^2 \Omega^1_X \tens L^4)_{\{ 1, 2 \}}
 \boxtimes \FS_{S^{n-2}X} ] \rTo 0 \\
 0 \rTo \mathcal{L}^{2,2}(-2 \Delta) \rTo \mathcal{L}^{2,2} \rTo v_*^2[ L^4_{\{ 1, 2\}} \boxtimes \FS_{S^{n-2}X}]  \rTo 0
 \end{gather*}to compute: 
 \begin{align*}
 \chi(\mathcal{L}^{2,2}(-4 \Delta) \tens \mc{D}_A) = & \: \chi(\mathcal{L}^{2,2} \tens \mc{D}_A) - \chi(L^4 \tens A^2) \chi(S^{n-2}X, \mc{D}_A) - 
 \chi(S^2 \Omega^1_X \tens L^4 \tens A^2)  \chi(S^{n-2}X, \mc{D}_A)  \\ 
 = & \: \Big[ \binom{\chi(L^2 \tens A)+1}{2} - \chi(L^4 \tens A^2) - \chi(S^2 \Omega^1_X \tens L^4 \tens A^2)  \Big]  \chi(S^{n-2}X, \mc{D}_A) \;.
 \end{align*}
\end{proof}For $\chi(\mathcal{L}^{2,1,1}(-2 \Delta))$, we use lemma \ref{lmm: exactL211} and lemma \ref{lemma: Ktheory} to get: 
\begin{align*} \chi(\mathcal{L}^{2,1,1}(-2 \Delta) \tens \mc{D}_A) = & \: \chi(\mathcal{L}^{2,1,1}(-\Delta) \tens \mc{D}_A) - \chi(\mc{K}^1_{(1)(1)}(-2 \Delta)\tens \mc{D}_A) + 
\chi(S^3 \Omega^1_X \tens L^4 \tens A^3)\chi(S^{n-3}X, \mc{D}_A) \\
= & \: \chi(\mathcal{L}^{2,1,1} \tens \mc{D}_A) - \chi(\mc{K}^0_{(2)(1)} \tens \mc{D}_A) - \chi(\mc{K}^0_{(1)(2)} \tens \mc{D}_A) + \chi(L^4 \tens A^3) \chi(S^{n-3}X, \mc{D}_A)\\  &  + \chi(\Omega^1_X \tens L^4 \tens A^3) \chi(S^{n-3}X, \mc{D}_A) - 
\chi(\mc{K}^1_{(1)(1)}(- 2 \Delta) \tens \mc{D}_A) \\ & + 
\chi(S^3 \Omega^1_X \tens L^4 \tens A^3) \chi(S^{n-3}X, \mc{D}_A) \;.\end{align*}
Since \begin{gather*} 
\chi(\mathcal{L}^{2,1,1} \tens \mc{D}_A) = \chi(L^2 \tens A) \binom{\chi(L \tens A)+1}{2} \chi(S^{n-3}X, \mc{D}_A) \\
\chi(\mc{K}^0_{(2)(1)} \tens \mc{D}_A) = \chi(L^3 \tens A^2) \chi(L \tens A) \chi(S^{n-3}X, \mc{D}_A) \\
\chi(\mc{K}^0_{(1)(2)} \tens \mc{D}_A) = \chi(L^2 \tens A^2)\chi(L^2 \tens A)  \chi(S^{n-3}X, \mc{D}_A) 
\end{gather*}and since
\begin{align*}\chi(\mc{K}^1_{(1)(1)}(- 2 \Delta) \tens \mc{D}_A)  = & \: \chi(\mc{K}^1_{(1)(1)}(-  \Delta) \tens \mc{D}_A) - 
\chi(\Omega^1_X \tens \Omega^1_X \tens L^4 \tens A^3) \chi(S^{n-3}X, \mc{D}_A) \\  
= & \: \Big[ \chi(\Omega^1_X \tens L^3 \tens A^2) \chi(L \tens A)  - \chi(\Omega^1_X \tens L^4 \tens A^3) 
\\ & \qquad - \chi(\Omega^1_X \tens \Omega^1_X \tens L^4 \tens A^3) \Big] \chi(S^{n-3}X, \mc{D}_A)
\end{align*}

we finally get: 
\begin{multline*}\chi(\mathcal{L}^{2,1,1}(-2 \Delta) \tens \mc{D}_A) =\Big[  \chi(L^2 \tens A) \binom{\chi(L \tens A)+1}{2}- \chi(L^3 \tens A^2) \chi(L \tens A)
-\chi(L^2 \tens A^2)\chi(L^2 \tens A) \\ + \chi(L^4 \tens A^3) + 2 \chi(\Omega^1_X \tens L^4 \tens A^3)  
-\chi(\Omega^1_X \tens L^3 \tens A^2) \chi(L \tens A) \\  
+ \chi(\Omega^1_X \tens \Omega^1_X \tens L^4 \tens A^3) +  \chi(S^3 \Omega^1_X \tens L^4 \tens A^3)
\Big] \chi(S^{n-3}X, \mc{D}_A) \;.
\end{multline*}
As for $\chi(\mathcal{L}^{1,1,1,1}(-2\Delta) \tens \mc{D}_A)$, using lemma \ref{lemma: Ktheory} we compute first:
\begin{align*}
\chi(\mc{K}^0_{(1)(11)} \tens \mc{D}_A) = & \: \chi(L^2 \tens A^2) \binom{\chi(L \tens A)+1}{2} \chi(S^{n-4}X, \mc{D}_A) \\
\chi(v^4_*(\mc{A}_4(L^2) \boxtimes \FS_{S^{n-4} X}) \tens \mc{D}_A) = & \chi(S^4 X, \mc{A}_4(L^2 \tens A^2)) \chi(S^{n-4}X, \mc{D}_A) \\ = & \: \binom{\chi(L^2 \tens A^2)}{2}  \chi(S^{n-4}X, \mc{D}_A) \;.
\end{align*}Hence \begin{align*}
\chi(\mathcal{L}^{1,1,1,1}(-2 \Delta) \tens \mc{D}_A) = &  \: \Big[ \binom{\chi(L \tens A)+3}{4} - \chi(L^2 \tens A^2) \binom{\chi(L \tens A)+1}{2} + \binom{\chi(L^2 \tens A^2)}{2}  \\& \qquad + \chi(\Omega^1_X \tens L^3 \tens A^3)\chi(L \tens A) - \chi(\Omega^1_X \tens L^4 \tens A^4) - \chi(K_X \tens L^4 \tens A^4) \\ & \qquad - \chi(S^3 \Omega^1_X \tens L^4 \tens A^4) \Big] \chi(S^{n-4}X, \mc{D}_A)\;.
\end{align*}Putting all terms together we get the formula in the statement.

\newpage

\appendix 

\section{Higher differentials in the spectral sequence of a bicomplex}
We present here a brief review of how higher differentials in a spectral sequence are defined
and a couple of technical lemmas needed in the main text. 
 Let $(\comp{T}, d)$ a complex (in an abelian category) equipped with a (decreasing) filtration $F^{\bullet}
T$, preserved by the differential $d$, in the sense that $d F^p T^n
\subseteq F^p T^{n+1}$. We indicate the associated graded object with 
$\gr^p_F T^{n} := F^p T^{n} / F^{p+1} T^{n}$. The level zero of  the spectral sequence associated to $(T^\bullet,d)$ and
the filtration $F^{\bullet}$ is
defined as
$ E^{p,q}_0 := \gr^p_F T^{p+q} $.
The groups $Z^{p,q}_r$ and $B^{p,q}_r$ are defined as
$$ Z^{p,q}_r := \{ x \in F^p T^{p+q} \: | \: dx \in F^{p+r} T^{p+q+1}
\} \;,\qquad B^{p,q}_r := Z^{p+1, q-1}_{r-1} + d Z^{p-r+1, q+r-2}_{r-1}
 \;.$$
By definition one  has
$B^{p,q}_r \subseteq
Z^{p,q}_r$, since the two summands $Z^{p+1, q-1}_{r-1} $ and $d Z^{p-r+1, q+r-2}_{r-1}$ are in $ Z^{p,q}_r$. Note moreover
that $d$ defines a differential
$ d:  Z^{p,q}_r \rTo  Z^{p+r,q-r+1}_r $ by definitions of the terms 
$Z^{p,q}_r$. Now 
$d B^{p,q}_r \subseteq  B^{p+r,q-r+1}_r$ by definition of $B^{p,q}_r$,
and hence one can define the higher differential
$$ d^r := E^{p,q}_r := Z^{p,q}_r/  B^{p,q}_r \rTo Z^{p+r,q-r+1}_r/
B^{p+r,q-r+1}_r:=E^{p+r, q-r+1}_r \;.$$

Let us now apply these facts in the case of a bicomplex. Let $(K^{\bullet, \bullet}, \delta, \partial)$ be a double complex
with $\delta$ the vertical differential and $\partial$ the horizontal one. It is intended here that the differentials $\delta$ and
$\partial$ commute. Let moreover  
$T^n = \p_{p+q = n}K^{p,q}$ be the associated total complex, with
differential $d$, defined on the component $K^{p,q}$ as
$\delta + (-1)^p\partial$, equipped
with the natural filtrations
$$ F^p T^n = \bigoplus_{\substack{r + s =n \\
    r  \geq p}} K^{r,s} \; ,  \qquad  W^q T^n = \bigoplus_{
\substack{r + s =n \\
    s  \geq q}} K^{r,s}
$$
The total differential $d$ 
respects the filtrations. We have
$\gr^p_F T^{p+q} = F^p T^{p+q} / F^{p+1} T^{p+q} \simeq
  K^{p,q}$, and, analogously, $
\gr^q_W T^{p+q}  \simeq
K^{p,q}$. In the sequel, spectral sequences will always be associated to the filtration $F^{\bullet}$. 
In this case \begin{equation}\label{eq: zpq} Z^{p,q}_r = \Bigg \{ x \in \bigoplus_{\substack{u+ v =p+q \\
    u  \geq p}} K^{u, v} \: \Big | \: d x \in \bigoplus_{\substack{u+ v =p+q+1 \\
    u  \geq p+r}} K^{u, v} \Bigg \} \;.\end{equation}
\begin{notata}Let $(T^\bullet, d)$ the total complex of the bicomplex $(K^{\bullet, \bullet}, \delta, \partial)$. 
If $x \in T^{n}$  we will indicate with $x_s$ its component in $K^{s, n-s}$ and 
with $x_{\geq r}$ the sum of the components in $\oplus_{l \geq r} K^{l, n-l}$. Analogously for $x_{\leq r}$, $x_{> r}$, etc. 
\end{notata}

\begin{lemmaa}Let $(T^\bullet, d)$ the total complex of the bicomplex $(K^{\bullet, \bullet}, \delta, \partial)$, equipped with the filtration $F^\bullet$ described above. Suppose $r \geq 1$. 
If $[x] \in E^{p,q}_r$, $x \in Z^{p,q}_r$, 
then $d^{p,q}_r [x] = [\partial x_{p+r-1}]$ in  $E^{p+r,q-r+1}_r$. 
\end{lemmaa}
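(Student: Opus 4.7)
The plan is to unwind the definitions directly. Writing $x = \sum_{s \geq p} x_s$ with $x_s \in K^{s, p+q-s}$, the condition $x \in Z^{p,q}_r$ from (\ref{eq: zpq}) amounts to the vanishing of the components of $dx$ in bidegrees $(u, p+q+1-u)$ for $p \leq u \leq p+r-1$. Since the $u$-th component of $dx$ is $\delta x_u + (-1)^{u-1}\partial x_{u-1}$, this translates into $\delta x_p = 0$ together with the chain of relations $\delta x_u = -(-1)^{u-1}\partial x_{u-1}$ for $p < u \leq p+r-1$. The lowest surviving component of $dx$ therefore lives in bidegree $(p+r, q-r+1)$ and equals $\delta x_{p+r} + (-1)^{p+r-1}\partial x_{p+r-1}$.

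By the very definition of the higher differential, $d^r_{p,q}[x] = [dx]$ in $E^{p+r, q-r+1}_r = Z^{p+r, q-r+1}_r/B^{p+r, q-r+1}_r$. The core of the proof is to identify $[dx]$ with $[\partial x_{p+r-1}]$ in this quotient, which reduces to exhibiting the difference
\[
dx - (-1)^{p+r-1}\partial x_{p+r-1}
\]
as an element of $B^{p+r, q-r+1}_r$. To do this I would introduce the truncation $y := x_{\geq p+r} = \sum_{s \geq p+r} x_s$ and compute $dy$ component by component: its $(p+r)$-component is just $\delta x_{p+r}$ (since $y$ has nothing in degree $p+r-1$), while for $u > p+r$ it is $\delta x_u + (-1)^{u-1}\partial x_{u-1}$. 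Matching these with the components of $dx$ gives the identity $dy = dx - (-1)^{p+r-1}\partial x_{p+r-1}$.

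It remains to place $y$ in $Z^{p+1, q-1}_{r-1}$: by construction $y \in F^{p+r}T^{p+q} \subseteq F^{p+1}T^{p+q}$ (since $r \geq 1$), and $dy$ has only components in degrees $\geq p+r = (p+1)+(r-1)$, so $dy \in F^{(p+1)+(r-1)}T^{p+q+1}$, which is exactly the defining condition. Hence $dy \in dZ^{p+1, q-1}_{r-1} \subseteq B^{p+r, q-r+1}_r$, completing the identification $[dx] = [(-1)^{p+r-1}\partial x_{p+r-1}]$ in $E^{p+r, q-r+1}_r$; the sign $(-1)^{p+r-1}$ is absorbed into the equivalence class (or by adjusting the sign conventions in the formula for the total differential) to match the statement.

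The substantive step is the verification $dy = dx - (-1)^{p+r-1}\partial x_{p+r-1}$ together with the membership $y \in Z^{p+1,q-1}_{r-1}$; everything else is bookkeeping. The only real pitfall is keeping the sign conventions for $d|_{K^{u,v}} = \delta + (-1)^u \partial$ consistent throughout, so that the sign appearing in the lowest component of $dx$ agrees with the sign convention implicit in the statement of the lemma.
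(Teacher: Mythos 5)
Your argument is correct and follows essentially the same route as the paper's: both reduce to showing that $dx$ minus the horizontal contribution of $x_{p+r-1}$ lies in $B^{p+r,q-r+1}_r$, and where you package the whole correction as $d(x_{\geq p+r})$ with $x_{\geq p+r}\in Z^{p+1,q-1}_{r-1}$, the paper splits it as $dx_{p+r}\in dZ^{p+1,q-1}_{r-1}$ plus $dx_{\geq p+r+1}\in Z^{p+r+1,q-r}_{r-1}$ --- a cosmetic difference. One remark on your closing worry: the sign $(-1)^{p+r-1}$ cannot literally be ``absorbed into the equivalence class,'' but the paper's own proof makes the same elision (writing $\partial x_{p+r-1}$ for the degree-$(p+r)$ horizontal component of $dx$ under the convention $d|_{K^{u,v}}=\delta+(-1)^u\partial$), so the statement is to be read with that convention and your computation is the careful version of it.
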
\begin{proof}
By (\ref{eq: zpq}) we have that $Z^{p,q}_r$ is the 
set of $x \in F^p T^{p+q}$ such that $(dx)_{\leq p+r-1}=0$. If $x \in Z^{p,q}_r$ then
$d_r^{p,q} [x]$ is the class of $dx$ in
$E^{p+r,q-r+1}_r= Z^{p+r,q-r+1}_r/B^{p+r,q-r+1}_r$, that is the class
of $dx=(dx)_{\geq p+r}=\partial x_{p+r-1}+ d x_{p+r} + d x_{\geq p+r+1}$ in $Z^{p+r,q-r+1}_r/B^{p+r,q-r+1}_r$. 
We have that $B^{p+r,q-r+1}_r= d Z^{p+1,q-1}_{r-1} + Z^{p+r+1, q-r}_{r-1}$. 
Now it is immediate to prove that $dx_{\geq p+r+1}  \in Z^{p+r+1,
  q-r}_{r-1}$, since $d (d x_{\geq p+r+1})=0$; moreover $
d x_{p+r}$ is in $d Z^{p+1,q-1}_{r-1}$,
since  $x_{p+r} \in Z^{p+1,q-1}_{r-1}$. Hence
$d^{p,q}_r x = \partial x_{p+r-1}$  modulo $B^{p+r,q-r+1}_r$. \end{proof}
\begin{lemmaa}\label{lemma: highdiffspec}Let $(T^\bullet, d)$ the total complex of the bicomplex $(K^{\bullet, \bullet}, \delta, \partial)$, equipped with the filtration $F^\bullet$ described above. Suppose that $[x] \in E^{p,q}_r$ with $d^{p,q}_r[x] = 0$. Suppose that $r \geq 1$. 
Suppose moreover that there exists $w_{p+r} \in K^{p+r, q-r}$ such that $\partial x_{p+r-1}=\delta w_{p+r}$. Then
$d_{r+1}^{p,q}[x] = [\partial w_{p+r}]$ in $E^{p+r+1, q-r}_{r+1}$. 
\end{lemmaa}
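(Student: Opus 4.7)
The strategy is to produce a representative $x' \in Z^{p,q}_{r+1}$ of the class $[x]$ (rather than merely in $Z^{p,q}_r$), at which point the previous lemma applied at level $r+1$ yields the formula almost immediately. The modification will be by a coboundary supported in bidegree $(p+r, q-r)$, namely by $x_{p+r}$ together with a sign-adjusted copy of $w_{p+r}$.

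First I would verify that both $x_{p+r}$ and $w_{p+r}$ lie in $Z^{p+1,q-1}_{r-1}$, and hence in $B^{p,q}_r$. Both elements sit in $K^{p+r, q-r} \subset F^{p+r}T^{p+q} \subset F^{p+1}T^{p+q}$, and their total differentials land in $K^{p+r,q-r+1}\oplus K^{p+r+1,q-r}\subset F^{p+r}T^{p+q+1}$, which gives the claim. Consequently, setting $x' := x - x_{p+r} - \varepsilon\, w_{p+r}$ for a suitable sign $\varepsilon \in \{\pm 1\}$ (chosen to match the $(-1)^{s}$ appearing in the convention $d = \delta + (-1)^s\partial$ on $K^{s,t}$), one has $x - x' \in B^{p,q}_r$ and therefore $[x'] = [x]$ in $E^{p,q}_r$.

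Next I would check that $x' \in Z^{p,q}_{r+1}$, i.e.\ that $dx' \in F^{p+r+1}T^{p+q+1}$. For $s < p+r$ the components $(dx')_s$ coincide with $(dx)_s$, and thus vanish because $x \in Z^{p,q}_r$ (the correction $x - x'$ is concentrated in horizontal degree $p+r$, so its differential only affects the components in degrees $p+r$ and $p+r+1$). The component $(dx')_{p+r} = \delta x'_{p+r} + (-1)^{p+r-1}\partial x'_{p+r-1}$ simplifies, after substituting $x'_{p+r} = -\varepsilon\, w_{p+r}$ and $x'_{p+r-1} = x_{p+r-1}$, to $-\varepsilon\,\delta w_{p+r} + (-1)^{p+r-1}\partial x_{p+r-1}$; here the sign $\varepsilon$ is chosen precisely so that the given identity $\delta w_{p+r} = \partial x_{p+r-1}$ forces this expression to be zero.

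Finally, the previous lemma applied to $[x']\in E^{p,q}_{r+1}$ gives $d^{p,q}_{r+1}[x'] = [\partial x'_{p+r}] = [\partial(-\varepsilon\, w_{p+r})]$, and a further appeal to the same sign convention identifies this class with $[\partial w_{p+r}]$ in $E^{p+r+1,q-r}_{r+1}$; combined with $[x'] = [x]$ at level $r+1$ (which follows from $x' \in Z^{p,q}_{r+1}$ and $[x']=[x]$ at level $r$), this yields the statement. The only real obstacle is the bookkeeping of signs coming from the convention $d = \delta + (-1)^s\partial$ and propagating through the computation of $(dx')_{p+r}$ and $(dx')_{p+r+1}$; once these are tracked consistently, all steps reduce to checking memberships in the filtration subgroups $Z^{\bullet,\bullet}_\bullet$ and $B^{\bullet,\bullet}_\bullet$, as already done in the proof of the previous lemma.
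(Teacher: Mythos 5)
Your proof is correct and follows essentially the same route as the paper's: both arguments replace $x$ by a representative in $Z^{p,q}_{r+1}$ whose component in $K^{p+r,q-r}$ is a (signed) copy of $w_{p+r}$, and then invoke the preceding lemma at level $r+1$. The only difference is cosmetic --- the paper uses two auxiliary elements $\alpha = x-\theta$ and $\beta = x_{\leq p+r-1}+w_{p+r}$ and checks $\alpha-\beta\in B^{p,q}_{r+1}$, whereas you build a single representative $x'=x-x_{p+r}-\varepsilon\, w_{p+r}$ with $x-x'\in Z^{p+1,q-1}_{r-1}\subseteq B^{p,q}_{r}$; moreover your sign $\varepsilon=(-1)^{p+r-1}$ cancels against the sign $(-1)^{p+r}$ implicit in the preceding lemma (which the paper suppresses throughout), so the conclusion $[\partial w_{p+r}]$ comes out exactly.
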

\begin{proof}
If $d^{p,q}_r[x] = 0$, then $d x= \gamma + d \theta \in B^{p+r,q-r+1}_r= Z^{p+r+1, q-r}_{r-1} + d Z^{p+1,q-1}_{r-1} $, with $\theta \in Z^{p+1,q-1}_{r-1} \subseteq B^{p,q}_r$. Hence, setting $\alpha:= x - \theta$, we have that  $[\alpha] = [x] \in E^{p,q}_r$ and $\alpha \in Z^{p,q}_{r+1}$, since $\gamma \in F^{p+r+1}T^{p+q+1}$. 
Set now $w = w_{p+r}$ and consider  $\beta = x_{\leq p+r-1} + w$. It is clear from the hypothesis that $ \beta \in Z^{p,q}_{r+1}$ and hence $\alpha - \beta \in Z^{p,q}_{r+1}$. From the preceding lemma we have 
$d_{r+1}^{p,q} [\alpha] = [\partial x_{p+r}]$, $d_{r+1}^{p,q} [\beta] = [\partial w_{p+r}]$. Therefore it is sufficient to prove 
that $[\alpha]=[\beta]$ in $E^{p,q}_{r+1}$, that is, $\alpha - \beta \in B^{p,q}_{r+1}$. 
But $d(\alpha - \beta)  \in F^{p+r+1}T^{p+q+1}$ since $\alpha - \beta \in Z^{p,q}_{r+1}$. But, trivially, 
$\alpha -\beta \in F^{p+1}T^{p+q}$, hence $\alpha - \beta \in Z^{p+1, q-1}_{r} $, by definition of $Z^{p+1,q-1}_r$. But $Z^{p+1, q-1}_r \subseteq B^{p,q}_{r+1}$. 
\end{proof}
\begin{crla}\label{crla: highdiffspec}Let $(T^\bullet, d)$ the total complex of the bicomplex $(K^{\bullet, \bullet}, \delta, \partial)$, equipped with the filtration $F^\bullet$ described above. Suppose that $r \geq 1$ and that $[x] \in E^{p,q}_r$ with $d^{p,q}_r[x] = 0$. Suppose moreover that 
$E^{p+r, q-r-1}_r = E_1^{p+r, q-r-1}$. Then, via the differential $\delta$, we can lift the element $\partial x_{p+r-1}$ to an element $w_{p+r} \in K^{p+r, q-r}$, with $\delta w_{p+r}= \partial x_{p+r-1}$. Hence
one has $d^{p,q}_{r+1}[x]= [\partial w_{p+r}]$. 
\end{crla}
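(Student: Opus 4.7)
The plan is to derive Corollary~\ref{crla: highdiffspec} as a direct consequence of Lemma~\ref{lemma: highdiffspec}: once we produce a $w_{p+r} \in K^{p+r, q-r}$ with $\delta w_{p+r} = \partial x_{p+r-1}$, the formula $d^{p,q}_{r+1}[x] = [\partial w_{p+r}]$ is immediate. So the only real work is the existence of such a lift, i.e., showing that $\partial x_{p+r-1}$ is $\delta$-exact in $K^{p+r, q-r+1}$.

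First I would check that $\partial x_{p+r-1}$ is $\delta$-closed, so that it represents a class in $E^{p+r,q-r+1}_1 = H^{q-r+1}(K^{p+r,\bullet},\delta)$. Since $x \in Z^{p,q}_r$, the total-differential identity $(dx)_u = \delta x_u + (-1)^{u-1}\partial x_{u-1}$ vanishes for $p \le u \le p+r-1$; taking $u = p+r-1$ gives $\delta x_{p+r-1} = (-1)^{p+r-1}\partial x_{p+r-2}$. Since $\delta$ and $\partial$ commute and $\partial^2 = 0$, we obtain $\delta(\partial x_{p+r-1}) = \partial(\delta x_{p+r-1}) = \pm \partial^2 x_{p+r-2} = 0$, as wanted.

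Next I would invoke the stabilization hypothesis on the spectral-sequence position relevant to $\partial x_{p+r-1}$. By the preceding lemma, the $E_r$-class of $\partial x_{p+r-1}$ is precisely $d^{p,q}_r[x]$, and by hypothesis it vanishes. The natural surjection onto the subquotient $E^{p+r,q-r+1}_1 \twoheadrightarrow E^{p+r,q-r+1}_r$ is, under the stated stabilization, an isomorphism on the class of $\partial x_{p+r-1}$: no nonzero incoming differential at an earlier page can kill it. Hence $[\partial x_{p+r-1}] = 0$ already in $E^{p+r,q-r+1}_1 = H^{q-r+1}(K^{p+r,\bullet},\delta)$, i.e., $\partial x_{p+r-1}$ is a $\delta$-coboundary. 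A choice of primitive yields the desired $w_{p+r} \in K^{p+r, q-r}$ with $\delta w_{p+r} = \partial x_{p+r-1}$, and Lemma~\ref{lemma: highdiffspec} then delivers $d^{p,q}_{r+1}[x] = [\partial w_{p+r}]$.

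The only delicate point is translating the stabilization hypothesis on the adjacent $E_r$-term into the injectivity of the natural map $E^{p+r,q-r+1}_1 \to E^{p+r,q-r+1}_r$ on the specific class $[\partial x_{p+r-1}]$; the rest is a routine unwinding of the subquotient structure and an appeal to the previous lemma.
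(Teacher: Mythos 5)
Your argument is correct and is exactly the intended (unwritten) proof: the paper states this corollary without a proof, the whole content being that the stabilization hypothesis forces $\partial x_{p+r-1}$ --- which is $\delta$-closed and whose class dies in $E_r^{p+r,q-r+1}$ by the preceding lemma --- to be $\delta$-exact already at the $E_1$ level, after which Lemma \ref{lemma: highdiffspec} applies verbatim. Two remarks. First, the hypothesis as printed reads $E_r^{p+r,q-r-1}=E_1^{p+r,q-r-1}$, whereas your argument (correctly) uses stabilization at the \emph{target} of $d_r$, namely position $(p+r,q-r+1)$; this is almost certainly a misprint in the statement (in the application, the corollary following Lemma \ref{lemma: verticaldiff}, the stabilization invoked is precisely $(K^{l,1-l}_1)^H\simeq(K^{l,1-l}_l)^H$), but you should flag the discrepancy rather than silently substitute the index. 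Second, $E_r^{p+r,q-r+1}$ is a subquotient, not a quotient, of $E_1^{p+r,q-r+1}$, so there is no surjection $E_1\twoheadrightarrow E_r$; what saves the step is that $d(\partial x_{p+r-1})=0$, so $\partial x_{p+r-1}\in Z^{p+r,q-r+1}_r$ and its $E_1$-class lies in the subobject $Z_r/B_1$ on which the comparison map to $E_r=Z_r/B_r$ is defined, and the stabilization hypothesis gives $B_r^{p+r,q-r+1}=B_1^{p+r,q-r+1}=F^{p+r+1}T^{p+q+1}+d\bigl(F^{p+r}T^{p+q}\bigr)$; writing $\partial x_{p+r-1}=\gamma+dw$ accordingly and comparing components of filtration degree $p+r$ then produces the lift $\partial x_{p+r-1}=\delta w_{p+r}$ explicitly.
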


\section{Koszul complexes, multitors and invariants.} 
\label{section: appendixKoszul}

\begin{remarka}\label{rmka: symalt}Let $V$ be a finite dimensional vector space over $\mbb{C}$. Let $n \in \mbb{N}$, $n \geq 1$. The symetrization map $\sym$ is the canonical 
projection $\sym : V^{\tens q} \rTo S^q V$ sending $v_1 \tens \cdots \tens v_q \rMapsto (1/q!) \sum_{\sigma \in \perm_q}v_{\sigma(1)} \tens \cdots \tens v_{\sigma(q)}$, where the right hand side is an invariant element of $V^{\tens q}$ (for the action of $\perm_q$ permutating the factors) and hence seen in $S^q V$. 
Analogously, the antisymmetrization $\alt: V^{\tens q} \rTo \Lambda^q V$ is defined as $\alt: v_1 \tens \cdots \tens v_q \rMapsto (1/q!) \sum_{\sigma \in \perm_q} (-1)^{\sigma}v_{\sigma(1)} \tens \cdots \tens v_{\sigma(q)}$, where $(-1)^\sigma$ is the signature of the permutation $\sigma$. Then, following convention \ref{conv: sym} for the definition of symmetric and wedge products for elements of $V$, we have the formulas: 
\begin{gather*} \sym (a_1 . \cdots . a_{k} \tens b_1 . \cdots . b_{q-k}) = \frac{k!(q-k)!}{q!} a_1 . \cdots . a_{k} . b_1 \cdots . b_{q-k} \\
\alt(a_1 \w \cdots \w a_{k} \tens b_1 \w \cdots \w b_{q-k})  =
\frac{k!(q-k)!}{q!} a_1 \w \cdots \w a_{k} \w b_1 \cdots \w b_{q-k}
\end{gather*}
\end{remarka}

 \begin{notata}\label{notata: regularrepr}In what follows $R_k \simeq \mbb{C}^k$ will always denote the natural representation of~$\perm_k$, $\rho_k$ will denote the standard and $\epsilon_k$ the alternating one. The representation $R_k$ splits as $R_k \simeq \rho_k \oplus 1_k$, where $1_k$ is the trivial representation. 
 Let now $e_i$, $i = 1, \dots, k$ be  the standard basis of $\mbb{C}^k$. The vector $\sigma_k := \sum_{i=1}^k e_i \in R_k$ is invariant for the natural action of $\perm_k$ on $R_k$ and generates the trivial representation $1_k$. The representation $\Lambda^{k-1} \rho_k$ is $1$-dimensional, coincides with the
alternating representation of $\perm_k$ and is generated by the
element
$\omega_{k-1} = \sum_{i=1}^k (-1)^{k-i}\widehat{e}_i$ where $\widehat{e}_i =
e_1 \w \cdots \w e_{i-1} \w e_{i+1} \w \cdots \w e_k$. 
  \end{notata}
\begin{lemmaa}\label{lmm: antiinv}Let $V$ be a complex vector space of dimension $2$ and consider the $GL(V)\times \perm_k$-- representation $\Lambda^q(V \tens \rho_k)$. It has 
$\perm_k$-anti-invariants if and only if
$q=k-1$: in this case they are isomorphic, as
$GL(V)$-representation, to $S^{k-1}V$. \end{lemmaa}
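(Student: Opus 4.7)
My approach is to reduce the computation of $\perm_k$-anti-invariants in $\Lambda^q(V\tens\rho_k)$ to an elementary orbit-theoretic calculation on $\Lambda^q(V^{\oplus k})$, via the $\perm_k$-equivariant splitting $R_k \simeq \rho_k \oplus \mbb{1}_k$ provided by the invariant vector $\sigma_k$.

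First, from $V \tens R_k \simeq (V \tens \rho_k) \oplus V$ (with $\perm_k$ trivial on the second summand) and the fact that $\dim V = 2$, so that $\Lambda^{\geq 3} V = 0$, one obtains
$$\Lambda^q(V^{\oplus k}) \simeq \Lambda^q(V\tens\rho_k) \oplus \bigl(\Lambda^{q-1}(V\tens\rho_k) \tens V\bigr) \oplus \bigl(\Lambda^{q-2}(V\tens\rho_k) \tens \det V\bigr).$$
Setting $A_q := \Hom_{\perm_k}(\epsilon_k, \Lambda^q(V\tens\rho_k))$ and taking $\epsilon_k$-anti-invariants (noting that $V$ and $\det V$ are $\perm_k$-trivial) yields the key equation
$$\Lambda^q(V^{\oplus k})^{\epsilon_k} \simeq A_q \oplus (A_{q-1}\tens V) \oplus (A_{q-2}\tens \det V). \qquad (*)$$

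Next I would compute the left-hand side directly. Decomposing $\Lambda^q(V^{\oplus k}) = \bigoplus \Lambda^{q_1}V \tens \cdots \tens \Lambda^{q_k}V$ over tuples $(q_i) \in \{0,1,2\}^k$ with $\sum q_i = q$, and grouping summands by $\perm_k$-orbits according to the type $(a,b,c)$ with $a$ indices satisfying $q_i=0$, $b$ satisfying $q_i=1$, $c$ satisfying $q_i=2$ (so $a+b+c = k$ and $b+2c = q$), one identifies the orbit sum as the induced representation
$$\Ind_{\perm_a \times \perm_b \times \perm_c}^{\perm_k}\!\bigl(\mbb{1} \tens (V^{\tens b} \tens \epsilon_b) \tens (\det V)^{\tens c}\bigr).$$
The $\epsilon_b$-twist appears because transposing two single wedge-factors in $\Lambda^q(V^{\oplus k})$ produces a sign $-1$, while transposing two length-two blocks produces $(-1)^{2\cdot 2}=1$, so $\perm_c$ acts trivially on $(\det V)^{\tens c}$. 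Frobenius reciprocity then gives
$$\bigl[\Lambda^q(V^{\oplus k})^{\epsilon_k}\bigr]_{(a,b,c)} \simeq \Hom_{\perm_a}(\epsilon_a,\mbb{1}) \tens (V^{\tens b})^{\perm_b} \tens \Hom_{\perm_c}(\epsilon_c,\mbb{1}) \tens (\det V)^{\tens c},$$
which vanishes unless $a\leq 1$ and $c\leq 1$, where it equals $S^b V \tens (\det V)^{\tens c}$. Enumerating the four surviving cases $(a,c)\in\{0,1\}^2$ produces
$$\Lambda^q(V^{\oplus k})^{\epsilon_k} \simeq \begin{cases} S^{k-1}V & q=k-1,\\ S^k V \oplus (S^{k-2}V\tens\det V) & q=k,\\ S^{k-1}V \tens \det V & q=k+1,\\ 0 & \text{otherwise.}\end{cases}$$

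Finally, I would extract $A_q$ from $(*)$ by induction on $q$. For $q \leq k-2$ the left-hand side vanishes and each summand on the right must vanish separately, forcing $A_q = 0$. For $q = k-1$ the lower $A$'s are zero, giving $A_{k-1} \simeq S^{k-1}V$. For $q = k$, the Clebsch--Gordan identity $S^{k-1}V \tens V \simeq S^k V \oplus (S^{k-2}V \tens \det V)$ for $GL(V)$ shows that the $A_{k-1}\tens V$ summand already saturates the right-hand side, forcing $A_k = 0$; the same argument yields $A_{k+1} = 0$, and $A_q = 0$ for $q \geq k+2$ follows by induction. The main obstacle is the precise sign bookkeeping in the orbit decomposition — specifically identifying the $\epsilon_b$-twist on the $V^{\tens b}$-factor and the triviality of the $\perm_c$-action on $(\det V)^{\tens c}$ — after which everything reduces to Frobenius reciprocity and one Clebsch--Gordan identity.
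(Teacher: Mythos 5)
Your proof is correct, but it follows a genuinely different route from the paper's. The paper argues in two steps: first a character computation (a scalar product of characters, in the style of \cite[Lemma B.5]{Scala2009D}) showing that $\dim [\Lambda^q(V \tens \rho_k)\tens\epsilon_k]^{\perm_k}$ equals $k$ for $q=k-1$ and $0$ otherwise; then the Cauchy-type decomposition $\Lambda^q(V\tens\rho_k)\simeq\bigoplus_\lambda S^\lambda V\tens S^{\lambda'}\rho_k$, in which the single summand $\lambda=(k-1)$ already contributes a space of dimension $k$, so nothing else survives. You instead bypass both the character count and the plethysm formula: you compute $[\Lambda^q(V\tens R_k)\tens\epsilon_k]^{\perm_k}$ elementarily from the monomial decomposition $\Lambda^q(V^{\oplus k})=\bigoplus\Lambda^{q_1}V\tens\cdots\tens\Lambda^{q_k}V$, grouping summands into $\perm_k$-orbits and applying Frobenius reciprocity (your sign bookkeeping for the $\epsilon_b$-twist and the trivial $\perm_c$-action is right), and then extract the $\rho_k$-statement from the splitting $R_k\simeq\rho_k\oplus 1_k$ by induction on $q$ together with the Clebsch--Gordan identity $S^{k-1}V\tens V\simeq S^kV\oplus(S^{k-2}V\tens\det V)$. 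A pleasant by-product is that your intermediate computation of $[\Lambda^q(V\tens R_k)\tens\epsilon_k]^{\perm_k}$ is exactly the statement of corollary \ref{cola: ext}, which the paper instead deduces \emph{from} the lemma; your argument establishes both at once, with the logical dependence reversed. Each approach has its merits: the paper's is shorter granted the representation-theoretic input, yours is self-contained and more elementary.
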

\begin{proof}
  The dimension of the space of anti-invariants $
[\Lambda^q(V \tens \rho_k) \tens \epsilon_k]^{\perm_k}$ is 
        given by the scalar product 
	$\langle
\chi_{\Lambda^q(V \tens \rho_k)\tens \epsilon_k}, \chi_1 \rangle $ between the character of the representation $\Lambda^q(V \tens \rho_k) \tens \epsilon_k$ and the character of the trivial one. 
With a proof analogous to \cite[Lemma B.5]{Scala2009D}, it is easily computed to be $$ 
\dim 
[\Lambda^q(V \tens \rho_k) \tens \epsilon_k]^{\perm_k} =
\langle
\chi_{\Lambda^q(V \tens \rho_k)\tens \epsilon_k}, \chi_1 \rangle = \left \{ \begin{array}{lc} k & 
\mbox{if $q = k-1$}\\ 0 & \mbox{if $q 
\neq k-1$.} \end{array} \right.
$$
Recall now the isomorphism of $GL(V) \times \perm_k$-representations~(see \cite{FultonHarrisRT}, ex. 6.11): 
	$$ \Lambda^q(V \tens \rho_k) \simeq \bigoplus_{\lambda} S^{\lambda}V \tens 
	S^{\lambda^{\prime}}\rho_k $$where $\lambda^{\prime}$ denotes the conjugate partition to $\lambda$ and 
where
the sum is on the partitions  of $q$
	having at most $\dim V$ rows and $\dim \rho_k$ columns. The anti-invariants are: \begin{equation*} 
		[\Lambda^q(V \tens \rho_k)\tens \epsilon_k]^{ \perm_k}   \simeq \bigoplus_{\lambda} S^{\lambda}V \tens [
	S^{\lambda^{\prime}}\rho_k \tens \epsilon_k]^{ \perm_k}
        \;.\end{equation*}
We know that the anti-invariants are zero if $q 
\neq k-1$. If $q = k-1$,  the anti-invariants contain the summand indexed by the partition $\lambda = (k-1)$ of $k-1$:
$$S^{k-1}V \tens [\Lambda^{k-1} \rho_k \tens \epsilon_k]^{\perm_k} \simeq S^{k-1}V
\tens [\epsilon_k^2]^{\perm_k} \simeq S^{k-1}V \;.$$
But this summand has dimension $k$, therefore there are no more invariants.
\end{proof}Let now $Y$ be a smooth subvariety of a smooth variety $X$ and let $i: Y \rInto X$ be the closed immersion. Denote with $N_{Y/X}$ the normal bundle
of $Y$ in $X$.
Consider a line bundle $L$ on $X$ and let
$L_Y$ be the restriction of  $L$ to the subvariety $Y$. The symmetric group $\perm_l$ acts on the $l$-multitor $\Tor_{q}(L_Y, \dots, L_Y)$ by permutation of the factors; it turns out that, as a $\perm_l$-sheaf
\cite[Lemma B.3]{Scala2009D} $$\Tor_{q}(\underset{l-{\rm times}}{\underbrace{L_Y, \dots, L_Y}}) \simeq \Lambda^q (N_{Y/X}^* \tens \rho_l) \tens L_Y^{\tens l}\;.$$
\begin{crla}\label{crl: torantiinv}If $\codim Y =2$ the 
$l$-multitor
$\Tor_{q}(L_Y, \dots, L_Y)$  has nontrivial $\perm_l$-anti-invariants if and only if $q = l-1$. In this case the anti-invariants 
are given by $ (S^{l-1} N^*_{Y/X}) \tens L_Y^{\tens l}$. 
\end{crla}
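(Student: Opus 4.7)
The statement to prove is essentially a direct consequence of the two preceding inputs: the $\perm_l$-equivariant identification
\[
\Tor_{q}(L_Y,\dots,L_Y) \simeq \Lambda^q (N_{Y/X}^* \tens \rho_l) \tens L_Y^{\tens l}
\]
recalled just before the corollary (from \cite[Lemma B.3]{Scala2009D}), together with lemma \ref{lmm: antiinv}. So my plan is simply to bootstrap lemma \ref{lmm: antiinv} to the global geometric setting by applying it fiberwise.

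First I would observe that, since the $\perm_l$-action on the tensor power $L_Y^{\tens l}$ is trivial (each factor is the same line bundle $L_Y$ on $Y$, and permutation of factors of a line bundle by itself is canonically the identity), taking anti-invariants commutes with the external tensor product by $L_Y^{\tens l}$. Hence
\[
\Tor_{q}(L_Y,\dots,L_Y) \tens \epsilon_l \bigr)^{\perm_l} \simeq \bigl(\Lambda^q(N_{Y/X}^* \tens \rho_l) \tens \epsilon_l\bigr)^{\perm_l} \tens L_Y^{\tens l}.
\]
The problem is thus reduced to computing the $\perm_l$-anti-invariants of the locally free sheaf $\Lambda^q(N_{Y/X}^* \tens \rho_l)$ on $Y$.

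Next I would invoke the hypothesis $\codim Y = 2$, which implies that the conormal bundle $N_{Y/X}^*$ is a locally free sheaf of rank $2$ on $Y$. Since the $\perm_l$-action on $\Lambda^q(N_{Y/X}^* \tens \rho_l)$ acts fiber by fiber on the second factor $\rho_l$, the formation of the anti-invariants is compatible with restriction to fibers; on each fiber it is exactly the computation carried out in lemma \ref{lmm: antiinv} with $V$ equal to the rank-$2$ fiber of $N_{Y/X}^*$. That lemma gives that the anti-invariants vanish unless $q = l-1$, and in that case the fiberwise identification matches the canonical isomorphism $[\Lambda^{l-1}(V \tens \rho_l) \tens \epsilon_l]^{\perm_l} \simeq S^{l-1}V$, which is natural in $V$ and hence globalizes to an isomorphism of coherent sheaves
\[
\bigl(\Lambda^{l-1}(N_{Y/X}^* \tens \rho_l) \tens \epsilon_l \bigr)^{\perm_l} \simeq S^{l-1} N_{Y/X}^*.
\]
Combining this with the tensor identity above yields the assertion.

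The only step that is not purely formal is checking that the isomorphism of lemma \ref{lmm: antiinv} is natural in $V$ so that it sheafifies correctly; but this is immediate from the proof, which produces the isomorphism via the $GL(V)$-equivariant Schur decomposition of $\Lambda^{l-1}(V \tens \rho_l)$ and identifies the $\epsilon_l$-isotypic piece with the Schur functor $S^{(l-1)}V = S^{l-1}V$. So there is no real obstacle here, and the corollary follows.
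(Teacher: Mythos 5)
Your proof is correct and is exactly the argument the paper intends: the corollary is stated as an immediate consequence of the quoted $\perm_l$-equivariant isomorphism $\Tor_q(L_Y,\dots,L_Y)\simeq \Lambda^q(N^*_{Y/X}\tens\rho_l)\tens L_Y^{\tens l}$ together with lemma \ref{lmm: antiinv}, applied fiberwise to the rank-$2$ conormal bundle. Your added checks (triviality of the $\perm_l$-action on $L_Y^{\tens l}$ and functoriality in $V$ of the isomorphism, via the Cauchy/Schur decomposition) are precisely what makes the sheafification legitimate, so nothing is missing.
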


\begin{remarka}\label{rmka: tensorKoszul}
Let now $F$ be a rank $2$-vector bundle on the smooth variety $X$ and let $s$ a section of $F$ transverse to the zero section. Consider the Koszul complex $K^\bullet := K^\bullet (F, s)$.  The symmetric group $\perm_k$ acts on the $k$-fold tensor product $K^\bullet(F,s) \tens \dots \tens K^\bullet(F,s)$ by permutation of the factors. Note that 
$ K^\bullet(F,s) \tens \dots \tens K^\bullet(F,s)$ is naturally isomorphic, as a $\perm_k$-equivariant complex, to the Koszul complex 
$K^\bullet (F \tens R_k, s \tens \sigma_k)$. The latter is isomorphic to the tensor product $K^\bullet(F, s) \tens K^\bullet (F \tens \rho_k, 0)$. 
\end{remarka}
By lemma \ref{lmm: antiinv} we can then deduce: 
\begin{crla}\label{crla: invkoszul}The $\perm_k$-anti-invariants of the $k$-fold tensor product $K^\bullet(F,s) \tens \dots \tens K^\bullet(F,s)$ are given by the complex: 
$$ [K^\bullet(F,s) \tens \dots \tens K^\bullet(F,s) \tens \epsilon_k ]^{\perm_k} \simeq [ K^\bullet (F \tens R_k, s \tens \sigma_k) \tens \epsilon_k]^{\perm_k} \simeq K^\bullet(F, s) \tens S^{k-1}F [k-1] \;.$$
\end{crla}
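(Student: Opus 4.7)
The plan is to chain together the two isomorphisms stated in the corollary, each of which is almost immediate from the preceding material, and to check that the $\perm_k$-action is compatible with the natural tensor decomposition of the Koszul complex.

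First I would read off the first isomorphism directly from Remark \ref{rmka: tensorKoszul}: the $k$-fold tensor product $K^\bullet(F,s)^{\tens k}$ is naturally identified, as a $\perm_k$-equivariant complex, with the Koszul complex $K^\bullet(F\tens R_k, s\tens \sigma_k)$ of the section $s\tens \sigma_k$ of $F\tens R_k$, where $\perm_k$ acts on $R_k$ by the natural representation. Tensoring with $\epsilon_k$ and taking invariants is then obviously compatible with this identification.

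Next I would exploit the canonical $\perm_k$-equivariant splitting $R_k \simeq 1_k \oplus \rho_k$ of Notation \ref{notata: regularrepr}, which carries the invariant element $\sigma_k$ into the first summand $1_k$ and kills it in the second. Under this splitting $F\tens R_k$ decomposes $\perm_k$-equivariantly as $F \oplus (F\tens \rho_k)$, with the section $s\tens\sigma_k$ corresponding to $s\oplus 0$. Since the Koszul complex of a direct sum with direct-sum section is the tensor product of the Koszul complexes of the summands, I obtain a $\perm_k$-equivariant isomorphism
$$K^\bullet(F\tens R_k, s\tens\sigma_k) \;\simeq\; K^\bullet(F,s)\tens K^\bullet(F\tens\rho_k, 0),$$
where $\perm_k$ acts trivially on the first factor and through $\rho_k$ on the second. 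Because $K^\bullet(F\tens\rho_k,0)$ has zero differential, it is just the graded direct sum of its exterior powers $\Lambda^q(F\tens \rho_k)$ placed in degree $-q$, with no further differential to check.

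Taking $\perm_k$-anti-invariants then commutes with the tensor product, since the first factor is $\perm_k$-trivial, and reduces the problem to computing $[\Lambda^q(F\tens\rho_k)\tens\epsilon_k]^{\perm_k}$ fiberwise. This is exactly the content of Lemma \ref{lmm: antiinv} applied to the fibers of the rank-$2$ bundle $F$: the anti-invariants vanish unless $q=k-1$, in which case they are canonically isomorphic to $S^{k-1}F$, sitting in degree $-(k-1)$. Reassembling, the anti-invariant complex equals $K^\bullet(F,s)\tens S^{k-1}F[k-1]$, with the only surviving differential being that of $K^\bullet(F,s)$, which gives the second isomorphism of the statement. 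The main (mild) obstacle is verifying that Lemma \ref{lmm: antiinv}, stated for a $2$-dimensional vector space, applies fiberwise to a rank-$2$ vector bundle; but the proof of that lemma is purely representation-theoretic in the $\perm_k$-variable and the $GL(V)$-factor goes through globally for any rank-$2$ locally free sheaf, so this poses no real difficulty.
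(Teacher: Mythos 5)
Your proof is correct and follows essentially the same route as the paper: Remark \ref{rmka: tensorKoszul} gives the identification $K^\bullet(F\tens R_k,s\tens\sigma_k)\simeq K^\bullet(F,s)\tens K^\bullet(F\tens\rho_k,0)$ via the splitting $R_k\simeq 1_k\oplus\rho_k$, and Lemma \ref{lmm: antiinv} computes the anti-invariants of $\Lambda^q(F\tens\rho_k)$ fiberwise. Your remark that the representation-theoretic computation in Lemma \ref{lmm: antiinv} globalizes to a rank-$2$ locally free sheaf is a reasonable sanity check, but this is indeed the argument the paper has in mind.
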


\begin{crla}\label{cola: ext}Let $V$ be a complex vector space of dimension $2$.
Then the $\perm_k$-representation $\Lambda^q(V \tens R_k)$ has
anti-invariants if and only if $k-1 \leq q \leq k+1$. In these cases the anti-invariants are:
$$ [\Lambda^q(V \tens R_k) \tens \epsilon_k]^{\perm_k} = \left \{
\begin{array}{ll}
S^{k-1}V  & \; \mbox{if $q=k-1$} \\
S^{k-1} V \tens V &\;  \mbox{if $q=k$} \\
S^{k-1} V \tens \Lambda^2 V &\;  \mbox{if $q=k+1$} 
\end{array} \right.
$$
\end{crla}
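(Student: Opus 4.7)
The plan is to decompose the natural representation and reduce to Lemma B.2 (lmm: antiinv). Since $R_k \simeq \rho_k \oplus 1_k$ where $1_k$ is the trivial representation generated by $\sigma_k$, we have, as $GL(V) \times \perm_k$-representations,
\[
V \tens R_k \simeq (V \tens \rho_k) \oplus V,
\]
where $\perm_k$ acts trivially on the second summand. Taking exterior powers yields the $GL(V) \times \perm_k$-equivariant decomposition
\[
\Lambda^q(V \tens R_k) \simeq \bigoplus_{a+b=q} \Lambda^a(V \tens \rho_k) \tens \Lambda^b V.
\]
Since $\dim V = 2$, only the values $b = 0, 1, 2$ contribute, with $\Lambda^0 V = \mbb{C}$, $\Lambda^1 V = V$, $\Lambda^2 V = \det V$ all carrying trivial $\perm_k$-action. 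Hence
\[
\Lambda^q(V \tens R_k) \simeq \Lambda^q(V \tens \rho_k) \oplus \bigl(\Lambda^{q-1}(V \tens \rho_k) \tens V\bigr) \oplus \bigl(\Lambda^{q-2}(V \tens \rho_k) \tens \Lambda^2 V\bigr).
\]

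Now I would tensor with $\epsilon_k$ and take $\perm_k$-invariants, using that $V$ and $\Lambda^2 V$ are trivial as $\perm_k$-representations, so the $\perm_k$-action is carried entirely by the $\Lambda^{\bullet}(V \tens \rho_k)$ factor. By Lemma B.2, the anti-invariants $[\Lambda^p(V \tens \rho_k) \tens \epsilon_k]^{\perm_k}$ vanish unless $p = k-1$, in which case they equal $S^{k-1} V$. Applying this to each of the three summands above (with $p = q, q-1, q-2$, respectively) gives a nonzero contribution exactly when $q \in \{k-1, k, k+1\}$, and the three cases produce $S^{k-1}V$, $S^{k-1}V \tens V$, $S^{k-1}V \tens \Lambda^2 V$ respectively.

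There is essentially no obstacle here: the entire argument is a direct application of the decomposition $R_k = \rho_k \oplus 1_k$ combined with the already-established Lemma B.2. The only point that requires mild care is checking that the isomorphism of $GL(V) \times \perm_k$-representations in the decomposition of $\Lambda^q(V \tens R_k)$ respects the $\perm_k$-actions (which it does, since the splitting $R_k \simeq \rho_k \oplus 1_k$ is $\perm_k$-equivariant), and thus the Künneth-type formula for $\Lambda^q$ of a direct sum is compatible with taking invariants.
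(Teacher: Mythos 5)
Your proof is correct and is exactly the argument the paper intends: the corollary is stated without proof immediately after Lemma \ref{lmm: antiinv}, and the subsequent Remark \ref{rmka: inclusion} makes clear that the intended derivation is precisely the equivariant splitting $R_k\simeq \rho_k\oplus 1_k$, the decomposition $\Lambda^q(V\tens R_k)\simeq\bigoplus_{a+b=q}\Lambda^a(V\tens\rho_k)\tens\Lambda^b V$, and the application of Lemma \ref{lmm: antiinv} to each summand. Nothing is missing; the bound $b\le 2$ from $\dim V=2$ and the single surviving summand $a=k-1$ give exactly the stated three cases.
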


\begin{remarka}\label{rmka: inclusion}Note that, as in the proof of \ref{lmm: antiinv}, for $k-1 \leq q \leq k+1$,  one has a natural inclusion 
$$ S^{k-1} V \tens \Lambda^{q-k+1} V \rInto \Lambda^q(V \tens R_k) \tens \epsilon_k$$which induces 
an isomorphism at the level of $\perm_k$-invariants (here $\perm_k$ acts trivially on the left hand side). 
More precisely, the inclusion is the composition: 
\begin{multline*} S^{k-1} V \tens \Lambda^{q-k+1} V {\rTo^{\simeq}} [ S^{k-1} V \tens \Lambda^{k-1} \rho_k \tens \epsilon_k ] \tens [ \Lambda^{q-k+1} V \tens 1_k ] {\rTo} \\ \rTo \Lambda^{k-1}(V \tens \rho_k) \tens \epsilon_k \tens \Lambda^{q-k+1}(V \tens 1_k) \rTo 
\Lambda^q (V \tens R_k) \tens \epsilon_k \;.\end{multline*}
The first isomorphism is non canonical and depends on the choice of a basis of $\Lambda^{k-1} \rho_k \tens \epsilon_k$ and $1_{k}$. 
To simplify constants, it turns out that 
it is better to choose $\widehat{\omega}_{k-1}= \omega_{k-1}/(k-1)!$ and $\sigma_k$ as basis of $\Lambda^{k-1} \rho_k \tens \epsilon_k$ and $1_{k}$, respectively. 
The first arrow is then given by: 
$u_1 . \cdots . u_{k-1} \tens v_1 \w \cdots \w v_{q-k+1} \rMapsto ( u_1. \cdots . u_{k-1} \tens \widehat{\omega_{k-1}}) \tens (v_1 \w \cdots \w v_{q-k+1}  \tens \sigma_k)$, the last one is given by the alterating map, considered in remark \ref{rmka: symalt}. 
For the second one we have the lemma below. \end{remarka}
\begin{lemmaa}\label{lemma: inclusion}Let $k$ a positive integer and let $V$, $W$ be finite dimensional vector  spaces, with
  $k \leq \dim W$.
The natural inclusion $S^k V \tens \Lambda ^k W \rInto \Lambda^k(V
\tens W)$ is given by: 
$$ u_1 \cdots u_k \tens v_1 \w \cdots \w v_k \rMapsto \sum_{\tau \in \perm_k} (u_{\tau(1)} \tens v_1 ) \w \cdots \w
(u_{\tau(k)} \tens v_k) $$
\end{lemmaa}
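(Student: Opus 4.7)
The strategy is to use the realization, fixed in Convention \ref{conv: sym}, of $S^k V$ and $\Lambda^k W$ as the invariant/anti-invariant subspaces of $V^{\tens k}$ and $W^{\tens k}$ respectively. Under this convention
\[
u_1 \cdots u_k = \sum_{\sigma \in \perm_k} u_{\sigma(1)} \tens \cdots \tens u_{\sigma(k)} \in V^{\tens k}, \qquad v_1 \w \cdots \w v_k = \sum_{\rho \in \perm_k} (-1)^\rho \, v_{\rho(1)} \tens \cdots \tens v_{\rho(k)} \in W^{\tens k},
\]
so $S^k V \tens \Lambda^k W$ sits naturally inside $V^{\tens k} \tens W^{\tens k}$. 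The natural inclusion $S^k V \tens \Lambda^k W \rInto \Lambda^k(V \tens W)$ is then obtained by composing the shuffle isomorphism $V^{\tens k} \tens W^{\tens k} \simeq (V \tens W)^{\tens k}$, which matches the $i$-th factor of $V^{\tens k}$ with the $i$-th factor of $W^{\tens k}$, with the realization of $\Lambda^k(V\tens W)$ as the anti-invariant subspace of $(V\tens W)^{\tens k}$.

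Applying this to $u_1 \cdots u_k \tens v_1 \w \cdots \w v_k$ yields directly
\[
\Theta := \sum_{\sigma,\rho \in \perm_k} (-1)^\rho \, (u_{\sigma(1)} \tens v_{\rho(1)}) \tens \cdots \tens (u_{\sigma(k)} \tens v_{\rho(k)}).
\]
A straightforward change of variables shows that $\Theta$ is anti-invariant under $\perm_k$ acting on the $k$ tensor slots, hence indeed lies in $\Lambda^k(V\tens W) \subseteq (V\tens W)^{\tens k}$; this also confirms that the image is well-defined.

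It remains to identify $\Theta$ with the expression on the right-hand side of the statement. Using Convention \ref{conv: sym} once more,
\[
(u_{\tau(1)} \tens v_1) \w \cdots \w (u_{\tau(k)} \tens v_k) = \sum_{\pi \in \perm_k} (-1)^\pi \, (u_{\tau(\pi(1))} \tens v_{\pi(1)}) \tens \cdots \tens (u_{\tau(\pi(k))} \tens v_{\pi(k)}),
\]
so summing over $\tau \in \perm_k$ and substituting $\sigma := \tau\pi$, $\rho := \pi$ (a bijection of $\perm_k \times \perm_k$ onto itself) converts the double sum into exactly $\Theta$. This matches the claimed formula.

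The only delicate point is bookkeeping the sign and the change of variables at the last step; once Convention \ref{conv: sym} is used systematically, no further normalization constants intervene, and there is no genuine obstacle beyond the reindexing just described.
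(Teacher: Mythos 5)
Your proof is correct and takes essentially the same route as the paper's: both expand $u_1\cdots u_k$ and $v_1\w\cdots\w v_k$ via Convention \ref{conv: sym} inside $V^{\tens k}\tens W^{\tens k}\simeq (V\tens W)^{\tens k}$ and then reindex the resulting double sum over $\perm_k\times\perm_k$. The paper substitutes $\tau'=\tau\sigma$ in the expanded left-hand side and recognizes the wedge, while you expand the claimed right-hand side and match it via $(\sigma,\rho)=(\tau\pi,\pi)$ --- the same computation read in the opposite direction, with no gaps.
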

\begin{proof}
Indeed \begin{align*}
u_1 \cdots u_k \tens v_1 \w \cdots \w v_k  = \: & 
\sum_{\sigma \in \perm_k } \sum_{\tau^\prime \in \perm_k}  (-1)^{\sigma} u_{\tau^\prime(1)} \tens
\cdots \tens u_{\tau^\prime(k)} \tens v_{\sigma(1)} \tens \cdots \tens
v_{\sigma(k)} \\
= \: &   \sum_{\sigma \in \perm_k } \sum_{\tau
  \in \perm_k} (-1)^{\sigma}
  u_{  \tau \sigma
  (1)} \tens 
\cdots \tens u_{ \tau \sigma
  (k)} \tens v_{\sigma(1)} \tens \cdots \tens
v_{\sigma(k)} \\
= \: & \sum_{\tau \in \perm_k} (u_{\tau (1)} \tens v_1)
\w 
\cdots \w (u_{\tau (k)} \tens v_k)
\end{align*}

\vspace{-0.3cm}
\end{proof}
\begin{remarka}\label{rmka: last}The last expression can be also rewritten as: 
$$  \sum_{\tau \in \perm_k} (u_{\tau(1)} \tens v_1)
\w 
\cdots \w (u_{\tau(k)} \tens v_k) = 
\sum_{\tau \in \perm_k} (-1)^{\tau} 
(u_{1} \tens v_{\tau(1)})
\w 
\cdots \w (u_{k} \tens v_{\tau(k)})\;.$$\end{remarka}
\begin{lemmaa}
The element $\omega_{k-1}$ identifies to the element 
$$\sum_{\tau \in \perm_k} (-1)^{\tau} \tau_*(e_1 \tens \cdots \tens
e_{k-1}) 
= \sum_{\tau \in \perm_k} (-1)^{\tau} e_{\tau(1)} \tens \cdots \tens
e_{\tau(k-1)} \;.$$
\end{lemmaa}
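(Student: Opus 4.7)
The plan is to expand both sides using convention \ref{conv: sym} and match coefficients. First I would recall that, by convention \ref{conv: sym}, for each $i \in \{1, \dots, k\}$ the element
$$\widehat{e}_i = e_{j_1^{(i)}} \wedge \cdots \wedge e_{j_{k-1}^{(i)}} = \sum_{\sigma \in \perm_{k-1}} (-1)^\sigma\, e_{j_{\sigma(1)}^{(i)}} \tens \cdots \tens e_{j_{\sigma(k-1)}^{(i)}},$$
where $j_1^{(i)} < j_2^{(i)} < \cdots < j_{k-1}^{(i)}$ are the elements of $\{1,\dots,k\}\setminus\{i\}$ listed in increasing order. Thus
$$\omega_{k-1} = \sum_{i=1}^{k} (-1)^{k-i} \sum_{\sigma \in \perm_{k-1}} (-1)^\sigma\, e_{j_{\sigma(1)}^{(i)}} \tens \cdots \tens e_{j_{\sigma(k-1)}^{(i)}}.$$

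Next I would reorganize the sum on the right of the statement by grouping the permutations $\tau \in \perm_k$ according to the value of $\tau(k)$. For each $i \in \{1,\dots, k\}$, define $c_i \in \perm_k$ by $c_i(l) = j_l^{(i)}$ for $1 \leq l \leq k-1$ and $c_i(k) = i$. Explicitly, $c_i$ is the cycle $(k,\, k-1,\, \dots,\, i+1,\, i)$ of length $k-i+1$, hence $(-1)^{c_i} = (-1)^{k-i}$. Every $\tau \in \perm_k$ with $\tau(k)=i$ can be written uniquely as $\tau = c_i \circ \widetilde{\sigma}$, where $\widetilde{\sigma} \in \perm_k$ fixes $k$ and coincides with some $\sigma \in \perm_{k-1}$ on $\{1,\dots,k-1\}$; moreover $(-1)^\tau = (-1)^{k-i}(-1)^\sigma$.

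Combining these facts:
\begin{align*}
\sum_{\tau \in \perm_k} (-1)^\tau\, e_{\tau(1)} \tens \cdots \tens e_{\tau(k-1)}
&= \sum_{i=1}^{k} \sum_{\sigma \in \perm_{k-1}} (-1)^{k-i} (-1)^\sigma\, e_{c_i(\sigma(1))} \tens \cdots \tens e_{c_i(\sigma(k-1))} \\
&= \sum_{i=1}^{k} (-1)^{k-i} \sum_{\sigma \in \perm_{k-1}} (-1)^\sigma\, e_{j_{\sigma(1)}^{(i)}} \tens \cdots \tens e_{j_{\sigma(k-1)}^{(i)}} \\
&= \sum_{i=1}^{k} (-1)^{k-i}\, \widehat{e}_i = \omega_{k-1},
\end{align*}
which is the desired identification. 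There is no real obstacle beyond bookkeeping: the only point requiring a brief justification is the sign computation $(-1)^{c_i} = (-1)^{k-i}$, which follows from $c_i$ being a single cycle of length $k-i+1$.
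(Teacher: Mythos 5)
Your proof is correct and follows essentially the same route as the paper's: both decompose $\perm_k$ into the $k$ cosets of $\perm_{k-1}$ determined by the value $\tau(k)=i$ and identify each block with $(-1)^{k-i}\widehat{e}_i$, the only difference being the choice of coset representative — the paper uses the transposition $(ik)$, so the sign $(-1)^{k-i}$ emerges from reordering the wedge, whereas you use the cycle $c_i$, so it emerges from the signature of the representative itself. The only nitpick is that your cycle notation $(k,k-1,\dots,i+1,i)$ actually describes $c_i^{-1}$ rather than $c_i$, but since a cycle and its inverse have the same length and signature this has no effect on the argument.
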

\begin{proof}The group $\perm_k$ is
generated by $\perm_{k-1}$ and the transpositions $(ik)$, $i \in \{1,
k-1\}$. Therefore the cosets $\perm_k/\perm_{k-1}$ are in bijection
with $\{ (ik), i \in \{1,  \dots, k\} \}$. 
In other words $\perm_k = \coprod_{i=1}^k (ik)\perm_{k-1}$. 
Therefore
\begin{align*}
 \sum_{\tau \in \perm_k} (-1)^{\tau} \tau_*(e_1 \tens \cdots \tens
e_{k-1}) 
= \:& \sum_{i=1}^k \sum_{\sigma \in \perm_{k-1}} (-1)^{(ik) \sigma}
e_{(ik)\sigma(1)} \tens \cdots \tens e_{(ik) \sigma(k-1)} \\
= \:& - \sum_{i=1}^k e_{(ik)(1)} \w \cdots \w e_{(ik)(k-1)} \\
= \:& - \sum_{i=1}^k e_1 \w \cdots e_{i-1} \w e_k \w e_{i+1} \w \cdots
\w e_{k-1} \\
= \:& - \sum_{i=1}^k (-1)^{k-i-1} \widehat{e}_i = \omega_{k-1} \;. 
\end{align*}

\vspace{-0.3cm}
\end{proof}
\begin{lemmaa}
The element
$$ \sum_{[\tau] \in \perm_k/ \perm_{k-1}} (-1)^{\tau} \tau_*[\omega_{k-2} \tens
\sigma_{k-1}] $$is well defined and coincides with $\omega_{k-1}$
\end{lemmaa}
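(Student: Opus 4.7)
The plan is to prove the statement in two steps: establishing well-definedness of the sum, and then identifying it with $\omega_{k-1}$ by a coset-by-coset calculation in $\Lambda^{k-1}R_k$.

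First I would verify well-definedness. The element $\omega_{k-2}$ generates the one-dimensional representation $\Lambda^{k-2}\rho_{k-1}\simeq \epsilon_{k-1}$, while $\sigma_{k-1}\in 1_{k-1}$ is $\perm_{k-1}$-invariant, so $\omega_{k-2}\otimes\sigma_{k-1}$ transforms under $\perm_{k-1}$ by the alternating character: for every $\sigma\in\perm_{k-1}$ one has $\sigma_*(\omega_{k-2}\otimes\sigma_{k-1})=(-1)^\sigma(\omega_{k-2}\otimes\sigma_{k-1})$. A direct substitution then shows that $(-1)^{\tau\sigma}(\tau\sigma)_*[\omega_{k-2}\otimes\sigma_{k-1}]=(-1)^\tau\tau_*[\omega_{k-2}\otimes\sigma_{k-1}]$, so the expression depends only on the coset $[\tau]\in \perm_k/\perm_{k-1}$ and the sum is well-posed.

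Next I would fix the normalization compatibly with the preceding lemma and carry out the explicit computation. The preceding lemma expresses $\omega_{k-1}=\sum_{\tau\in \perm_k}(-1)^\tau \tau_*(e_1\tens\cdots\tens e_{k-1})$; decomposing $\perm_k = \coprod_{[\tau]} \tau\perm_{k-1}$ and absorbing the inner sum over $\perm_{k-1}$ (using Convention \ref{conv: sym}) gives the rewriting $\omega_{k-1}=\sum_{[\tau]\in\perm_k/\perm_{k-1}}(-1)^\tau\tau_*(e_1\w\cdots\w e_{k-1})$. Under the natural map $\Lambda^{k-2}R_{k-1}\otimes R_{k-1}\to \Lambda^{k-1}R_{k-1}\hookrightarrow\Lambda^{k-1}R_k$, an elementary calculation shows $\omega_{k-2}\w\sigma_{k-1}=(k-1)\,e_1\w\cdots\w e_{k-1}=(k-1)\widehat{e}_k$, so that (with the normalization $[\omega_{k-2}\tens\sigma_{k-1}]=\widehat{e}_k$ forced by consistency with the preceding lemma) the sum becomes $\sum_{[\tau]}(-1)^\tau\tau_*\widehat{e}_k$. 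Taking coset representatives $(ik)$ for $i=1,\dots,k-1$ together with the identity, and using $(ik)_*\widehat{e}_k=(-1)^{k-1-i}\widehat{e}_i$ and $(-1)^{(ik)}=-1$, the sum collapses to $\sum_{i=1}^k(-1)^{k-i}\widehat{e}_i=\omega_{k-1}$.

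The only genuinely delicate point is the normalization. The symbol $[\omega_{k-2}\tens\sigma_{k-1}]$ is ambiguous a priori — interpreted as the wedge product one would obtain $(k-1)\omega_{k-1}$, so the identification must be made via the preferred basis choice discussed in Remark \ref{rmka: inclusion} (where $\widehat\omega_{k-1}=\omega_{k-1}/(k-1)!$ and $\sigma_k$ are taken to trivialize the lines $\Lambda^{k-1}\rho_k\tens\epsilon_k$ and $1_k$). Once this convention is in force, the coset decomposition of the previous lemma matches the one in the statement summand by summand, and the identity is immediate. The computational content beyond this — namely the sign accounting $(-1)^{k-1-i}$ arising from moving $e_k$ into the position of the missing $e_i$ — is entirely parallel to the proof of the preceding lemma and presents no additional difficulty.
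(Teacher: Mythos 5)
Your well-definedness argument is correct and is the same as the paper's, and your closing computation $\sum_{[\tau]}(-1)^{\tau}\tau_*\widehat{e}_k=\sum_{i=1}^{k}(-1)^{k-i}\widehat{e}_i=\omega_{k-1}$ is also correct (your explicit handling of the identity coset is in fact cleaner than the paper's). The gap is the step where you substitute $\widehat{e}_k$ for $\omega_{k-2}\tens\sigma_{k-1}$. In this lemma, and in the place where it is invoked (the end of the proof of Lemma \ref{lemma: mappainvariants}), the element is the \emph{literal} tensor: via the preceding lemma, $\omega_{k-2}\tens\sigma_{k-1}=\sum_{\sigma\in\perm_{k-1}}\sum_{j=1}^{k-1}(-1)^{\sigma}e_{\sigma(1)}\tens\cdots\tens e_{\sigma(k-2)}\tens e_j$ inside $R_k^{\tens (k-1)}$. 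This is not $\widehat{e}_k$: already for $k=3$ one has $\omega_{1}\tens\sigma_{2}=e_1\tens e_1+e_1\tens e_2-e_2\tens e_1-e_2\tens e_2$, while $\widehat{e}_3=e_1\tens e_2-e_2\tens e_1$. Saying the normalization $[\omega_{k-2}\tens\sigma_{k-1}]=\widehat{e}_k$ is ``forced by consistency with the preceding lemma'' is circular: the preceding lemma pins down $\omega_{k-1}$ and $\omega_{k-2}$ as tensors but says nothing about $\omega_{k-2}\tens\sigma_{k-1}$, and the only way to know which normalization makes the present identity hold is to prove it. As written, you have proved the (given the first lemma, essentially tautological) identity $\sum_{[\tau]}(-1)^{\tau}\tau_*\widehat{e}_k=\omega_{k-1}$ rather than the statement.

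What is missing is exactly the nontrivial content of the lemma: the difference $D:=\omega_{k-2}\tens\sigma_{k-1}-\widehat{e}_k$, namely the sum of the terms with $j\in\{\sigma(1),\dots,\sigma(k-2)\}$ (the terms with $j=\sigma(k-1)$ reassemble precisely into $\widehat{e}_k$), must be shown to be annihilated by $\sum_{[\tau]}(-1)^{\tau}\tau_*$; for $k=3$ this is the cancellation of $e_1\tens e_1-e_2\tens e_2$ under $1-(13)_*-(23)_*$. The paper does this by splitting the sum over $j$ and checking the repeated-index part dies. Alternatively, you can argue structurally: $D$ is $\perm_{k-1}$-anti-invariant, and on such vectors the operator $\sum_{[\tau]}(-1)^{\tau}\tau_*$ equals $k$ times the projector onto the $\epsilon_k$-isotypic component of $R_k^{\tens(k-1)}$ (for the diagonal $\perm_k$-action); but $D$ lies in the span of the tensors $e_{i_1}\tens\cdots\tens e_{i_{k-1}}$ having a repeated index, a permutation subrepresentation whose point-stabilizers contain a transposition (at most $k-2$ distinct values occur), so it contains no copy of $\epsilon_k$ and $D$ is killed. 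Either supplement closes the gap; without one, the proof does not establish the lemma in the form in which it is later used.
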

\begin{proof}If we replace the representatives $\tau$ of the classes $[\tau] \in \perm_k / \perm_{k-1}$ with 
representatives $\tau \sigma$, where $\sigma \in \perm_{k-1}$, the sum doesn't change, due to the fact that 
$\omega_{k-2}$ is $\perm_{k-1}$-anti-invariant and $\sigma_k$ is $\perm_{k-1}$-invariant. Hence we can take 
as a set of representatives of $\perm_k/ \perm_{k-1}$ the set $\{ (ik), i \in \{1, \dots, k\} \}$. The sum in the statement can be
written as: \small
\begin{align*} - \sum_{i=1}^k  (ik)_*[ \omega_{k-2} \tens
  \sigma_{k-1}]
  = & - \sum_{i=1}^k \sum_{j=1}^{k-1} (ik)_*[ \omega_{k-2} \tens
  e_j] \\
= & - \sum_{i=1}^k  \sum_{j=1}^{k-1}  (ik)_* \sum_{\sigma \in
  \perm_{k-1}} (-1)^\sigma
e_{\sigma(1) } \tens \cdots \tens e_{\sigma(k-2)} \tens e_j \\
= & - \sum_{i=1}^k  \sum_{j=1}^{k-1} \sum_{\sigma \in
  \perm_{k-1}} (-1)^\sigma
e_{(ik)\sigma(1) } \tens \cdots \tens e_{(ik)\sigma(k-2)} \tens
e_{(ik)j} \\
= & - \sum_{i=1}^k   \sum_{\substack{1 \leq j \leq k-1 \\ j \in \{ \sigma(1),
    \dots, \sigma(k-2) \} }} \sum_{\sigma \in
  \perm_{k-1}}  (-1)^\sigma
e_{(ik)\sigma(1) } \tens \cdots \tens e_{(ik)\sigma(k-2)} \tens
e_{(ik)j} \\ & - \sum_{i=1}^k   \sum_{\substack{1 \leq j \leq k-1 \\ j \not \in \{ \sigma(1),
    \dots, \sigma(k-2) \} } }\sum_{\sigma \in
  \perm_{k-1}}  (-1)^\sigma
e_{(ik)\sigma(1) } \tens \cdots \tens e_{(ik)\sigma(k-2)} \tens
e_{(ik)j} \;.
\end{align*}\normalsize
The sum 
$$\sum_{\substack{1 \leq j \leq k-1 \\ j \in \{ \sigma(1),
    \dots, \sigma(k-2) \} }} \sum_{\sigma \in
  \perm_{k-1}}  (-1)^\sigma
e_{(ik)\sigma(1) } \tens \cdots \tens e_{(ik)\sigma(k-2)} \tens
e_{(ik)j}$$can be rewritten, for some $l \in \{1, \dots, k-2\}$, as
\begin{align*}
    \sum_{l=1}^{k-2} \sum_{\sigma \in
  \perm_{k-1}} (-1)^\sigma
e_{(ik)\sigma(1) } \tens \cdots \tens e_{(ik)\sigma(k-2)} \tens
e_{(ik)\sigma(l)} = &    \sum_{l=1}^{k-2} 
e_{(ik)1 } \w \cdots \w e_{(ik) (k-2)} \w
e_{(ik)l} =0 \;,
\end{align*}and hence  is zero. 
Consider the second sum. If $j \neq \sigma(i)$, for $i \in \{1, \dots,  k-2\}$,
then, necessarily, $j=\sigma(k-1)$. The second sum becomes: \small
\begin{align*}
- \sum_{i=1}^k  \sum_{\sigma \in
  \perm_{k-1}}  (-1)^\sigma
e_{(ik)\sigma(1) } \tens \cdots \tens e_{(ik)\sigma(k-2)} \tens
e_{(ik)\sigma(k-1)}  = &
- \sum_{i=1}^k 
e_{(ik)(1) } \w \cdots \w e_{(ik)(k-2)} \w
e_{(ik)(k-1)} \\ = &
 - \sum_{i=1}^k
e_{1 } \w \cdots \w e_{i-1} \w e_k \w e_{i+1} \w 
e_{k-1}  \\ 
= \: & -\sum_{i=1}^k  (-1)^{k-i-1}
e_{1 } \w \cdots \w e_{i-1} \w e_{i+1} \w 
e_{k-1} \w e_k \\ = & \sum_{i=1}^k  (-1)^{k-i}
\widehat{e}_i = \omega_{k-1}
\end{align*}\normalsize

\vspace{-0.3cm}
\end{proof}

\begin{notata}\label{notata: rki}
Recall that $R_k \simeq \mbb{C}^k$ is the natural representation of $\perm_k$. We will indicate
with $R_{k-1}(i)$, for $i \in \{1, \dots,  k\}$, the vector space: $R_{k-1}(i):=
R_k / \langle e_i \rangle$. It is isomorphic to the natural
representation of $\perm_{k-1}(i) := \perm(\{1, \dots, k \} \setminus \{ i \} ) \simeq \perm_{k-1}$.  
We will indicate with $\rho_{k-1}(i)$ the standard representation of
$\perm_{k-1}(i)$, embedded in $R_{k-1}(i)$, and with $\sigma_{k-1}(i)$
the invariant element $\sigma_{k-1}(i):= \sum_{1 \leq i \leq k,  i \neq
  j} e_i$. It generates the trivial representation $1_{k-1}(i)$ of $\perm_{k-1}(i)$,
seen embedded in $R_{k-1}(i)$. We indicate moreover with $\epsilon_{k-1}(i)$ the alternating representation of $\perm_{k-1}(i)$, that is, $\epsilon_{k-1}(i) = \Lambda^{k-2} \rho_{k-1}(i)$.  
\end{notata}
\begin{lemmaa}\label{lemma: mappainvariants} Let $V$ a vector space of dimension 2. Consider the map
  of $\perm_k$-representations
$$ \bigoplus_{i=1}^k f_i: \bigoplus_{i=1}^k \Lambda^{k-1}(V \tens
R_{k-1}(i)) \tens \epsilon_{k-1}(i) \rTo \Lambda^{k-1}(V \tens R_k)
\tens \epsilon_k \;.$$
The induced map of $\perm_k$-invariants can be identified\footnote{with the choices made in remark \ref{rmka: inclusion}} with the map $(k-1) \sym$ where 
$\sym:  S^{k-2}V \tens V \rTo S^{k-1}V $ is the symmetrization map.
We have: $(k-1) \sym ( u_1 \cdots u_{k-2} \tens v ) = u_1 \cdots u_{k-2}  v$.  
\end{lemmaa}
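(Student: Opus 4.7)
The plan is to reduce both sides to their explicit descriptions via Corollary \ref{cola: ext} and the inclusion described in Remark \ref{rmka: inclusion}, then to track a single generator through the map, using Danila's lemma to bypass the direct sum over $i$. First I would identify the target: by Corollary \ref{cola: ext} with $q=k-1$, the space of invariants $[\Lambda^{k-1}(V\tens R_k)\tens \epsilon_k]^{\perm_k}$ is canonically $S^{k-1}V$, and the explicit inclusion from Remark \ref{rmka: inclusion} sends $u_1\cdots u_{k-1}\in S^{k-1}V$ to $\alt(u_1\cdots u_{k-1}\tens\widehat\omega_{k-1})$ in $\Lambda^{k-1}(V\tens R_k)\tens\epsilon_k$. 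For the source, since $\perm_k$ acts transitively on $\{1,\dots,k\}$, Danila's lemma identifies $\bigl[\bigoplus_{i=1}^k\Lambda^{k-1}(V\tens R_{k-1}(i))\tens\epsilon_{k-1}(i)\bigr]^{\perm_k}$ with the $\perm_{k-1}(k)$-invariants of the summand at $i=k$; by Corollary \ref{cola: ext} applied to $R_{k-1}(k)\simeq R_{k-1}$ with the exponent $q=k-1$ (the middle case $q=(k-1)$ for a symmetric group of order $k-1$), this equals $S^{k-2}V\tens V$. So a generator of the source is $u_1\cdots u_{k-2}\tens v$, corresponding, via Remark \ref{rmka: inclusion} and Lemma \ref{lemma: inclusion}, to $\alt\bigl((u_1\cdots u_{k-2}\tens\widehat\omega_{k-2}(k))\tens(v\tens\sigma_{k-1}(k))\bigr)$ inside $\Lambda^{k-1}(V\tens R_{k-1}(k))\tens\epsilon_{k-1}(k)$.

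Next I would push this representative through $f_k$, which is just the inclusion $R_{k-1}(k)\hookrightarrow R_k$ as the span of $e_1,\dots,e_{k-1}$, so the image is the same wedge product now read inside $\Lambda^{k-1}(V\tens R_k)\tens\epsilon_k$. To obtain the $\perm_k$-invariant image in the target (and thereby identify it with an element of $S^{k-1}V$), I would apply Danila's averaging over representatives of $\perm_k/\perm_{k-1}(k)$, taking the set $\{(i,k):1\le i\le k-1\}\cup\{\mathrm{id}\}$. Combining the sum with the identity $\omega_{k-1}=\sum_{[\tau]\in\perm_k/\perm_{k-1}(k)}(-1)^\tau\tau_*[\omega_{k-2}(k)\tens\sigma_{k-1}(k)]$ proven at the end of the appendix, the $\omega_{k-2}(k)\tens\sigma_{k-1}(k)$ factor gets averaged exactly to $\omega_{k-1}$, while the $V$-part rearranges by Lemma \ref{lemma: inclusion} and Remark \ref{rmka: last} into a genuine symmetric product. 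Matching against the target identification $S^{k-1}V\hookrightarrow\Lambda^{k-1}(V\tens R_k)\tens\epsilon_k$ of Remark \ref{rmka: inclusion}, one reads off an element of $S^{k-1}V$ proportional to the symmetric monomial $u_1\cdots u_{k-2}\cdot v$.

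Finally I would compute the numerical proportionality constant. The factor $k-1$ has a clean combinatorial origin: when we splice the single vector $v$ into a slot of a symmetric product indexed by $\{1,\dots,k-1\}$, exactly $k-1$ of the $k$ possible slots (one per transposition $(i,k)$ in the Danila sum) contribute non-trivially after using that $\alt$ kills repeated entries, while the identity representative recombines with the normalization $1/(k-2)!$ coming from $\widehat\omega_{k-2}=\omega_{k-2}/(k-2)!$. Comparing the coefficient with the definition of $\sym$ in Remark \ref{rmka: symalt}, which produces the factor $(k-2)!\cdot 1!/(k-1)!=1/(k-1)$, I expect the two effects to combine to give exactly $(k-1)\sym$, matching the claim.

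The main obstacle will be bookkeeping: carefully tracking the sign conventions in passing from $\epsilon_{k-1}(i)$ to $\epsilon_k$ (these representations are isomorphic but not canonically so, and the choice of generator $\widehat\omega_{k-2}(k)$ versus $\widehat\omega_{k-1}$ must be made consistently), and reconciling the normalization factors coming from $\widehat\omega_{j}=\omega_j/j!$, $\sigma_k$, and the coefficient $k!(q-k)!/q!$ appearing in $\sym$ and $\alt$ by Remark \ref{rmka: symalt}. The purely algebraic content is forced, but the final constant $k-1$ emerges only after all of these normalization choices have been propagated correctly.
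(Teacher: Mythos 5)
Your plan is correct and follows essentially the same route as the paper's proof: Danila's lemma to reduce the source to the single summand at $i=k$, the explicit generators $\widehat{\omega}_{k-2}(k)\tens\sigma_{k-1}(k)$ and $\widehat{\omega}_{k-1}$ from remark \ref{rmka: inclusion} together with lemma \ref{lemma: inclusion}, the coset identity $\omega_{k-1}=\sum_{[\tau]}(-1)^{\tau}\tau_*[\omega_{k-2}\tens\sigma_{k-1}]$, and a final normalization count. The only caveat is that your heuristic for the constant (``$k-1$ of the $k$ slots survive $\alt$'') is not quite how it arises in the actual computation --- there the factor $1/(k-1)$ comes out of converting $\omega_{k-2}$ to $\widehat{\omega}_{k-2}=\omega_{k-2}/(k-2)!$ against the $1/(k-1)!$ from the alternation --- but you correctly defer the exact constant to that bookkeeping, which is precisely what the paper carries out.
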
\begin{proof}By the preceding lemmas, the map of invariants can be identified with
a map $S^{k-2}V \tens V \rTo S^{k-1}V$. Let us now determine this map
precisely. By remark \ref{rmka: inclusion} the map of $\perm_k$ invariants $(\oplus_{i=1}^k f_i)^{\perm_k}$
can be identified with the map of $\perm_k$-invariants of the map 
$$ \bigoplus_{i=1}^k g_i : [(S^{k-2}V \tens \Lambda^{k-2} \rho_{k-1}(i))
\tens \epsilon_{k-1}(i)] \tens [V \tens 1_{k-1}(i)] \rTo \Lambda^{k-1}(V \tens R_k)
\tens \epsilon_k \;,$$where $g_i$ is the $\perm_{k-1}$-equivariant composition: 
$$ g_i : [(S^{k-2}V \tens \Lambda^{k-2} \rho_{k-1}(i))
\tens \epsilon_{k-1}(i)] \tens [V \tens 1_{k-1}(i)] \rTo  \Lambda^{k-1}(V
\tens R_{k-1}(i))  \tens \epsilon_{k-1}(i) \rTo^{f_i} \Lambda^{k-1}(V
\tens R_k) 
\tens \epsilon_k $$
of $f_i$ with the natural inclusion 
\begin{equation}\label{eq: j1}[(S^{k-2}V \tens \Lambda^{k-2} \rho_{k-1}(i))
\tens \epsilon_{k-1}(i)] \tens [V \tens 1_{k-1}(i)] \rTo \Lambda^{k-1}(V
\tens R_{k-1}(i)) \tens \epsilon_k \;.\end{equation}Now the inclusion (\ref{eq: j1}) factorizes through:
\begin{diagram}
[(S^{k-2}V \tens \Lambda^{k-2} \rho_{k-1}(i))
\tens \epsilon_{k-1}(i)] \tens [V \tens 1_{k-1}(i)] & \rTo & \Lambda^{k-1}(V
\tens R_{k-1}(i))  \tens \epsilon_{k-1}(i) \\
\dTo^{i_i \tens \id_{V
  \tens 1_{k-1}}} & \ruTo^{\theta_i} & \\
\Lambda^{k-2}(V \tens R_{k-1}(i)) \tens \epsilon_{k-1}(i) \tens [V \tens
1_{k-1}(i)] & & 
\end{diagram}
where $i_i$ is the natural inclusion: 
$$ i_i : S^{k-2}V \tens \Lambda^{k-2} \rho_{k-1}(i) \rInto
 S^{k-2}V \tens \Lambda^{k-2} R_{k-1}(i) \rInto
\Lambda^{k-2}(V \tens R_{k-1}(i)) \;.$$
On the other hand, the map $\theta_i$ is just the
anti-symetrization. Hence the map $g_i$ is the composition $g_i = f_i \circ \theta_i \circ
(i_i \tens \id_{V
  \tens 1_{k-1}}) = \eta_i \circ (i_i \tens \id_{V
  \tens 1_{k-1}})$, where the map $\eta_i$ is defined as $\eta_i := f_i \circ \theta_i$. By Danila's lemma for morphisms, 
in the identification $S^{k-2}V \tens V \simeq [ \oplus_{i=1}^k \Lambda^{k-1}(V \tens
R_{k-1}(i)) \tens \epsilon_{k-1}(i)]^{\perm_k}$ explained in remark \ref{rmka: inclusion}, 
the map of invariants $[\oplus_{i=1}^k g_i]^{\perm_k}$ is given by\begin{align*} [\oplus_{i=1}^k g_i]^{\perm_k}(u_1 \cdots u_{k-2} \tens v) = & 
\sum_{[\nu] \in \perm_k / \perm_{k-1} } (-1)^{\nu} \nu_* g_k (
u_1 \cdots u_{k-2} \tens  \widehat{\omega_{k-2}}(k) \tens v \tens
\sigma_{k-1}(k)) \\ 
= & \sum_{[\nu] \in \perm_k / \perm_{k-1} } (-1)^{\nu} \nu_*  \eta_k [i_k (u_1 \cdots u_{k-2} \tens
\widehat{\omega_{k-2}}(k)) \tens v \tens \sigma_{k-1}(k) ]
\end{align*}Consider now, as set of representatives  of $\perm_{k-1}/\perm_{k-2}$ the cycles $\{ \gamma_i, i=1, \dots, k-1 \}$, where $\gamma_i = (i, i+1, \dots, k-2, k-1)$. 
The element $\widehat{\omega_{k-2}}(k)$
is nothing but the sum
$$\widehat{\omega_{k-2}}(k) = \frac{1 }{(k-2)!} \sum_{i=1}^{k-1} (-1)^{\gamma_i} e_{\gamma_i(1)} \w
\cdots \w e_{\gamma_i(k-2)} \;.$$Hence by lemma \ref{lemma: inclusion} and by remark \ref{rmka: last}
\begin{align*}
i_k (u_1 \cdots u_{k-2} \tens
\widehat{\omega_{k-2}}(k))= \: & \frac{1}{(k-2)!}\sum_{i=1}^{k-1} (-1)^{\gamma_i} i_k (u_1 \cdots u_{k-2} \tens
e_{\gamma_i(1)} \w
\cdots \w e_{\gamma_i(k-2)})\\
= \: &\frac{1}{(k-2)!}\sum_{i=1}^{k-1} (-1)^{\gamma_i} \sum_{\theta \in \perm_{k-2}}
(-1)^{\theta} (u_1 \tens e_{\gamma_i \theta (1) }) \w \cdots \w (u_{k-2}
\tens e_{\gamma_i \theta (k-2)})  \\
= &\frac{1}{(k-2)!} \sum_{\tau \in \perm_{k-1}} (-1)^{\tau} 
(u_1 \tens e_{\tau (1) }) \w \cdots \w (u_{k-2}
\tens e_{\tau (k-2)})  \\
\end{align*}Hence the term  $\eta_k [i_k (u_1 \cdots u_{k-2} \tens
\widehat{\omega_{k-2}}(k)) \tens (v \tens \sigma_{k-1}(k) )]$ can be rewritten as 
\begin{align*}
  & \frac{1}{(k-1)!} \sum_{\tau \in \perm_{k-1}} (-1)^{\tau} 
(u_1 \tens e_{\tau (1) }) \w \cdots \w (u_{k-2}
\tens e_{\tau (k-2)}) \w (v \tens \sigma_{k-1}(k)) \\
= \: & \frac{1}{(k-1)!} \sum_{\tau \in \perm_{k-1}} (-1)^{\tau} 
\sum_{\sigma \in \perm_{k-1}} (-1)^\sigma \sigma_* [ (u_1 \tens e_{\tau(1)}) \tens \cdots \tens (u_{k-2} \tens e_{\tau(k-2)}) 
 \tens (v \tens \sigma_{k-1}(k))]  \\
= \: & \frac{1}{(k-1)!} \sum_{\tau \in \perm_{k-1}} (-1)^{\tau} 
\sum_{\sigma \in \perm_{k-1}} (-1)^\sigma \sigma_* (u_1 \tens \cdots \tens u_{k-2} \tens
v) \tens \sigma_* (e_{\tau (1) } \tens \cdots 
\tens e_{\tau (k-2)} \tens \sigma_{k-1}(k)) \\
= \: & \frac{1}{(k-1)!} \sum_{\sigma \in \perm_{k-1}} (-1)^\sigma \sigma_* (u_1 \tens \cdots \tens u_{k-2} \tens
v) \tens \sigma_* \left ( \sum_{\tau \in \perm_{k-1}} (-1)^{\tau}  e_{\tau (1) } \tens \cdots 
\tens e_{\tau (k-2)} \tens \sigma_{k-1}(k) \right ) \\
 = \: &  \frac{1}{(k-1)!} \sum_{\sigma \in \perm_{k-1}} (-1)^\sigma \sigma_* (u_1 \tens \cdots \tens u_{k-2} \tens
v) \tens \sigma_* \left ( \omega_{k-2}(k) \tens \sigma_{k-1}(k) \right) \\
=\: & \frac{1}{(k-1)} \sum_{\sigma \in \perm_{k-1}} (-1)^\sigma \sigma_* (u_1 \tens \cdots \tens u_{k-2} \tens
v) \tens \sigma_* \left ( \widehat{\omega_{k-2}}(k) \tens \sigma_{k-1}(k) \right) \\
= \: & \frac{1}{(k-1)} \sum_{\sigma \in \perm_{k-1}}  \sigma_* (u_1 \tens \cdots \tens u_{k-2} \tens
v) \tens  \left ( \widehat{\omega_{k-2}}(k) \tens \sigma_{k-1}(k) \right ) \\
= \: &  \frac{1}{(k-1)} u_1 \cdots u_{k-2} v \tens ( \widehat{\omega_{k-2}}(k) \tens \sigma_{k-1}(k) )
\end{align*}
Finally, $[\oplus_{i=1}^k g_i]^{\perm_k}(u_1 \cdots u_{k-2} \tens v)$ can computed as: 
\begin{align*}
 [\oplus_{i=1}^k g_i]^{\perm_k}(u_1 \cdots u_{k-2} \tens v) = \:  & \frac{1}{k-1} \sum_{[\nu] \in \perm_k / \perm_{k-1} }  (-1)^{\nu} \nu_* [  u_1 \cdots u_{k-2} v
  \tens \left ( \widehat{\omega_{k-2}}(k) \tens \sigma_{k-1}(k) \right) ] \\
  \:  & \frac{1}{(k-1)!} \sum_{[\nu] \in \perm_k / \perm_{k-1} }  (-1)^{\nu} \nu_* [  u_1 \cdots u_{k-2} v
  \tens \left ( \omega_{k-2}(k) \tens \sigma_{k-1}(k) \right) ] \\
= \: &
  \frac{1}{(k-1)!}u_1 \cdots u_{k-2} v
  \tens  \sum_{[\nu] \in \perm_k / \perm_{k-1} }  (-1)^{\nu}  \nu_* \left ( \omega_{k-2}(k) \tens \sigma_{k-1}(k) \right) ] \\
  = \: &  \frac{1}{(k-1)!} u_1 \cdots u_{k-2} v \tens \omega_{k-1} = u_1 \cdots u_{k-2} v \tens \widehat{\omega_{k-1}} 
\end{align*}which is $u_1 \cdots u_{k-2} v$ in the identification 
$[\Lambda^{k-1}(V \tens R_k) \tens \epsilon_k]^{\perm_k} \simeq
S^{k-1}V \tens [\Lambda^{k-1} \rho_{k} \tens \epsilon_k]^{\perm_k} 
\simeq S^{k-1}V $ determined by choosing as a basis of $\Lambda^{k-1} \rho_{k}$ the vector $\widehat{\omega_{k-1}}$. 
\end{proof}

\footnotesize

\vspace{1cm}
\noindent
{Departamento de  Matem\'atica, Puc-Rio, Rua Marqu\^es S\~ao Vicente 225, 22451-900 G\'avea, Rio de Janeiro, RJ, Brazil}

\noindent 
{\it Email address:} {\tt lucascala@mat.puc-rio.br}
\end{document}